\date{November 28 2025}
\newtheorem{theorem}{Theorem}[section]
\newtheorem{corollary}[theorem]{Corollary}
\newtheorem{proposition}[theorem]{Proposition}
\newtheorem{lemma}[theorem]{Lemma}
\newtheorem{assumption}[theorem]{Assumption}
\theoremstyle{definition}
\newtheorem{remark}[theorem]{Remark}
\newtheorem{example}[theorem]{Example}
\newtheorem{definition}[theorem]{Definition}
\newtheorem{transverse lyapunov exponent strictly maximally negative when r(G)=2 cases}{Case}
\newtheorem{transverse lyapunov exponent strictly maximally negative for least singular elements cases}{Case}
\newtheorem{case negativity of transversal lyapunov exponent on positive weyl chambers}{Case}
\newtheorem{case no transverse manifolds in bad directions of generic points}{Case}
\newtheorem{case empirical measures sequence}{Case}
\newtheorem{step main}{Step}
\newenvironment{topological measure rigidity}{\emph{Proof of Proposition \ref{topological measure rigidity}:}}{\hfill$\square$}
\newenvironment{transverse lyapunov exponent strictly maximally negative when r(G)=2}{\emph{Proof of Proposition \ref{transverse lyapunov exponent strictly maximally negative} when $r(G)=2$:}}{\hfill$\square$}
\newenvironment{transverse lyapunov exponent strictly maximally negative for least singular elements}{\emph{Proof of Proposition \ref{transverse lyapunov exponent strictly maximally negative for least singular elements}:}}{\hfill$\square$}
\newenvironment{proof of P-equivariance case r(G)>2}{\emph{Proof of Proposition \ref{P-equivariance of transverse Pesin manifolds}:}}{\hfill$\square$}
\newenvironment{proof of entropic measure rigidity}{\emph{Proof of Proposition \ref{entropic measure rigidity}:}}{\hfill$\square$}
\newenvironment{proof of coherence proposition}{\emph{Proof of Proposition \ref{coherence proposition}:}}{\hfill$\square$}
\newenvironment{proof of coherence proposition when rG=2} {\emph{Proof of Proposition \ref{coherence proposition} when $r(G)=2$:}}{\hfill$\square$}
\newenvironment{proof of Pertubations by generic elements of P have LU factorization}{\emph{Proof of Proposition \ref{Pertubations by generic elements of $P$ have $LU$ factorization}:}}{\hfill$\square$}
\newenvironment{proof of N intersects all orbits}{\emph{Proof of Proposition \ref{N intersects all orbits}:}}{\hfill$\square$}
\newenvironment{proof of main}{\emph{Proof of Theorem \ref{main}:}}{\hfill$\square$}
\newenvironment{proof of entropic measure rigidity negative transverse exponents}{\emph{Proof of Proposition \ref{entropic measure rigidity} when all elements in $\Lambda_\mu^T(s)$ are negative:}}{\hfill$\square$}
\newenvironment{sketch of proof}{\emph{Sketch of Proof}:}{\hfill$\square$}
\newcommand{\N}{\mathbb{N}}
\newcommand{\Z}{\mathbb{Z}}
\newcommand{\R}{\mathbb R}
\newcommand{\s}{\vspace{0.2cm}}
\newcommand{\m}{\mathcal M}
\newcommand{\supp}{\operatorname{supp}}
\newcommand{\lo}{\lambda_{\mu^P}^T}
\newcommand{\haar}{\operatorname{Haar}}
\newcommand{\SL}{\operatorname{SL}}
\newcommand{\vol}{\operatorname{Vol}}
\newcommand{\locstable}{\mathcal W^s_{\operatorname{loc}}}
\newcommand{\lie}{\operatorname{Lie}}
\newcommand{\intW}{\operatorname{Int}(\mathcal W^P)}
\newcommand{\hmup}{h_{\mu^P}}
\def\acts{\curvearrowright}
\def\actsright{\curvearrowleft}
  \DeclareFontFamily{U}{wncy}{}
    \DeclareFontShape{U}{wncy}{m}{n}{<->wncyr10}{}
    \DeclareSymbolFont{mcy}{U}{wncy}{m}{n}
    \DeclareMathSymbol{\Sha}{\mathord}{mcy}{"58}
\title{Global Rigidity of Codimension One Actions}
\author{Camilo Arosemena Serrato}
\address{Department of Mathematics\\Rice University\\Houston, TX 77005}
\email{ja102@rice.edu}
\begin{document}
\begin{abstract}
    Consider a smooth, locally free, codimension-one action of a higher-rank, simple, split Lie group $G$ on a closed manifold $M$. Let $P$ be a minimal parabolic subgroup of $G$. If the action admits a $P$-invariant probability measure that is mixing, then the action is either equivariantly diffeomorphic to the suspension of a codimension one, locally free action on a closed manifold of a parabolic subgroup of $G$; or, it is finitely and equivariantly covered by $G\acts G/\Gamma\times S^1$, where $G\acts G/\Gamma$ is the coset action, and $G\acts S^1$ is the trivial action. We prove this by doing a jointly integration argument of stable and  center unstable Pesin manifolds. This is a smooth version of results by Nevo and Zimmer. 
\end{abstract}
\maketitle
\section{Introduction}

An essential part of the Zimmer program, laid out by Robert Zimmer in his 1986 ICM address, see \cite{zimmericm}, is to classify actions of semisimple, higher rank Lie groups on closed manifolds. Let $G$ be such a Lie group, and $P$ a minimal parabolic subgroup of $G$. A significant result in this vein is the  Theorem of Nevo and Zimmer, see \cite{nzinventiones}, which proves that for any measurable action of $G$ on a compact metric space $X$ for which there exists a $P$-mixing measure $\lambda$ on $X$, either $\lambda$ is $G$-invariant, or there exists a parabolic subgroup $Q$ of $G$ such that the original action of $G$ on $X$ is the suspension action of a measurable, measure preserving action of $Q$ on a probability space(see Definition \ref{suspension space defintion} for the notion of suspension space). The authors weaken the hypothesis of this result in their subsequent work, see \cite[Theorem~3]{nzann}, by only assuming there exists a $P$-invariant probability measure for which an specific finite set of lines of $A$ are ergodic with respect to this measure, where $A$ is any maximal torus of $G$ contained in $P$.

This theorem by Nevo and Zimmer does not answer the question of whether a continuous (or smooth) action of $G$ on a metric space (or smooth manifold)  is the suspension action of a continuous(or smooth action) of a parabolic subgroup of $G$ on some metric space(or smooth manifold), when the action has no invariant probability measure. In this paper, we answer this question for an important class of such actions, namely, for locally free, codimension one actions of $G$. Let $G$ be a simple, higher rank and split Lie group; $P$ a minimal parabolic subgroup of $G$; $A$ a maximal torus of $G$ contained in $P$; and $\Pi_P$ the set of simple roots of $(G,A)$ with respect to $P$. The following theorem is the main result of this paper:

 \begin{theorem}\label{main}
     Suppose $G$ acts locally freely and in a $C^2$ manner on a closed connected manifold $M$, with codimension $1$ orbits. Suppose there exists a $P$-invariant probability measure $\mu^P$ on $M$ such that $\ker(\alpha)\acts (M,\mu^P)$ is ergodic, for all $\alpha\in\Pi_P$. Then exactly one of the following holds:
    \begin{enumerate} 
        \item $M$ has an invariant probability measure for the action $G\acts M$. Furthermore, $M$ is equivariantly and finitely covered by the diagonal action $G\acts G/\Gamma\times S^1$, for some uniform lattice $\Gamma\leq G$, where $G\acts S^1$ is the trivial action;
        \item\label{suspension statement} There exists a proper parabolic subgroup $Q\leq G$ and an equivariant smooth map $\pi:M\rightarrow G/Q$. Furthermore, $M$ is equivariantly diffeomorphic to the suspension space $G\times_Q N$, where $N=\pi^{-1}(\{Q\})\subseteq M$ is an embedded submanifold invariant under the action of $Q$, and $\dim(N)=\dim(Q)+1$.
    \end{enumerate}
\end{theorem}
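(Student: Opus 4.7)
The plan is to combine the measurable dichotomy of Nevo--Zimmer with a smooth bootstrap powered by nonuniform hyperbolic theory for the $P$-action. First, apply \cite[Theorem~3]{nzann} to the $P$-invariant measure $\mu^P$, which is valid because the ergodicity of every $\ker(\alpha)$ with $\alpha \in \Pi_P$ is exactly what that theorem requires. This yields the measurable alternative: either $\mu^P$ is already $G$-invariant, or there exists a proper parabolic $Q \leq G$ and a measurable $G$-equivariant map $\pi : M \to G/Q$ defined $\mu^P$-a.e., with the Haar measure class as its push-forward. Everything remaining is to upgrade these soft conclusions to the claimed smooth ones.

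For the non-invariant case, I would pass to Pesin theory for the $A$-action on $(M,\mu^P)$. Since the action is locally free and codimension one, the tangent bundle splits equivariantly along $G$-orbits as the trivial bundle $M \times \lie(G)$ plus a one-dimensional transverse line bundle. The Lyapunov exponents of an element $a \in A$ along the Lie algebra directions are the restricted roots evaluated at $\log a$, and there is one additional transverse exponent $\lo$. The key analytic input (the propositions alluded to in the excerpt on the transverse Lyapunov exponent being strictly maximally negative) is that in the positive Weyl chamber associated with $P$, the transverse exponent lies strictly below every restricted root. Granting this, for generic $a$ in the interior of the Weyl chamber the Pesin stable manifold integrates the sum of the strictly negative root spaces together with the transverse direction; jointly integrating this stable distribution with the central (zero-exponent) directions produced by the $\ker(\alpha)$-ergodicity hypothesis should yield, at $\mu^P$-a.e.\ point, a smooth immersed manifold of dimension $\dim(Q) + 1$ that is tangent to $\lie(Q)$ plus the transverse line. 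I then verify $P$-equivariance of this integrated family (this is the content of the referenced \emph{$P$-equivariance of transverse Pesin manifolds}) and identify its leaves $\mu^P$-a.e.\ with the fibers of $\pi$; from this, setting $N$ to be the leaf over $eQ$, the map $G \times_Q N \to M$ is a smooth $G$-equivariant diffeomorphism.

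For the invariant case, a locally free codimension-one $C^2$ action of a higher-rank simple Lie group $G$ carrying a $G$-invariant probability measure is exceedingly rigid: Stuck--Zimmer forces every orbit to be closed with stabilizer a lattice, and because orbits have codimension one in a connected $M$, the orbit space is a closed one-manifold, hence $S^1$. Choosing a single lattice $\Gamma$ via connectedness of $M$ and trivializing the resulting $G/\Gamma$-bundle over $S^1$ (the monodromy must centralize $\Gamma$, hence is finite by Mostow/Margulis) yields a finite equivariant cover by $G/\Gamma \times S^1$ with $G$ acting trivially on $S^1$.

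The principal obstacle is unquestionably the joint integration step. Pesin stable manifolds are a priori only $\mu^P$-a.e.\ objects with merely measurable dependence on the base point, and absolute continuity is delicate; upgrading them to honest smooth $P$-equivariant leaves defined everywhere on $M$, with the correct dimension $\dim(Q)+1$, and matching them with the measurable fibers of $\pi$, requires exploiting the topological minimality coming from $P$-mixing together with the algebraic structure of $G$. Once the leaf through $\mu^P$-generic points is $P$-invariant and smooth, spreading it by the $G$-action and appealing to density should globalize, but making each of these steps quantitative is where the bulk of the work lies.
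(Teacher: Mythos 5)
Your proposal begins from a genuinely different entry point than the paper: you invoke the Nevo--Zimmer measurable dichotomy to get a measurable factor map $\pi:M\to G/Q$ up front and then plan to upgrade it. The paper never uses Nevo--Zimmer's theorem; it constructs the parabolic $Q_0$ as $\operatorname{Stab}(\mu^P)$ and the submanifold $N$ purely from smooth dynamics, and the measurable factor plays no role. This difference matters, because a measurable factor map gives no traction on the integration problem you yourself identify as the main obstacle: its fibers are a priori only $\mu^P$-measurable sets, with no smooth structure, and matching those fibers to Pesin leaves is not easier than building the leaves directly.

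There is also a concrete error in the proposed distribution. You propose to integrate the stable distribution for $a\in\operatorname{Int}(\mathcal W^P)$ together with the zero-exponent directions and claim the result is tangent to $\lie(Q)$ plus the transverse line. For $a$ in the positive chamber, the stable distribution is the sum of the \emph{negative} root spaces and the one-dimensional transverse direction $E^{\lo}$, while the central directions along the orbit are $\lie(A)$; together these span $\lie(P^{\mathrm{op}})\oplus E^{\lo}$, not $\lie(Q_0)\oplus E^{\lo}$. The paper's $N$ is instead $Q_0\cdot\mathcal W^T(x)$, where $\mathcal W^T(x)$ is the one-dimensional transverse Pesin manifold and $Q_0\supseteq P$, so $N$ is tangent to $\lie(Q_0)\oplus E^{\lo}$, a distribution that mixes stable, unstable, and central $G$-orbit directions with respect to any single $a$. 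No single element's Pesin stable or center-stable foliation integrates this.

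The two arguments the paper actually needs to make the construction go through are absent from your sketch. First, to see that the purported submanifold is injectively immersed and does not intersect a single $G$-orbit in more than one $Q_0$-coset, the paper proves a ``Topological Measure Rigidity'' statement (Proposition~\ref{topological measure rigidity}): if a $\mu^P$-generic point $x$ and some $ux\in\supp(\mu^P)$ with $u\in G^{-\alpha}\setminus\{e\}$, then $\mu^P$ is $G^{-\alpha}$-invariant. This uses a Hopf/accessibility argument via Deodhar's lemma and the absolute continuity of stable holonomy, and is the engine behind the coherence Proposition~\ref{coherence proposition}. You do not mention any mechanism to prevent the $Q_0$-saturation from self-intersecting or crossing a $G$-orbit in two distinct cosets of $Q_0$. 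Second, to go from an a.e.-defined immersed manifold to a global object hitting \emph{every} orbit, the paper proves that $P\acts M$ has only finitely many ergodic invariant measures, via Deroin--Kleptsyn's classification of harmonic measures for codimension-one foliations (Proposition~\ref{uniquePergodicity}), and then uses Lemma~\ref{Pattraction} and Proposition~\ref{local rigidty of Pinvariant measures} to show the set of orbits that meet $N$ is open and closed. Your appeal to ``topological minimality coming from $P$-mixing'' gestures at this but the theorem's hypothesis is $\ker(\alpha)$-ergodicity, not mixing, and without the harmonic-measure finiteness result there is no reason the a.e.-defined leaves should close up into a closed embedded submanifold. These two omissions are genuine gaps.
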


One crucial idea in the proof of this result is an integrability argument of foliations by $C^2$ manifolds, whose transversal regularity is only measurable, see Lemma \ref{injectivey immersed N}. Such integration arguments in dynamical systems are challenging tasks, and are the center of a lot of current research, see the work of Katz \cite{katz}, and the recent work of Brown, Eskin, Filip, and Rodr\'iguez-Hertz, see \cite{brown2025measurerigiditygeneralizedugibbs}.

Another essential tool for the proof of this result are topological dynamical arguments appearing in the recent work of Deroin and Hurtado, see \cite{dh}, that adapt constructions and results of smooth dynamical systems, to the context of dynamical systems of continuous functions.

Important motivation for this result has been the recent celebrated work, in the Zimmer program, on the resolution of several important cases of Zimmer's Conjecture, on classification of actions of lattices of higher rank Lie groups on manifolds of small dimension with respect to the Lie group, by Brown, Fisher and Hurtado, see \cite{bfh}; and the aforementioned recent proof of Deroin and Hurtado of the non-left-orderability of this class of lattices, see \cite{dh}. Locally free actions play a pivotal role in these results via the suspended action construction(for this notion, see Definition \ref{suspension space defintion}), which yields a locally free action of the Lie group encoding the original lattice action; a main feature of these results is using the algebraic structure of the Lie group, and the geometric structure of the induced action, to apply tools from dynamical systems and geometry to prove properties of the suspended action, which then correspond to properties of the original lattice action.

Locally free actions give rise to a foliation of the acted upon manifold, with leaves the orbits of this action. A notable class of foliations, for which a lot of work in foliation theory has been done, are those whose leaves have codimension one in the manifold, i.e., codimension one foliations. Thus, an important class of locally free actions of Lie groups is that for which their orbits are of codimension one in the manifold. The study of this kind of actions has had various important results, as we illustrate below.

An important early observation on the rigidity of locally free actions of codimension one is the fact that the only orientable closed surface admitting a one-dimensional smooth flow without singularities, is the torus, because of the Hopf-Poincar\'e index theorem. In the early seventies, the work of  Rosenberg, Roussarie, and  D.Weil, and then the work of Chatelet and Rosenberg, see \cite{rrw} and \cite{chateletrosenberg}, proves that the only closed oriented manifolds of dimension $n$ admitting a locally free action of $\R^{n-1}$ are fiber bundles over $\mathbb T^k$ with fiber $\mathbb T^{n-k}$. 

In his 1979 PhD thesis Ghys, see \cite{ghys85}, proves that all smooth actions of the group $\operatorname{AG}$, of orientation preserving affine transformations of the real line(which is the only non-abelian two-dimensional Lie group), on 3-manifolds, are smoothly conjugate to a homogeneous action, whenever there is  a continuous volume form invariant under the action. Later, Asaoka, see \cite{asaokagrenoble}, proved that all smooth actions of $\operatorname{AG}$ are smoothly orbit equivalent to a homogeneous action; and later that there are such actions that are not smoothly conjugate to a homogeneous action, see \cite{asaokaannals}.

For semisimple Lie groups $G$ with property (T), Stuck and Zimmer proved that any locally  free, $C^2$ action $G\acts M$, with codimension one orbits, having a $G$-invariant probability measure, is finitely and 
equivariantly covered by the diagonal action $G\acts G/\Gamma\times S^1$, where $G\acts G/\Gamma$ is the coset action and $G\acts S^1$ is the trivial action, see \cite{sz} and Theorem \ref{sz}. The assumption on the existence of such invariant probability measures is quite strong as $G$ is not amenable. For instance, showing such invariant measures exist, for the corresponding suspension space, is the main argument in the theorem by Ghys on finiteness of smooth actions of  lattices of higher rank semisimple Lie groups on the circle, see \cite{ghys}.

Our main result, Theorem \ref{main}, improves the results of Stuck-Zimmer, in the sense that we need a much weaker hypothesis, and we prove that all the actions of the same class are of algebraic origin.




\subsection{Outline of the proof}

The first item of the conclusion of Theorem \ref{main} follows from the  Theorem by Stuck and Zimmer, see Theorem \ref{sz}. Hence, to prove Theorem \ref{main}, we need only prove the second item of it by assuming the indicated action $G\acts M$, satisfying the hypothesis of this theorem, has no invariant probability measure. 

Fix $P\leq G$ minimal parabolic, and $A$ a maximal torus of $G$ with $A\leq P$. The first step in proving Theorem \ref{main} is to prove, under the assumpion that $G\acts M$ has no invariant probability measures, that the convex set of all $P$-invariant, Borel, probability measures on $M$ has only finitely many extremal points, see Proposition \ref{uniquePergodicity}. This is done by applying the main result of the paper \cite{dk} of Deroin and Kleptsyn on harmonic probability measures of foliations with codimension one leaves; then showing that stationary measures with respect to the locally free action $G\acts M^{\dim(G)+1}$ are harmonic measures for the foliation of $M$ whose leaves are the orbits of this action, and then applying the classical work of Furstenberg, see \cite{furstenberg}, yielding a convex bijection between stationary and $P$-invariant probability measures for $G$-actions. 

Fix $\mu^P$ a $P$-invariant measure satisfying the ergodic hypothesis of our main result, Theorem \ref{main}, so that in particular it is $A$-ergodic. 

By the higher rank Osedelets' Theorem, see \cite[Theorem~2.4]{brhwzd}, there exists a Lyapunov exponent with respect to $\mu^P$, whose corresponding Oseledets' distribution is, $\mu^P$-almost surely, transversal to the $G$-orbits. We denote this Lyapunov exponent, which is an element of $A^\ast$, by $\lo$. In section \ref{Lyapunov Exponent Transverse to Orbits and Resonance}, we show that $\lo$ is less than all roots of $(G,A)$, in the interior of the Weyl chamber of $A$ associated to $P$, see Proposition \ref{transverse lyapunov exponent strictly maximally negative}. This yields, for $\mu^P$ almost every $x$, a one-dimensional $C^2$, injectively immersed, Pesin submanifold, which we denote by $\mathcal W^T(x)$, transverse to the $G$-orbit $G\cdot x$ at $x$, for $\mu^P$-a.e. $x$. 

Denote by $Q$ the stabilizer subgroup of $\mu^P$, that is,
\begin{equation*}
    Q=\{g\in G:g_\ast\mu^P=\mu^P\},
\end{equation*}
where $g_\ast\mu^P$ is the pushforward measure of $\mu^P$ under the diffeomorphism of $M$ given by the action of $g$, i.e., the map $M\ni x\mapsto gx$. 

In section \ref{Proof of theorem when DeltaminusDeltaQgeq3} we prove, when $G$ is not locally isomorphic to $\SL(3,\R)$, that  for $\mu^P$-a.e. $x$ the submanifold $\mathcal W^T(x)$ is $\haar_Q$ almost everywhere equivariant with respect to the action $Q\acts M$, see Proposition \ref{P-equivariance of transverse Pesin manifolds}. This property is used to show that 
\begin{equation*}
    Q\cdot\mathcal W^T(x)=\{qy:q\in Q, y\in\mathcal W^T(x)\}
\end{equation*}
is an injectively immersed smooth submanifold of $M$ of dimension $\dim(Q)+1$, which we denote by $N$. $N$ does not depend on any $x$ in a subset of $M$ of full $\mu^P$ measure. By construction, the restriction of the original action to $Q$ yields a locally free action of this subgroup on $N$.

When $G$ is locally isomorphic to $\SL(3,\R)$, the same construction as above can be made, see Section \ref{Construction of submanifold when r(G)=2}, yielding an injectively immersed submanifold as well, by showing $\mu^P$ disintegrates, along the family $\{\mathcal W^T(x)\}_{x\in\Lambda}$, for some  $\Lambda\subseteq M$ with $\mu^P(\Lambda)=1$, into measures that are equivalent to the Riemannian volume on $\mathcal W^T(x)$, with strictly positive corresponding $C^1$ Radon-Nikodym derivatives, see Proposition \ref{absolute continuity along transverse Pesin}; then, the construction of the corresponding $N$ is shown to be an injectively immersed submanifold by using the absolute continuity property of the holonomy maps between $G$-orbits, along the family $\{\mathcal W^T(x)\}_{x\in\Lambda}$, see Section \ref{Construction of submanifold when r(G)=2}. 

We construct the $G$-equivariant map $M\rightarrow G/Q$ in Theorem \ref{main}, by showing the following properties about the submanifold $N$:

\begin{enumerate}
    \item\label{important property of N I} $N$ intersects all the orbits of the action $G\acts M$;
    \item\label{important property of N II} for all $x\in M$, $G\cdot x\cap N=gQ\cdot x$, for a unique $gQ\in G/Q$.
\end{enumerate}
These two properties are shown in Section \ref{N is a closed embedded submanifold}, see Lemma \ref{N intersects all orbits}, by using a fact we call ``Topological Measure Rigidity", proved in Section \ref{section topological measure rigidity}, see Proposition \ref{topological measure rigidity}, motivated by the recent work of Deroin and Hurtado, see \cite[Theorem~8.2]{dh}. The ``Topological Measure Rigidity" property for $\mu^P$, says that in a set of full measure of $\mu^P$, $\Lambda$, we have that if for some $x\in\Lambda$, $ux\in\supp(\mu^P)$ for some $u\in G^{-\beta}\setminus\{e\}$, for any $\beta$ root of $G$ with respect to the torus $A$, positive with respect to $P$, then $\mu^P$ is $G^{-\beta}$-invariant. For the proof of this fact, the ergodic assumption in Theorem \ref{main} on $\mu^P$ is essential.

The ``Topological Measure Rigidity" property implies property (\ref{important property of N II}) above, see Proposition \ref{coherence proposition}; it implies property (\ref{important property of N I}) by showing $P\acts M$ is actually uniquely ergodic, using that there are finitely many extremal points in the set of $P$-invariant probability measures, see the Proof of Proposition  \ref{N intersects all orbits}, at the end of Section \ref{N intersects all G-orbits section}. 

From properties (\ref{important property of N I}) and (\ref{important property of N II}), we finish the proof of Theorem \ref{main} by showing $M$ is equivariantly diffeomorphic to the suspension space $G\times_QN$, see the proof of Theorem \ref{main} in Subsection \ref{Construction of smooth equivariant map}, and see Defintion \ref{suspension space defintion} for the notion of suspension space. 

\subsection{Acknowledgments}
This work is part of the author's PhD thesis.

The author thanks Sebasti\'an Hurtado for all his feedback, suggestions, and enthusiasm for this project, as well as for visits to Yale University; his help was essential to this project. The author thanks his advisor, David Fisher, for his unwavering guidance and support throughout this project, as well as for the discussions, suggestions, and feedback that informed the ideas presented in this work. The author also thanks Aaron Brown, Ralf Spatzier, and Kurt Vinhage for their many helpful suggestions, discussions, and feedback, as well as for visits to their institutions.  The author would also like to thank Rose Elliot Smith and Miguel Pineda for helpful suggestions and discussions.

\tableofcontents

\section{Preliminaries}

\subsubsection{The theorem of Pugh and Shub on $\R^k$ ergodic measures}

We have the following theorem which proves that for a probability measure ergodic under the action of $\R^k$, for any $k\geq 1$, almost all lines in $\R^k$ going through the origin act ergodically as well. 

\begin{theorem}\cite[Theorem~1]{pughshub}\label{pughshub}
    If $\mathbb R^k$ acts ergodically on $(M,\mu)$, $\mu(M)<\infty$, and $L^2(M,\mu)$ is separable, then all elements in $\mathbb R^k$ off a countable family of hyperplanes act $\mu$-ergodically on $M$.
\end{theorem}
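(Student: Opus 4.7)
The plan is to reduce ergodicity questions for individual elements of $\mathbb{R}^k$ to a spectral condition on the Koopman representation, and then to a purely measure-theoretic statement on $\mathbb{R}^k$. Write $L^2_0(M,\mu)$ for the orthogonal complement of the constants, and let $U\colon \mathbb{R}^k\to \mathcal U(L^2_0(M,\mu))$ be the unitary representation induced by the action. By the SNAG theorem there is a projection-valued measure $E$ on the Pontryagin dual $\widehat{\mathbb{R}^k}\cong \mathbb{R}^k$ satisfying $U(v)=\int e^{2\pi i\langle v,\xi\rangle}\,dE(\xi)$, and separability of $L^2(M,\mu)$ yields a finite scalar maximal spectral type $\sigma$, a Borel measure on $\mathbb{R}^k$ with $E(B)=0\iff\sigma(B)=0$. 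Ergodicity of the whole $\mathbb{R}^k$ action translates to $\sigma(\{0\})=0$, and by the functional calculus a single element $v\in\mathbb{R}^k$ acts ergodically on $(M,\mu)$ iff $\ker(U(v)-I)=0$ in $L^2_0$, iff $\sigma(H_{v,n})=0$ for every $n\in\mathbb{Z}$, where $H_{v,n}:=\{\xi\in\mathbb{R}^k:\langle v,\xi\rangle=n\}$.

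It therefore suffices to prove the following measure-theoretic fact: for any finite Borel measure $\sigma$ on $\mathbb{R}^k$ with $\sigma(\{0\})=0$, the set $B=\{v\in\mathbb{R}^k:\exists n\in\mathbb{Z},\ \sigma(H_{v,n})>0\}$ lies in a countable union of affine hyperplanes. I would prove this by a recursive decomposition of $\sigma$. Let $S^*\subset \mathbb{R}^k$ be a countable union of affine hyperplanes attaining $\sup\{\sigma(S):S\text{ is a countable union of affine hyperplanes}\}$ (obtained by a standard exhaustion argument); decompose $\sigma=\sigma|_{S^*}+\sigma|_{\mathbb{R}^k\setminus S^*}$. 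Maximality of $S^*$ forces the second summand to charge no affine hyperplane, so it contributes no bad $v$. Write $S^*=\bigcup_iH_i$ and apply the same decomposition to the finite Borel measure $\sigma|_{H_i}$ on $H_i\cong \mathbb{R}^{k-1}$, extracting a countable family of codimension-$2$ affine subspaces; iterate. The recursion terminates by dimension and yields a countable collection $\{W_\alpha=w_\alpha+L_\alpha\}_\alpha$ of affine subspaces of $\mathbb{R}^k$ with the property that every bad $v$ must satisfy $W_\alpha\subset H_{v,n}$ for some $\alpha$ and some $n\in\mathbb{Z}$. Unwinding, this forces $v\in L_\alpha^\perp$ together with $\langle v,w_\alpha\rangle\in\mathbb{Z}$, so $v$ is confined to a countable union of affine hyperplanes of the linear subspace $L_\alpha^\perp$ (or to $L_\alpha^\perp$ itself when $w_\alpha\in L_\alpha$), each of which sits inside a hyperplane of $\mathbb{R}^k$. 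Aggregating over countably many $W_\alpha$ then covers $B$ by countably many hyperplanes.

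The main obstacle is the recursive measure-theoretic lemma. A naive Fubini argument applied to $\int_v\int_\xi \mathbf{1}_{\{\langle v,\xi\rangle\in\mathbb{Z}\}}\,d\sigma(\xi)\,dv$ shows only that $B$ has Lebesgue measure zero, which is much weaker than the desired hyperplane cover. The key refinement is the dichotomy between the ``hyperplane-atomic'' portion of $\sigma$, which is supported on countably many affine hyperplanes, and a ``hyperplane-continuous'' remainder, which charges no affine hyperplane; combined with a careful dimensional induction to guarantee that each iteration produces only countably many new supporting subspaces. Separability of $L^2$ is used to guarantee that the scalar spectral type $\sigma$ exists as a single finite Borel measure on $\mathbb{R}^k$, so that the supremum in each stage of the decomposition is genuinely attained.
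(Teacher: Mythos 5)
Your proof is correct and follows the same spectral route as Pugh and Shub's original argument: pass to the Koopman representation on $L^2_0(M,\mu)$, invoke the SNAG theorem and separability to obtain a finite maximal spectral type $\sigma$ on $\widehat{\mathbb{R}^k}\cong\mathbb{R}^k$ with $\sigma(\{0\})=0$, reduce single-element ergodicity to the vanishing of $\sigma$ on the affine hyperplanes $H_{v,n}=\{\xi:\langle v,\xi\rangle=n\}$, and then reduce to a purely measure-theoretic statement about $\sigma$. Your recursive decomposition of $\sigma$ into a hyperplane-charging part supported on countably many affine flats and a remainder charging no affine hyperplane, iterated down the dimension, is a correct way to establish that measure-theoretic lemma (with the small caveat, which you implicitly handle, that the restricted measures arising in the recursion still give $0$ mass to the origin since they are restrictions of $\sigma$, so the final flats $W_\alpha$ are never $\{0\}$ and each annihilator set $\{v:\exists n,\ W_\alpha\subset H_{v,n}\}$ sits inside countably many hyperplanes of $\mathbb{R}^k$).
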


\subsection{Lyapunov exponents and Pesin manifolds}

\subsubsection{Osedelec's multiplicative ergodic theorem and Lyapunov exponents}

For any $C^1$ diffemorphism $f:M\rightarrow M$, and any $f$-ergodic probability measure $\mu$ on $M$ we have Osedelec's theorem, see \cite{osedelec}, and also  \cite[Theorem~7.1]{brown2019entropylyapunovexponentsrigidity}.

\begin{theorem}[Osedelec \cite{osedelec}]\label{Osedelec's theorem}
There are:
\begin{enumerate}
    \item a measurable set $\Lambda$ with $\mu(\Lambda)=1$;
    \item real numbers $\lambda^1>\ldots>\lambda^p$;
    \item a $\mu$-measurable, $Df$-invariant splitting $T_xM=\bigoplus_{i=1}^r E^i(x)$, for all $x\in\Lambda$,
\end{enumerate}
    such that for all $x\in\Lambda$ and all $v\in E^i(x)\setminus\{0\}$ we have
    \begin{equation*}
        \lim_{n\rightarrow\pm\infty}\frac1n\log||D_xf^n(v)||=\lambda^i.
    \end{equation*}
\end{theorem}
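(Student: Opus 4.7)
The plan is to derive the theorem from Kingman's subadditive ergodic theorem applied to the derivative cocycle. First, I would observe that $\varphi_n(x) := \log\|D_xf^n\|$ is subadditive along $f$: from the chain rule $D_xf^{m+n} = D_{f^nx}f^m \circ D_xf^n$ we obtain $\varphi_{m+n}(x) \leq \varphi_m(f^nx) + \varphi_n(x)$. Since $M$ is closed and $f$ is $C^1$, $\varphi_1$ is bounded, hence integrable. Kingman's theorem then produces an $f$-invariant limit $\lambda^1(x) = \lim_n \tfrac{1}{n}\varphi_n(x)$ for $\mu$-a.e.\ $x$, and by ergodicity $\lambda^1$ is $\mu$-a.e.\ constant; this will be the top Lyapunov exponent.

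Next, I would apply the same argument to the induced cocycles on each exterior power $\wedge^k TM$. The norm of $\wedge^k D_xf^n$ equals the product of the top $k$ singular values of $D_xf^n$, so Kingman gives almost sure exponential rates $\sigma^n_1(x)\cdots\sigma^n_k(x) \sim e^{n(\lambda^1+\cdots+\lambda^k)}$. Subtracting consecutive rates produces the finite ordered list $\lambda^1 > \cdots > \lambda^p$ of distinct exponents together with their multiplicities.

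Then I would build the Lyapunov filtration. Set $V^i(x) := \{v \in T_xM : \limsup_n \tfrac{1}{n}\log\|D_xf^n v\| \leq \lambda^i\}$, giving a descending $Df$-invariant flag whose graded pieces have the correct multiplicities (by the singular value asymptotics from the previous step). On $V^i(x)\setminus V^{i+1}(x)$ the forward limit equals $\lambda^i$. Because $f$ is a diffeomorphism, repeating the construction with $f^{-1}$ yields a complementary ascending filtration $U^i(x)$, and the $Df$-invariant splitting is obtained by $E^i(x) := V^i(x) \cap U^i(x)$ on a full-measure invariant set $\Lambda$; measurability follows from measurable dependence of each $V^i, U^i$ on $x$.

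The main technical obstacle, and the step I would spend the most care on, is upgrading the forward $\limsup$ to a two-sided genuine limit on each $E^i(x)$. This amounts to proving that the angles between the distinct Oseledec subspaces along the orbit decay subexponentially: one uses a Borel--Cantelli argument to show that $\tfrac{1}{n}\log\sin\angle\bigl(V^i(f^nx), V^{i+1}(f^nx)\bigr) \to 0$, so no angle collapse can conspire with the norm growth to produce cancellation. Once this tempered-angle estimate is established, the forward rate on $E^i(x)$ is exactly $\lambda^i$, and applying the same estimate to $f^{-1}$ forces the backward rate to agree, which gives the two-sided convergence asserted in the theorem.
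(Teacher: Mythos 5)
The paper does not prove this statement: Theorem \ref{Osedelec's theorem} is cited directly from Oseledec and never re-derived, so there is no internal proof to compare yours against. What you have written is a sketch of the standard Kingman-plus-Raghunathan/Ruelle route, and the skeleton is the right one: Kingman's subadditive ergodic theorem on $\log\|D_xf^n\|$ gives the top exponent, the same applied to $\wedge^k Df$ yields the ordered sums $\lambda^1+\cdots+\lambda^k$ and hence the distinct exponents with multiplicities, and intersecting the forward descending flag $V^i$ with the backward ascending flag $U^i$ produces the invariant splitting. This is essentially the proof one finds in the references the paper points to (e.g.\ \cite{brown2019entropylyapunovexponentsrigidity}, \cite{bpintro}).

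Two places in your sketch are stated more confidently than they are argued. First, defining $E^i(x) := V^i(x) \cap U^i(x)$ does not by itself give a splitting; you still need to show the intersections are transversal in the correct codimensions so that $\dim E^i = m^i$ and $T_xM = \bigoplus_i E^i(x)$. This follows from a dimension count using the singular-value asymptotics of $\wedge^k Df^{\pm n}$, but it is a step, not a definition. Second, the ``tempered angle'' paragraph is the genuinely delicate part and your description of it is mostly heuristic. The standard rigorous argument is not really a Borel--Cantelli argument on the angles; one either (a) applies the Birkhoff ergodic theorem to a suitable integrable function to show the cocycle is \emph{tempered} along $\mu$-a.e.\ orbit (so that $\frac1n\log\|D_{f^nx}f^{\pm1}\|\to 0$, which converts the $\limsup$ into a $\lim$), or (b) constructs a Lyapunov metric in which the splitting becomes orthogonal and the angle issue disappears. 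Saying ``no angle collapse can conspire with the norm growth'' is the conclusion one wants, not the mechanism. As a proposal the outline is correct in spirit, but I would insist on these two steps being carried out before calling it a proof.

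Finally, note a small typo in the statement as printed: the splitting should run over $i=1,\ldots,p$, not $i=1,\ldots,r$; the index $r$ is not defined.
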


The real numbers $\lambda^i$ are called \emph{Lyapunov exponents}, and the spaces $E^i(x)$ are called Osedelets' distributions. We call the $\mu$-almost surely value $m^i:=\dim E^i(x)$ the multiplicity of $\lambda^i$, for $\mu$-a.e. $x$.

\subsubsection{Unstable Pesin manifolds}\label{unstable pesin manifolds}

Recall that given a compact manifold $M$, a $C^2$ diffeomorphism $f:M\rightarrow M$, and a $f$-ergodic probability measure $\mu$, with corresponding Lyapunov exponents $\lambda^1>\ldots>\lambda^d$; there are associated global $j$th unstable manifolds, which are  $C^2$ injectively  immersed 
submanifolds given by 
\begin{equation}\label{Pesin manifolds}
    \mathcal W^j(x)=\left \{y\in M:\limsup_{n\rightarrow\infty}\frac1n\log d(f^{-n}(x),f^{-n}(y))\leq -\lambda^j\right\}
\end{equation}
whenever $\lambda^i>0$, for $\mu$-a.e. $x$. These manifolds satisfy $T_x\mathcal W^j(x)=\bigoplus_{\lambda^j\leq\lambda^i}E^i(x)$ for $\mu$-a.e. $x$.

The set 
\begin{equation*}
    \mathcal W^u(x)=\left \{y\in M:\limsup_{n\rightarrow\infty}\frac1n\log d(f^{-n}(x),f^{-n}(y))< 0\right\}
\end{equation*}
is equal to the $r_0$th unstable manifold, where $1\leq r_0\leq p$ is such that $\lambda_{r_0}<0$, but $\lambda_{r_0+1}\geq 0$, if it exists, so that $\mathcal W^u(x)$ is a $C^2$ injectively immersed submandifold, for $\mu$-a.e. $x$, called \emph{the unstable Pesin manifold} at $x$. It satisfies $T_x\mathcal W^u(x)=\bigoplus_{\lambda^i<0}E^i(x)=:E^u(x)$, for $\mu$-a.e. $x$.

We denote by $\mathscr W^j$ and $\mathscr W^u$ the corresponding partitions by Pesin manifolds.  

There are also corresponding stable and central distributions $E^s(x):=\bigoplus_{\lambda^i<0}E^i(x)$, $E^0(x):=\bigoplus_{\lambda^i=0}E^i(x)$, only the former  has Pesin manifolds $\mathcal W^s(x)$ tangent to $E^s(x)$

\subsection{Entropy and The Ledrappier-Young theorem}

In this subsection we follow \cite[Section~8]{brown2019entropylyapunovexponentsrigidity}.

Let $f$ be a diffeomorphism of a closed manifold $M$. Let $\mu$ be an ergodic, $f$-invariant, probability measure on $M$. The metric entropy of $f$ with respect to $\mu$, denoted by $h_\mu(f)$, is equal to the entropy conditional to any measurable increasing  partition $\xi$ of $M$, subordinate to $\mathscr W^u$, this value is defined in \cite[Section~8.1.2]{brown2019entropylyapunovexponentsrigidity}, such measurable partitions always exists, and this value is independent of the measurable partition satisfying this properties, see \cite[Section~8.3]{brown2019entropylyapunovexponentsrigidity}.

For the $f$-ergodic probability measure $\mu$, consider  corresponding Lyapunov exponents $\lambda^1>\cdots>\lambda^p$, along with distributions $E^i(-)$ corresponding to $\lambda^i$, for $1\leq i\leq p$, as in Osedelec's Theorem, see Theorem \ref{Osedelec's theorem}. Denote by $mi$ the $\mu$-almost surely value $\dim E^i(x)$.

We have the following theorems relating the metric entropy of $f$ with respect to $\mu$, and the $\mu$-almost surely constant values given by Osedelec's Theorem.  

\begin{theorem}[Ruelle-Margulis inequality]\label{Ruelle-Margulis inequality}
    \begin{equation*}
        h_\mu(f)\leq \sum_{\lambda^i>0}m^i\lambda^i
    \end{equation*}
\end{theorem}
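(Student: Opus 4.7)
The plan is to prove this classical inequality via a volume-growth/partition argument. Write $\chi^+ := \sum_{\lambda^i > 0} m^i \lambda^i$ and $E^+(x) := \bigoplus_{\lambda^i > 0} E^i(x)$. Applying Oseledec's Theorem (Theorem~\ref{Osedelec's theorem}) to exterior powers of $Df$ yields
\begin{equation*}
\lim_{n \to \infty} \frac{1}{n} \log \bigl|\det(D_x f^n \mid E^+(x))\bigr| = \chi^+
\end{equation*}
for $\mu$-a.e.\ $x$. Fix $\epsilon > 0$. By Egorov's Theorem together with Pesin's construction of Lyapunov charts, there is a compact set $K \subseteq M$ with $\mu(K) > 1 - \epsilon$ on which this convergence is uniform, and constants $C, r_0 > 0$ such that for every $x \in K$ and every $n \geq 0$, the image $f^n(B(x, r_0))$, read through the Lyapunov chart at $f^n x$, projects along the non-positive directions to a subset of the unstable leaf at $f^n x$ of $E^+$-volume at most $C e^{n(\chi^+ + \epsilon)}$.

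Next, I would pick a finite measurable partition $\mathcal{P}$ of $M$ of diameter less than some $\delta < r_0$, with $\mu(\partial \mathcal{P}) = 0$. The key combinatorial estimate is that for any atom $P$ of $\mathcal{P}$ meeting $K$, the number of atoms of $\mathcal{P}^n := \bigvee_{i=0}^{n-1} f^{-i} \mathcal{P}$ contained in $P$ and meeting $K$ is at most $C' e^{n(\chi^+ + \epsilon)}$: two such atoms have $f^n$-images at distance at least $\delta$, and the volume bound forces only $C' e^{n(\chi^+ + \epsilon)}$ many $\delta$-separated points to fit inside $f^n(P \cap K)$. Absorbing the complement of $K$ via the crude bound $H_\mu(\mathcal{P}^n \mid \{K, K^c\}) \leq n \log(\#\mathcal{P})\, \mu(K^c)$ yields $h_\mu(f, \mathcal{P}) \leq \chi^+ + \epsilon + \epsilon \log(\#\mathcal{P})$; letting $\epsilon \to 0$ for fixed $\mathcal{P}$, and then taking $\mathcal{P}$ through a sequence of mesh-zero refinements, gives $h_\mu(f) \leq \chi^+$.

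The main obstacle is that Oseledec and Pesin estimates are only asymptotic and hold on a measurable, a priori noncompact set; in particular, the Lyapunov chart size at $x$ depends measurably on $x$, and atoms of $\mathcal{P}$ straddling the boundary of the Pesin block $K$ must have their contribution absorbed into the $\epsilon$ slack. The standard resolutions are Ma\~n\'e's covering lemma, or working with Pesin blocks of uniformly bounded geometry and propagating volume bounds via a first-return argument to $K$; this measure-theoretic bookkeeping is the only nontrivial piece of what is otherwise a soft counting argument.
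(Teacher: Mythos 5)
The paper does not actually prove this theorem; it states the Margulis--Ruelle inequality as a classical fact, following the exposition in \cite[Section~8]{brown2019entropylyapunovexponentsrigidity}, and defers to the original references. So there is no proof in the paper to compare against.

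That said, your sketch reproduces the standard argument (essentially Ruelle's original proof, or Ma\~n\'e's streamlining): exterior powers of $Df$ plus Oseledec give the exponential rate $\chi^+$ for the Jacobian along $E^+$; Egorov and Pesin charts give uniformity on a large-measure compact block; volume growth of images of small balls then caps the number of distinguishable $n$-cylinders, and the complement of the block is absorbed into the $\varepsilon$-slack. Structurally this is correct and is the accepted route to the inequality. Two details are stated loosely and would need tightening in a full write-up. First, the claim that distinct atoms of $\mathcal{P}^n$ lying in the same $\mathcal{P}$-atom ``have $f^n$-images at distance at least $\delta$'' is false as stated: two such atoms separate at some intermediate time $1 \le i \le n-1$ into distinct $\mathcal{P}$-atoms, which gives no metric separation of the $f^n$-images (adjacent partition atoms can contain arbitrarily close points). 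The standard fix is to replace the separation count by a covering count (number of $\mathcal{P}$-atoms met by the image at each step), or to work with partitions whose atoms have a definite gap, or to invoke Ma\~n\'e's purely volumetric estimate directly. Second, the bound $H_\mu(\mathcal{P}^n \mid \{K,K^c\}) \le n\log(\#\mathcal{P})\,\mu(K^c)$ discards the contribution of the $K$-part and is not the way the conditioning is usually organized: one needs to control atoms whose length-$n$ orbit \emph{leaves and re-enters} $K$, which is precisely what the first-return bookkeeping you mention at the end is for. You flag both of these as the nontrivial steps, so the sketch is honest about where the work lies, but as written those two assertions are gaps rather than true statements.
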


The $f$-ergodic probability measure $\mu$ is called an SRB measure, if for any measurable partition $\xi$ of $M$ subordinate to $\mathscr W^u$, we have that for $\mu$-a.e. $x$, the conditional measure $\mu^\xi_x$ is absolutely continuous with respect to the Riemannian volume on $\mathcal W^u(x)$.

The Ledrappier-Young Theorem, see \cite[Theorem~A]{ledrappieryoung1} says that the $f$-ergodic probability measures $\mu$ having an equality in Theorem \ref{Ruelle-Margulis inequality}, are exactly the SRB measures. That SRB measures have such an equality was first proved by Ledrappier and Strelcyn, see \cite{ledrappierstrelcyn}. Furthermore, in case of such equality, the corresponding Radon-Nikodym densities are $C^1$ and strictly positive, see the remark after \cite[Theorem~A]{ledrappieryoung1}.

\begin{theorem}[Ledrappier-Young theorem, \cite{ledrappieryoung1}]\label{ledrappier young}
    We have that
    \begin{equation*}
        h_\mu(f)= \sum_{\lambda^i>0}m^i\lambda^i,
    \end{equation*}
    if and only if for any measurable partition $\xi$ subordinate to $\mathscr W^u$ we have that $\mu_x^\xi$ is equivalent to the Riemannian volume on  $\mathcal W^u(x)$, for $\mu$-a.e. $x$. Furthermore,  the corresponding densities are positive  $C^1$ functions.
\end{theorem}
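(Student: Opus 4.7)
The plan is to study $h_\mu(f)$ via an \emph{increasing} measurable partition $\xi$ subordinate to $\mathscr W^u$, meaning $f^{-1}\xi \leq \xi$; such $\xi$ exist by the Pesin--Ma\~n\'e construction and compute the entropy as $h_\mu(f) = H_\mu(f\xi \mid \xi)$. By Rokhlin disintegration,
\begin{equation*}
    h_\mu(f) = -\int \log \mu^\xi_x(A_{f\xi}(x)) \, d\mu(x),
\end{equation*}
where $A_{f\xi}(x) = f(A_\xi(f^{-1}x))$ is a subpiece of $A_\xi(x)$ inside $\mathcal W^u(x)$, obtained by transporting the atom through $f^{-1}x$ forward under $f$. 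Thus the theorem reduces to understanding the measure that $\mu^\xi_x$ assigns to this contracted piece.

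For the direction assuming the SRB property, substitute $d\mu^\xi_x = \rho_x \, d\mathrm{Leb}_{\mathcal W^u(x)}$ into the formula above. The change of variables along unstable leaves yields
\begin{equation*}
    \mu^\xi_x(A_{f\xi}(x)) = \int_{A_\xi(f^{-1}x)} \rho_x(fy) \cdot J^u(y) \, d\mathrm{Leb}_{\mathcal W^u(f^{-1}x)}(y),
\end{equation*}
where $J^u(y) = \lvert\det(D_yf|_{E^u(y)})\rvert$. The $f$-invariance of the disintegration $\{\mu^\xi_y\}_y$ produces a cocycle identity linking $\rho_x$, $\rho_{f^{-1}x}$, and $J^u$; integrating logarithms and invoking Birkhoff's ergodic theorem produces $h_\mu(f) = \int \log J^u \, d\mu$, which equals $\sum_{\lambda^i > 0} m^i \lambda^i$ by Theorem \ref{Osedelec's theorem}.

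For the reverse direction, I would compare $-\log \mu^\xi_x(A_{f\xi}(x))$ against its Lebesgue analogue $-\log(\mathrm{Leb}_{\mathcal W^u(x)}(A_{f\xi}(x))/\mathrm{Leb}_{\mathcal W^u(x)}(A_\xi(x)))$: Jensen's inequality applied to these ratios reproduces the Ruelle--Margulis bound (Theorem \ref{Ruelle-Margulis inequality}), and the equality case of Jensen forces $\mu^\xi_x$ to agree, atom-by-atom, with normalized leaf Lebesgue on the refining sequence $\{f^n \xi\}_{n \geq 0}$; a martingale/Lebesgue differentiation argument on Pesin blocks then upgrades this to absolute continuity of $\mu^\xi_x$ with respect to $\mathrm{Leb}_{\mathcal W^u(x)}$. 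This last passage is the main obstacle, since controlling the geometry of the atoms of $f^n\xi$ as $n \to \infty$ is delicate outside Pesin blocks and requires Besicovitch-type coverings adapted to nonuniform hyperbolicity. For the $C^1$ regularity and positivity of the density $\rho_x$, the cocycle identity can be iterated: $\rho_x$ is expressible as an infinite product of normalized $J^u$ factors pulled back along the backward $f$-orbit of $x$. Because $f$ is $C^2$ and $E^u$ is $C^1$ along leaves by Pesin theory, each factor is $C^1$, and standard distortion estimates force the product to converge uniformly in $C^1$ on atoms of $\xi$, yielding the regularity and strict positivity stated in the theorem.
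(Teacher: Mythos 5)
The paper does not prove Theorem \ref{ledrappier young}: it is stated in the preliminaries as a background citation to Ledrappier--Young \cite{ledrappieryoung1} (the ``if'' direction being Ledrappier--Strelcyn \cite{ledrappierstrelcyn}), so there is no internal argument of the author's to compare your sketch against.

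Judged on its own terms, your outline identifies the correct skeleton. The reduction $h_\mu(f)=H_\mu(f\xi\mid\xi)$ for an increasing partition subordinate to $\mathscr W^u$ is the Ledrappier--Strelcyn formula; the change of variables plus the cocycle coming from $f$-invariance of the disintegration give Pesin's formula $h_\mu(f)=\int\log J^u\,d\mu$ in the absolutely continuous case; and the density $\rho_x$ is indeed Sinai's infinite product of normalized unstable Jacobians along the backward orbit, whose $C^1$ convergence on atoms of $\xi$ uses bounded distortion, available in the $C^2$ (or $C^{1+\alpha}$) category as you say. Two caveats. The claim that Jensen ``reproduces the Ruelle--Margulis bound'' is loose: the classical Ruelle inequality is a volume-growth count, while what Jensen gives here is that the deficit $\sum\lambda^im^i - H_\mu(f\xi\mid\xi)$ is an integral of a nonnegative Kullback--Leibler-type quantity that vanishes exactly when $\mu^\xi_x$ matches normalized leafwise Lebesgue --- a reformulation, not a rederivation, of Theorem \ref{Ruelle-Margulis inequality}. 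More substantially, the step you single out as the ``main obstacle'' --- passing from atom-wise equality along the refinements $\{f^n\xi\}$ to genuine absolute continuity of $\mu^\xi_x$, with control of atom geometry off Pesin blocks --- is not a technical gloss to be filled in later: it is essentially the whole content of \cite{ledrappieryoung1}, handled there by a dimension argument along unstable leaves rather than a direct Besicovitch differentiation. Flagging it is honest, but as written the sketch leaves the converse, and hence most of the theorem, open.
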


\subsubsection{Entropy conditional on measurable foliations}
Let $f:M\rightarrow M$ be a $C^2$ diffeomorphism, and $\mu$ an ergodic $f$-invariant probability measure on $M$. 

Let us consider measurable $f$-invariant foliations $\mathcal F$ of $M$ by $C^2$ leaves, which are \emph{tame measurable foliations}, that is, $\mathcal F$ is a partition of $M$ by $C^2$ manifolds, such that restricting the foliation to sets of large measure, $\mathcal F$ has the structure of a continuos family of $C^2$ disks. Examples of such partitions are the partition into Pesin manifolds, $\mathscr W^j$ and $\mathscr W^u$ constructed above, see \cite[Chapter~7]{bpintro}.

We say $\mathcal F$ is \emph{expanding} if $\mathcal F(x)\subseteq\mathcal W^u(x)$ for $\mu$-a.e. $x$. 

For any $\xi$ measurable partition subordinate to an expanding tame measurable foliation $\mathcal F$, we define the entropy of $f$ with respect to $\mu$ conditional on $\mathcal F$ to be
\begin{equation*}
    h_\mu(f\mid\mathcal F):=h_\mu(f\mid\xi),
\end{equation*}
and such measurable partitions always exist, and the value above does not depend on the particular measurable partition subordinate to $\mathcal F$.

If $\mathcal F$ is a tame measurable foliation, we set 
\begin{equation*}
    h_\mu(f,\mathcal F)=h_\mu(f,\mathcal F^u),
\end{equation*}
where $\mathcal F^u$ is the expanding, tame measurable partition given by 
\begin{equation*}
    \mathcal F^u:=\mathcal F\vee\mathscr W^u:=\{A\cap B: A\in\mathcal F, B\in \mathscr W^u\}.
\end{equation*}

\subsubsection{Conditional entropy along measurable partitions and the Ledrappier-Young Theorem}

We follow \cite[Section~4, Section~7]{brhwzd}.

We have the following version of the Ledrappier-Young inequality for conditional entropies, see \cite[Theorem~7.2]{brhwzd} and \cite[Corollary~6.1.4]{ledrappieryoung1}. Define the multiplicity of $\lambda^i$ relative to $\mathcal F$ to be the $\mu$-almost surely constant value of
\begin{equation*}
    m_i(\mathcal F):=\dim(E^i(x)\cap T_x[\mathcal F(x)])
\end{equation*}

\begin{theorem}\label{ledrappier young conditional entropy}
    We have
\begin{equation*}
    h_\mu(f|\mathcal F)\leq\sum_{1\leq i\leq r}\lambda_i m_i(\mathcal F).
\end{equation*}
Moreover, equality holds if and only if for any measurable partition $\xi$ subordinate to $\mathcal F^u$ we have that $\mu_x^\xi$ is equivalent to the Riemannian volume on  $\mathcal F^u(x)$, for $\mu$-a.e. $x$. Furthermore,  the corresponding densities are positive  $C^1$ functions.
\end{theorem}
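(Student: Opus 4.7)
\begin{sketch of proof}
The plan is to adapt the Ledrappier--Young machinery, following \cite[Chapter~VIII]{ledrappieryoung1} and \cite[Section~7]{brhwzd}, to the conditional setting. By the definition of $h_\mu(f\mid\mathcal F)$, we have $h_\mu(f\mid\mathcal F)=h_\mu(f\mid\mathcal F^u)$, so I may replace $\mathcal F$ by $\mathcal F^u$ and assume without loss of generality that the foliation under consideration is expanding. Note that $m_i(\mathcal F^u)=m_i(\mathcal F)$ for every $\lambda^i>0$, because $T_x\mathcal F^u(x)=T_x\mathcal F(x)\cap T_x\mathcal W^u(x)$ while $E^i(x)\subseteq T_x\mathcal W^u(x)$ whenever $\lambda^i>0$; thus the right-hand side of the theorem is unaffected by this reduction.

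For the inequality, I would fix a measurable partition $\xi$ subordinate to $\mathcal F^u$ and estimate $h_\mu(f\mid\xi)$ through a volume-lemma argument. The key geometric input is that Osedelec's theorem applied to the restriction of $Df$ to the measurable distribution $T\mathcal F^u$ shows that under $f^n$ the plaque element $\xi(x)$ is mapped inside $\mathcal F^u(f^n x)$ to a set whose intrinsic Riemannian volume grows at rate $\exp\bigl(n\sum_{\lambda^i>0}m_i(\mathcal F)\,\lambda^i+o(n)\bigr)$. Combined with a Brin--Katok type lower bound on the conditional measures $\mu_x^\xi$ of dynamical balls inside $\mathcal F^u$, this yields the stated upper bound on $h_\mu(f\mid\mathcal F)$.

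The equality direction is the main obstacle. Here I would follow the conditional version of the Ledrappier--Young dimension formula: by refining along the successive Pesin submanifolds inside $\mathcal F^u$, one establishes
\[
h_\mu(f\mid\mathcal F)\;=\;\sum_{\lambda^i>0}\delta_i(\mathcal F)\,\lambda^i,
\]
where $\delta_i(\mathcal F)$ is the partial pointwise dimension of $\mu_x^\xi$ in the $i$-th Lyapunov direction inside $\mathcal F^u(x)$, satisfying $0\leq\delta_i(\mathcal F)\leq m_i(\mathcal F)$, and where equality $\delta_i(\mathcal F)=m_i(\mathcal F)$ holds if and only if $\mu_x^\xi$ is absolutely continuous in that direction. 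Thus, equality in the theorem forces $\delta_i(\mathcal F)=m_i(\mathcal F)$ for every $\lambda^i>0$, which is equivalent to absolute continuity of $\mu_x^\xi$ with respect to Riemannian volume on $\mathcal F^u(x)$.

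Finally, the $C^1$ smoothness and strict positivity of the density follow from the quasi-invariance relation $f_*\mu_x^\xi\propto\mu_{fx}^\xi$: the corresponding transfer operator is given by the inverse Jacobian of $Df$ restricted to $T\mathcal F^u$, which is $C^1$ along leaves, so iterating backward and running the standard contraction-on-positive-$C^1$-functions argument yields a $C^1$, strictly positive density, exactly as in the remark following \cite[Theorem~A]{ledrappieryoung1}. The principal difficulty lies in the equality direction, specifically in constructing the partial dimensions $\delta_i(\mathcal F)$ via the telescoping argument along intermediate Pesin submanifolds of $\mathcal F^u$ and in proving the sharp inequality $\delta_i(\mathcal F)\leq m_i(\mathcal F)$ together with its absolute-continuity characterization; this is the conditional analogue of the hardest step in \cite{ledrappieryoung1}.
\end{sketch of proof}
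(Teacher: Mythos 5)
Your sketch is consistent with the standard Ledrappier--Young machinery that the cited references carry out; the paper itself gives no proof here but simply quotes \cite[Theorem~7.2]{brhwzd} and \cite[Corollary~6.1.4]{ledrappieryoung1}, and your outline correctly reconstructs the route those sources take. The reduction to $\mathcal F^u$ is indeed immediate from the paper's definition, and your verification that $m_i(\mathcal F^u)=m_i(\mathcal F)$ for $\lambda^i>0$ is sound since $E^i(x)\subseteq T_x\mathcal W^u(x)$ in that regime. The volume-lemma argument for the inequality, the partial-dimension telescoping for the equality characterization, and the infinite-product/bounded-distortion argument for the $C^1$ strictly positive density (which, together with absolute continuity, upgrades to equivalence) are all the right ingredients, and you have correctly flagged the telescoping along intermediate Pesin submanifolds inside $\mathcal F^u$ as the technically demanding step. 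No gap at the level of a sketch.
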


\subsubsection{Geometric characterization of the defect in the entropy formula}

We follow \cite[Chapter~7]{brhwzd}. Let $f:M\rightarrow M$ be a $C^2$ diffeomorphism, and $\mu$ an ergodic, $f$-invariant probability measure on $M$. Let $\lambda^1>\ldots>\lambda^p$ be the 

Let $\eta$  be any measurable partition of $(M,\mu)$. For $1\leq i\leq r$, let $\xi^i$ be a measurable partition subordinate to the partition associated to the $i$th unstable Pesin manifolds, i.e., $\mathscr W^i$. We define the $i$th upper and lower pointwise dimensions of $\mu$ relative to $\eta$ at $x$ to be the limits
\begin{equation*}
    \overline{\dim}^i (\mu, x|\eta) := \limsup_{\delta \to 0} \frac{\log \left( \mu_x^{\xi^i \vee \eta}(B(x, \delta)) \right)}{\log \delta}, \underline{\dim}^i (\mu, x|\eta) := \liminf_{\delta \to 0} \frac{\log \left( \mu_x^{\xi^i \vee \eta}(B(x, \delta)) \right)}{\log \delta},
\end{equation*}
where $B(x,\delta)$ is the ball of radius $\delta$ centered at $x$ of a Riemannian
metric on $M$, these values are independent of such a metric, and of the chosen measurable partitions. These values are constant $\mu$-almost surely, and we  denote, respectively, these values by $\overline{\dim}^i(\mu\mid\eta)$ and $\underline{\dim}^i(\mu\mid\eta)$

For the partition $\eta^+:=\bigvee_{i=0}^\infty f^i\eta$, the values $\overline{\dim}^i(\mu\mid\eta^+)$ and $\underline{\dim}^i(\mu\mid\eta^+)$ coincide, see \cite[Proposition~7.4]{brhwzd}. 

Set $\dim^0(\mu|\eta^+) = 0$. For $1 \le i \le r$ the \textit{$i$th transverse dimension of $\mu$ relative to $\eta^+$} is
$$
\gamma^i(\mu\mid\eta^+):= \dim^i(\mu\mid\eta^+) - \dim^{i-1}(\mu\mid\eta^+).
$$
The $i$th transverse dimension is not bigger than the multiplicity $m_i$ of the Osedeltec's distributions $E^i$, see \cite[Claim~7.5]{brhwzd}.

Using the pointwise transverse dimensions above we can indicate the exact failure for equality to hold in the Ruelle-Margulis inequality (\ref{Ruelle-Margulis inequality}), this was proved by Ledrappier and Young.

\begin{theorem}\cite{ledrappieryoung2}\label{ledrappieryoung2}
    Let $\eta$ be any measurable partition of $(M,\mu)$. Then
    \begin{equation*}h_\mu(f\mid\eta)=\sum_{\lambda_i>0}\lambda_i\gamma^i(\mu\mid\eta^+).
    \end{equation*}
\end{theorem}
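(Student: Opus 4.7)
The plan is to adapt the Ledrappier--Young dimension analysis of \cite{ledrappieryoung2} to the setting with a fixed conditioning partition $\eta$. The strategy is to compute $h_\mu(f|\eta)$ as a telescoping sum indexed by the unstable Pesin filtration, and to identify each telescoped term with $\lambda_i \gamma^i(\mu|\eta^+)$.

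First, I would fix, for each index $i$ with $\lambda_i > 0$, a measurable partition $\xi^i$ subordinate to $\mathscr W^i$, chosen compatibly so that $\xi^{i-1}$ refines $\xi^i$ along $\mathscr W^{i-1}$-leaves (which makes sense since $\mathscr W^{i-1} \subseteq \mathscr W^i$). Set $\zeta^i := \xi^i \vee \eta^+$, with conventions $\zeta^0 = \eta^+$ and $\zeta^p$ a partition subordinate to the full unstable foliation $\mathscr W^u$ joined with $\eta^+$. Using that $h_\mu(f|\eta)=h_\mu(f|\eta^+)$, that the terminal quantity $h_\mu(f|\zeta^p)$ vanishes because all of the entropy is contained in the unstable direction modulo $\eta^+$, and standard properties of conditional Kolmogorov--Sinai entropy along $f$-increasing partitions, I would obtain the telescoping
\[
h_\mu(f|\eta) = \sum_{i : \lambda_i > 0} \bigl[ h_\mu(f|\zeta^{i-1}) - h_\mu(f|\zeta^i) \bigr].
\]

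Next, at each level I would identify the summand $h_\mu(f|\zeta^{i-1}) - h_\mu(f|\zeta^i)$ with $\lambda_i \gamma^i(\mu|\eta^+)$. The geometric point is that inside $\mathcal W^i(x)$ the finer partition $\xi^i$ sits transversally to $\mathscr W^{i-1}$-leaves, and $Df^{-1}$ contracts this transverse direction at exponential rate $\lambda_i$. Applying a Brin--Katok style argument to the family of conditional measures $\mu_x^{\zeta^{i-1}}$, the entropy difference is identified with the exponential rate at which these conditional measures concentrate on $\zeta^i$-atoms as one iterates $f^{-n}$. In the limit this rate equals $\lambda_i$ times the transverse pointwise dimension $\gamma^i(\mu|\eta^+)$, which is well-defined precisely because $\eta^+$ is $f$-increasing and so the upper and lower pointwise dimensions of $\mu_x^{\xi^i \vee \eta^+}$ coincide, as recorded in \cite[Proposition~7.4]{brhwzd}.

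The main obstacle is making this dimensional identification rigorous at each level. Doing so requires constructing ``Pesin rectangles'' in local charts that are product-adapted to the full Oseledets splitting and that respect the partition $\eta^+$, and then applying a conditional Shannon--McMillan--Breiman theorem along these rectangles to convert exponential entropy decay into the transverse dimensional statement. One must also verify the almost-sure existence and constancy of $\gamma^i(\mu|\eta^+)$, again leaning on the $f$-increasing property of $\eta^+$ and on the measurability of the Oseledets filtration. Summing the resulting identities over $i$ with $\lambda_i > 0$ then yields the claimed formula.
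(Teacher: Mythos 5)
The paper does not prove this statement; Theorem~\ref{ledrappieryoung2} is quoted verbatim from Ledrappier--Young's second paper and invoked as a black box. So there is no ``paper's proof'' to compare against, and what you have written should instead be measured against the original source. Your roadmap does in fact track the Ledrappier--Young argument: you telescope the (residual) conditional entropy over the nested Pesin filtration $\mathscr W^1 \subset \cdots \subset \mathscr W^u$, using compatibly chosen increasing partitions $\zeta^i = \xi^i \vee \eta^+$ with $\zeta^0 = \eta^+$ and $\zeta^p$ subordinate to $\mathscr W^u \vee \eta^+$, and you aim to identify each increment $h_\mu(f|\zeta^{i-1}) - h_\mu(f|\zeta^i)$ with $\lambda_i\,\gamma^i(\mu|\eta^+)$ via a conditional Shannon--McMillan--Breiman argument in Pesin local charts. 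That is the correct skeleton, and the framing of $\gamma^i$ as a transverse pointwise dimension (well-defined because $\eta^+$ is $f$-increasing) matches how the constants are set up in both the original paper and in \cite[\S 7]{brhwzd}.

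That said, this is an outline and not a proof, and you say so yourself. The identification $h_\mu(f|\zeta^{i-1}) - h_\mu(f|\zeta^i) = \lambda_i\, \gamma^i(\mu|\eta^+)$ is exactly where essentially all of the analytic work in Ledrappier--Young lives: one must build $f$-increasing measurable partitions with uniform product-like geometry inside Pesin blocks, prove a conditional volume lemma relating entropy to covering numbers at scale $e^{-n\lambda_i}$, establish that the transverse pointwise dimension exists $\mu$-a.e.\ and is constant, and do all of this in the presence of the extra conditioning on $\eta^+$. None of these steps is straightforward, and phrases like ``applying a Brin--Katok style argument'' or ``applying a conditional Shannon--McMillan--Breiman theorem'' name the difficulties rather than resolve them. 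Two small notational cautions worth flagging: first, your $h_\mu(f|\cdot)$ must consistently mean residual Kolmogorov--Sinai conditional entropy for the telescoping to read $h_\mu(f|\zeta^0) - h_\mu(f|\zeta^p) = h_\mu(f|\eta) - 0$; this clashes with the convention the paper uses for $h_\mu(f|\mathcal F)$ when $\mathcal F$ is an expanding foliation (where it denotes entropy produced along $\mathcal F$), so you would need to disambiguate. Second, the claim that $h_\mu(f|\zeta^p)=0$ needs the Pinsker-type fact that a partition subordinate to the full unstable foliation (joined with $\eta^+$) is generating relative to $\eta^+$, which should be stated explicitly and sourced to the appropriate lemma in Ledrappier--Young or \cite{brhwzd}.
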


\subsection{Smooth ergodic theory of higher rank abelian actions}\label{Smooth ergodic theory of higher rank abelian actions}

In this subsection we follow \cite{brhwzd}.

\subsubsection{Higher-rank Osedelec's Theorem }

Let $M$ be a closed manifold and $\alpha:\Z^d\rightarrow\operatorname{Diff}^1(M)$ be an action. Let $\mu$ be an invariant and $\alpha$-ergodic probability measure. 

\begin{theorem}[Higher Rank Osedelec's Theorem]\cite[Theorem~2.4]{brhwzd}\label{Higher rank osedelec}
    There exist
    \begin{enumerate}
        \item an $\alpha$-invariant meausurable set $\Lambda\subseteq M$ such that $\mu(\Lambda)=1$;
        \item linear functionals $\lambda^1,\ldots,\lambda^p:\R^d\rightarrow\R;$
        \item a $\mu$-measurable, $D\alpha$ invariant splitting $T_xM=\bigoplus_{i=1}^p E^i(x)$, defined for $\mu$-a.e. $x$;
        \item for $\mu$-a.e. $x$ we have that for every $v\in E^i(x)\setminus\{0\}$, 
        \begin{equation*}
            \lim_{|n| \to \infty} \frac{\log \| D_x \alpha(n)(v) \| - \lambda^i(n)}{|n|} = 0
        \end{equation*}
    \end{enumerate}
\end{theorem}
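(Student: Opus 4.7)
The plan is to reduce the statement to the classical one-dimensional Oseledets theorem (Theorem \ref{Osedelec's theorem}) by iteratively refining a splitting, using crucially that the $\Z^d$-action is abelian so that every $D\alpha(m)$ preserves the Oseledets subbundles produced by any single generator.

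First, I would fix a generator $n_1$ of $\Z^d$ and apply the classical Oseledets theorem to the $\mu$-ergodic diffeomorphism $f_1 = \alpha(n_1)$ (or its ergodic components, if $\mu$ fails to be ergodic for this particular element, and then piece together). This produces a measurable $D\alpha(n_1)$-invariant splitting $T_xM = \bigoplus_j F_1^{j}(x)$ with scalar exponents $\chi_1^j$ on a full measure set. Because $D\alpha(m)$ commutes with $D\alpha(n_1)$ for every $m \in \Z^d$, and the subbundles $F_1^j$ are characterized by the exponential growth rates under $D\alpha(n_1)$, each $F_1^j$ is automatically preserved by the entire cocycle $\{D\alpha(m)\}_{m\in\Z^d}$.

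Next, I would iterate on each piece: restrict the cocycle to $F_1^j$ and apply classical Oseledets to $\alpha(n_2)|_{F_1^j}$, obtaining a further decomposition invariant under the whole action by the same commutativity argument. Continuing for a chosen set of generators $n_1,\ldots,n_d$ of $\Z^d$ and taking the common refinement yields, on a full measure set $\Lambda$, a measurable splitting $T_xM = \bigoplus_{i=1}^p E^i(x)$ such that for every $n_k$ the restriction $D\alpha(n_k)|_{E^i(x)}$ has a well-defined one-dimensional Lyapunov exponent $\lambda^i(n_k)$. Linearity of $n \mapsto \lambda^i(n)$ on $\Z^d$ follows from the cocycle identity $D_x\alpha(n+m) = D_{\alpha(m)x}\alpha(n)\circ D_x\alpha(m)$, $\alpha$-invariance of $\mu$, and subadditivity: dividing $\log\|D_x\alpha(n+m)v\|$ by a large parameter along $v \in E^i(x)\setminus\{0\}$ forces $\lambda^i(n+m) = \lambda^i(n) + \lambda^i(m)$. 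Together with integer homogeneity this produces a linear functional on $\Z^d$, uniquely extended by $\Q$-linearity and continuity to $\R^d$.

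The main obstacle is promoting the classical one-parameter convergences, which hold along each ray $\Z \cdot n_k$, to the joint higher-rank convergence
\begin{equation*}
\lim_{|n| \to \infty} \frac{\log\|D_x\alpha(n)(v)\| - \lambda^i(n)}{|n|} = 0
\end{equation*}
uniformly as $n$ escapes to infinity in \emph{every} direction of $\Z^d$, not merely along the fixed generators. The standard resolution, which I would follow, is to exploit tempered distortion estimates: the functions $x \mapsto \log^+\|D_x\alpha(n_k)^{\pm 1}\|$ are $\mu$-integrable, so on a full measure set of Lyapunov-regular points $\|D_{\alpha(n)x}\alpha(n_k)^{\pm 1}\|$ grows subexponentially in $|n|$. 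One then decomposes any $n \in \Z^d$ as a word of length $O(|n|)$ in the generators, bounds $\log\|D_x\alpha(n)v\| - \lambda^i(n)$ by a telescoping sum of one-parameter errors plus tempered distortion terms, and applies the subexponential control on an $\alpha$-invariant full measure subset of $\Lambda$ to conclude. This matches the Kalinin–Karlsson–Margulis cocycle framework for higher-rank abelian actions and completes the proof.
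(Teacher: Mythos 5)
The paper does not prove this theorem: it is quoted verbatim from \cite[Theorem~2.4]{brhwzd} as a black box, so there is no in-paper argument to compare against. Your sketch is nonetheless a faithful outline of how the result is established in the literature: apply classical Oseledets to a single generator, use commutativity of the $\Z^d$-action (via the cocycle identity and temperedness of $\log^+\|D\alpha(m)^{\pm1}\|$) to conclude that the rest of the action preserves those Oseledets subbundles, refine iteratively over a generating set, and then upgrade radial convergence to convergence as $|n|\to\infty$ in all directions via tempered distortion and telescoping over words in generators, which is exactly the Kalinin--Karlsson--Margulis mechanism. Two small points you glossed over but which are routine: (a) if $\mu$ is $\alpha$-ergodic but not $\alpha(n_1)$-ergodic, the Lyapunov data of $\alpha(n_1)$ are still $\mu$-a.e.~constant because the $\alpha$-action permutes the $\alpha(n_1)$-ergodic components while preserving exponents; and (b) linearity $\lambda^i(n+m)=\lambda^i(n)+\lambda^i(m)$ needs both the subadditive inequality and its reverse (from invertibility / applying the same estimate to $-n$, $-m$), not subadditivity alone. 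With those caveats the route is sound.
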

We call the functionals $\lambda^1,\ldots,\lambda^p$ the Lyapunov exponents of $\alpha:\Z^d\acts (M,\mu)$; $E^i$ the Osedelec's distribution or vector space associated to $\lambda^i$; along with the almost surely constant multiplicity of $\lambda^i$, given by $m_i:=\dim E^i(x)$, for $\mu$-a.e. $x$.

\subsubsection{Global unstable Pesin manifolds in higher rank actions}\label{Global unstable Pesin manifolds in higher rank actions}
Let $\alpha:\Z^d\times (M,\mu)\rightarrow(M,\mu)$ be an ergodic action on a finite measure space. Let $\mathcal L=\{\lambda_i:\R^d\rightarrow\R:1\leq i\leq p\}$ be the set of corresponding Lyapunov exponents of this action. Given any $\vec n\in\Z^d$ there is a permutation $\Delta(\vec n)$ of $\{1,\ldots,p\}$ and $u(\vec n)$ in this set so that
\begin{equation*}
    \lambda_{\Delta(\vec n)(1)}(\vec n)\geq \lambda_{\Delta(\vec n)(2)}(\vec n)\geq\cdots\geq \lambda_{\Delta(\vec n)(u(\vec n))}(\vec n)>0\geq\cdots\geq\lambda_{\Delta(\vec n)(p)}(\vec n).
\end{equation*}

\begin{definition}\label{definition ith unstable}
    For any $x\in M$ and any $\vec n\in\Z^d$, and $1\leq i\leq u(\vec n)$, define the \emph{$i$th unstable manifold through $x$ for $\alpha(\vec n)$} as the set
\[
W^{u,i}_{\vec{n}}(x) = \left\{ y \in M \,\middle|\, \limsup_{k \to -\infty} \frac{1}{k} \log d\big(\alpha(k\vec{n}, x), \alpha(k\vec{n}, y)\big) \leq -\lambda_{\Delta(\vec{n})(i)}(\vec{n}) \right\}.
\]

The \textit{unstable manifold through \( x \) for \( \alpha(\vec{n}) \)} is the set
\[
W^u_{\vec{n}}(x) := \left\{ y \in M \,\middle|\, \limsup_{k \to -\infty} \frac{1}{k} \log d\big(\alpha(k\vec{n}, x), \alpha(k\vec{n}, y)\big) < 0 \right\}.
\]

\end{definition}

It can be shown that the unstable manifold $W^u_{\vec n}(x)$ with respect to the diffeomorphism $\alpha(\vec n)$ coincides with $\mathcal W^{u,u(\vec n)}_{\vec n}(x)$ for $\mu$-a.e. $x$. The manifold $W^{u,i}_{\vec{n}}(x)$ is tangent to $\bigoplus_{1\leq j\leq i}E_{\lambda_{\Delta(\vec n)(j)}}(x)$.

We have the following uniqueness property for the $i$th unstable manifolds defined above.

\begin{lemma}\cite[Lemma~4.7]{brhwzd}\label{uniqueness of ith unstable higher rank}
    Let \( \vec{n_1}, \vec{n_2} \in \mathbb{Z}^d \) have the following property: for some \( 1 \leq i \leq \min\{u(\vec{n_1}), u(\vec{n_2})\} \)
\begin{enumerate}
  \item \( \{\Delta(\vec{n_1})(j) : 1 \leq j \leq i\} = \{\Delta(\vec{n_2})(j) : 1 \leq j \leq i\} \),
  \item \( \lambda_{\Delta(\vec{n_1})(i)}(\vec{n_1}) > \lambda_{\Delta(\vec{n_1})(i+1)}(\vec{n_1}) \), \textit{and}
  \item \( \lambda_{\Delta(\vec{n_2})(i)}(\vec{n_2}) > \lambda_{\Delta(\vec{n_2})(i+1)}(\vec{n_2}) \).
\end{enumerate}
Then \( \mathscr{W}^{u,i}_{\vec{n_1}}\overset{\circ}{=}\mathscr{W}^{u,i}_{\vec{n_2}} \).
\end{lemma}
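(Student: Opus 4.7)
The plan is to reduce the statement to a local question via Pesin charts, where the shared tangent distribution forces local coincidence of the two unstable manifolds, and then to globalize by exploiting that both manifolds can be recovered by forward iteration from their local counterparts. Set $E^{u,i}(x) := \bigoplus_{1 \le j \le i} E_{\lambda_{\Delta(\vec{n}_1)(j)}}(x)$; by hypothesis (1) this coincides with the analogous sum for $\vec{n}_2$, so for $\mu$-a.e.\ $x$ both $\mathcal{W}^{u,i}_{\vec{n}_1}(x)$ and $\mathcal{W}^{u,i}_{\vec{n}_2}(x)$ are $C^2$ injectively immersed submanifolds of the same dimension, tangent at $x$ to the common subspace $E^{u,i}(x)$.

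First, I would pass to a Pesin regularity block $\Lambda_\ell$ of positive $\mu$-measure on which the Oseledets splitting, the Lyapunov norms, and the Pesin chart estimates are simultaneously under uniform control for the two diffeomorphisms $\alpha(\vec{n}_1)$ and $\alpha(\vec{n}_2)$. For $x \in \Lambda_\ell$, in a common Lyapunov chart centered at $x$ each local manifold $\mathcal{W}^{u,i}_{\vec{n}_k,\mathrm{loc}}(x)$ is realized as the graph of a $C^2$ map from a neighborhood of $0$ in $E^{u,i}(x)$ into the complementary subspace, and the strict gaps from (2) and (3) are precisely what make these graphs well defined.

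Second, I would show local coincidence. For $y \in \mathcal{W}^{u,i}_{\vec{n}_1,\mathrm{loc}}(x)$, decompose the displacement of $y$ from $x$ in the chart along the Oseledets splitting; by the defining graph structure, its projection to $E_{\lambda_{\Delta(\vec{n}_2)(j)}}(x)$ for $j > i$ is a higher-order function of the $E^{u,i}(x)$-components. Iterating $\alpha(\vec{n}_2)^{-1}$ in the chart, the components along $E_{\lambda_{\Delta(\vec{n}_2)(j)}}(x)$ for $j \le i$ contract at exponential rate at least $\lambda_{\Delta(\vec{n}_2)(i)}(\vec{n}_2) > 0$ by (3), and the standard Pesin graph-transform bootstrap drives the complementary nonlinear remainder down at the same rate, using the gap (3) to dominate it. This forces $d(\alpha(k\vec{n}_2)x, \alpha(k\vec{n}_2)y)$ to decay at rate at least $\lambda_{\Delta(\vec{n}_2)(i)}(\vec{n}_2)$ as $k \to -\infty$, placing $y \in \mathcal{W}^{u,i}_{\vec{n}_2,\mathrm{loc}}(x)$. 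The symmetric argument yields the reverse inclusion.

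Finally, I would globalize by writing $\mathcal{W}^{u,i}_{\vec{n}_k}(x) = \bigcup_{m \ge 0} \alpha(m\vec{n}_k)\, \mathcal{W}^{u,i}_{\vec{n}_k,\mathrm{loc}}(\alpha(-m\vec{n}_k)x)$, and invoking Poincar\'e recurrence of the backward $\alpha(\vec{n}_k)$-orbit to $\Lambda_\ell$: at every recurrence time the already-established local coincidence upgrades to agreement of the iterated piece, and the union equality gives $\mathcal{W}^{u,i}_{\vec{n}_1}(x) = \mathcal{W}^{u,i}_{\vec{n}_2}(x)$ for $\mu$-a.e.\ $x$, i.e.\ $\mathscr{W}^{u,i}_{\vec{n}_1} \overset{\circ}{=} \mathscr{W}^{u,i}_{\vec{n}_2}$. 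The principal obstacle is the local step: the passage from agreement of the tangent subspace at a single point to coincidence of the full graph requires the Pesin graph-transform apparatus applied to two a priori unrelated diffeomorphisms, and the strict spectral gaps (2) and (3) are indispensable to execute this nonlinear comparison with only measurable control of the Oseledets splitting.
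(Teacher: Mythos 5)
The paper does not prove this lemma; it is quoted verbatim as \cite[Lemma~4.7]{brhwzd} and used as a black box, so there is no in-paper argument to compare against. I will therefore evaluate your sketch on its own merits.

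Your outline (shared tangent distribution $E^{u,i}$, local coincidence via Lyapunov charts, globalization by recurrence) is the right shape, but there is one load-bearing observation missing, and its absence makes both the local and global steps fail to close: you never establish the full $\alpha(\Z^d)$-equivariance of the unstable families, namely that $\alpha(\vec m)\,\mathcal W^{u,i}_{\vec n_k}(x) = \mathcal W^{u,i}_{\vec n_k}(\alpha(\vec m)x)$ for \emph{every} $\vec m\in\Z^d$, not merely for multiples of $\vec n_k$. This is an immediate consequence of Definition~\ref{definition ith unstable}: each $\alpha(\vec m)$ is a fixed diffeomorphism of a compact manifold, hence bi-Lipschitz, so the $\limsup$ defining $W^{u,i}_{\vec n_k}$ is invariant under composition by $\alpha(\vec m)$ and the partition is equivariant under the whole action. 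Without it, your local step has a hole: when you iterate $\alpha(\vec n_2)^{-1}$ on a point $y\in\mathcal W^{u,i}_{\vec n_1,\mathrm{loc}}(x)$, the assertion that the ``complementary nonlinear remainder'' is driven down with the $E^{u,i}$-component requires that the backward $\vec n_2$-iterates of $y$ remain on a uniformly Lipschitz graph tangent to $E^{u,i}$ over the backward $\vec n_2$-orbit of $x$; the $\vec n_1$-graph transform gives you no control along $\vec n_2$-iterates, and the only thing that does is knowing $\alpha(-k\vec n_2)y\in\mathcal W^{u,i}_{\vec n_1}(\alpha(-k\vec n_2)x)$ for all $k$ --- which is exactly the missing equivariance. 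Likewise in the globalization: to upgrade agreement of local pieces at recurrence times to $\mathcal W^{u,i}_{\vec n_1}(x)=\mathcal W^{u,i}_{\vec n_2}(x)$, you must push a local $\vec n_2$-piece forward by $\alpha(m\vec n_1)$ and still know it lands inside $\mathcal W^{u,i}_{\vec n_2}(x)$ --- again equivariance. Once this observation is stated, the local comparison is a standard graph-transform contraction for $\alpha(\vec n_2)$ applied to an $\alpha(\vec n_2)$-invariant candidate graph, and your plan does go through; you should make the equivariance explicit, since it is elementary but it is the fact on which every other step depends.
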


For two tame measurable foliations $\mathcal F$ and $\mathcal G$, the relation $\mathcal F\overset{\circ}{=}\mathcal G$ means that $\mathcal F(x)=\mathcal G(x)$ for $\mu$-a.e. $x$.

\subsubsection{Coarse Lyapunov exponents and coarse Pesin manifolds}\label{Coarse Lyapunov exponents and Pesin manifolds for higher rank actions}

For the $\alpha$-invariant and ergodic probability measure $\mu$, consider its set of Lyapunov exponents $\mathcal L:
=\{\lambda^i\}_{i=0}^p$. Two such Lyapunov exponents, $\lambda^i$ and $\lambda^j$,  are \emph{coarsely equivalent}, if there exists some $c>0$ such that $\lambda^i=c\lambda^j$. The corresponding equivalence classes are called \emph{coarse equivalent classes}, and we denote the corresponding set of equivalence classes by $\hat {\mathcal L}$. 

Given $\chi\in\hat{\mathcal L}$, with $\chi\neq\{0\}$(here $0$ is the zero element of $A^\ast$), the \emph{coarse Lyapunov exponent} corresponding to $\chi$ is given by 
\begin{equation*}
    \mathscr W^\chi:=\bigvee_{\{\vec n\in\Z^d:\chi(\vec n)>0\}}\mathscr W_{\vec n}^u.
\end{equation*}
With corresponding leaves denoted by $\mathcal W^\chi(x)$. $\mathscr W^\chi$ is a tame measurable foliation.

\subsubsection{Product structure of entropy for higher rank actions}

We have the following product structure theorem for higher rank actions.

\begin{theorem}\cite[Corollary~13.2]{brhwzd}\label{entropy product structure}
    For any $\vec n\in\Z^d$ we have 
    \begin{equation*}
        h_\mu(\alpha(\vec n))=\sum_{\{\chi\in\tilde{\mathcal L}:\chi(\vec n)>0\}}h_\mu(\alpha(\vec n)|\mathscr W^\chi)
    \end{equation*}
\end{theorem}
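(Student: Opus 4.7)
\emph{Proof plan.} Fix $\vec n\in\Z^d$ and set $f=\alpha(\vec n)$. By the higher-rank Osedelec Theorem (Theorem~\ref{Higher rank osedelec}), the Lyapunov exponents of $f$, counted with multiplicity, are $\{\lambda^i(\vec n):1\le i\le p\}$, with Osedelec distributions $E^i(x)$. Since coarse equivalence identifies Lyapunov functionals that are strictly positive scalar multiples of one another, the sign of $\lambda^i(\vec n)$ depends only on the coarse class $\chi$ of $\lambda^i$: one has $\lambda^i(\vec n)>0$ iff $\chi(\vec n)>0$. Grouping the positive Osedelec distributions of $f$ by coarse class therefore yields the splitting $E^u_f(x)=\bigoplus_{\chi(\vec n)>0}E^\chi(x)$, where $E^\chi(x):=\bigoplus_{\lambda^i\in\chi}E^i(x)$. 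By Lemma~\ref{uniqueness of ith unstable higher rank} and the definition of the coarse Pesin foliation, each $\mathscr W^\chi$ is an expanding tame measurable foliation tangent to $E^\chi$ and contained in $\mathscr W^u_f$; moreover $\bigvee_{\chi(\vec n)>0}\mathscr W^\chi \overset{\circ}{=} \mathscr W^u_f$.

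The strategy is to express both sides of the desired equality as sums of transverse dimensions weighted by Lyapunov exponents, using Theorem~\ref{ledrappieryoung2}. For the right-hand side, fix a measurable partition $\eta_\chi$ subordinate to $\mathscr W^\chi$; one has $h_\mu(f\mid\mathscr W^\chi)=h_\mu(f\mid\eta_\chi)$ by construction. Applying Theorem~\ref{ledrappieryoung2} gives
\[
h_\mu(f\mid\mathscr W^\chi) \;=\; \sum_{\lambda^i(\vec n)>0} \lambda^i(\vec n)\,\gamma^i(\mu\mid\eta_\chi^+).
\]
The crux is the identification of the transverse dimensions $\gamma^i(\mu\mid\eta_\chi^+)$: using the tangency $T_x\mathcal W^\chi(x)=E^\chi(x)$ and the fact that $\eta_\chi^+$ resolves the $E^\chi$ directions completely while leaving the remaining Osedelec directions effectively undetected, one shows that
\[
\gamma^i(\mu\mid\eta_\chi^+) \;=\; \begin{cases} \gamma^i(\mu) & \text{if } \lambda^i\in\chi, \\ 0 & \text{if } \lambda^i\notin\chi, \end{cases}
\]
so that $h_\mu(f\mid\mathscr W^\chi)=\sum_{\lambda^i\in\chi}\lambda^i(\vec n)\,\gamma^i(\mu)$.

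For the left-hand side, applying Theorem~\ref{ledrappieryoung2} to the trivial one-atom partition yields $h_\mu(f)=\sum_{\lambda^i(\vec n)>0}\lambda^i(\vec n)\,\gamma^i(\mu)$. Partitioning this sum according to the coarse class of $\lambda^i$ then gives
\[
h_\mu(f) \;=\; \sum_{\chi(\vec n)>0}\;\sum_{\lambda^i\in\chi}\lambda^i(\vec n)\,\gamma^i(\mu) \;=\; \sum_{\chi(\vec n)>0}h_\mu(f\mid\mathscr W^\chi),
\]
as claimed. The main obstacle is the transverse-dimension identification in the previous paragraph: one must verify that conditioning on a partition subordinate to $\mathscr W^\chi$ precisely isolates the Osedelec directions in $\chi$ and contributes the full transverse dimension there, while contributing nothing in the directions associated to other coarse classes. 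This requires a Fubini/disintegration argument along coarse Pesin leaves together with absolute continuity of the holonomies between coarse Pesin foliations---structural tools developed in the earlier chapters of~\cite{brhwzd}. Once this bookkeeping is in place, the theorem is a direct repackaging of Theorem~\ref{ledrappieryoung2} and Theorem~\ref{ledrappier young conditional entropy}.
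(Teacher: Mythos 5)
The paper does not give a proof of Theorem~\ref{entropy product structure}: it is quoted directly from \cite[Corollary~13.2]{brhwzd} in the preliminaries, so there is no in-paper argument to compare against, and your proposal must stand on its own. As written it is an outline, not a proof.

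Your organizing idea---rewrite both sides via Theorem~\ref{ledrappieryoung2} as $\sum_{\lambda^i(\vec n)>0}\lambda^i(\vec n)\gamma^i(\cdot)$ and then match transverse dimensions coarse-class-by-coarse-class---is the right frame, and the final bookkeeping is correct \emph{granting} the asserted identity
\[
\gamma^i(\mu\mid\eta_\chi^+)=\gamma^i(\mu)\text{ if }\lambda^i\in\chi,\qquad \gamma^i(\mu\mid\eta_\chi^+)=0\text{ if }\lambda^i\notin\chi.
\]
But this identity is the entire content of the theorem; everything else is a change of notation. Moreover, your diagnosis of what proving it requires is off. You appeal to ``absolute continuity of the holonomies between coarse Pesin foliations,'' but in the nonuniform setting the coarse Lyapunov foliations are only measurably (or at best H\"older) transverse, and their holonomies are not absolutely continuous in general---that is precisely one of the obstacles the measure-rigidity machinery has to work around, not a tool one can invoke. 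This is also not how \cite{brhwzd} argues; the route there goes through a product structure for the conditional measures of $\mu$ along joins of coarse foliations, built inductively over an ordering of the coarse classes compatible with $\vec n$ and using an Abramov--Rokhlin-type conditional-entropy identity. Two more specific worries sit inside your gap: (i) the elements of a single coarse class need not be consecutive in the ordering of the $\lambda^i(\vec n)$, so $\mathcal W^i(x)\cap\mathcal W^\chi(x)$ can be a genuinely intermediate submanifold and the claim $\gamma^i(\mu\mid\eta_\chi^+)=0$ for $\lambda^i\notin\chi$ is far from formal; and (ii) the other half of the identity, that conditioning on $\eta_\chi$ does not \emph{lower} $\gamma^i$ when $\lambda^i\in\chi$, is also nontrivial and equally deferred. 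In short, the proposal correctly reduces the statement to a transverse-dimension decoupling claim, but offers no argument for that claim and names a tool (absolute continuity of coarse holonomies) that is not available.
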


\subsection{The Theorem of Stuck and Zimmer for codimension one actions}

The following result by Nevo and Zimmer yields the first item for our main result, Theorem \ref{main}.

\begin{theorem}\cite[Theorem~5.1]{sz}\label{sz}
    Let $G$ be a higher rank, connected, simple, Lie group, with finite center. Suppose $G$ acts locally freely and in a $C^2$ manner on a closed connected manifold $M$, with codimension $1$ orbits. Assume there is a probability measure on $M$ invariant under this action.
    
    Then there exist a cocompact lattice $\Gamma$ in $G$ and a finite cover $\tilde M$ of $M$ such that $\tilde M$ is $G$-equivariantly diffeomorphic to $G/\Gamma\times S^1$.
\end{theorem}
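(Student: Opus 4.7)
The plan is to use the $G$-invariant probability measure together with property $(T)$, which $G$ satisfies as it is a higher rank simple Lie group with finite center, to pin down the orbit structure, and then exploit the codimension one transverse direction to obtain the product form.

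First, local freeness with codimension one orbits gives $\dim M = \dim G + 1$ and $\Gamma_x := \mathrm{Stab}_G(x)$ discrete in $G$ for every $x$. Disintegrating the $G$-invariant probability measure $\mu$ along the orbit foliation forces each conditional measure to be $G$-invariant on $G\cdot x \cong G/\Gamma_x$, hence proportional to Haar; for it to be a probability, $\Gamma_x$ must have finite covolume, i.e., be a lattice for $\mu$-a.e. $x$. The measurable map $x \mapsto \Gamma_x$ is $G$-equivariant into the space of closed subgroups of $G$ (with $G$ acting by conjugation), so the pushforward of $\mu$ is an invariant random subgroup of $G$. By the Stuck--Zimmer classification of invariant random subgroups of a higher rank simple Lie group, each ergodic component of this IRS is supported on a single conjugacy class of lattices. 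To upgrade lattice to cocompact, one argues that a non-uniform lattice orbit $G/\Gamma_x$ is immersed but non-closed, so its closure is a $G$-invariant closed subset strictly containing it, and a transverse-measure / Fubini argument using compactness of $M$ rules this out.

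Once every orbit is a compact embedded submanifold diffeomorphic to a fixed $G/\Gamma$ with $\Gamma$ cocompact, the orbit foliation is a codimension one foliation all of whose leaves are closed. The leaf space is then a Hausdorff $1$-manifold, and compactness and connectedness of $M$ force it to be $S^1$. Hence $M \to S^1$ is a fiber bundle with fiber $G/\Gamma$, compatible with the $G$-action fiberwise. The monodromy is a $G$-equivariant self-diffeomorphism of $G/\Gamma$, and such diffeomorphisms form the finite group $N_G(\Gamma)/\Gamma$ (finiteness of $N_G(\Gamma)/\Gamma$ follows from Mostow--Margulis rigidity for higher rank lattices). The monodromy therefore has finite order, and pulling back by the finite cover of $S^1$ that kills it gives $\tilde M \cong G/\Gamma \times S^1$ equivariantly, with $G$ acting by cosets on the first factor and trivially on the second.

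The main obstacle is the IRS/lattice identification step. Ruling out the trivial stabilizer is immediate since Haar measure on $G$ is infinite, but showing the stabilizers are all conjugate to a single \emph{cocompact} lattice is the technical heart of the argument and requires the full force of Stuck--Zimmer's classification, which itself relies on property $(T)$ and Margulis-type cocycle superrigidity to control the measurable map $x \mapsto \Gamma_x$. The remaining foliation-theoretic and monodromy steps are essentially formal once the orbit structure has been identified; the only mildly delicate point is arranging, after passing to a finite cover, that the transverse direction is orientable so that the base of the fibration is genuinely $S^1$.
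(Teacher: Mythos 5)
The approach you take is materially different from the one the paper is gesturing at, and it has a gap at the very first step. The paper's own remark (Remark~\ref{idea of stuck zimmer}) says Stuck--Zimmer's proof proceeds by using the $G$-invariant probability measure to produce a \emph{holonomy invariant transverse measure} for the orbit foliation (exploiting unimodularity of $G$, essentially Plante's theorem), then deducing from this that a \emph{closed} orbit exists, and finally applying Reeb--Thurston stability together with property~(T) of higher-rank lattices to propagate from one closed leaf to a finite cover of the form $G/\Gamma\times S^1$. You instead try to start from a Rokhlin disintegration of $\mu$ along the orbit partition and conclude ``for it to be a probability, $\Gamma_x$ must be a lattice for $\mu$-a.e.\ $x$.'' This step is incorrect as stated: the orbit partition of an ergodic measure is not a Rokhlin-measurable partition unless the action is essentially transitive, which is precisely what one needs to establish. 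The leaf-wise (conditional) measures along orbits are only defined, via a system of subordinate measurable partitions, as $\sigma$-finite measures up to scaling, and Proposition~\ref{haarconditionals} tells you that for a $G$-invariant $\mu$ these are proportional to Haar on the orbit \emph{without} implying they are probability measures or that $\Gamma_x$ is a lattice. In particular ``ruling out the trivial stabilizer is immediate since Haar measure on $G$ is infinite'' does not work: an essentially free ergodic $G$-action with an invariant probability measure is perfectly consistent with Haar-proportional leaf-wise measures, and ruling it out in the codimension one setting requires the foliation-theoretic argument (or, equivalently, requires invoking the Stuck--Zimmer essential transitivity/essential freeness dichotomy and then killing the free branch by a separate geometric argument that you have not supplied).

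There is a second gap even granting the IRS classification: knowing that $\mu$-a.e.\ orbit is a compact copy of $G/\Gamma$ does not by itself say anything about orbits in the complement of $\supp(\mu)$, so the passage to ``once every orbit is a compact embedded submanifold diffeomorphic to a fixed $G/\Gamma$'' is not formal. The correct move (and the one the paper's sketch points to) is to get one compact leaf from the transverse invariant measure, then argue that the set of leaves diffeomorphic to it is open (Reeb--Thurston local stability plus the vanishing of $H^1(G/\Gamma;\R)$ afforded by property~(T), which controls deformations) and closed, hence all of $M$. Your final fibration-over-$S^1$ and monodromy paragraph is essentially right once that is done, but it is downstream of the two gaps above. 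I would also flag, though you noted it yourself, that ruling out non-uniform lattices really is part of the Reeb-style local analysis rather than a ``transverse-measure/Fubini argument'': a non-compact leaf in a codimension one foliation with a transverse invariant measure has its holonomy controlled, and that is what forces cocompactness.
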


\begin{remark}\label{idea of stuck zimmer}
    The proof of this theorem uses the existence of the Borel invariant probability measure to show that there exists a holonomy invariant measure on $M$, with respect to the foliation by $G$ orbits; this is used to show there is a closed orbit of the action, and from this it is concluded using the Reeb-Thurston theorem that $M$, and the fact that lattices of $\SL(n,\R)$ with $n\geq 3$ have property (T), must have a finite cover of the indicated form. 
\end{remark}


\subsection{Invariance from entropy considerations}\label{Invariance from entropy considerations}

Here we follow \cite[\S~6]{el}, \cite{brown2019entropylyapunovexponentsrigidity}, and \cite {brhw}.

Let $G$ be a Lie group, and suppose $G$ acts, to the left, smoothly and locally freely on a closed manifold $M$. Suppose $H\leq G$ is a closed subgroup, and suppose $s\in G$ normalizes $H$. Let $\mu$ be an ergodic, $s$-invariant probability measure on $M$(we mean ergodicity and invariance with respect to the diffeomorphism of $M$ given by $x\mapsto sx$). Suppose the orbit $H\cdot x$ is contained in the unstable manifold $\mathcal W^u(x;s)$ for $\mu$-a.e. $x$. A measurable partition $\eta$ of $M$ is \emph{subordinate to the $H$-orbits of $M$} if for $\mu$-almost every $x$ we have that $\eta(x)$ is contained in $H\cdot x$; and contains a neighborhood of $x$ in the orbit $H\cdot x$. 

Since the action is locally free, we can pushforward (any positive multiple of)the left Haar measure on $H$ to the orbit $H\cdot x$, via the map $h\mapsto h\cdot x$.

\begin{lemma}\cite[Theorem~9.4]{brown2019entropylyapunovexponentsrigidity}\label{invariance for subgroups with orbits in unstable}
$\mu$ is $H$-invariant if and only if for any measurable partition $\xi$ subordinate to the partition into $H$-orbits, and for $\mu$-a.e. $x$ the conditional measure $\mu^\xi_x$ coincides(up to normalization) with the restriction of the left-Haar measure on $H\cdot x$ to $\xi(x)$.
\end{lemma}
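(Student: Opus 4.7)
The plan is to exploit Rokhlin's disintegration theorem together with the fact that the left-Haar measure is the unique (up to positive scalar) locally finite left-translation invariant Borel measure on a Lie group.

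For the forward direction (``$\mu$ is $H$-invariant $\Rightarrow$ conditionals are Haar''), I would use the standard covariance identity $(h_\ast\mu)^{h\xi}_x=h_\ast\mu^\xi_{h^{-1}x}$, which combined with $h_\ast\mu=\mu$ gives $\mu^{h\xi}_x=h_\ast\mu^\xi_{h^{-1}x}$ for $\mu$-a.e.\ $x$. Taking the common refinement $\xi\vee h\xi$ and applying iterated Rokhlin disintegration, the conditional of $\mu$ on this refinement at $x$ is simultaneously the normalized restriction of $\mu^\xi_x$ and of $h_\ast\mu^\xi_{h^{-1}x}$ to $\xi(x)\cap h\xi(x)$, so uniqueness of disintegration yields
\[
\mu^\xi_x\big|_{\xi(x)\cap h\xi(x)}\propto (h_\ast\mu^\xi_{h^{-1}x})\big|_{\xi(x)\cap h\xi(x)},\quad\mu\text{-a.e.\ }x.
\]
Pulling back to $H$ via the local diffeomorphism $g\mapsto gx$ provided by local freeness, this becomes a measurable cocycle equation on a neighborhood of the identity in $H$. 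Varying $h$ over a countable dense subset and using a measurable-selection/density-point argument, one extracts full left-translation invariance of the pulled-back measures, which by uniqueness of Haar forces them to be proportional to the Haar measure.

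For the backward direction (``conditionals are Haar $\Rightarrow$ $\mu$ is $H$-invariant''), I would argue locally: by local freeness, around $\mu$-a.e.\ point $x$ there is a chart $\varphi:U\times T\hookrightarrow M$, with $U$ an open neighborhood of the identity in $H$ and $T$ a local transversal to the $H$-orbits, in which the $H$-action is left-translation on the $U$-factor. In this chart, any measurable partition subordinate to $H$-orbits has atoms contained in the slices $U\times\{t\}$, and the Haar hypothesis on conditionals implies that $\mu$ has, in the chart, a density of the form $f(u,t)\,du\otimes d\rho(t)$ for some reference measure $\rho$ on $T$. Requiring that on \emph{every} such subordinate partition the conditional on an atom $A_t\subseteq U\times\{t\}$ be normalized Haar forces $f(u,t)$ to be independent of $u$, i.e., $f(u,t)=g(t)$. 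Hence $\mu$ is locally $H$-invariant in the chart, and a countable cover of $M$ by such charts yields global $H$-invariance.

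The main technical difficulty lies in the forward direction: one must carefully track the different base points $x$ and $h^{-1}x$ appearing in the cocycle relation and select a coherent measurable family of conditional measures that collectively transports the invariance from ``for each $h$, $\mu$-a.e.\ $x$'' to ``for $\mu$-a.e.\ $x$, every $h$ in a neighborhood of the identity''. The hypothesis that $H$-orbits lie inside unstable Pesin manifolds for $s$ enters only in an auxiliary way, by guaranteeing via the tame measurable foliation structure inherited from $\mathscr W^u$ the existence of measurable partitions subordinate to $H$-orbits, so that Rokhlin's disintegration is available at all.
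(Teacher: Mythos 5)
The paper does not prove this lemma: it is quoted verbatim from \cite[Theorem~9.4]{brown2019entropylyapunovexponentsrigidity} and used as a black box, so there is no in-house proof to compare against. Your plan is the standard Einsiedler--Lindenstrauss leafwise-measure argument (which is also what Brown's notes follow), and the backward direction as you sketch it is essentially fine: push $\xi$ to $h\xi$, observe that the $h$-image of a Haar conditional is again a Haar conditional, and note that $\mu$ and $h_\ast\mu$ have the same transverse (quotient) measure because $h$ preserves the $H$-orbit partition; uniqueness of disintegration then gives $h_\ast\mu=\mu$.

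The forward direction, however, has a genuine gap as written. Pulling back
\[
\mu^\xi_x\big|_{\xi(x)\cap h\xi(x)}\ \propto\ \bigl(h_\ast\mu^\xi_{h^{-1}x}\bigr)\big|_{\xi(x)\cap h\xi(x)}
\]
through $\iota_x:g\mapsto gx$ does \emph{not} produce a left-translation relation on a single measure; since $\iota_x^{-1}\circ h=c_h\circ\iota_{h^{-1}x}^{-1}$ (with $c_h(g)=hgh^{-1}$), what you get is a \emph{conjugation} relation between the pulled-back conditionals $\nu_x$ and $\nu_{h^{-1}x}$ at two \emph{different} base points. To close this you must additionally invoke the base-point change identity for leafwise measures, $\nu_{h^{-1}x}\propto (R_h)_\ast\nu_x$ whenever $h^{-1}x\in\xi(x)$ (which comes from $\iota_{h^{-1}x}=\iota_x\circ R_{h^{-1}}$ and equality of conditionals on a common atom, and holds for \emph{any} $\mu$, not just $H$-invariant ones), and then only the algebraic cancellation $c_h\circ R_h=L_h$ turns the composite into a genuine left-translation invariance of $\nu_x$. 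This is the actual crux of the forward direction, and your phrases ``measurable cocycle equation on a neighborhood of the identity'' and ``measurable-selection/density-point argument'' paper over it rather than identifying it; as stated, the assertion that the pullback directly yields left-translation invariance is incorrect. Your closing remark about the role of the $H$-orbits-in-$\mathcal W^u$ hypothesis (it merely guarantees subordinate partitions exist) is apt.
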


We define the entropy of $s$, seen as an element of $\operatorname{Diff}(M)$, conditioned on $H$-orbtis, as the entropy conditioned on any measurable partition $\xi$ subordinate to the $H$-orbits, denoted by $h_\mu(s\mid H)$, namely,
\begin{equation*}
    h_\mu(s\mid H):=h_\mu(s\mid \xi).
\end{equation*}

Let $\lambda^i$, $E^i(x)$, and $m^i$ be as in \ref{Osedelec's theorem} for the dynamics of $s$ and the measure $\mu$.  We define the \emph{multiplicity of $\lambda^i$ relative to $H$} to be (the almost surely constant value of)  $$m^{i,H} = \dim (E^i(x) \cap T_x (H\cdot x)).$$

We have the following version of the Ledrappier-Young Theorem, see \cite{ledrappieryoung1}, characterizing $H$-invariant measures. 

\begin{lemma}\cite[Theorem~9.5]{brown2019entropylyapunovexponentsrigidity}\label{invariance from maximal entropy}
    The following are equivalent:
		\begin{enumerate} 
			\item $h_\mu(f\mid H) = \sum_{\lambda^i>0} \lambda^i m^{i,H}$;
			\item $\mu$ is $H$-invariant.  
		\end{enumerate}
\end{lemma}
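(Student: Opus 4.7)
The plan is to reduce the statement to the conditional Ledrappier--Young theorem, Theorem \ref{ledrappier young conditional entropy}, by exhibiting the orbit foliation of $H$ as a tame measurable foliation. Let $\mathcal F_H$ be the partition of $M$ into $H$-orbits. Because the action is locally free and smooth, $\mathcal F_H$ is a tame measurable foliation by $C^2$ leaves, and by hypothesis $\mathcal F_H(x)\subseteq\mathcal W^u(x;s)$ for $\mu$-a.e.\ $x$, so $\mathcal F_H$ is expanding in the sense of Subsection 2.3.1. Any measurable partition $\xi$ subordinate to the $H$-orbits is then subordinate to $\mathcal F_H$, so $h_\mu(s\mid H)=h_\mu(s\mid\mathcal F_H)$. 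Moreover $m_i(\mathcal F_H)=\dim(E^i(x)\cap T_x[H\cdot x])=m^{i,H}$ $\mu$-almost surely.

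For the forward implication, suppose $\mu$ is $H$-invariant. By Lemma \ref{invariance for subgroups with orbits in unstable}, for $\mu$-a.e.\ $x$ the conditional measure $\mu_x^\xi$ coincides, up to normalization, with the restriction to $\xi(x)$ of the pushforward of left Haar measure on $H$ under $h\mapsto hx$. Because the orbit map is a smooth immersion, this Haar-pushforward is absolutely continuous with respect to Riemannian volume on $H\cdot x$ with a strictly positive $C^1$ density. The equality case of Theorem \ref{ledrappier young conditional entropy} then gives $h_\mu(s\mid\mathcal F_H)=\sum_{i}\lambda^i m_i(\mathcal F_H)$; restricting the sum to $\lambda^i>0$ (the other terms not contributing positive entropy since $\mathcal F_H$ is expanding) we recover $h_\mu(s\mid H)=\sum_{\lambda^i>0}\lambda^i m^{i,H}$.

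For the converse, assume the entropy equality. Theorem \ref{ledrappier young conditional entropy} then asserts that for any $\xi$ subordinate to $\mathcal F_H$, $\mu_x^\xi$ is equivalent to the Riemannian volume on $H\cdot x$ with a strictly positive $C^1$ density. Pulling back to $H$ via the orbit map, we obtain a $C^1$ positive density $\rho_x:U_x\to\R_{>0}$ with respect to left Haar measure $\lambda_H$, defined on an open neighbourhood $U_x\subseteq H$ of the identity. The goal is to upgrade ``equivalence with volume'' to ``proportional to Haar,'' which by Lemma \ref{invariance for subgroups with orbits in unstable} is equivalent to $H$-invariance of $\mu$.

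The upgrade is the main technical step, and is a standard invariance-principle argument \`a la Ledrappier, see \cite[Theorem~9.5]{brown2019entropylyapunovexponentsrigidity}. The essential input is that $s$ normalizes $H$, so the diffeomorphism $y\mapsto sy$ sends the orbit $H\cdot x$ onto $H\cdot sx$ and, in Haar coordinates, is a left translation composed with the automorphism $\alpha_s(h)=shs^{-1}$ of $H$; the Jacobian of $\alpha_s$ with respect to $\lambda_H$ is the constant $|\!\det\operatorname{Ad}_H(s)|$. Combining this with the transformation rule $s_\ast\mu_x^\xi\propto\mu_{sx}^{s\xi}$ coming from $s$-invariance of $\mu$ yields the functional equation
\[
\rho_x(h)\;=\;c(x)\,\rho_{sx}\bigl(\alpha_s(h)\bigr)
\]
on the intersection of the relevant domains. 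Iterating this relation under $s^{-n}$, using that $s^{-1}$ acts as a contraction on the tangent space to $H\cdot x$ at $x$ (because $\mathcal F_H$ is expanding for $s$), and exploiting the $C^1$, strictly positive, uniformly bounded character of $\rho_x$ on bounded sets, a bounded-distortion argument forces $\rho_x$ to be constant on $U_x$. Hence $\mu_x^\xi$ is proportional to Haar-pushforward on $\xi(x)$, and Lemma \ref{invariance for subgroups with orbits in unstable} concludes that $\mu$ is $H$-invariant. I expect the only real obstacle to be this last bounded-distortion/invariance-principle step; the rest is a direct application of the conditional Ledrappier--Young theorem.
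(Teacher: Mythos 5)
The paper does not prove this lemma: it is a straight citation of \cite[Theorem~9.5]{brown2019entropylyapunovexponentsrigidity}, so there is no ``paper's own proof'' to compare against. Judged on its own terms, your reduction is set up correctly: identifying the $H$-orbit partition $\mathcal F_H$ with an expanding tame measurable foliation, noting that expansion forces $m_i(\mathcal F_H)=0$ for $\lambda^i\le 0$, and observing that $\mathcal F_H^u=\mathcal F_H$, so Theorem \ref{ledrappier young conditional entropy} applies to exactly the partitions in Lemma \ref{invariance for subgroups with orbits in unstable}. The implication $(2)\Rightarrow (1)$ is then genuinely resolved: Haar pushforward under the orbit immersion is equivalent to Riemannian volume on the orbit with smooth positive density, and the ``if'' direction of Theorem \ref{ledrappier young conditional entropy} gives the entropy equality.

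The implication $(1)\Rightarrow (2)$ is where the real content of the cited theorem sits, and your write-up identifies the right mechanism but does not actually supply it. You correctly derive the cohomological relation $\rho_x(h)\propto\rho_{sx}(\alpha_s(h))$ (with the constant $|\det\operatorname{Ad}_H(s)|$ absorbed into $c(x)$) and correctly pass to the normalized ratio $\rho_x(h)/\rho_x(e)=\rho_{s^{-n}x}(\alpha_s^{-n}(h))/\rho_{s^{-n}x}(e)$, using that $\alpha_s^{-n}(h)\to e$ because $H$ is $s$-expanding. But the step that collapses this ratio to $1$ is precisely what needs proving and cannot simply be cited back to \cite[Theorem~9.5]{brown2019entropylyapunovexponentsrigidity}: one must (i) invoke Poincar\'e recurrence to a Pesin block on which the subordinate partition elements contain $H$-neighborhoods of uniform size, (ii) establish that the normalized densities are uniformly continuous (equicontinuous) and bounded above and away from zero on such a block, and (iii) verify that $\alpha_s^{-n}(h)$ remains in the relevant domain $\xi(s^{-n}x)$ along the recurrence times. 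Without these three points one has only a heuristic, not a proof. A minor correction: in the Haar coordinates $h\mapsto hx$ and $h'\mapsto h'(sx)$ on the two orbits, the map $y\mapsto sy$ is $h\mapsto\alpha_s(h)$ alone, with no additional left translation.
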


To any Borel probability measure $\mu$ on $M$, there is a family $\{\mu_x^H\}_{x\in M}$ of measures of $H$, called \emph{leafwise measures along the orbits of the action} $H\acts M$, see \cite[Section~6]{el}, having the following property: for any measurable partition $\eta$ of $(M,\mu)$ subordinate to the $H$-orbits, there exists a measurable function $c^\eta:X\rightarrow (0,\infty)$ such that if $\{\mu_x^\eta\}_{x\in M}$ is a family of conditional measures on $(M,\mu)$ associated with $\eta$, then 
\begin{equation*}
    \mu_x^\eta=c^\eta(x)(h\mapsto h\cdot x)_\ast(\mu_x^H)\upharpoonright_{\eta(x)}.
\end{equation*}
We have the following immediate consequence of the previous lemma:
\begin{proposition}\cite[Problem~6.28]{el}\label{haarconditionals}
    A probability measure $\mu$ on $M$ is $H$-invariant if and only if $\mu_x^H$ coincides with the (positive proportionally class of the) left Haar measure on $H$ for $\mu$-almost every $x$.
\end{proposition}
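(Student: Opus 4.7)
The plan is to deduce this from the defining pushforward relation between the leafwise measure $\mu_x^H$ on $H$ and the conditional measures $\mu_x^\eta$ along subordinate partitions, together with the general principle that $H$-invariance of $\mu$ is characterized by local left-Haar behavior of those conditional measures on the orbits.

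First I would observe that since the action of $G$ is locally free, the orbit map $h \mapsto h \cdot x$ is a local diffeomorphism from $H$ onto $H \cdot x$. Hence the pushforward $(h \mapsto h \cdot x)_{*}$ is an injection from the cone of positive Radon measures on any sufficiently small neighborhood of $e \in H$ into the cone of positive Radon measures on the atom $\eta(x) \subseteq H \cdot x$. In view of the relation
\[
\mu_x^\eta = c^\eta(x)\,(h \mapsto h \cdot x)_{*}\bigl(\mu_x^H\bigr)\upharpoonright_{\eta(x)},
\]
this means that $\mu_x^H$ is a positive multiple of left-Haar measure on $H$ if and only if $\mu_x^\eta$ is a positive multiple of the pushforward of left-Haar restricted to $\eta(x)$.

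It therefore suffices to establish that $\mu$ is $H$-invariant if and only if this Haar-like property of $\mu_x^\eta$ holds for $\mu$-a.e. $x$ and for some (equivalently every) measurable partition $\eta$ subordinate to the $H$-orbits. For the forward implication, I would exploit the essential uniqueness of conditional measures: if $h_{*}\mu = \mu$, then $\mu_{hx}^{\eta}$ and $(h_{*}\mu_{x}^{\eta})$ must agree, up to normalization, on the overlap $\eta(hx) \cap h\,\eta(x)$ of atoms; comparing these forces $\mu_x^\eta$ to be locally invariant under left translation along the orbit. Locally left-translation-invariant Radon measures on an orbit of a locally free $H$-action are precisely the pushforwards of the left-Haar measure, so the forward direction follows. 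For the reverse direction, fix a subordinate partition $\eta$ and compute $\int_M f(h \cdot y)\,d\mu(y)$ by disintegrating through $\eta$ and integrating against the Haar-like conditional measures; the left-invariance of Haar on $H$ combined with the cocycle transformation of $c^\eta$ under the map $x \mapsto h x$ delivers $\int_M f\,d\mu$, proving $H$-invariance.

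The main obstacle will be the strictly local nature of subordinate partitions: a left translation by $h \in H$ moves the atom $\eta(x)$ off itself, so the Haar characterization can only be read off on possibly small intersections $\eta(hx) \cap h\,\eta(x)$. This is handled by refining $\eta$ to have arbitrarily small atoms within each orbit, so that for any fixed $h$ the relevant overlap is nonempty and open in $H \cdot x$ for $\mu$-a.e. $x$, and then piecing together these local invariances along a generating family of $h$'s. The resulting book-keeping with the cocycle $c^\eta$ under translation is the only substantive piece, and is precisely what justifies the paper's description of the proposition as an ``immediate consequence'' once the pushforward relation and the Lemma \ref{invariance for subgroups with orbits in unstable}-type characterization of invariance are in hand.
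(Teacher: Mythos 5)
Your proposal is correct and follows essentially the same route the paper relies on: the paper treats Proposition~\ref{haarconditionals} as an immediate consequence of Lemma~\ref{invariance for subgroups with orbits in unstable} (the Haar-conditional characterization of $H$-invariance) combined with the defining pushforward relation between leafwise measures $\mu_x^H$ and subordinate conditionals $\mu_x^\eta$, which is precisely the pair of facts you assemble. You additionally re-derive the content of Lemma~\ref{invariance for subgroups with orbits in unstable} (including the globalization over atoms) rather than merely citing it, which is harmless but not necessary.
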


\subsection{Smooth ergodic theory of nonuniform partially hyperbolic diffeomorphisms}\label{Smooth ergodic theory of nonuniformly partially hyperbolic diffeomorphisms}

We follow \cite{bpintro} and \cite{bp}. Let $f$ be a $C^1$ diffeomorphism of a closed manifold $M$. 

\subsubsection{Nonuniformly partially hyperbolic sets}

\begin{definition}\label{definition of nonuniformly partially hyperbolic subset}

We call an $f$-invariant nonempty measurable subset $Z \subset M$ \textit{nonuniformly partially hyperbolic} if there are a number $\varepsilon_0 > 0$, measurable functions $\lambda_1, \lambda_2, \mu_1, \mu_2, C, K : Z \to (0, \infty)$, and $\varepsilon : Z \to [0, \varepsilon_0]$, and subspaces $E^s(x)$, $E^u(x)$, and $E^0(x)$, which depend measurably on $x \in Z$, such that the following \textit{partial hyperbolicity conditions} hold:

\begin{itemize}
    \item[(1)] the functions $\lambda_1$, $\lambda_2$, $\mu_1$, $\mu_2$, $\varepsilon$ are $f$-invariant and satisfy for $x \in Z$
    \[
    \lambda_1(x)e^{\varepsilon(x)} < \lambda_2(x)e^{-\varepsilon(x)} < 1 < \mu_2(x)e^{\varepsilon(x)} < \mu_1(x)e^{-\varepsilon(x)};
    \]
    
    \item[(2)] for $m\in \Z$
    \begin{equation*}
        C(f^m(x))\leq C(x)e^{\varepsilon(x)|m|}, K(f^m(x))\geq K(x)e^{-\varepsilon(x)|m|}
    \end{equation*}
    \item[(3)] $T_x M = E^s(x) \oplus E^0(x) \oplus E^u(x)$ and for $a = s, u, 0$,
    \[
    d_x f E^a(x) = E^a(f(x));
    \]
    
    \item[(4)] for $v \in E^s(x)$, $w \in E^u(x)$, and  $n \ge 0$ we have
    \begin{equation*}
        ||d_xf^nv||\leq C(x)\lambda_1(x)^ne^{\varepsilon(x)n}||v||, ||d_xf^{-n}w||\leq C(x)\mu_1(x)^{-n}e^{\varepsilon(x)n}||v|| 
    \end{equation*}
    
    \item[(5)] for $v \in E^0(x)$
    \[
    C(x)^{-1}\lambda_2(x)^n e^{-\varepsilon(x)n}\|v\| \le \|d_x f^n v\| \le C(x)\mu_2(x)^n e^{\varepsilon(x)n}\|v\|, \quad n \ge 0,
    \]
    \[
    C(x)^{-1}\mu_2(x)^n e^{\varepsilon(x)n}\|v\| \le \|d_x f^n v\| \le C(x)\lambda_2(x)^n e^{-\varepsilon(x)n}\|v\|, \quad n \le 0;
    \]
    
    \item[(6)] the angles satisfy $\angle(E^a(x), E^b(x)) \ge K(x)$ where $a, b \in \{s, u, 0\}$ and $a \ne b$.
\end{itemize}
\end{definition}

\begin{remark}
    For an ergodic action $\alpha$ of $\Z^d$ on $(M,\mu)$, $\mu$ a Borel probability measure on $M$, from \cite[Proposition~2.6]{brhwzd}, it follows that for any $\vec n\in\Z^d$ the $\alpha(\vec n)$ invariant subset $\Lambda$ appearing in Theorem \ref{Higher rank osedelec} is a nonuniformly partially hyperbolic subset of $M$ with respect to  $\alpha(\vec n)$.
\end{remark}

For a nonuniformly partially hyperbolic set $Z\subseteq M$ with respect to $f$ we have the existence of stable manifolds. We have that $Z=\bigcup_{l\in\N}Z_l$, where each $Z_l$ is compact, and where the functions $Z\rightarrow [0,\infty]$ appearing in Definition \ref{definition of nonuniformly partially hyperbolic subset} are all continuous.  

\subsubsection{Local stable manifolds}

For each regular $Z^l$, we have the existence of stable manifolds.

\begin{theorem}\cite[Theorem~7.1, Section~7.3.5]{bpintro}\label{stable manifold theorem}
   For every $x \in Z_l$ there exists a local stable manifold $V^s(x)$ such that $x \in V^s(x)$, $T_x V^s(x) = E^s(x)$, and for every $y \in V^s(x)$ and $n \ge 0$,
\[
\rho(f^n(x), f^n(y)) \le T(x)(\lambda')^n \rho(x, y),
\]
where $\rho$ is the distance in $M$ induced by the Riemannian metric, $\lambda'$ is a number satisfying $0 < \lambda e^\varepsilon < \lambda' < 1$, and $T: \Lambda_\iota \to (0, \infty)$ is a Borel function satisfying
\[
T(f^m(x)) \le T(x)e^{10\varepsilon|m|}, \quad m \in \mathbb{Z}.
\]
Furthermore, $f^m(V^s(x)) \subset V^s(f^m(x))$ for every $m \in \mathbb{Z}$; and $\locstable(x)$ depends uniformly continuously on $x\in Z^l$.
\end{theorem}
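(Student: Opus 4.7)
The plan is the classical Pesin-theoretic construction via Lyapunov inner products and a graph-transform fixed-point argument, carried out on each uniformly regular piece $Z_l$, following the template of \cite[Chapter~7]{bpintro}. First I would construct adapted inner products $\langle\cdot,\cdot\rangle'_x$ on $T_xM$, depending measurably on $x\in Z$, in which the stable contraction becomes uniform along the orbit. Fixing $\lambda'$ with $\lambda_1(x)e^{\varepsilon(x)}<\lambda'<1$, set
\[
\|v\|'_x:=\Bigl(\sum_{n\geq 0}(\lambda')^{-2n}\|d_xf^n v\|^2\Bigr)^{1/2}\qquad\text{for }v\in E^s(x),
\]
with analogous formulas on $E^u(x)$ and $E^0(x)$ (the latter using two-sided sums weighted by $\mu_2$ and $\lambda_2$). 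Condition (4) makes the series converge; combined with the tempered growth (2) of $C$ and the tempered angle estimate (6), this yields a comparison $\|v\|_x\leq\|v\|'_x\leq A(x)\|v\|_x$ with $A$ tempered. In the adapted norm, $\|d_xf\,v\|'_{f(x)}\leq\lambda'\|v\|'_x$ on $E^s$, with an analogous uniform expansion on $E^u$. I then introduce Lyapunov charts $\Phi_x:B(0,r(x))\subseteq T_xM\to M$ with $\Phi_x(0)=x$, where $r$ is a tempered positive function small enough that $\tilde f_x:=\Phi_{f(x)}^{-1}\circ f\circ \Phi_x$ differs from its linearization by an arbitrarily small Lipschitz perturbation on the chart.

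Next I would run the Hadamard--Perron graph transform. I seek $V^s(x)$ as the image under $\Phi_x$ of the graph of a Lipschitz map $\varphi_x:B_s(0,r(x))\subseteq E^s(x)\to E^0(x)\oplus E^u(x)$ with $\varphi_x(0)=0$ and $\mathrm{Lip}(\varphi_x)\leq 1$. The pushforward under $\tilde f_x$ of such a graph is still a Lipschitz graph over $E^s(f(x))$, and pulling back consistently along the orbit defines a graph transform which, thanks to the uniform stable contraction and the uniform center--unstable expansion (or weaker contraction) in the Lyapunov norm, is a contraction in the $C^0$ metric on this space of graphs. Its unique fixed point, consistent along the orbit by uniqueness, provides $V^s(x)$, and tangency $T_xV^s(x)=E^s(x)$ then follows from the uniform contraction together with the $C^1$ estimates on the fixed point inherited from the smoothness of $\tilde f_x$.

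Finally, the exponential contraction $\rho(f^n(x),f^n(y))\leq T(x)(\lambda')^n\rho(x,y)$ follows by iterating the $\|\cdot\|'$-contraction on the graph and converting back to the Riemannian metric using the tempered factor $A$, producing a $T$ with the asserted growth $T(f^m(x))\leq T(x)e^{10\varepsilon|m|}$. Forward invariance $f^m(V^s(x))\subseteq V^s(f^m(x))$ is automatic from invariance of the graph under $\tilde f_x$. For the uniform continuity statement, on each $Z_l$ all defining data $E^a,\lambda_i,\mu_i,C,K,\varepsilon$ are continuous by construction, so the Lyapunov inner product, the chart radius $r(x)$, and the graph-transform contraction vary continuously with $x$; since the fixed point of a continuous family of uniform contractions depends continuously on the parameter, $V^s(x)$ depends uniformly continuously on $x\in Z_l$ in the $C^1$ topology. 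The main obstacle, and the technical heart of Pesin theory, is the bookkeeping of the tempered constants: ensuring that the chart radii are large enough and the contraction factors small enough to make the graph transform contracting while delivering the precise quantitative conclusion of the theorem.
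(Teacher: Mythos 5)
The paper does not prove this statement; it is imported verbatim by citation to Barreira--Pesin \cite[Theorem~7.1, Section~7.3.5]{bpintro}, so there is no internal proof to compare against. Your sketch (Lyapunov inner products to make the contraction uniform, Lyapunov charts, Hadamard--Perron graph transform with a fixed-point argument, then temperedness bookkeeping to recover the $T(x)(\lambda')^n$ estimate and the uniform continuity on each regular set $Z_l$) is precisely the route taken in that reference, so it is the same approach as the cited source rather than a different one.

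Two small cautions if you were to fill this in rigorously. First, your formula for the adapted norm $\|v\|'_x$ on $E^s$ should use a fixed $\lambda'$ chosen once and for all on the ergodic component (where $\lambda_1,\varepsilon$ are constant), and you need to verify the comparison constant $A(x)$ against the specific temperedness rate $e^{10\varepsilon|m|}$ demanded by the statement, not merely ``some tempered function'' --- the exponent $10\varepsilon$ is not automatic and comes from tracking $C$, $K$, and the chart radius together. Second, the uniform continuity on $Z_l$ is in the $C^1$ topology of the local graphs and requires that the Lusin-type sets $Z_l$ be chosen so that $x\mapsto E^\sigma(x)$ and the tempered functions are continuous there; this is part of the construction of the regular sets $Z_l$, not something that holds on arbitrary positive-measure subsets, so that hypothesis should be made explicit rather than taken for granted.
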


\begin{remark}
    In fact, for every $x \in Z^l$,
\[
\mathcal{W}_{\text{loc}}^s(x) = \exp_x \{ (v, \psi_x^s(x)) : v \in B^s(r) \},
\]
where $B^s(r)$ is the ball of radius $r$, for some $r \geq r^l$, of the vector subspace $E^s(x)$ of $T_x M$, and $\psi_x^s : B^s(r) \to E^u(x)$ is a smooth map satisfying $\psi_x^s(0) = 0$ and $d_0 \psi_x^s = 0$.
\end{remark} 

\subsubsection{Absolute continuity along stable manifolds}\label{Absolute continuity along stable manifolds section}
 
Given $x\in\mathcal R^l$, consider the union 
\begin{equation}
    \mathcal{Q}^l(x)=\bigcup_{w\in\mathcal R^l\cap B^M_{r^l}(x)}\locstable(w)
\end{equation}
of the family of local stable manifolds 
\begin{equation}\label{family of local stable manifolds}
    \mathcal L^l(x)=\{\locstable(w):w\in\mathcal R^l\cap B^M_{r^l}(x)\}.
\end{equation}

A \textit{local smooth submanifold} of $M$ is the graph of a smooth injective function $\mathbb{R}^k \to M$. A \textit{local transversal} to the family $\mathcal{L}^l$ is a local smooth submanifold which is uniformly transverse to every manifold in this family, i.e., the angle between $T$ and $\mathcal{W}_{\text{loc}}^s(w)$ is uniformly away from zero for every $w\in\mathcal R^l\cap B^M_{r^l}(x)$. For any $T^1$ and $T^2$ local transversals to $\mathcal{L}^l(x)$, we have a corresponding holonomy map
\begin{equation}\label{holonomy between transversals along stable}
    \mathcal{Q}^l(x) \cap T^1 \to \mathcal{Q}^l(x) \cap T^2, 
\end{equation}
by setting $\locstable(w)\cap T^1= z_1 \mapsto z_2=\locstable(w)\cap T^2$, for every $w\in\mathcal R^l\cap B^M_{r^l}(x)$.

For such transverse submanifolds $T$ to the family $\mathcal L^l(x)$, we denote by $\nu_T$ the volume measure on $T$ inherited from the restriction of the Riemannian metric of $M$ to $W$.

\begin{theorem}\cite[Theorem~8.2, Remark~8.19]{bpintro} \label{absolute continuity along stable}
    If in (\ref{holonomy between transversals along stable}) $\nu_{T^1}(T^1\cap \mathcal{Q}^l(x))>0$, the holonomy map (\ref{holonomy between transversals along stable}) is absolutely continuous, with respect to the volume measures $\nu_{T_1}$ and $\nu_{T_2}$.
\end{theorem}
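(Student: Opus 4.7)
The proof follows the classical Pesin-theoretic strategy for absolute continuity of stable holonomies. The plan is to show that for any Borel set $E \subseteq T^1 \cap \mathcal{Q}^l(x)$ with $\nu_{T^1}(E)=0$, one has $\nu_{T^2}(\pi(E))=0$, where $\pi$ denotes the holonomy map along the family (\ref{family of local stable manifolds}). The essential feature that makes this possible is that on the regular block $\mathcal R^l$ all the relevant quantities from Theorem \ref{stable manifold theorem} and Definition \ref{definition of nonuniformly partially hyperbolic subset} --- the radius $r^l$, the graph-Lipschitz constant of $\psi_x^s$, the temperedness functions $C$, $K$, $T$, and the contraction rate $\lambda'$ --- are uniform in $x$, and the maps $x\mapsto E^s(x)$ and $x \mapsto \locstable(x)$ are uniformly continuous on $\mathcal R^l$.

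The key idea is to reduce to small scales by forward iteration. For $n$ large, the forward images of the local stable manifolds under $f^n$ have diameter at most $C(x)(\lambda')^n r^l$ and are contained in $\locstable(f^n(w))$; after a further restriction to a subset where the regularity functions are uniformly bounded, one can work in the exponential chart at a point of the iterated regular block. In this chart the stable manifolds become graphs of uniformly Lipschitz functions over $E^s$, with uniform transversality to the transported transversals $f^n(T^i)$, and a Fubini-type argument produces an absolutely continuous holonomy $\pi_n$ between $f^n(T^1)$ and $f^n(T^2)$ with Jacobian uniformly bounded above and below by constants depending only on $l$.

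To transport this back to $\pi$, one uses the conjugacy relation $f^n \circ \pi = \pi_n \circ f^n$ to write
\begin{equation*}
\operatorname{Jac}(\pi)(z) = \operatorname{Jac}\bigl(f^n|_{T^2}\bigr)^{-1}(\pi(z)) \cdot \operatorname{Jac}(\pi_n)(f^n(z)) \cdot \operatorname{Jac}\bigl(f^n|_{T^1}\bigr)(z),
\end{equation*}
and the quantitative heart of the argument is to verify that, as $n\to\infty$, the ratio of the two outer Jacobians converges to a positive, measurable, locally bounded function. This follows from the fact that both $Df^n\cdot T_z T^1$ and $Df^n\cdot T_{\pi(z)} T^2$ approach $E^s$ exponentially at rate $\lambda'$, combined with uniform control of $\log\|Df|_{E^s}\|$ under forward iteration on $\mathcal R^l$. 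The resulting limit gives a Jacobian for $\pi$ that is bounded on $T^1 \cap \mathcal Q^l(x)$, so $\nu_{T^2}(\pi(E)) \le \|\operatorname{Jac}(\pi)\|_\infty\, \nu_{T^1}(E) = 0$.

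The main obstacle is precisely this convergence of Jacobian ratios, which requires the H\"older continuity of the Oseledets stable bundle along orbits within $\mathcal R^l$ together with the tempered estimates on $C$, $T$, and $K$ from Definition \ref{definition of nonuniformly partially hyperbolic subset} and Theorem \ref{stable manifold theorem}. These are the technical core of \cite[Chapter~8]{bpintro}, which I would invoke directly rather than reproduce; the positive-measure hypothesis $\nu_{T^1}(T^1 \cap \mathcal Q^l(x)) > 0$ serves only to guarantee that the Fubini-type comparison is performed on a non-negligible set of leaves, so that the resulting comparison of volumes is meaningful.
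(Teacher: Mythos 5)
The paper does not prove this statement: Theorem \ref{absolute continuity along stable} is imported verbatim, with a citation to \cite[Theorem~8.2, Remark~8.19]{bpintro}, and is used as a black box. So there is no in-paper proof to compare yours against. Your sketch does follow the standard Anosov/Katok--Strelcyn/Pugh--Shub route that Barreira--Pesin use there: push forward by $f^n$ until the stable plaques become very short, observe that the short-scale holonomy $\pi_n$ between the iterated transversals has controlled Jacobian, then transport back via $\operatorname{Jac}(\pi) = \operatorname{Jac}(f^n|_{T^2})^{-1}\circ\pi\cdot\operatorname{Jac}(\pi_n)\circ f^n\cdot\operatorname{Jac}(f^n|_{T^1})$ and control the product by temperedness and H\"older regularity of the invariant splitting on the regular block. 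Your recognition that the positivity hypothesis only matters to make the Fubini comparison nonvacuous is also correct.

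One claim in the quantitative heart is directionally wrong and would break the argument as written. You assert that $Df^n\cdot T_z T^1$ and $Df^n\cdot T_{\pi(z)}T^2$ ``approach $E^s$ exponentially'' and that the Jacobian ratio is controlled via $\log\|Df|_{E^s}\|$. But $T^1$ and $T^2$ are \emph{transverse} to the stable plaques, so their tangent spaces are graphs over the complement $E^0\oplus E^u$ with nonzero $E^s$-slope; forward iteration \emph{contracts} the $E^s$-component relative to the rest and flattens these graphs onto $E^0(f^n z)\oplus E^u(f^n z)$, not onto $E^s$. Correspondingly, the outer Jacobians $\operatorname{Jac}(f^n|_{T^i})$ asymptotically track $\bigl|\det\bigl(Df\upharpoonright_{E^0\oplus E^u}\bigr)\bigr|$ along the two orbits, and it is the H\"older regularity of that quantity on regular blocks, together with the exponential closing $d(f^n z, f^n\pi(z))\to 0$, that makes the ratio converge. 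Replacing $E^s$ by $E^0\oplus E^u$ throughout that step fixes the sketch and aligns it with the argument in \cite[Chapter~8]{bpintro}.
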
 

\section{Harmonic measures, unique $P$-ergodicity and $G$-invariant measures}\label{Harmonic measures, unique $P$-ergodicity and $G$-invariant measures}

In this subsection we follow \cite{dk} and \cite{candel}. 

We first discuss the notion of harmonic measures for foliations of compact manifolds. We then study the relationship of these measures with  measures of actions of semisimple Lie groups invariant under Borel subgroups.

\subsubsection{Harmonic functions on Lie groups}
Let $G$ be a Lie group, and $K$ a compact subgroup, and $X$ is the symmetric space $G/K$. A function $f$ in $X$(or a left $K$-invariant function on $G$) is harmonic if $Df=0$ for all differential operators $D$ on $X$ which vanish all constant functions and it is left $G$-invariant, see \cite{godement}.Since this is a local property, it also makes sense on $G/\Gamma$. The following lemma gives a global sufficient condition for such functions. 

\begin{theorem}\cite[Main Theorem]{godement}\label{godement}
    A left $K$-invariant function $f$ on $G$ is harmonic if it is locally integrable and 
    \begin{equation*}
        f(g)=\int_Kf(xkg)dk,
    \end{equation*}
    for all $g,x\in G$.
\end{theorem}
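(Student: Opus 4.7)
The plan is to reduce to the case of smooth $f$ by mollification, and then derive harmonicity by differentiating the mean value identity in the $x$-variable.

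First I would verify that mollification preserves the hypothesis. Let $\phi$ be any smooth, compactly supported, bi-$K$-invariant function on $G$. The convolution $f*\phi$ is smooth and left $K$-invariant, and applying the hypothesis pointwise to $gh$ in place of $g$, together with Fubini, yields
\begin{equation*}
(f*\phi)(g) = \int \phi(h^{-1})\int_K f(xk(gh))\,dk\,dh = \int_K (f*\phi)(xkg)\,dk.
\end{equation*}
Letting $\phi$ range over an approximate identity and using that harmonicity is a local property preserved under $L^{1}_{\mathrm{loc}}$ limits, I may assume $f$ is smooth.

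Next, for each $X\in\mathfrak{g}$, substituting $x=\exp(tX)$ in the mean value identity and differentiating $n$ times in $t$ at $t=0$ gives, since the left side is independent of $x$,
\begin{equation*}
0 = \int_K (X^R)^n f(kg)\,dk,\qquad n\ge 1,
\end{equation*}
where $X^R$ is the right-invariant vector field on $G$ corresponding to $X$. Polarizing in $X$, one obtains $\int_K (D^R f)(kg)\,dk = 0$ for every right-invariant differential operator $D^R$ on $G$ with vanishing constant term.

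Finally, I would relate these identities to arbitrary $G$-invariant differential operators on $X=G/K$. Via Harish--Chandra's symmetrization, any $D\in\mathbb{D}(X)$ lifts to a right-invariant, left-$K$-invariant differential operator $D^R$ on $G$ built from $\mathrm{Ad}(K)$-invariant elements of $S(\mathfrak{g})$. Since $D^R$ commutes with left $K$-translation and $f$ is left $K$-invariant, the function $D^R f$ is itself left $K$-invariant; hence the integrand $(D^R f)(kg)$ does not depend on $k$, and the integral above collapses to $(Df)(g)$. Thus $(Df)(g)=0$ for every $G$-invariant $D$ killing constants, i.e., $f$ is harmonic.

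The main obstacle is the last step: one must carefully identify $\mathbb{D}(X)$ with the appropriate $\mathrm{Ad}(K)$-invariant subquotient of $U(\mathfrak{g})$, reconcile the left- versus right-invariant conventions, and ensure that every $G$-invariant operator annihilating constants really is captured by the polarized family of right-invariant operators produced in the differentiation step.
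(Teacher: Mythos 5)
The paper does not prove Theorem~\ref{godement}; it is quoted verbatim from Godement's paper \cite{godement} and used as a black box in the proof of Proposition~\ref{stationaryharmonic}. There is therefore no ``paper's own proof'' to compare against, so what follows is an assessment of your sketch on its own merits.

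Your outline is essentially the classical argument, and each of the three stages is sound. In the mollification step, the hypothesis $f(gh)=\int_K f(xkgh)\,dk$ together with Fubini does give the mean-value identity for $f\ast\phi$, and since $f$ is already left $K$-invariant one does not even need $\phi$ to be bi-$K$-invariant for that displayed computation; left $K$-invariance of $f\ast\phi$ is automatic and harmonicity passes to $L^1_{\mathrm{loc}}$ limits (one should also invoke elliptic regularity for the Laplacian, which lies in $\mathbb{D}(X)$, to know that the limit is smooth once shown harmonic in the distributional sense). Differentiating at $x=\exp(tX)$ produces $\int_K \bigl((X^R)^n f\bigr)(kg)\,dk=0$ for $n\ge 1$ exactly because $X^R$ differentiates from the left, and polarization together with the Poincar\'e--Birkhoff--Witt symmetrization map does give the full augmentation ideal of $U(\mathfrak g)$ acting by right-invariant operators.

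The step you flag as the obstacle is the genuine one, and it is worth noting a shortcut that resolves most of it. From $\int_K (D^R f)(kg)\,dk=0$ you do not need to restrict a priori to $\mathrm{Ad}(K)$-invariant lifts: using $L_{k^{-1}}D^R L_k=(\mathrm{Ad}(k^{-1})u)^R$ and $L_{k^{-1}}f=f$, the integral equals $\bigl(\bar u^R f\bigr)(g)$ where $\bar u=\int_K \mathrm{Ad}(k)u\,dk$ is the $K$-average. Thus one automatically gets $v^R f=0$ for every $v\in U(\mathfrak g)^K$ with zero constant term, and every such $v$ arises (take $u=v$). What then remains is precisely the identification of the algebra of invariant operators on the symmetric space with the subquotient $U(\mathfrak g)^K/\bigl(U(\mathfrak g)^K\cap\mathfrak k\,U(\mathfrak g)\bigr)$, together with the observation that an operator killing constants lifts to an element without constant term. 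You must also be consistent about sides: since $f$ is left $K$-invariant it descends to $K\backslash G$, on which $G$ acts by right translations, and invariant operators there correspond to right-invariant operators on $G$ --- exactly the $D^R$ your differentiation produced; the paper's notation ``$X=G/K$'' should be read with this in mind. These are standard facts (Harish--Chandra), so the gap you identified is real but routine, and I would accept your sketch as a correct outline of the proof.
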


\subsection{Transversely Invariant and Harmonic measures}

\subsubsection{Foliations and harmonic measures}
Suppose $\mathcal F$ is a foliation of a compact manifold $M$, with $C^3$ leaves, depending $C^1$ transversely. Then there exists a finite family of foliation boxes $D_i\times T_i$ covering $M$, where $T_i$ is transverse to the leaves. Recall that the change of coordinates from $D_i\times T_i$ to $D_j\times T_j$ are of the form
$$ (x_i,t_i)\mapsto (x_j=x_j(x_i,t_i), t_j(t_i)).$$
The maps $t_j(t_i)$ generate a pseudo-group on the union $T=\cup_i T_i$, 
called the holonomy pseudo-group. A measure on $T$ which is invariant under the holonomy pseudo-group is called a transversely invariant measure.

In \cite{garnett}, Lucy Garnett defined the notion of harmonic probability measures for $C^1$-transversely compact foliated spaces, such that each leaf is endowed with a Riemannian metric. In her work she showed these measures always exists, unlike holonomy invariant measures. In the setting of locally free actions of Lie groups on compact manifolds, it is not hard to see that invariant probability measures exist if and only if holonomy invariant measures exist, see \cite[Theorem~9.1]{plante} and Remark \ref{idea of stuck zimmer}.

Suppose $(M,\mathcal F)$ is a transversely $C^1$ and leafwise $C^3$ foliated space, endowed with  Riemannianm metrics along each leaf. Then there is a Laplace operator $\Delta$ associated to this foliation, which acts on measurable functions $f$ on $M$ which are twice differentiable along the leaves of the foliation by $\Delta f(x)=\Delta_L(f\upharpoonright L)(x)$, where $L$ is the leaf of $(M,\mathcal F)$ containing $x$, and $\Delta_L$ is the Laplacian of the Riemannian metric associated to $L$. 

Associated to the above Laplace operator we have for each $t\geq 0$ a diffusion operator $D^t:C^0(M)\rightarrow C^0(M)$ given by $D^t(f):=F(t,-)$, where $F$ is the solution of the heat equation 
\begin{equation*}
    \frac{\partial F}{\partial t}=\Delta F,
\end{equation*}
with initial condition $F(0,x)=f(x)$. 

If $m$ is a measure on $M$, we can consider for any  $t\geq 0$ the measure $D^tm$ given by 
\begin{equation*}
    \int f d(D^tm):=\int D^t(f)dm.
\end{equation*}

A probability measure $m$ on $M$ is \emph{harmonic}, if $D^tm=m$ for all $t\geq 0$. Such a measure is ergodic, if it cannot be written as a convex combination of other harmonic measures. 

The following result gives a local characterization of harmonic measures:

\begin{theorem}\cite[Theorem~1.c]{garnett}\cite[Proposition~5.2]{candel}\label{garnett}
    A measure $m$ is harmonic if and only if $m$ locally (in a distinguished foliation coordinate system) disintegrates into a transversal sum of leaf measures, where almost every leaf measure is a positive harmonic function times the Riemannian leaf measure
\end{theorem}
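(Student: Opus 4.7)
The plan is to work in a fixed distinguished foliation chart $U \cong D \times T$ and disintegrate $m|_U = \int_T \mu_t \, d\nu(t)$ into conditional measures $\mu_t$ concentrated on the plaques $D \times \{t\}$; the theorem then reduces to showing that the local structure ``$\mu_t = h_t\, dV_{L_t}$ with $\Delta_{L_t} h_t = 0$'' is equivalent to the global heat-semigroup identity $D^s m = m$ for every $s \ge 0$.

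For the easy direction, fix $f \in C^0(M)$ supported in the chart and of class $C^2$ along leaves. Because the leafwise heat semigroup $e^{s\Delta_L}$ is formally self-adjoint with respect to $dV_L$, and $e^{s\Delta_L}h_t = h_t$ whenever $h_t$ is harmonic, one computes
\[
\int_D (D^s f)(x,t)\, h_t(x)\, dV_{L_t}(x) = \int_D f(x,t)\, h_t(x)\, dV_{L_t}(x).
\]
Integrating against $d\nu(t)$ and combining finitely many such charts by a partition of unity gives $\int D^s f\, dm = \int f\, dm$ for every admissible $f$, hence $D^s m = m$.

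For the substantive direction, differentiate $D^s m = m$ at $s = 0$: since the generator of the leafwise heat semigroup is $\Delta$, one obtains $\int \Delta f\, dm = 0$ for every continuous $f$ on $M$ that is $C^2$ along leaves with bounded leafwise derivatives of order up to two. Testing against product functions $f(x,t) = \varphi(x)\psi(t)$ with $\varphi \in C^2_c(D)$ and $\psi$ continuous on $T$, and using Fubini on the local disintegration, one extracts for $\nu$-almost every $t \in T$ the distributional identity
\[
\int_D \Delta_{L_t} \varphi\, d\mu_t = 0 \quad \text{for every } \varphi \in C^2_c(D).
\]
By Weyl's lemma applied to the elliptic operator $\Delta_{L_t}$ on the plaque (whose coefficients are at least $C^1$ since the foliation is leafwise $C^3$ and transversely $C^1$), $\mu_t$ is absolutely continuous with respect to $dV_{L_t}$ and its density $h_t$ is a classical harmonic function; positivity of $h_t$ follows from $\mu_t \ge 0$ together with the strong minimum principle on the connected plaque.

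The main obstacle is the transverse Fubini step: passing from the single global identity $\int \Delta f\, dm = 0$ to the leafwise weak harmonicity of $\mu_t$ for $\nu$-almost every $t$ requires fixing a countable dense subset of test functions $\{\varphi_n\} \subset C^2_c(D)$ and arguing, by separability, that a single $\nu$-conull set of $t$'s works simultaneously for all $\varphi_n$; one must also justify that a product function $\varphi(x)\psi(t)$ cut off to the chart yields a globally defined test function that is $C^2$ along each leaf even when a leaf re-enters the chart through multiple plaques. Once this transverse measurability and extension issue is settled, Weyl's elliptic regularity and the leafwise self-adjointness of $e^{s\Delta_L}$ close both implications.
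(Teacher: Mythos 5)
The paper does not prove this statement; it cites it to Garnett and Candel and uses it as a black box. On its own terms, your proposal is a correct sketch of the standard argument in those references: reduce semigroup invariance to the infinitesimal condition $\int \Delta f\, dm = 0$, localize via Fubini/separability to plaque-wise weak harmonicity of the conditionals $\mu_t$, then apply hypoellipticity of the (formal transpose of the) leafwise Laplace--Beltrami operator and the strong minimum principle.

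One caveat worth fixing in your ``easy direction.'' You integrate over the plaque $D$ and invoke self-adjointness of the heat semigroup $e^{s\Delta_L}$ together with $e^{s\Delta_L}h_t = h_t$, but that semigroup acts on and is self-adjoint with respect to the volume of the \emph{entire leaf}, and its fixed points are leaf-globally harmonic functions. The hypothesis, however, only gives $h_t$ harmonic on a single plaque, and a leaf may revisit the chart in several plaques with a priori unrelated densities; the consistency of the $h_t$'s along a leaf is a \emph{conclusion} of Garnett's theorem, not part of the local disintegration you are assuming. The cleaner fix, and what both references actually do, is to show the disintegration implies the infinitesimal condition directly: for $f$ compactly supported in a chart, write
\begin{equation*}
\int_M \Delta f\, dm = \int_T\!\left(\int_D \Delta_{L_t} f(\cdot,t)\, h_t\, dV_{L_t}\right) d\nu(t) = \int_T\!\left(\int_D f(\cdot,t)\, \Delta_{L_t} h_t\, dV_{L_t}\right) d\nu(t) = 0,
\end{equation*}
with no boundary term since $f(\cdot,t)$ has compact support in the plaque, and extend to general $f$ via a partition of unity. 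Then recover $D^s m = m$ from the infinitesimal condition by the constancy argument $\tfrac{d}{ds}\int D^s f\, dm = \int \Delta(D^s f)\, dm = 0$. Everything else in your writeup (the separability reduction, the Weyl/hypoellipticity step noting that the transpose of $\Delta_{L_t}$ with respect to Lebesgue measure is still elliptic and that the resulting smooth density, after dividing by $\sqrt{\det g}$, is $\Delta_{L_t}$-harmonic, and the positivity via the minimum principle) is correct and is essentially the route the cited references take.
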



The following result of Deroin and Kleptsyn regarding the existence of \emph{transversely invariant} probability measures, that is, a measure invariant under the holonomy pseudo-group,  for codimension one foliations of compact manifolds, is  essential for this paper.

Let $(M,\mathcal F)$ be a codimension one foliation of a closed manifold,  $C^3$ leafwise, and  $C^1$  transversely. 

\begin{theorem}\cite[Theorem~1.1.d]{dk}\label{dk}
If $(M,\mathcal F)$ has no holonomy invariant probability measure, then there exist $\m_1,\ldots, \m_r$ minimal subsets of $(M,\mathcal F)$ such that 
\begin{enumerate}
    \item $(\m_i,\mathcal F|\m_i)$ has a unique harmonic measure $\mu_i$;
    \item When $t$ goes to infinity, the diffusions $D^t f$ of a continuous
function $f:M\rightarrow\R$ converge uniformly to the function
$\sum_j c_j p_j$, where $c_j = \int f d\mu_j$. 
\end{enumerate}
\end{theorem}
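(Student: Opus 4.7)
The plan is to combine the classical fixed-point existence of harmonic measures with a codimension-one Lyapunov analysis of leafwise Brownian motion, the latter replacing the absent invariant transverse measure. First, for every nonempty closed $\mathcal F$-saturated subset $K\subseteq M$, the diffusion semigroup $\{D^t\}_{t\geq 0}$ restricts to operators on $\mathcal P(K)$ that are continuous in the weak-$\ast$ topology (since the leafwise heat kernel is smooth and $D^t$ is a Markov operator). Markov--Kakutani then yields a $D^t$-fixed probability measure, that is, a harmonic measure. Applying this to minimal $\mathcal F$-saturated closed subsets (which exist by Zorn) gives on each such minimal set $\mathcal M$ an ergodic harmonic measure, and by Theorem \ref{garnett} it disintegrates along leaves as a positive harmonic density times the Riemannian leaf measure.

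The heart of the argument is to prove, for an ergodic harmonic measure $\mu$ on a minimal set, that the transverse Lyapunov exponent of leafwise Brownian motion is strictly negative. Since the foliation is codimension one, the transverse holonomy along a Brownian path $w_t$ on a leaf is a germ of a real diffeomorphism whose derivative $DH_{w_t}$ is scalar. The multiplicative ergodic theorem applied to this scalar cocycle produces a $\mu$-a.s. constant
\begin{equation*}
\lambda(\mu)=\lim_{t\to\infty}\frac{1}{t}\log|DH_{w_t}|.
\end{equation*}
If $\lambda(\mu)\geq 0$, then a Furstenberg--Ledrappier style averaging, applied to pullbacks of a smooth density on a transversal along random holonomy sequences, would produce a probability measure on the transversal invariant under the holonomy pseudo-group, contradicting the hypothesis. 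This is the main technical obstacle: the sign of $DH_{w_t}$ must be tracked carefully (an orientability issue), the averaging must exploit the codimension-one structure to remain tight, and the boundary case $\lambda(\mu)=0$ must be ruled out by a quantitative non-degeneracy for the harmonic diffusion.

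Once $\lambda(\mu)<0$, two Brownian trajectories in a common foliation chart contract transversely, producing a ``stochastic synchronization'' phenomenon. This implies at once: (i) uniqueness of the ergodic harmonic measure on each minimal set $\mathcal M_i$, because the diffusion started from nearby transverse points becomes asymptotically identical; (ii) finiteness of the minimal sets, since each contracting minimal set attracts an open transverse neighborhood in $M$ and the corresponding basins are pairwise disjoint in a compact manifold; (iii) continuity and harmonicity of the basin limits $p_j:=\lim_{t\to\infty}D^t\mathbf 1_{\mathrm{basin}(\mathcal M_j)}$, which are nonnegative with $\sum_j p_j\equiv 1$. The uniform convergence $D^tf\to\sum_j\bigl(\int f\,d\mu_j\bigr)p_j$ then follows from a mean ergodic theorem for the compact Markov semigroup $\{D^t\}$ acting on $C^0(M)$, combined with the spectral gap supplied by the negative transverse Lyapunov exponent.
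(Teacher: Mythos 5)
This statement is \cite[Theorem~1.1.d]{dk}: it is imported as a black box and the paper offers no proof of it, so there is no in-paper argument against which to check your attempt. What you have written is a reconstruction of the Deroin--Kleptsyn strategy, and at the level of broad architecture it is aligned with theirs: existence of harmonic measures via Garnett/Markov--Kakutani; strict negativity of the transverse Lyapunov exponent of leafwise Brownian motion under the no-transverse-invariant-measure hypothesis, exploiting the codimension-one (hence scalar holonomy cocycle) structure; exponential contraction leading to uniqueness of the harmonic measure on each minimal set, finiteness of minimal sets via attracting basins, and the attraction statement for diffusions.

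There are, however, genuine gaps where the sketch substitutes a slogan for what is actually the hard part of Deroin--Kleptsyn. First, ruling out $\lambda(\mu)\ge 0$ is not a matter of ``quantitative non-degeneracy'': the delicate case is $\lambda(\mu)=0$, where one must produce an honest holonomy-invariant transverse measure from a zero-exponent scalar cocycle over the foliated Brownian motion; this requires a tightness/limiting construction for measures on a transversal (and handling the sign/orientation of the holonomy, which you flag but do not resolve). In rank-one the construction is close to Thurston's harmonic-measure argument and to Deroin--Dujardin, but it is not a one-line consequence of Osedelets. Second, the uniform convergence $D^t f\to\sum_j c_j p_j$ is not obtained from ``mean ergodic theorem plus spectral gap'': the diffusion semigroup on $C^0(M)$ does not have a spectral gap in this generality, and Deroin--Kleptsyn instead argue directly that every subsequential weak-$\ast$ limit of the diffused Dirac masses is a harmonic probability measure, hence a convex combination of the finitely many $\mu_j$; the uniformity then comes from the contraction and compactness of $M$, not from spectral theory. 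Finally, for finiteness you need the basins to be open and to cover $M$, and openness of the basin is precisely where the negative exponent and the synchronization are used; asserting it without argument leaves the finiteness claim unsupported.
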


This implies the following.

\begin{corollary}
    The measures $\mu_1,\ldots,\mu_r$ are the extremal points of the convex set of harmonic probability measures on $M$.
\end{corollary}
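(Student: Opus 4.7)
The plan is to establish the corollary by proving two inclusions: (i) each $\mu_i$, extended by zero from $\mathcal M_i$ to a measure on $M$, is an extreme point of the convex set $\mathcal H$ of harmonic probability measures on $M$; and (ii) every element of $\mathcal H$ is a convex combination of $\mu_1,\ldots,\mu_r$. Together these force the extreme points of $\mathcal H$ to coincide with $\{\mu_1,\ldots,\mu_r\}$. That each $\mu_i$ does define a harmonic probability measure on $M$ uses that $\mathcal M_i$ is closed and $\mathcal F$-saturated, so the leafwise Laplacian and hence the diffusion operator are compatible with the inclusion $\mathcal M_i\hookrightarrow M$.

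For (i), suppose $\mu_i=\alpha\nu_1+(1-\alpha)\nu_2$ with $0<\alpha<1$ and $\nu_1,\nu_2\in\mathcal H$. Since $\mu_i$ is supported on $\mathcal M_i$, so are $\nu_1$ and $\nu_2$. Restricting these to $(\mathcal M_i,\mathcal F|_{\mathcal M_i})$ gives harmonic probability measures there, and the uniqueness clause in Theorem~\ref{dk}.(1) forces $\nu_1=\nu_2=\mu_i$, so $\mu_i$ is extreme.

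For (ii), I would test an arbitrary $\mu\in\mathcal H$ against continuous $f\colon M\to\mathbb R$, exploiting harmonicity via the duality $\int D^t f\,d\mu=\int f\,d\mu$, valid for all $t\ge 0$. By Theorem~\ref{dk}.(2), $D^tf\to\sum_j c_jp_j$ uniformly with $c_j=\int f\,d\mu_j$; the $p_j$ are continuous as uniform limits of continuous functions. Passing the limit through the integral gives $\int f\,d\mu=\sum_j\alpha_j\int f\,d\mu_j$ with $\alpha_j:=\int p_j\,d\mu$. Since this holds for every continuous $f$, the Riesz representation theorem yields $\mu=\sum_j\alpha_j\mu_j$ as finite signed measures, and taking $f\equiv 1$ gives $\sum_j\alpha_j=1$.

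The main obstacle I anticipate is confirming the nonnegativity $\alpha_j\ge 0$, which is what makes this a genuine convex combination rather than just a signed decomposition. This should fall out of the fact that the minimal sets $\mathcal M_j$ are pairwise disjoint closed subsets of $M$: Urysohn's lemma then produces continuous $\phi_j\colon M\to[0,1]$ equal to $1$ on $\mathcal M_j$ and $0$ on $\bigcup_{i\neq j}\mathcal M_i$, so plugging $f=\phi_j$ into the identity above yields $\alpha_j=\int \phi_j\,d\mu\ge 0$. With this in hand, $\mu=\sum_j\alpha_j\mu_j$ is an honest convex combination, and any extreme $\mu\in\mathcal H$ must equal some single $\mu_j$, completing the identification of the extreme points.
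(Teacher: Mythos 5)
Your proposal is correct and follows essentially the same route as the paper: you recover the decomposition $\mu=\sum_j\alpha_j\mu_j$ from harmonicity plus the uniform convergence $D^tf\to\sum_jc_jp_j$ in Theorem~\ref{dk}, exactly as the paper does. The two places you add detail are in fact points the paper leaves implicit: (a) the extremality of each $\mu_i$ — the paper simply cites ``(1) in the previous Theorem,'' whereas you spell out the argument that any harmonic $\nu_1,\nu_2$ with $\mu_i=\alpha\nu_1+(1-\alpha)\nu_2$ are supported on the closed saturated set $\mathcal M_i$ and hence collapse to $\mu_i$ by uniqueness; and (b) the nonnegativity of the coefficients, where the paper takes $h_j=\int p_j\,dm\ge 0$ for granted (the $p_j$ are in fact $[0,1]$-valued attraction probabilities in Deroin--Kleptsyn), while you derive $\alpha_j\ge 0$ cleanly via Urysohn functions separating the pairwise disjoint minimal sets. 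Both additions are sound, and the Urysohn step is a nice way to make the nonnegativity independent of any structural knowledge of the $p_j$; it does rely on the $\mathcal M_j$ being pairwise disjoint, which holds since distinct minimal sets of a foliation are disjoint. In short, same proof skeleton, cleaner bookkeeping.
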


\begin{proof}
Let $m$ be any harmonic probability measure on $M$. Then 
\begin{equation*}
    \int D^t(f)dm=\int fd(Dtm)=\int fdm,
\end{equation*}
so that by (3) in the previous Theorem it follows that 
\begin{equation*}
    \int fdm=\lim_{t\rightarrow\infty}\int D^tfdm=\int \lim_{t\rightarrow\infty}D^tfdm=\sum_j c_j \int p_jdm=\sum_jh_j\int fd\mu_j,
\end{equation*}
where we can exchange the limit and the integral by uniform convergence, and $h_j=\int p_jdm$, thus $m=\sum_jh_j\mu_j$. Therefore, the convex hull of the measures $\mu_1,\ldots,\mu_r$ is the set of harmonic probability measures on $M$; that they are extremal points of this set follows from (1) in the previous Theorem.
\end{proof}

\subsection{Harmonic measures and $P$-invariant measures}

Let $G$ be a semisimple Lie group with finite center; $K$ a maximal subgroup of it; and $P$ a minimal parabolic of it. Let $\mu$ be an absolutely continuous probability measure on $G$ with $C^\infty$ density. 

From Theorem \ref{dk} we prove a dichotomy between the existence of invariant measures for locally free actions of $G$ on compact manifolds $M^{\dim(G)+1}$, and having finitely many extremal points for the set of $P$-invariant probability measures on $M$. 

To do this, we show that for any absolutely continuous and $K$-bi-invariant probability measure $\mu$ on $G$, any $\mu$-stationary probability measure is in fact harmonic with respect to the foliation induced by the $G$-orbits of the action, by using classic work of Furstenberg \cite{furstenberg}. This fact appears in \cite{candelpres}, without proof.

A Borel probability measure $\nu$ on $M$ is called \emph{stationary} if $\mu\ast\nu:=\int g_\ast\nu d\mu(g)=\nu$. We denote by $\mathcal M^1_\mu(M)$ the set of all such measures; and the set of $P$-invariant probability measures on $M$ by $\mathcal M^1_P(M)$.

We have the following classical theorems of Furstenberg.

\begin{theorem}\cite[Theorem~2.3]{furstenberg2}\cite[Proposition~1.3]{nzinventiones}\label{statboundary}
    Let $Q\leq G$ be a parabolic subgroup. Then $G/Q$ has a unique $\mu$-stationary measure. This measure is in the class of the smooth measure. If $\mu$ is left $K$-invariant, then this measure is the unique $K$-invariant measure on $G/Q$.
\end{theorem}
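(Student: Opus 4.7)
The plan is to establish the three assertions in turn: the existence and $K$-invariance parts are essentially formal, whereas uniqueness of the stationary measure on $G/Q$ is the main content, and smoothness then follows from the stationarity equation. The main obstacle is the contraction property of the random walk on the Furstenberg boundary $G/P$.

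For the existence and $K$-invariance part, the Iwasawa decomposition $G = KP$, together with $P \leq Q$, gives $G = KQ$, so $K$ acts transitively on $G/Q$ and there is a unique $K$-invariant Borel probability measure $\nu_K$ on $G/Q$, namely the pushforward of normalized Haar measure on $K$ under $k \mapsto kQ$. When $\mu$ is left $K$-invariant, a change of variables $g' = kg$ yields $k_*(\mu * \nu_K) = \mu * \nu_K$ for every $k \in K$, so $\mu * \nu_K$ is $K$-invariant and therefore equal to $\nu_K$; thus $\nu_K$ is $\mu$-stationary, and it lies in the smooth class as a pushforward of the smooth Haar measure on $K$. For general $\mu$, existence of a stationary measure on $G/Q$ follows from compactness of $G/Q$ and weak-$\ast$ continuity of $\nu \mapsto \mu * \nu$ via a Cesaro averaging argument.

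For uniqueness, I would reduce to the case of the Furstenberg boundary $G/P$. The canonical $G$-equivariant projection $\pi: G/P \to G/Q$ pushes any stationary measure on $G/P$ forward to a stationary measure on $G/Q$; conversely, by the universal property of the Furstenberg boundary, any $\mu$-stationary measure on $G/Q$ is the pushforward under a $G$-equivariant measurable map $G/P \to G/Q$ of the unique stationary measure on $G/P$, and any such equivariant map must agree with $\pi$ almost everywhere. Uniqueness on $G/Q$ therefore reduces to uniqueness on $G/P$. For $G/P$, the key property is contraction of the random walk: for admissible $\mu$, the random product $Y_1 \cdots Y_n$ acting on $G/P$ contracts $G/P$ to a random point $\xi(\omega)$ almost surely, so $(Y_1 \cdots Y_n)_* \nu' \to \delta_{\xi(\omega)}$ weakly for any probability measure $\nu'$ on $G/P$. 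Applied to any stationary $\nu'$, the martingale convergence theorem then identifies $\nu'$ with the law of $\xi$. Establishing this contraction property is the main obstacle, and requires genuine input from the geometry of semisimple Lie groups, through Lyapunov exponents of products of matrices or equivalently through the geometry of the symmetric space $G/K$.

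Finally, smoothness of the unique stationary measure $\nu$ on $G/Q$ follows from the stationarity equation $\nu = \mu * \nu$: since $\mu$ has $C^\infty$ density with respect to Haar on $G$, the convolution $\mu * \nu$ is absolutely continuous with $C^\infty$ density on $G/Q$, so $\nu$ is in the smooth class.
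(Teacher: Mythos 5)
The paper does not prove this statement; it cites it as a classical theorem of Furstenberg (Theorem~2.3 of the reference \emph{furstenberg2}) and Nevo--Zimmer (Proposition~1.3 of the reference \emph{nzinventiones}), so there is no proof internal to the paper to compare against.

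Your outline is the standard Furstenberg argument, and the parts you actually carry out are correct: existence via a Cesaro/Krylov--Bogolyubov averaging on the compact space $G/Q$; transitivity of $K$ on $G/Q$ (from $G=KP\subseteq KQ$) giving a unique $K$-invariant measure, which is stationary when $\mu$ is left-$K$-invariant; and smoothness of the stationary class from $\nu=\mu*\nu$ together with the $C^\infty$ density of $\mu$. You also correctly isolate uniqueness as the substantive point and openly leave the contraction/proximality property of the boundary action unproved, which is an honest acknowledgment rather than an error.

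One remark on the chosen route for uniqueness. Appealing to the universal property of the Poisson boundary to reduce the problem from $G/Q$ to $G/P$ does not actually sidestep the contraction step. To know that a given stationary measure $\nu$ on $G/Q$ is the image of the boundary measure on $G/P$ under a $G$-map, you must first know that $(G/Q,\nu)$ is a $\mu$-boundary, i.e.\ that the conditional measures $(Y_1\cdots Y_n)_*\nu$ converge a.s.\ to Dirac masses; but that is exactly the contraction property on $G/Q$. Since that contraction holds on every flag variety $G/Q$, not only $G/P$, the cleanest version of the argument runs the martingale convergence and contraction directly on $G/Q$, making the detour through $G/P$ unnecessary. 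Additionally, the claim that any $G$-equivariant measurable map $G/P\to G/Q$ agrees a.e.\ with the canonical projection deserves its own justification (it amounts to the fact that $eQ$ is the unique $P$-fixed point in $G/Q$, up to measure zero considerations), and as stated it is asserted rather than shown.
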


\begin{theorem}\cite[Theorem~2.1]{furstenberg2}\cite[Theorem~1.4.2]{nzinventiones}\label{bijectionPmu}
    Let $G$ be a connected non-compact semisimple Lie group with finite center. Let $\tilde\nu_0$ be any probability measure on $G$ which under the canonical $p:G\rightarrow G/P$ satisfies $p_\ast\tilde\nu_0=\nu_0$, where $\nu_0$ is the unique $\mu$-stationary measure on $G/P$. Then the assignment $\lambda\mapsto\tilde\nu_0\ast\lambda$ is a convex bijection from $\mathcal M_P^1(M)$ onto $\mathcal M_\mu^1(M)$.
\end{theorem}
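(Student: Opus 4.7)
The plan is to prove that $\lambda \mapsto \tilde\nu_0 \ast \lambda$ is a convex bijection in three steps: verify well-definedness, observe convex-affineness, and construct an explicit inverse using the Poisson boundary.

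\textbf{Well-definedness.} Given $\lambda \in \mathcal M^1_P(M)$, I want to show $\tilde\nu_0 \ast \lambda$ is $\mu$-stationary. The crucial observation is that when $\lambda$ is $P$-invariant, $g_\ast\lambda$ depends only on the coset $gP$, so the convolution $\nu \ast \lambda = \int g_\ast\lambda\, d\nu(g)$ depends only on $p_\ast\nu \in \mathcal M^1(G/P)$. Hence
\begin{equation*}
\mu \ast (\tilde\nu_0 \ast \lambda) = (\mu \ast \tilde\nu_0) \ast \lambda = \tilde\nu_0 \ast \lambda,
\end{equation*}
where the last equality uses $p_\ast(\mu \ast \tilde\nu_0) = \mu \ast \nu_0 = \nu_0 = p_\ast \tilde\nu_0$ by the $\mu$-stationarity of $\nu_0$ on $G/P$ from Theorem \ref{statboundary}. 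Convex-affineness is immediate from bilinearity of convolution.

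\textbf{Inverse via the Poisson boundary.} Given $\nu \in \mathcal M^1_\mu(M)$, consider the i.i.d.\ random walk $(g_n)_{n\geq 1}$ on $G$ with step distribution $\mu$, and form the sequence of random measures $\nu_n := (g_1 \cdots g_n)_\ast \nu \in \mathcal M^1(M)$. Stationarity of $\nu$ implies that $(\nu_n)$ is a martingale in the weak-$\ast$-compact space $\mathcal M^1(M)$, hence converges almost surely to a random limit $\nu_\omega$. Furstenberg's boundary theory identifies $(G/P, \nu_0)$ as a $\mu$-boundary via a measurable $G$-equivariant map, so $\omega \mapsto \nu_\omega$ descends to a measurable map $\psi : G/P \to \mathcal M^1(M)$ satisfying $\nu = \int \psi(\xi)\, d\nu_0(\xi)$ and $g_\ast \psi(\xi) = \psi(g\xi)$ for $\nu_0$-a.e.\ $\xi$ and $\mu$-a.e.\ $g$. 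Setting $\lambda := \psi(eP)$, this equivariance together with the fact that $P$ stabilizes $eP$ forces $\lambda \in \mathcal M^1_P(M)$, and unpacking $\nu = \int \psi(\xi)\, d\nu_0(\xi)$ through any lift $\tilde\nu_0$ of $\nu_0$ yields $\nu = \tilde\nu_0 \ast \lambda$. Injectivity of $\lambda \mapsto \tilde\nu_0 \ast \lambda$ also follows: the martingale construction recovers $\psi$ uniquely from $\nu$, hence determines $\lambda = \psi(eP)$.

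\textbf{Main obstacle.} The main technical content is the identification of $G/P$ as a factor of the Poisson boundary and the existence of the equivariant boundary map $\psi$ with the claimed disintegration property; this is precisely the classical work of Furstenberg in \cite{furstenberg} and \cite{furstenberg2}. Once this is invoked, the remaining steps are formal manipulations of convolution together with the fact that $P$-invariance makes the convolution on the left factor only through the quotient $G/P$.
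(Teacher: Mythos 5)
The paper does not prove this theorem; it cites it verbatim to Furstenberg (\cite[Theorem~2.1]{furstenberg2}) and Nevo--Zimmer (\cite[Theorem~1.4.2]{nzinventiones}). So there is no internal proof to compare against, and you should be assessed on the faithfulness of your reconstruction of the classical argument.

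Your forward direction is correct and is exactly the standard computation: $P$-invariance of $\lambda$ collapses the left convolution factor to $G/P$, and associativity plus $\mu\ast\nu_0=\nu_0$ on $G/P$ gives $\mu\ast(\tilde\nu_0\ast\lambda)=\tilde\nu_0\ast\lambda$. Affineness is immediate. The inverse is where the genuine content lies, and your outline (martingale $\nu_n=(g_1\cdots g_n)_\ast\nu$, weak-$\ast$ convergence, factorization through the Poisson boundary $(G/P,\nu_0)$, equivariance of the resulting map $\psi$, and the disintegration $\nu=\int\psi\,d\nu_0$) is precisely Furstenberg's proof.

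One step is stated more glibly than it should be: you set $\lambda := \psi(eP)$. But $\psi$ is only defined $\nu_0$-a.e.\ (equivalently, a.e.\ with respect to the smooth class on $G/P$), and $eP$ is a $\nu_0$-null point, so this literal evaluation is not meaningful; nor does the a.e.\ equivariance $g_\ast\psi(\xi)=\psi(g\xi)$ transfer to any specific $\xi$ without argument. The correct extraction is to \emph{undo the random direction}: choose a Borel section $s:G/P\to G$ and observe, using equivariance, that the measure $s(\xi)^{-1}_\ast\psi(\xi)$ is $\nu_0$-essentially constant and $P$-invariant; that essentially constant value is $\lambda$. Then $\tilde\nu_0\ast\lambda = \int g_\ast\lambda\,d\tilde\nu_0(g)$ collapses through $G/P$ to $\int\psi(\xi)\,d\nu_0(\xi)=\nu$ as desired, and injectivity follows because the martingale limit depends measurably and uniquely on $\nu$. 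With that correction your reconstruction matches Furstenberg's proof; without it, the key step of producing a $P$-invariant measure from $\psi$ is left resting on a pointwise evaluation that the construction does not supply.
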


The following fact appears without proof in \cite{candelpres}.

\begin{proposition}\label{stationaryharmonic}
    Any probability measure on $M$ which is $\mu$-stationary, is harmonic for the foliation of $M$ induced by the $G$-orbits.
\end{proposition}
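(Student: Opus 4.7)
The plan is to exploit Furstenberg's bijection (Theorem \ref{bijectionPmu}) to reduce the statement to the case where $\mu$ is the heat kernel on $G$, for which $\mu$-stationarity is tautologically equivalent to harmonicity. I will assume throughout that $\mu$ is left-$K$-invariant (the applications in this paper take $\mu$ to be $K$-bi-invariant). By Theorem \ref{statboundary}, the unique $\mu$-stationary measure $\nu_0$ on $G/P$ is the unique $K$-invariant probability on $G/P$, and hence does not depend on the choice of left-$K$-invariant smooth probability $\mu$; fix once and for all a lift $\tilde\nu_0$ on $G$ with $p_\ast \tilde\nu_0 = \nu_0$. Theorem \ref{bijectionPmu} then presents the \emph{same} map $\lambda \mapsto \tilde\nu_0 \ast \lambda$ as a convex bijection $\mathcal{M}_P^1(M) \to \mathcal{M}_\mu^1(M)$ for every such $\mu$, so the range $\mathcal{M}_\mu^1(M) = \{\tilde\nu_0 \ast \lambda : \lambda \in \mathcal{M}_P^1(M)\}$ is independent of $\mu$.

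Now let $\mu_t$ denote, for each $t > 0$, the heat kernel on $G$ at time $t$ for the Laplacian of a fixed $K$-bi-invariant Riemannian metric on $G$. Since the bi-invariant Laplacian commutes with both left and right $K$-translations, the density of $\mu_t$ is $K$-bi-invariant, so each $\mu_t$ is itself a smooth $K$-bi-invariant probability measure on $G$. By the previous paragraph $\mathcal{M}_{\mu_t}^1(M) = \mathcal{M}_\mu^1(M)$, so any $\mu$-stationary $\nu$ satisfies $\mu_t \ast \nu = \nu$ for every $t > 0$.

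It remains to identify the leafwise heat semigroup $D^t$ on measures with convolution by $\mu_t$. Each leaf $L = G \cdot y$ inherits its Riemannian metric from the bi-invariant metric on $G$ via the locally free action, making $G \to L$ a Riemannian local isometry; consequently the leafwise Brownian motion started at $y$ is the pushforward under $g \mapsto g \cdot y$ of Brownian motion on $G$ started at $e$, and one obtains $D^t f(y) = \int_G f(g \cdot y)\,d\mu_t(g)$ for every continuous $f$ on $M$ and every $y \in M$. Integrating against $\nu$ gives
\[
\int f\,d(D^t \nu) = \int D^t f\,d\nu = \int_M \int_G f(g \cdot y)\,d\mu_t(g)\,d\nu(y) = \int f\,d(\mu_t \ast \nu) = \int f\,d\nu,
\]
so $D^t \nu = \nu$ for all $t \ge 0$, which is precisely the definition that $\nu$ is harmonic for the foliation by $G$-orbits. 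I do not anticipate a significant obstacle; the main point to verify is the identification $D^t = \mu_t\ast$ on measures, which is a standard consequence of bi-invariance of the metric on $G$ and the homogeneous structure of each leaf.
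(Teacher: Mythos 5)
Your route --- swapping $\mu$ for the heat kernels $\mu_t$ via Furstenberg's bijection, and then identifying the leafwise diffusion semigroup with convolution by $\mu_t$ --- is genuinely different from the paper's, which instead applies Garnett's local characterization (Theorem \ref{garnett}), lifts the leafwise densities $h_t$ of $\nu$ to $G$, and shows they satisfy the averaging identity in Godement's theorem (Theorem \ref{godement}). Your plan has the advantage of avoiding all of the function-theoretic bookkeeping on $G$. Unfortunately there is a concrete error at the key step.

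The claim that ``the density of $\mu_t$ is $K$-bi-invariant'' is false. First, note the metric on $G$ one must use is not ``bi-invariant'' (no such metric exists on a noncompact simple group); to induce a well-defined metric on every leaf $G\cdot y \cong G/\Gamma_y$, one needs a right-$G$-invariant metric, and to have enough $K$-symmetry it is natural to take it also left-$K$-invariant (exactly the metric $d_G$ the paper fixes in Section \ref{Some Lie group preliminaries}). For such a metric the fact that the Laplacian commutes with the isometries $L_k$ ($k\in K$) and $R_g$ ($g\in G$) gives only the heat-kernel invariances $p_t(kx,ky)=p_t(x,y)$ and $p_t(xg,yg)=p_t(x,y)$. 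From these one obtains $p_t(e,kgk')=p_t(e,g\,(k'k)^{-1})$, which is \emph{not} $p_t(e,g)$ unless $k'=k^{-1}$. In other words, the density $g\mapsto p_t(e,g)$ of $\mu_t$ is only $\operatorname{Ad}(K)$-invariant (invariant under $g\mapsto kgk^{-1}$), not $K$-bi-invariant nor even left-$K$-invariant. Concretely, $(L_k)_\ast\mu_t$ is the heat-kernel measure based at $k$, not at $e$; these differ. As a consequence you may not invoke the left-$K$-invariance clause of Theorem \ref{statboundary} for $\mu_t$, and the deduction $\mathcal M^1_{\mu_t}(M)=\mathcal M^1_\mu(M)$ via a common lift $\tilde\nu_0$ does not follow as stated.

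The gap is repairable but requires an extra argument you did not supply. Using only the $\operatorname{Ad}(K)$-invariance of $\mu_t$, one checks that for any probability $\lambda$ on $G/P$ and any $k\in K$ one has $\mu_t\ast(k_\ast\lambda)=k_\ast(\mu_t\ast\lambda)$; thus the image under $k_\ast$ of the unique $\mu_t$-stationary measure on $G/P$ is again $\mu_t$-stationary, hence equal to it, so the $\mu_t$-stationary measure on $G/P$ is $K$-invariant and therefore equals $\nu_0$. With that substituted for the false ``$K$-bi-invariance of $\mu_t$'' claim, your Step~1 goes through and the remainder of your argument (the identification $D^t f(y)=\int_G f(g\cdot y)\,d\mu_t(g)$ for the right-$G$-invariant leafwise metric, and the resulting $D^t\nu=\mu_t\ast\nu$) is correct. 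So the overall strategy is sound and cleaner than the paper's, but as written it rests on a false symmetry statement about the heat kernel.
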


\begin{proof}
Let $\nu$ be a $\mu$-stationary probability measure. 

Recall that since $G\acts M^{\dim(G)+1}$ is locally free, its leaves yield a codimension one foliation of $M$, which we denote by $\mathcal F_G$. 

Since  $\nu=\mu\ast\nu$  and since $\mu$ is absolutely continuous on $G$ with smooth densitity, we get that the leafwise measures of $\nu$ with respect to the action $G$, denoted by $[\nu_x^G]$, are absolutely continuous with respect to the volume measure of the orbits of the action $G\acts M$, with smooth density.  

Let $B\simeq U\times [-1,1]$ be any foliation box of  $(M,\mathcal F_G)$, where $U\simeq\R^{\dim G}$. By identifying the plaques of $B$ and using Rokhlin's disintegration, the previous paragraph implies that the restriction of the measure $\nu$ to $B$ disintegrates as
    \begin{equation*}
        \nu|_B=\int h_t d\vol_{L_t}d\gamma(t),
    \end{equation*}
    where $\gamma$ is a finite measure supported on $[-1,1]$, $L_t$ is the $G$-orbit associated to the plaque corresponding to $t$, for $\gamma$-a.e. $t$, and $h_t$ is a non-negative function supported on the plaque associated to $t$, for $\gamma$-a.e. $t$. As foliation boxes intersect in open sets, $M$ can be covered by a finite  number of such boxes, and because of the uniqueness statements in both the Radon-Nikodym and the Rokhlin disintegration theorems, it follows that  we can extend each $h_t$ to a global function on the whole $G$-orbit containing the plaque associated to $t$, for $\gamma$-a.e. $t$. Denote this global function by $\overline{h_t}$.

    Let $\eta$ be the Haar measure on $K$ of total measure $1$, which can be seen as a measure on $G$, as $K$ is in particular a closed subset of $G$. It satisfies $\eta\ast\eta=\eta$.  If $p:G\rightarrow G/P$ is the canonical projection, then $p_\ast\eta$ is the $\mu$-stationary probability measure on $G/P$, by Theorem \ref{statboundary}. Applying Theorem \ref{bijectionPmu}, there exists a $P$-invariant measure $\lambda$ on $M$ such that $\nu=\eta\ast\lambda$. 

    Since by hypothesis we have $\mu\ast\nu=\nu$, we get from the construction of $\overline{h_t}$ that 
    \begin{equation}\label{muharmonicity}
        \overline{h_t}(x)=\int\overline{h_t}(gx)d\mu(g)
    \end{equation}
    for all $x\in L_t$, and $\gamma$-a.e. $t$. Let $\hat{h_t}$ be the lift of $\overline{h_t}$ to $G$, by taking a $G$-equivariant diffeomorphism $G/\Gamma_t\rightarrow L_t$, $\Gamma_t\leq G$ discrete subgroup, i.e., $\hat{h_t}(g)=\overline{h_t}(g\Gamma_t)$ for all $g\in G$. Then from (\ref{muharmonicity}) we get that 
    \begin{equation}\label{liftmuharmonicity}
        \hat{h_t}(h)=\int\hat{h_t}(gh)d\mu(g)
    \end{equation}
    for all $h\in G$.

    The argument in the first paragraph of the proof in \cite[theorem~5.6]{furstenberg} shows that the function
    \begin{equation}\label{jhnvmvkf3}
        g\mapsto\int\hat{h_t}(gkg_1)d\eta(k)
    \end{equation}
    is constant, for all $g_1\in G$, using (\ref{liftmuharmonicity}). For this, we may assume, without loss of generality, that $\mu$ is of class $B_\infty$, as defined in \cite[Section~5.5]{furstenberg}.

    We also have that 
    \begin{equation}\label{mbknmjegh}
        \int \hat h_t(kg_1)dk=\hat h_t(g_1),
    \end{equation}
    for all $g_1\in G$. This follows from the fact that $\eta\ast\nu=\nu$, as $\nu=\eta\ast\lambda$ and $\eta\ast\eta=\eta$.

    From (\ref{jhnvmvkf3}) and (\ref{mbknmjegh}) we obtain that 
    \begin{equation}\label{fhlhkhf}
        \int \hat h_t(gkg_1)dk=\hat h_t(g_1)
    \end{equation}
    for all $g,g_1\in G$, and $\gamma$-a.e. $t$. 
    
    Finally, as $\eta$ is the Haar measure on $K$, and $K$ is unimodular, as it is compact, we get that $k_\ast\eta=\eta$, and this in turn implies $\hat h_t$ is left $K$-invariant. Hence, from Theorem \ref{godement} and , we get that for $\gamma$-a.e. $t$, $\hat h_t$ is left $K$-invariant. This, along with (\ref{fhlhkhf}) and Theorem \ref{godement}, implies that $\hat h_t$ is annihilated by all differential operators on $G$ which are right $G$-invariant and annihilate all constant functions on $G$; since this is a local property, the same is true for $h_t$, for $\gamma$-a.e. $t$, i.e., $h_t$ is harmonic, and we are done by Theorem \ref{garnett}.
\end{proof}

Together, Proposition \ref{stationaryharmonic} and Theorem \ref{dk}
have the following consequence.

\begin{proposition}\label{uniquePergodicity}
    Let $G$ be a connected, non-compact, semisimple Lie group with finite center, having property (T), and acting locally freely and in a $C^1$ manner on a closed manifold $M$ of dimension $\dim(G)+1$. Let $P\leq G$ be a minimal parabolic subgroup. Suppose $G\acts M$ has no invariant probability measure. Then, there are minimal subsets $\mathcal M_1,\ldots, \mathcal M_r$ of the action $G\acts M$ such that:
    \begin{enumerate}
        \item The restricted action $P\acts \mathcal M_j$ is uniquely ergodic, for all $j=1,\ldots, r$, denote by $\mu_j$ the corresponding $P$-invariant probability measure on $\mathcal M_j$;
        \item The set of $P$-invariant probability measures on $M$ is the convex hull of the measures $\{\mu_j\}_{j=1}^r$, and these measures are the extremal points of this set.
    \end{enumerate}
\end{proposition}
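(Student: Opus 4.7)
The plan is to apply Theorem \ref{dk} (Deroin--Kleptsyn) to the codimension-one foliation $\mathcal F_G$ of $M$ given by $G$-orbits, to identify its finitely many ergodic harmonic measures with $P$-invariant measures via Proposition \ref{stationaryharmonic} and the Furstenberg bijection (Theorem \ref{bijectionPmu}), and to verify both parts of the conclusion. Since $G \acts M$ is $C^1$, locally free, and $\dim M = \dim G + 1$, the orbits form a codimension-one foliation $\mathcal F_G$ of $M$, with leaves equipped with the Riemannian metric induced by a fixed left-invariant metric on $G$. The Plante correspondence (\cite[Theorem~9.1]{plante}) identifies holonomy-invariant probability measures for $\mathcal F_G$ with $G$-invariant Borel probability measures on $M$; since the latter set is empty by hypothesis, Theorem \ref{dk} produces finitely many minimal subsets $\mathcal M_1, \ldots, \mathcal M_r$ of $(M, \mathcal F_G)$, each carrying a unique harmonic probability measure $\mu_j^H$, and the Corollary to that theorem identifies the set of all harmonic probability measures on $M$ with $\operatorname{conv}(\mu_1^H, \ldots, \mu_r^H)$.

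Next, I would upgrade each $\mu_j^H$ to a $\mu$-stationary measure. Fix an absolutely continuous, $K$-bi-invariant, smooth probability measure $\mu$ on $G$ of the form used in Theorem \ref{bijectionPmu}. Each compact $G$-invariant set $\mathcal M_j$ admits a $\mu$-stationary probability measure $\nu_j$ by a standard weak-$\ast$ compactness argument applied to the continuous affine map $\lambda \mapsto \mu \ast \lambda$ on the space of probability measures supported in $\mathcal M_j$. By Proposition \ref{stationaryharmonic}, $\nu_j$ is harmonic on $M$, and since it is supported in $\mathcal M_j$ while the distinct minimal sets $\mathcal M_1, \ldots, \mathcal M_r$ are pairwise disjoint, the extremal decomposition coming from Theorem \ref{dk} forces $\nu_j = \mu_j^H$. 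Thus each $\mu_j^H$ is $\mu$-stationary, and since stationarity is preserved under convex combinations we have $\operatorname{conv}(\mu_1^H, \ldots, \mu_r^H) \subseteq \mathcal M^1_\mu(M)$; the reverse inclusion is Proposition \ref{stationaryharmonic} combined with the Corollary to Theorem \ref{dk}, so $\mathcal M^1_\mu(M) = \operatorname{conv}(\mu_1^H, \ldots, \mu_r^H)$.

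Finally, Theorem \ref{bijectionPmu} provides an affine bijection $\mathcal M^1_P(M) \to \mathcal M^1_\mu(M)$, so $\mathcal M^1_P(M)$ is itself the convex hull of $r$ extremal points $\mu_1, \ldots, \mu_r$, defined as the preimages of $\mu_1^H, \ldots, \mu_r^H$; this proves (2). For (1), any $P$-invariant probability measure supported in $\mathcal M_j$ maps under the bijection to a $\mu$-stationary measure supported in $\mathcal M_j$ (since $\mathcal M_j$ is $G$-invariant and $\tilde\nu_0$ is supported on $G$), which by the previous paragraph must equal $\mu_j^H$; the bijection then forces the original measure to equal $\mu_j$, giving unique $P$-ergodicity on $\mathcal M_j$. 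I expect the main subtle point to be verifying that Proposition \ref{stationaryharmonic} applies to the measure $\nu_j$ (and that $\nu_j$'s support is correctly preserved as a measure on all of $M$), together with the compatibility of supports under the Furstenberg bijection; once those are in hand, the argument is essentially dictionary work between the three ingredients above.
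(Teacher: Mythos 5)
Your proposal is correct and follows essentially the same route as the paper: apply Deroin--Kleptsyn (Theorem \ref{dk}) to the orbit foliation after observing via Plante that there is no holonomy-invariant measure, use Proposition \ref{stationaryharmonic} to show stationary measures supported on each minimal set are harmonic and hence coincide with the unique harmonic measure there, and transfer everything to $P$-invariant measures through the Furstenberg bijection (Theorem \ref{bijectionPmu}). The only cosmetic difference is that the paper produces a stationary measure on each $\mathcal M_j$ by convolving $\tilde\nu$ with a $P$-invariant measure supplied by amenability of $P$, whereas you invoke a weak-$\ast$ fixed-point argument; both are standard and equivalent here.
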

\begin{proof}
    Suppose $\m$ is a minimal subset of $G\acts M$ with no invariant measure. Then $\m$ does not have a holonomy invariant measure, for otherwise $M$ would have a holonomy invariant measure supported on $\m$, since $\m$ is closed; and then, by the Remark \ref{idea of stuck zimmer}, $M$ would have a finite cover equivariantly diffeomorphic to $G/\Gamma\times S^1$, such a cover has a $G$-invariant measure, as all its orbits are closed, and from this measure a transversely invariant measure can be built, see \cite[Theorem~9.1]{plante} and \cite[Theorem~5.1]{sz}, yielding a contradiction.
    
    We can apply Theorem \ref{dk} in this case, as the orbits of the action $G\acts M$ carry an analytic differentiable structure, that depends transversely in a $C^1$ manner, coming from $G$. From this theorem we get that the foliation induced by the locally free action $G\acts \m$ has a unique harmonic measure. Let $\lambda$ be a $P$-invariant probability measure on $\m$, which exists as $P$ is amenable and $\m$ is compact. $\lambda$ is also a probability measure on $M$. Then from theorems \ref{bijectionPmu} and \ref{stationaryharmonic} we get that $\tilde\nu\ast\lambda$ is harmonic on $M$, recall $\tilde\nu$ is the extension to $G$ of the normalized Haar measure of $K$. However, as $\lambda$ is supported on $\m$, we get from the definition of convolutions that $\tilde\nu\ast\lambda$ is also supported on $\m$, and it is a harmonic probability measure. Thus, because of theorem \ref{bijectionPmu}, we get that $\lambda$ is the unique $P$-invariant probability measure on $\m$.
\end{proof}

\section{Topological measure rigidity}\label{section topological measure rigidity}

Let $G$ be a non-compact split simple Lie group with finite center. Throughout this section we consider a locally free action of $G$  on a closed manifold $M$.

For the rest of this section, fix a maximal torus $A\leq G$, along with a minimal parabolic subgroup $P\leq G$ such that $A\leq P$. We denote by $\Pi_P$ the set of all simple positive roots, with respect to $(G,A)$, of $P$. 

Throughout this section, fix a locally free action $G\acts M$, on a closed manifold $M$. In this section we give conditions under which a $P$-invariant probability measure on $M$ is $G^{-\alpha}$-invariant, for $\alpha\in\Pi_P$. 

\subsubsection{Some Lie group preliminaries}\label{Some Lie group preliminaries}

Fix a maximal compact subgroup $K$ of $G$. We fix a right $G$-invariant, bi-$K$-invariant metric $d_G$ on $G$. For a semisimple element $a$ of $G$ we consider the following subgroups of $G$:
\begin{equation}\label{Na and La}
\begin{split}
    N_a:=\{g\in G:\lim_{n\rightarrow-\infty} d_G(a^nga^{-n},e)=0\}\\
    L_a:=\{g\in G:\lim_{n\rightarrow\infty} d_G(a^nga^{-n},e)=0\}.
\end{split}
\end{equation}
We denote by $C_a$ the center of $a$ in $G$. If $a\neq e$, denote by $Q_a$ the proper parabolic subgroup of $G$ generated by $P$ and $C_a$.

For any closed subgroup $H$ of $G$ and any $\delta>0$, we set 
\begin{equation*}
    H^{<\delta}=\{h\in H:d_G(h,e)<\delta\}.
\end{equation*}

\begin{remark}
     $C_a$ normalizes both $N_a$ and $L_a$.
\end{remark}

\begin{remark}
    Since the given action $G\acts M$ is locally free and $M$ is compact, there exists some $\delta>0$ such that for any $n\in N_a\setminus\{e\}$ we have $nx\neq x$. Indeed,  if $n\in N_a$ and $x\in M$ with $nx=x$, then for some large $k\in\N$ we have that $d(e,a^kna^{-k})<\delta$, however $a^kx=a^knx=a^kna^{-k}a^kx$, which by our choice of $\delta$ implies that $a^kna^{-k}=e$, i.e., $n=e$. Therefore, for any $x\in M$ the map $N_a\ni n\mapsto nx$ is injective. The same holds for $L_a$.
\end{remark}

\begin{lemma}\label{multiplication maps}
    The maps $C_a\times N_a\rightarrow Q_a$ and $L_a\times C_a\times N_a\rightarrow G$ given by $(c,n)\mapsto cn$ and $(l,c,n)\mapsto lcn$ are diffeomorphisms of a neighborhood of the identity of $C_a\times N_a$ and $L_a\times C_a\times N_a$, respectively, onto a neighborhood of the identity of $Q_a$ and $G$, respectively
\end{lemma}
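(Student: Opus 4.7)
The plan is to verify that both multiplication maps have invertible differentials at the identity and then apply the inverse function theorem. Since $G$ is split and $a$ is semisimple, I would first reduce (after conjugation if necessary) to the case $a\in A$, so that $\operatorname{Ad}(a)$ acts on $\mathfrak{g}$ diagonalizably with positive real eigenvalues. Setting $H=\log a$ and using the root space decomposition $\mathfrak{g}=\mathfrak{a}\oplus\bigoplus_\alpha \mathfrak{g}^\alpha$, $\operatorname{Ad}(a)$ acts on $\mathfrak{g}^\alpha$ by the scalar $e^{\alpha(H)}$. I would then define
\begin{equation*}
\mathfrak{l}_a:=\bigoplus_{\alpha(H)<0}\mathfrak{g}^\alpha,\qquad \mathfrak{c}_a:=\mathfrak{a}\oplus\bigoplus_{\alpha(H)=0}\mathfrak{g}^\alpha,\qquad \mathfrak{n}_a:=\bigoplus_{\alpha(H)>0}\mathfrak{g}^\alpha,
\end{equation*}
yielding the vector space direct sum $\mathfrak{g}=\mathfrak{l}_a\oplus\mathfrak{c}_a\oplus\mathfrak{n}_a$. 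Using the identity $a^n\exp(X)a^{-n}=\exp(\operatorname{Ad}(a^n)X)$ together with the dynamical definitions in (\ref{Na and La}), I would check that $\operatorname{Lie}(N_a)=\mathfrak{n}_a$, $\operatorname{Lie}(L_a)=\mathfrak{l}_a$, and $\operatorname{Lie}(C_a)=\mathfrak{c}_a$; indeed, for $X$ in the eigenspace where $\operatorname{Ad}(a)$ acts with eigenvalue $\lambda$, the convergence $\operatorname{Ad}(a^n)X\to 0$ as $n\to-\infty$ is equivalent to $|\lambda|>1$, and similarly for the other two pieces.

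Next, I would show that $\operatorname{Lie}(Q_a)=\mathfrak{c}_a\oplus\mathfrak{n}_a$. Since the statement presumes $Q_a$ to be a proper parabolic containing $P$ and $C_a$, $a$ must lie in the closed Weyl chamber associated to $P$, so every positive root $\alpha$ (with respect to $P$) satisfies $\alpha(H)\geq 0$. Consequently $\operatorname{Lie}(P)=\mathfrak{a}\oplus\bigoplus_{\alpha>0}\mathfrak{g}^\alpha\subseteq \mathfrak{c}_a\oplus\mathfrak{n}_a$, while $\mathfrak{c}_a\subseteq \mathfrak{c}_a\oplus\mathfrak{n}_a$ is trivial; moreover $\mathfrak{c}_a\oplus\mathfrak{n}_a$ is itself a Lie subalgebra since $\{\alpha:\alpha(H)\geq 0\}$ is closed under addition. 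Thus the Lie subalgebra generated by $\operatorname{Lie}(P)\cup\mathfrak{c}_a$ is exactly $\mathfrak{c}_a\oplus\mathfrak{n}_a$, and for the reverse inclusion $\mathfrak{n}_a\subseteq\operatorname{Lie}(P)$ since every $\alpha$ with $\alpha(H)>0$ is positive. This yields $\operatorname{Lie}(Q_a)=\mathfrak{c}_a\oplus\mathfrak{n}_a$.

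Finally, by the Baker--Campbell--Hausdorff formula, the differential at $(e,e)$ of $(c,n)\mapsto cn$ is the addition map $\mathfrak{c}_a\oplus\mathfrak{n}_a\to\operatorname{Lie}(Q_a)$, and similarly the differential at $(e,e,e)$ of $(l,c,n)\mapsto lcn$ is the addition map $\mathfrak{l}_a\oplus\mathfrak{c}_a\oplus\mathfrak{n}_a\to\mathfrak{g}$. Both maps are linear isomorphisms by the direct sum decompositions above, so the inverse function theorem yields the claim. The main subtle point is the identification $\operatorname{Lie}(Q_a)=\mathfrak{c}_a\oplus\mathfrak{n}_a$, which relies on $a$ lying in the closed $P$-Weyl chamber so that the positive root spaces are absorbed into $\mathfrak{c}_a\oplus\mathfrak{n}_a$; the rest is a routine application of the root space decomposition and the inverse function theorem.
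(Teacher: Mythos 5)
Your proof is correct and takes essentially the same route as the paper's one-line argument: identify $\operatorname{Lie}(L_a)$, $\operatorname{Lie}(C_a)$, $\operatorname{Lie}(N_a)$ via the root space decomposition, observe that the differentials at the identity are the addition maps onto $\operatorname{Lie}(Q_a)=\operatorname{Lie}(C_a)\oplus\operatorname{Lie}(N_a)$ and $\mathfrak g=\operatorname{Lie}(L_a)\oplus\operatorname{Lie}(C_a)\oplus\operatorname{Lie}(N_a)$, and apply the inverse function theorem. Your remark that the identification $\operatorname{Lie}(Q_a)=\mathfrak c_a\oplus\mathfrak n_a$ requires $a\in\overline{\mathcal W^P}$ is a point the paper's terse proof leaves implicit.
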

\begin{proof}
    This follows by showing the Jacobian of each of the maps above has full rank, as $\lie(G)=\lie(L_a)\oplus\lie(C_a)\oplus\lie(N_a)$ and $\lie(Q_a)=\lie(C_a)\oplus\lie(N_a)$, by using the root space decomposition of $\lie(G)$ with respect to $A$.
\end{proof}

\subsubsection{Deodhar's lemma on Weyl wall reflections}

Given a root $\alpha\in A^\ast$, the reflection in $A$ of the hyperplane $\ker(\alpha)$, which we denote by $w_\alpha$, is uniquely represented by an element of the Weyl group $N_G(A)/C_G(A)$ of $G$(here $N_G(A)$ denotes the normalizer of $A$ in $G$). We can get representatives in $N_G(A)$ of such reflections that can be expressed as a product of specific unitary elements. More precisely we have the following lemma by Deodhar, which uses the split hypothesis on $G$.

\begin{lemma}\cite[Lemma~1.3]{deodhar}\label{deodhar}
    Given any $u\in U^{-\alpha}\setminus\{e\}$, there are $u_1\in U^{-\alpha}$ and $v_1\in U^\alpha$ such that the product $w=u_1v_1u$ belongs to $N_G(A)$; and acts on $A$ as $w_\alpha$, i.e., $w_\alpha w^{-1}\in C_G(A)$.
\end{lemma}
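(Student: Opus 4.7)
The plan is to reduce the whole statement to an explicit matrix computation in $\SL(2,\R)$, via the $\mathfrak{sl}_2$-triple associated with the root $\alpha$, which is available precisely because $G$ is split.

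Since $G$ is split, the root spaces $\mathfrak g^{\pm\alpha}$ are one-dimensional. Choose generators $X_{\pm\alpha}\in\mathfrak g^{\pm\alpha}$ normalised so that $[X_\alpha,X_{-\alpha}]=H_\alpha$ is the coroot; then $\{H_\alpha,X_\alpha,X_{-\alpha}\}$ is an $\mathfrak{sl}_2$-triple in $\mathfrak g$. The corresponding rank one subgroup $G_\alpha\leq G$, generated by $U^\alpha$ and $U^{-\alpha}$, is locally isomorphic to $\SL(2,\R)$; under this local isomorphism, lower (resp.\ upper) triangular unipotents correspond to elements of $U^{-\alpha}$ (resp.\ $U^\alpha$), while the diagonal torus corresponds to the one-dimensional subtorus $A_\alpha\subseteq A$ dual to $\alpha$.

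The given element $u\in U^{-\alpha}\setminus\{e\}$ then corresponds to some $\bigl(\begin{smallmatrix}1 & 0\\ t & 1\end{smallmatrix}\bigr)$ with $t\neq 0$. Taking $v_1\in U^\alpha$ and $u_1\in U^{-\alpha}$ corresponding to $\bigl(\begin{smallmatrix}1 & -1/t\\ 0 & 1\end{smallmatrix}\bigr)$ and $\bigl(\begin{smallmatrix}1 & 0\\ t & 1\end{smallmatrix}\bigr)$ respectively, the direct multiplication
\begin{equation*}
\begin{pmatrix}1 & 0\\ t & 1\end{pmatrix}\begin{pmatrix}1 & -1/t\\ 0 & 1\end{pmatrix}\begin{pmatrix}1 & 0\\ t & 1\end{pmatrix}=\begin{pmatrix}0 & -1/t\\ t & 0\end{pmatrix}
\end{equation*}
exhibits $w:=u_1v_1u$ as a standard Weyl representative inside $G_\alpha$; in particular $w\in N_{G_\alpha}(A_\alpha)$ and conjugation by $w$ inverts every element of $A_\alpha$.

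The remaining point, which is the only place real care is needed, is to upgrade $w\in N_{G_\alpha}(A_\alpha)$ to $w\in N_G(A)$ acting as the reflection $w_\alpha$. Decompose $A=A_\alpha\cdot Z$ with $Z=(\ker\alpha)^\circ$. For every $z\in Z$ one has $\operatorname{Ad}(z)X_{\pm\alpha}=e^{\pm\alpha(\log z)}X_{\pm\alpha}=X_{\pm\alpha}$, so $z$ commutes with $U^\alpha$ and with $U^{-\alpha}$, hence with the whole of $G_\alpha$; in particular $wzw^{-1}=z$. Combined with $wA_\alpha w^{-1}=A_\alpha$, this gives $w\in N_G(A)$, and the induced automorphism of $A$ is trivial on $Z\subseteq\ker\alpha$ and inverts $A_\alpha$, which is exactly the Weyl reflection $w_\alpha$; hence $w_\alpha w^{-1}\in C_G(A)$, as required. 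The split hypothesis is what makes the argument clean, by forcing $\dim\mathfrak g^\alpha=1$ and reducing everything to an $\SL(2,\R)$ computation; without it, one would instead have to implement the same reflection inside a higher dimensional rank one subgroup.
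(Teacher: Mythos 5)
The paper offers no argument here: it simply cites \cite[Lemma~1.3]{deodhar}. Your proof is a correct, self-contained verification of that citation. The computation you give is the standard Chevalley relation in the rank-one subgroup $G_\alpha = \langle U^\alpha, U^{-\alpha}\rangle$: writing $u=\phi_\alpha\bigl(\begin{smallmatrix}1&0\\t&1\end{smallmatrix}\bigr)$ for the homomorphism $\phi_\alpha:\SL(2,\R)\to G_\alpha$ carrying unipotents to root groups, the identity
$\bigl(\begin{smallmatrix}1&0\\t&1\end{smallmatrix}\bigr)\bigl(\begin{smallmatrix}1&-1/t\\0&1\end{smallmatrix}\bigr)\bigl(\begin{smallmatrix}1&0\\t&1\end{smallmatrix}\bigr)=\bigl(\begin{smallmatrix}0&-1/t\\t&0\end{smallmatrix}\bigr)$ exhibits the required $u_1,v_1$, and the upgrade from $N_{G_\alpha}(A_\alpha)$ to $N_G(A)$ acting by $w_\alpha$ is handled correctly via $A=A_\alpha\cdot(\ker\alpha)^\circ$ and the fact that $(\ker\alpha)^\circ$ centralizes $\mathfrak g^{\pm\alpha}$, hence all of $G_\alpha$. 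Your remark about where the split hypothesis enters is also the right one: one-dimensionality of $\mathfrak g^{-\alpha}$ is what guarantees that an \emph{arbitrary} nontrivial $u\in U^{-\alpha}$ is of the form $\exp(tX_{-\alpha})$ and can be matched to a lower-triangular unipotent in $\SL(2,\R)$, which is exactly the form in which Deodhar's lemma is stated and used.
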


\subsubsection{Birkhoff ergodic theorem and measures invariant under unstable subgroup}

We need the following observation about $s$-ergodic measures that are also invariant under the unstable subgroup of $s$ in $G$. See also \cite[Proposition~8.4]{dh}.

\begin{lemma}\label{birkhoff for almost all px}
    Let $a\in A$, and suppose $\mu$ is a $N_a$-invariant and $a$-ergodic probability measure on $M$. For $\mu$-a.e. $x$ and a.e. $n\in N_a$, the Birkhoff ergodic theorem holds for $nx$ with respect to $\mu$ and $a$.  
\end{lemma}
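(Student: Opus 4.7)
The plan is to derive this from the classical Birkhoff ergodic theorem for the cyclic group $\langle a\rangle$ combined with a Fubini argument, exploiting the $N_a$-invariance of $\mu$ in an essential way.

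First, I would let $B\subseteq M$ denote the set of Birkhoff generic points for the action of $a$ on $(M,\mu)$, that is, the set of $x\in M$ such that for every $f$ in a fixed countable dense subset of $C(M)$ the time averages $\frac{1}{N}\sum_{k=0}^{N-1}f(a^kx)$ converge to $\int f\,d\mu$. Since $\mu$ is $a$-ergodic and $C(M)$ is separable, Birkhoff's ergodic theorem gives that $B$ is a Borel subset of $M$ with $\mu(B)=1$, and any $x\in B$ satisfies the Birkhoff ergodic theorem for $(a,\mu)$ against every continuous test function.

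Next, because $\mu$ is $N_a$-invariant, for every $n\in N_a$ one has $n_\ast\mu=\mu$ and hence $\mu(n^{-1}B)=1$. Fix a left Haar measure $\nu$ on $N_a$, and exhaust $N_a$ by compact sets $K_1\subseteq K_2\subseteq\cdots$ with $\bigcup_j K_j=N_a$, so each $(\nu|_{K_j})\times\mu$ is a finite Borel measure on $K_j\times M$. Consider
\begin{equation*}
    E:=\{(n,x)\in N_a\times M:nx\notin B\},
\end{equation*}
whose $n$-slices are exactly $n^{-1}(M\setminus B)$ and therefore all have $\mu$-measure zero. Applying Fubini's theorem on each product $K_j\times M$ and taking a countable union in $j$, I obtain that for $\mu$-a.e. $x\in M$ the set $\{n\in N_a:nx\notin B\}$ has $\nu$-measure zero. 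For any such $x$, for $\nu$-a.e. $n\in N_a$ the point $nx$ lies in $B$, which is exactly the asserted Birkhoff property.

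The argument is essentially unobstructed; the only minor technical points are the $\sigma$-finiteness of $\nu$, handled by the compact exhaustion above, and the restriction to a countable dense family of test functions in defining $B$ so that $B$ is genuinely Borel and of full $\mu$-measure.
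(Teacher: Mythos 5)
Your proof is correct, and it takes a genuinely more elementary route than the paper's. The paper derives the lemma from Proposition \ref{haarconditionals}: the $N_a$-invariance of $\mu$ forces the leafwise measures $\mu_x^{N_a}$ to be Haar, and since the Birkhoff-generic set $B$ has full $\mu$-measure, its conditionals along $N_a$-orbits are full for a.e.\ base point; translating this through the identification of conditionals with Haar gives the claim. Your argument avoids the leafwise-measure formalism entirely: you observe that each slice $\{x : nx\notin B\}=n^{-1}(M\setminus B)$ is $\mu$-null directly from $N_a$-invariance, and a single application of Fubini (made legitimate by exhausting $N_a$ by compacta so the relevant product measures are finite) flips the quantifiers. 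What the paper's route buys is uniformity with the surrounding machinery, since the leafwise-measure language is already in play throughout Section 3 and reappears in the entropy arguments; what your route buys is self-containedness and the absence of any appeal to disintegration along group orbits, and it also makes it transparent that the hypothesis that $N_a$ is the unstable horospherical subgroup of $a$ is irrelevant to this particular lemma, since only $N_a$-invariance of $\mu$ is used. One could quibble that you should record that $E$ is Borel (it is the preimage of $M\setminus B$ under the continuous action map $N_a\times M\to M$), but you implicitly cover this by arranging for $B$ to be Borel via a countable dense family of test functions, so the argument is complete.
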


\begin{proof}  
Since $N_a$ is by construction the unstable subgroup of $a$ in $G$, we obtain from proposition \ref{haarconditionals} that the Birkhoff ergodic theorem holds for $ux$ with respect to $\mu$ and $a$, for $\mu$-a.e. $x$ and $\haar_{N_a}$-a.e. $u$, since this theorem holds for $\mu$-a.e. $x$. 
\end{proof}

\subsubsection{$a$-ergodic measures and transverse Lyapunov exponents}

Take $a\in A$. Any $a$-ergodic probability measure on $M$ has a corresponding Osedelets' decomposition of $TM$, on a subset of full measure of $M$, along with corresponding Lyapunov exponents. Notice that we have canonical Lyapunov exponents, i.e. roots of the maximal torus containing $a$, coming from the Cartan flow of the locally free action $G\acts M$. 

\begin{definition}
    Given an $a$-ergodic probability measure $\nu$ on $M$, we consider the set $\Lambda_\nu^T(a)$ of \emph{transverse Lyuapunov exponents} of $\nu$ to be the set, consisting of real numbers, of all Lyapunov exponents of $a$ with respect to $\nu$ whose corresponding associated Lyapunov subspaces of $TM$ are not tangent to the orbits of $G\acts M$.
\end{definition}


We prove that there is no positive entropy for $\mu^P$ along the $G^\beta$-orbit partition for all $\beta\in\Delta_{Q_0}^c$. Compare this with \cite[Section~5.3]{brhw}; as in this reference, we will rely on the high entropy method, see \cite{ek} and \cite{el}.

\subsubsection{The high entropy method}
Let $G$ be a semisimple split Lie group, $A\leq G$ a maximal subtorus, and denote by $\Delta:=\Delta_{(G,A)}$ the corresponding root system.

The following proposition follows by combining the decomposition theorem in \cite[Theorem~8.4]{ek} with the invariance argument appearing in \cite[Proposition~7.1]{ekcomm}, using that there are points in the interiors of all the Weyl chambers of $A$ for which $\mu$ is ergodic, because of theorem \ref{pughshub}. In the following statement we make no assumption on the dimension of $M$.

\begin{proposition}\label{high entropy method}
    Suppose we have a locally freely action $G\acts M$, $M$ closed. Let $\mu$ be an $A$-invariant and ergodic measure on $M$. Let $\alpha,\beta\in\Delta$ be such that $\alpha+\beta\in\Delta$. If for $\mu$-a.e. $x$ both leafwise measures $\mu_x^{G^\alpha}$ and $\mu_x^{G^\beta}$ are non-atomic, then $\mu$ is $G^{\alpha+\beta}$-invariant.
\end{proposition}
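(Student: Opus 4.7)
The plan is to invoke the \emph{high entropy method} of Einsiedler and Katok in this smooth setting. The algebraic input is that $[G^\alpha,G^\beta]\supseteq G^{\alpha+\beta}$ is non-trivial here, since $G$ is split and the bracket of the root spaces $\mathfrak{g}^\alpha,\mathfrak{g}^\beta$ is a nonzero element of $\mathfrak{g}^{\alpha+\beta}$. The dynamical input is the hypothesized non-atomicity of both $\mu_x^{G^\alpha}$ and $\mu_x^{G^\beta}$. Together, these should force $\mu_x^{G^{\alpha+\beta}}$ to coincide with a positive multiple of $\haar_{G^{\alpha+\beta}}$ for $\mu$-a.e.\ $x$, after which Proposition \ref{haarconditionals} yields $G^{\alpha+\beta}$-invariance of $\mu$.

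First, using Theorem \ref{pughshub} I would fix a regular $a\in A$ in a Weyl chamber where $\alpha(a)>0$ and $\beta(a)>0$ (so also $(\alpha+\beta)(a)>0$), and for which $a$ acts ergodically on $(M,\mu)$. Then $G^\alpha,G^\beta,G^{\alpha+\beta}$ all lie inside the unstable horospherical subgroup of $a$; the leafwise measures along each are well defined on a conull set, and satisfy the usual $A$-equivariance, which on the coarse Lyapunov subgroup $G^{\alpha+\beta}$ gives
\begin{equation*}
(a^{-1})_\ast\mu_{ax}^{G^{\alpha+\beta}}\propto(\mathrm{Ad}(a))_\ast\mu_x^{G^{\alpha+\beta}},
\end{equation*}
where conjugation by $a$ stretches $G^{\alpha+\beta}$ by the factor $e^{(\alpha+\beta)(a)}>0$.

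The main step is to realize nontrivial elements of $G^{\alpha+\beta}$ inside $\supp(\mu_x^{G^{\alpha+\beta}})$ via commutators. Combining the product-structure decomposition of \cite[Theorem~8.4]{ek} for the leafwise measure along the full unstable horospherical of $a$ with the invariance-through-commutators scheme of \cite[Proposition~7.1]{ekcomm}, I would argue as follows: fix $\mu$-typical $x$, and by non-atomicity pick sequences $u_n\in\supp(\mu_x^{G^\alpha})\setminus\{e\}$ and $v_n\in\supp(\mu_x^{G^\beta})\setminus\{e\}$ with $u_n,v_n\to e$. The commutators $[u_n,v_n]\in G^{\alpha+\beta}$ are nontrivial for $n$ large, and the decomposition theorem together with a shadowing argument at nearby Lyapunov-regular points places these commutators, up to a controlled error, inside the support of $\mu_{y_n}^{G^{\alpha+\beta}}$ for some $y_n$ close to $x$. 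Pushing forward along $a^k$ and exploiting the stretching rule above, together with Lemma \ref{birkhoff for almost all px} to transfer from $y_n$ back to $x$ along a $\mu$-generic $a$-orbit, one populates $\supp(\mu_x^{G^{\alpha+\beta}})$ with nontrivial elements at arbitrarily large scales. Since leafwise measures are supported on closed subgroups and quasi-invariant under their own support, this forces $\supp(\mu_x^{G^{\alpha+\beta}})=G^{\alpha+\beta}$ and $\mu_x^{G^{\alpha+\beta}}\propto\haar_{G^{\alpha+\beta}}$, and Proposition \ref{haarconditionals} concludes.

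The main obstacle I anticipate is the careful transfer of the arguments of \cite{ek,ekcomm}, originally developed for homogeneous actions, to this smooth locally-free setting: one must verify that leafwise measures along unipotent one-parameter subgroups of $G$ behave here as in the algebraic case, and in particular that commutators of leafwise-measure-generic elements can be detected inside the support of the leafwise measure on $G^{\alpha+\beta}$. The role of Theorem \ref{pughshub} is essential in this step, because it provides ergodic regular elements in every Weyl chamber, allowing one to pick the conjugating element $a$ with whichever sign configuration on $\alpha,\beta,\alpha+\beta$ the argument demands, and to transfer support properties of leafwise measures across $a$-generic orbits.
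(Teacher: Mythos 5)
Your proposal matches the paper's approach essentially verbatim: the paper does not give a detailed proof of this proposition but simply states that it follows by combining the product-structure decomposition of \cite[Theorem~8.4]{ek} with the invariance argument of \cite[Proposition~7.1]{ekcomm}, invoking Theorem~\ref{pughshub} to supply ergodic regular elements in every Weyl chamber, which is precisely your plan (with the conclusion drawn via Proposition~\ref{haarconditionals}).
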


\subsection{Topological measure rigidity}

The main result of this section is the following proposition, which is heavily inspired by the work of Katok and Spatzier, and Deroin and Hurtado, see \cite[Corollary~5.2]{katokspatzier} and \cite[Lemma~8.2]{dh}. Let $\alpha$ be a root of $(G,A)$.

\begin{proposition}[Topological Measure Rigidity]\label{topological measure rigidity}
Suppose $s \in \ker(\alpha) \setminus \{e\}$. Let $\mu$ be an $s$-ergodic and $C_G(A)\ltimes\langle N_s,G^{\alpha}\rangle$-invariant probability measure on $M$ such that all elements in $\Lambda^T_\mu(s)$ are negative. For $\mu$-a.e. $x$ we have that if $ux\in\supp(\mu)$ for some $u\in G^{-\alpha}\setminus\{e\}$, then $\mu$ is $G^{-\alpha}$-invariant. 
\end{proposition}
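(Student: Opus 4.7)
The plan is to use Lemma \ref{deodhar} to convert the $G^\alpha$-invariance of $\mu$, which is given, into $G^{-\alpha}$-invariance, which is wanted, by transporting it through a Weyl reflection built from the given $u$. Explicitly, applying Lemma \ref{deodhar} to $u\in G^{-\alpha}\setminus\{e\}$ produces $u_1\in G^{-\alpha}$ and $v_1\in G^{\alpha}$ so that $w:=u_1v_1u\in N_G(A)$ represents the Weyl reflection $w_\alpha$. Because $\alpha(s)=0$, the reflection $w_\alpha$ fixes $s$ and preserves the sign of $\beta(s)$ for every root $\beta$; consequently conjugation by $w$ fixes $s\in A$, sends $N_s$ to $N_s$, sends $C_G(A)$ to $C_G(A)$, and swaps $G^\alpha$ with $G^{-\alpha}$. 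Hence $w_\ast\mu$ is $s$-invariant, $s$-ergodic, invariant under $C_G(A)\ltimes N_s$, and \emph{additionally} invariant under $G^{-\alpha}$. The strategy is then to prove $w_\ast\mu=\mu$.

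The first intermediate step is to show $\supp(\mu)$ is $G^{-\alpha}$-invariant, so that $\supp(w_\ast\mu)=\supp(\mu)$. Since $\alpha(s)=0$, $u$ commutes with $s$, so $s^n(ux)=us^nx$ for every $n$; for a $\mu$-generic $x$, $\{s^nx\}$ is dense in $\supp(\mu)$ by $s$-ergodicity, hence $\{us^nx\}\subseteq\supp(\mu)$ from the assumption $ux\in\supp(\mu)$ and the $s$-invariance of $\supp(\mu)$. Continuity of $u$ (and $u^{-1}$) then gives $u\cdot\supp(\mu)=\supp(\mu)$. Combining this with $A$-invariance of $\supp(\mu)$ and with the fact that conjugation by $A$ realises all positive scalings of $u$ in the one-dimensional root group $G^{-\alpha}$, each such scaling also preserves $\supp(\mu)$; by closure one obtains $G^{-\alpha}\cdot\supp(\mu)=\supp(\mu)$, and together with $G^\alpha$-invariance this yields $w\cdot\supp(\mu)=\supp(\mu)$.

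The second step is Pesin-theoretic. Since all transverse Lyapunov exponents of $s$ are assumed negative, the positive Lyapunov directions of $s$ at $\mu$-a.e.\ point are exactly the root spaces $G^\beta$ with $\beta(s)>0$, so the unstable manifold of $s$ coincides locally with the $N_s$-orbit. By $N_s$-invariance of $\mu$ and Proposition \ref{haarconditionals}, the conditionals of $\mu$ along $\mathcal{W}^u(\cdot;s)$ are Haar, and Lemma \ref{invariance from maximal entropy} then gives equality in the Ruelle--Margulis inequality, $h_\mu(s)=\sum_{\beta(s)>0}\beta(s)$. The very same conclusion applies to $w_\ast\mu$, with the same unstable foliation and the same leafwise Haar conditionals; note however that the centre-zero-exponent factor splits asymmetrically, since $\mu$ has Haar conditionals along $G^\alpha$ and $w_\ast\mu$ along $G^{-\alpha}$, so the two measures agree along the unstable and the torus directions but not \emph{a priori} along the centre.

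The main obstacle is bridging from this structural equality to $\mu=w_\ast\mu$. The plan is to compare the two $s$-ergodic measures on a common Pesin block using the absolute continuity of the stable holonomy (Theorem \ref{absolute continuity along stable}): since the transverse Pesin-stable manifold is one-dimensional and $s$-contracting, the transverse factor of any $s$-ergodic probability measure supported in $\supp(\mu)$ with the prescribed Haar unstable conditionals is rigidly determined, and stable holonomy lets one identify the transverse factors of $\mu$ and $w_\ast\mu$. Together with the common support and common unstable conditionals, this forces $\mu=w_\ast\mu$, and the $G^{-\alpha}$-invariance of $w_\ast\mu$ then transfers to $\mu$. I expect the rigidity of the transverse factor to be the most delicate point, requiring simultaneous control of the zero-exponent $G^\alpha$- and $G^{-\alpha}$-directions via the invariance under $\langle G^\alpha, G^{-\alpha}\rangle$-commutators that arises from the Deodhar decomposition of $w$.
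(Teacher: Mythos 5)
Your opening moves match the paper's: invoke Deodhar's lemma to manufacture a Weyl representative $w=u_1v_1u$ with $u_1\in G^{-\alpha}$, $v_1\in G^{\alpha}$, observe that $w_\ast\mu$ is $G^{-\alpha}$-invariant (and $s$-ergodic, $N_s$- and $C_G(A)$-invariant), and reduce the problem to proving $w_\ast\mu=\mu$. The Pesin-theoretic observations in your second step (unstable manifold is the $N_s$-orbit, Haar conditionals, equality in Ruelle--Margulis) are also part of the paper's setup. Where you and the paper part ways is exactly at the point you flag as "the most delicate": establishing $w_\ast\mu=\mu$.

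The gap is genuine. Your proposed mechanism --- that ``the transverse factor of any $s$-ergodic probability measure supported in $\supp(\mu)$ with the prescribed Haar unstable conditionals is rigidly determined'' --- is not true, and absolute continuity of stable holonomy does not supply it on its own. Two $s$-ergodic measures can share support and have Haar conditionals along the full unstable foliation while disagreeing along the centre/transverse directions; nothing in Ledrappier--Young or in Theorem \ref{absolute continuity along stable} pins down the transverse factor from this data. The paper's actual argument is a quantitative Hopf argument: it establishes, via absolute continuity of stable holonomy inside a Pesin block, a \emph{continuity of Birkhoff averages} statement (Propositions \ref{local rigidty of Pinvariant measures}, \ref{translated local rigidty of Pinvariant measures}) --- for $\mu$-a.e.\ $x$ and every nearby $y$, a positive-$\haar_{Q_s}$ set of perturbations $q$ of $y$ have $s$-Birkhoff averages $\epsilon$-close to $\int f\,d\mu$. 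The hypothesis $u_0x_0\in\supp(\mu)$ is then used in a sharp way: choosing a point $y$ near $u_0x_0$ and a suitable $v_2\in G^{\alpha}$, one applies this continuity simultaneously for $\mu$ at the translate and for $(w_\alpha)_\ast\mu$ via the Deodhar factorisation of $w_\alpha$ through $u_0$; the common $Q_s$-perturbation with a further $C_s$/$N_s$-splitting yields $\lvert\int f\,d\mu-\int f\,d(w_\alpha)_\ast\mu\rvert<2\epsilon$. Your proposal never makes the support hypothesis do this comparative work.

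A smaller issue: your Step 1 (deducing $G^{-\alpha}$-invariance of $\supp(\mu)$ from $ux\in\supp(\mu)$) only yields the one-sided inclusion $u\cdot\supp(\mu)\subseteq\supp(\mu)$; the $A$-conjugation trick then gives $\exp(ct\,v)\cdot\supp(\mu)\subseteq\supp(\mu)$ for $c>0$ and for the fixed generator $v$ with $u=\exp(tv)$, but never the negative half of the one-parameter group, and the inclusion need not be an equality. This step is in any case not used by the paper and cannot be salvaged before one already knows $\mu=w_\ast\mu$; it should be dropped rather than repaired.
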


The main goal of this section is to prove this proposition.  Compare this result with the aforementioned facts,  \cite[Corollary~5.2]{katokspatzier} and \cite[Lemma~8.2]{dh}. In particular, the proof of proposition \ref{topological measure rigidity} is motivated by the one of the latter paper which is itself a modification of the Hopf argument and uses accessibility coming from a factor map of a suspension space onto $ G/\Gamma$, for $\Gamma$ a lattice, by exploiting the ergodicity of singular elements of $A$ on $G/\Gamma$, with the Haar measure. Here, we use Proposition \ref{deodhar} coming from \cite{deodhar} to get accessibility. The idea of using \cite{deodhar} to get accessibility appears for instance in \cite{damjanovic2025zimmerprogrampartiallyhyperbolic}.

Unlike in proposition \ref{topological measure rigidity}, for the following proposition we do not need any hypothesis on the transverse Lyapunov exponents.

The following fact is a direct consequence of proposition \ref{topological measure rigidity} under the hypothesis of this fact, which justifies its name; compare the statement below with \cite[Corollary~5.2]{katokspatzier}. However, we shall prove this statement in a more general setting, more precisely, we do not make the assumption that the transverse Lyapunov exponents associated to the ergodic measure are negative. 

\begin{proposition}[Entropic measure rigidity]\label{entropic measure rigidity}
    Let $s\in\ker(\alpha)\setminus\{e\}$  and suppose $\mu$ is an $s$-ergodic and $C_G(A)\ltimes\langle N_s,G^{\alpha}\rangle$-invariant probability measure on $M$. If  $\mu^{G^{-\alpha}}_x$ is non-atomic for $\mu$-a.e. $x$, then $\mu$ is $G^{-\alpha}$-invariant. 
\end{proposition}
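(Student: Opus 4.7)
The plan is an entropic argument in the spirit of Einsiedler--Katok--Lindenstrauss, using the product structure of entropy together with the invariance-from-maximal-entropy principle. From $s$-ergodicity together with $C_G(A)\supseteq A$-invariance, $\mu$ is $A$-ergodic, so by Pugh--Shub (Theorem \ref{pughshub}) I may assume $s$ is generic in $\ker(\alpha)\cap A$ in the sense that $\{\beta\in\Delta : \beta(s)=0\}=\{\pm\alpha\}$, and pick a $\mu$-ergodic element $a\in A$ arbitrarily close to $s$ in the Weyl chamber of $s$ with $\alpha(a)>0$, so that for every root $\beta\neq\pm\alpha$ the sign of $\beta(a)$ agrees with that of $\beta(s)$.

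The product structure of entropy (Theorem \ref{entropy product structure}) gives
\[
h_\mu(a)=\sum_{\chi\in\hat{\mathcal L},\,\chi(a)>0} h_\mu\bigl(a\,\big|\,\mathscr W^\chi\bigr),
\]
where the sum is over coarse Lyapunov exponents of the $A$-action, which for split $G$ consist of the roots $\beta$ together with at most one further transverse exponent $\lambda^T$. The $N_s$-invariance forces $\mu_x^{G^\beta}=\haar_{G^\beta}$ for every $\beta$ with $\beta(s)>0$ by Lemma \ref{haarconditionals}, and $G^\alpha$-invariance does the same for $\beta=\alpha$; combined with the one-dimensionality of root spaces (as $G$ is split) and Lemma \ref{invariance from maximal entropy}, each such contribution attains its Ruelle maximum $\beta(a)$. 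Applying the same decomposition to $a^{-1}$, bounding each $h_\mu(a^{-1}\mid G^\beta)\le|\beta(a)|$ for $\beta(s)<0$ by the conditional Ruelle--Margulis inequality (Theorem \ref{ledrappier young conditional entropy}), equating $h_\mu(a)=h_\mu(a^{-1})$, and using the identity $\sum_{\beta(s)>0}\beta(a)=\sum_{\beta(s)<0}|\beta(a)|$ coming from $\sum_{\beta\text{ root}}\beta=0$ together with $\alpha(a)+(-\alpha)(a)=0$, I obtain
\[
h_\mu(a^{-1}\mid G^{-\alpha})\;\ge\;\alpha(a) + T^+ - T^-,
\]
where $T^+=h_\mu(a\mid\mathscr W^{[\lambda^T]})$ if $\lambda^T(a)>0$ and $T^+=0$ otherwise, and likewise $T^-=h_\mu(a^{-1}\mid\mathscr W^{[\lambda^T]})$ if $\lambda^T(a)<0$ and $T^-=0$ otherwise. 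Choosing the perturbation in the previous step so that $\lambda^T(a)\ge 0$ makes $T^-=0$, and one concludes $h_\mu(a^{-1}\mid G^{-\alpha})\ge\alpha(a)$. The reverse Ruelle bound is automatic, so equality holds, and Lemma \ref{invariance from maximal entropy} applied to $f=a^{-1}$ and $H=G^{-\alpha}$ yields the desired $G^{-\alpha}$-invariance of $\mu$.

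The main obstacle is handling the transverse Lyapunov contribution. Generically one can arrange $\lambda^T(a)\ge 0$ together with all the sign prescriptions coming from the chamber of $s$, but in the residual configuration where $\lambda^T$ is a strictly negative multiple of $\alpha$ no element of the chamber adjacent to $s$ containing $\{\alpha>0\}$ satisfies this, and the entropy balance above leaves an uncancelable deficit $-T^-$. In that degenerate case I expect to need a complementary Hopf--Deodhar accessibility argument---using Lemma \ref{deodhar} to transport the known $G^\alpha$-invariance of $\mu$ onto $G^{-\alpha}$ through a Weyl-reflection element of $N_G(A)$---where the non-atomicity hypothesis on $\mu_x^{G^{-\alpha}}$ guarantees a rich enough leafwise support to make the transfer succeed.
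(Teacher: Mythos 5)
Your entropy balance is tautological and never uses the non-atomicity hypothesis, so it cannot be the engine of the proof. Let $T^+$ (resp.\ $T^-$) denote the contribution of the transverse coarse Lyapunov exponent to $h_\mu(a)$ (resp.\ $h_\mu(a^{-1})$). Invariance of $\mu$ under $G^\beta$ for all $\beta$ with $\beta(a)>0$, together with $h_\mu(a)=h_\mu(a^{-1})$ and the conditional Ruelle--Margulis bound, already gives
\begin{equation*}
T^+ - T^- \;=\; -\sum_{\beta(a)>0}\Bigl[\beta(a) - h_\mu\bigl(a^{-1}\mid \mathcal W^{-\beta}\bigr)\Bigr] \;\le\; 0,
\end{equation*}
so the inequality $h_\mu(a^{-1}\mid\mathcal W^{-\alpha}) \ge \alpha(a)+T^+-T^-$ is an automatic consequence of the balance, not new information. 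To close the argument you would need $T^+\ge T^-$, i.e.\ every Ruelle deficit above to vanish simultaneously---which is equivalent to $\mu$ already being $G$-invariant. Choosing $a$ with the transverse exponent nonnegative does force $T^-=0$, but then the displayed identity also forces $T^+=0$ and $G^{-\beta}$-invariance for \emph{every} $\beta$, so again only the trivial case. In the only configuration carrying content---transverse exponent strictly negative on $\operatorname{Int}(\mathcal W^P)$, which is exactly the generic situation of Lemma~\ref{negativity of transversal lyapunov exponent on positive weyl chambers}---no such $a$ exists, and the entropy identity yields nothing. That ``residual configuration'' you dismiss is not a corner case; it is the whole proposition, and it is not restricted to the transverse exponent being a negative multiple of $\alpha$.

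Two further problems. First, the reduction to generic $s\in\ker(\alpha)$ via Pugh--Shub is not legitimate: the hypothesis is invariance under $\langle N_s,G^\alpha\rangle$ for the \emph{given} $s$, and if $\beta(s)=0$ for some $\beta\ne\pm\alpha$ then $G^{\pm\beta}\not\le\langle N_s,G^\alpha\rangle$, whereas a generic $\tilde s\in\ker(\alpha)$ has $G^{\beta}$ or $G^{-\beta}$ inside $N_{\tilde s}$; $\mu$ need not be invariant under that group, so you cannot run the entropy decomposition in the chamber of $\tilde s$. Second, if the transverse exponent is resonant with $-\alpha$ (precisely your ``degenerate case''), then $-\alpha$ and the transverse exponent lie in the same coarse Lyapunov exponent, and $h_\mu(a^{-1}\mid\mathcal W^{-\alpha})$ does not even appear as a separate summand in Theorem~\ref{entropy product structure}.

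The Hopf--Deodhar accessibility argument you relegate to a final caveat is, in fact, the paper's actual proof, and it works uniformly. One shows $(w_\alpha)_*\mu=\mu$ directly: by Lemma~\ref{deodhar}, factor $w_\alpha = v_1 u_1 v_0 c$ with $u_1\in G^\alpha$, $v_1\in G^{-\alpha}$, $c\in C_G(A)$, and $v_0\in G^{-\alpha}\setminus\{e\}$ chosen in the support of the non-atomic leafwise measure $\mu_x^{G^{-\alpha}}$; then match Birkhoff averages along $N_s$-orbits through $G^\alpha$-shifted base points for $\mu$ and for $(w_\alpha)_*\mu$, using the local $L_s C_s N_s$ product decomposition near the identity (Lemma~\ref{multiplication maps}) to absorb the discrepancy into a small $C_s$-factor that a uniformly continuous $f$ cannot distinguish. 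Since $(w_\alpha)_*\mu$ is automatically $G^{-\alpha}$-invariant (as $\mu$ is $G^\alpha$-invariant and $w_\alpha G^\alpha w_\alpha^{-1}=G^{-\alpha}$), this concludes. No entropy estimate is used. You should carry out this accessibility argument in detail rather than lead with the entropy balance.
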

\begin{proof of entropic measure rigidity negative transverse exponents}
    Let $x$ be generic enough for $\mu$ so that for $\mu_x^{G^{-\alpha}}$-a.e. $u$ we have $ux\in\supp(\mu)$; and $\mu_x^{G^{-\alpha}}$ is nonatomic, which can be done by hypothesis. Then there exists $u\in G^{-\alpha}\setminus\{e\}$ such that $ux\in\supp(\mu)$. Therefore, by proposition \ref{topological measure rigidity} we get that $\mu$ is $G^{-\alpha}$-invariant.
\end{proof of entropic measure rigidity negative transverse exponents}
\subsection{Entropy measure rigidity}






We now give the general proof of proposition \ref{entropic measure rigidity}, without using proposition \ref{entropic measure rigidity}. Although we will not use this general version in the proof of the main result of this paper, we give a proof of the general case, as some of the ideas appearing in the proof of Proposition \ref{topological measure rigidity} will appear in the following argument.

\begin{proof of entropic measure rigidity} 
    We just need to show that 
    \begin{equation*}
        (w_\alpha)_\ast\mu=\mu,
    \end{equation*}
    since $(w_\alpha)_\ast\mu$ is $G^{-\alpha}$-invariant, as $\mu$ is $G^{\alpha}$-invariant by hypothesis. 

    Since $s$ and $w_\alpha$ commute, we get that $(w_\alpha)_\ast\mu$ is $s$-ergodic as well.

    For $\mu$-a.e. $x$ we have the following:
    \begin{enumerate}
        \item\label{jhkmsvjf} the Birkoff ergodic theorem holds for $u'uvx$ with respect to $s$ and $\mu$ for $\mu_x^{G^{-\alpha}}$-a.e. $v$, $\haar_{G^\alpha}$-a.e. $u$, and $\haar_{N_s}$-a.e. $u'$; 
        \item\label{dbgkglhk} the Birkhoff ergodic theorem holds for $u'ux_0$ with respect to $s$ and $\mu$ for $\haar_{N_s}$-a.e. $u'$ and $\haar_{G^\alpha}$-a.e. $u$.
    \end{enumerate}
    Both properties hold as $\mu$ is $s$-ergodic and $\langle N_s, G^\alpha\rangle$-invariant.

    This can be done since $\mu$ is $s$-ergodic and $P$-invariant. Fix an $x_0$ along with some $v_0\in G^{-\alpha}\setminus\{e\}$ satisfying property (\ref{dbgkglhk}) above, which can be done as $\mu_x^{G^{-\alpha}}$ is non-atomic.

    By proposition \ref{deodhar} we get that there are $u_1\in G^{\alpha}$ and $v_1\in G^{-\alpha}$ such that $w_\alpha=v_1u_1v_0c$ for some $c\in C_G(A)$; however, $\mu$ is invariant under $c$, thus, to prove $w_\alpha\mu=\mu$, we need only prove $(v_1u_1v_0)_\ast\mu=\mu$, since
    \begin{equation*}\label{reduction of measure}
        (w_\alpha)_\ast\mu=(v_1u_1v_0c)_\ast\mu=(v_1u_1v_0)_\ast\mu.
    \end{equation*}

    Pick any $f\in C^0(M)$ and $\epsilon>0$. Since $M$ is compact, $f$ is uniformly continuous on $M$, hence there exists $\delta_0>0$ such that $|f(gx)-f(x)|<\epsilon$ whenever $g\in G^{<\delta_0}$. 

    Using the second map indicated in lemma \ref{multiplication maps} for $s$, and property (\ref{dbgkglhk}) above, we get that  for all $g\in G^{\leq\delta_0}$ we have 
    \begin{equation}\label{afba45hoh}
        \left |\limsup_{N\rightarrow\infty}\frac1N\sum_{j=0}^{N-1}f(s^jgu_1v_0y)-\int fd\mu\right |<\epsilon,
    \end{equation}
    as any $g\in G^{<\delta_0}$ can be written as $lcu$ for some $l\in L_s$, $c\in C_s^{\leq\delta_0}$ and $u\in N_s$ and because of our choice of $\delta_0$. 
    
    As $(w_\alpha)_\ast\mu=(v_1u_1v_0)_\ast\mu$ we get that the Birkhoff ergodic theorem holds for $u'vv_1u_1v_0 x_0$ with respect to $s$ and $(w_\alpha)_\ast\mu$ for a.e. $u'\in N_s$ and a.e. $v\in G^{-\alpha}$, because of our choice of $x_0$ with respect to property (\ref{jhkmsvjf}), and since $w_\alpha G^{-\alpha}w_\alpha^{-1}=G^\alpha$. Thus, for $\haar_{G^{-\alpha}}$-a.e. $v\in G^{-\alpha}$ and $\haar_{G}$-a.e. $g\in G^{<\delta_0}$ we have 
    \begin{equation}\label{asfoibv2}
        \left |\limsup_{N\rightarrow\infty}\frac1N\sum_{j=0}^{N-1}f(s^jgvv_1u_1v_0x_0)-\int fd(w_\alpha)_\ast\mu\right |<\epsilon,
    \end{equation}
    by decomposing elements in $G^{<\delta_0}$ using lemma \ref{multiplication maps}, and applying Fubini's Theorem.
    
    We can pick $v$ arbitrarily close to $v_1^{-1}$ so that this property holds for a.e. $u'\in N_s$, and $vv_1u_1v_0x_0$ is close enough to $u_1v_0x_0$ in such a way  that $G^{<\delta_0}\cdot vv_1u_1v_0x_0$ intersects $G^{<\delta_0}\cdot u_1v_0x_0$, and this intersection is open in the immersed topology of the orbit $G\cdot x_0$. Hence, we can take a point in this intersection that satisfies both inequalities (\ref{afba45hoh}) and (\ref{asfoibv2}), this implies 
    \begin{equation*}
        \left |\int fd(w_\alpha)_\ast\mu-\int fd\mu\right |<2\epsilon,
    \end{equation*}
    and since $\epsilon>0$ and $f\in C^0(M)$ were chosen arbitrarily, this proves $(w_\alpha)_\ast\mu=\mu$.
\end{proof of entropic measure rigidity}

\subsection{Extension of a nonuniform partially hyperbolic system by the center}

Let $a\in A$ and $\mu$ be an $a$-ergodic probability measure on $M$. For such measures the multiplicative ergodic theorem implies the  existence of a nonuniform partially hyperbolic subset $Z$, with respect to $a$ seen as a diffeomorphism of $M$, of full $\mu$ measure, see \cite[Remark~5.9, Section~5.4]{bpintro}. In the following lemma we prove that such subsets can be extended along orbits of the centralizer of $a$ to obtain a nonuniform partially hyperbolic subset with respect to $a$.

\begin{lemma}\label{Ca closedness of R}
    Take any $a\in A\setminus\{e\}$. Let $\mu$ be an $a$-ergodic probability measure on $M$. Then there exists a nonuniform partially hyperbolic subset $\mathcal R$ with respect to $a$, of full $\mu$ measure, which is invariant under multiplication by elements of $C_a$. Furthermore, for all $x\in\mathcal{R}$ we have 
    \begin{equation}\label{distros of extended system}
        D_xcE^\sigma(x)=E^\sigma(cx)
    \end{equation}
    for all $c\in C_a$, and $\sigma\in\{s,u,0\}$; and
    \begin{equation}\label{stable manifolds of extended system}
        c\cdot\mathcal W^\sigma(x)=\mathcal W^\sigma(cx),
    \end{equation}
    for $\sigma\in\{s,u,cu\}$.
\end{lemma}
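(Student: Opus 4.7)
The plan is to enlarge the standard nonuniform partially hyperbolic set $Z_0\subseteq M$ of full $\mu$-measure supplied by Osedelec's theorem applied to the diffeomorphism $a$ (cf.\ the remark following Definition \ref{definition of nonuniformly partially hyperbolic subset}) to $\mathcal R := C_a\cdot Z_0$. Clearly $\mu(\mathcal R) = 1$ and $\mathcal R$ is $C_a$-invariant. The crucial input is that every $c\in C_a$ commutes with $a$ as a diffeomorphism of $M$, so $a^n\circ c = c\circ a^n$ for all $n\in\Z$; the chain rule then yields the derivative cocycle equivariance
\begin{equation*}
D_{cx}a^n \;=\; D_{a^n(x)}c\circ D_x a^n\circ (D_x c)^{-1}.
\end{equation*}
For each $c\in C_a$, set $B(c):= \max\bigl(\sup_{y\in M}\|D_y c\|,\sup_{y\in M}\|D_y c^{-1}\|\bigr)$, which is finite by compactness of $M$.

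For $y = cx\in\mathcal R$ with $c\in C_a$ and $x\in Z_0$, I define $E^\sigma(y) := D_x c(E^\sigma(x))$ for $\sigma\in\{s,u,0\}$. For well-definedness, if $y = c'x'$ is another representation with $x'\in Z_0$, set $c_0 := (c')^{-1}c\in C_a$; then $x' = c_0 x$ and both $x,x'\in Z_0$. The equivariance formula together with $B(c_0)<\infty$ gives $\lim_{n\to\pm\infty}\frac{1}{n}\log\|D_{x'}a^n v\| = \lim_{n\to\pm\infty}\frac{1}{n}\log\|D_x a^n (D_x c_0)^{-1}(v)\|$ for every $v\neq 0$, since the conjugating factors contribute $O(1)$ after taking logarithms. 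Uniqueness of the Osedelec splitting at $x'$ then forces $D_x c_0(E^\sigma(x)) = E^\sigma(x')$, so the two definitions of $E^\sigma(y)$ agree, establishing (\ref{distros of extended system}). To check the conditions of Definition \ref{definition of nonuniformly partially hyperbolic subset} on $\mathcal R$, take $\lambda_1,\lambda_2,\mu_1,\mu_2,\varepsilon$ to be the $\mu$-a.s.\ constant values from $Z_0$. Fixing a measurable choice of representative $y\mapsto(c(y),x(y))$, set $C(y) := B(c(y))^2 C(x(y))$ and $K(y) := B(c(y))^{-2} K(x(y))$. Applying the equivariance formula, for $v\in E^\sigma(y)$ with $y = cx$ I obtain $\|D_y a^n v\| \leq B(c)\cdot C(x)\lambda_1^n e^{\varepsilon n}\cdot B(c)\|v\|$, and likewise for the other estimates in items (4)--(5); the angle bound (6) follows from $\|D_x c\|\cdot\|(D_x c)^{-1}\|\leq B(c)^2$. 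Tempered growth in item (2) transfers because $a^m(y) = c(y)\cdot a^m(x(y))$, so the $B(c(y))^{\pm 2}$ factors appear at both the base point and its iterate, cancelling against the tempered growth already valid on $Z_0$.

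Finally, the Pesin manifold equivariance (\ref{stable manifolds of extended system}) is immediate from the intrinsic characterization of $\mathcal W^s,\mathcal W^u,\mathcal W^{cu}$ via exponential (resp.\ sub-exponential) rates of $d(a^nx,a^ny)$: substituting $a^n\circ c = c\circ a^n$ and using the bi-Lipschitz bound $B(c)$ on $M$ shows the defining limits at $cx$ reduce to those at $x$ for the translated points, giving $c\cdot\mathcal W^\sigma(x)\subseteq\mathcal W^\sigma(cx)$; the reverse inclusion follows by applying $c^{-1}$. The main obstacle I expect is purely bookkeeping: verifying well-definedness and tempered growth of the constants $C(y), K(y)$ under the non-canonical choice of representative $y = cx$. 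This is handled uniformly once a measurable selection $y\mapsto(c(y),x(y))$ is fixed, since the only $c$-dependence in the estimates enters as the multiplicative factor $B(c)^{\pm 2}$, which commutes with $a^m$-iteration through $a^m(cx) = c\cdot a^m(x)$.
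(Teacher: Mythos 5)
Your proof takes essentially the same route as the paper's: saturate an $a$-invariant Oseledets regular set $Z_0$ by $C_a$, push forward the Oseledets splitting using that every $c\in C_a$ commutes with $a$ as a diffeomorphism, and extend the functions $C,K$. The well-definedness argument for $E^\sigma$ via uniqueness of the Oseledets splitting is correct and is in fact spelled out more carefully than in the paper.

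The one imprecise step is the tempered-growth transfer. A measurable selection $y\mapsto(c(y),x(y))$ chosen arbitrarily will in general \emph{not} satisfy $c(a^m y)=c(y)$ and $x(a^m y)=a^m x(y)$, so the cancellation you invoke through $a^m(cx)=c\cdot a^m(x)$ does not apply to the function $C(y)=B(c(y))^2C(x(y))$ as you have defined it: the representation your selection picks at $a^m y$ could involve a completely different, much larger $B(c(a^m y))$. Two clean fixes: (i) replace your definition by $C(y):=\inf\{B(c)^2C(x): c\in C_a,\ x\in Z_0,\ cx=y\}$ (and $K(y)$ by the analogous supremum), for which item (2) of Definition \ref{definition of nonuniformly partially hyperbolic subset} is immediate since any representation $y=cx$ induces the representation $a^m y=c\cdot a^m x$ with $a^m x\in Z_0$, and the Pesin estimates (4)--(6) survive the infimum; or (ii) observe that by your well-definedness argument every point of $C_a\cdot Z_0$ is Lyapunov--Perron regular for $a$ with the same exponents, and take the intrinsic Lyapunov-norm comparison constant from Oseledets--Pesin theory, which is automatically tempered. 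The paper is equally terse at this point, so this is not a deviation from its level of detail, but as written your justification hides a real issue.
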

\begin{proof}
    By ergodicity of $\mu$, we can choose an $a$-invariant nonuniform partially hyperbolic subset $\mathcal S$, such that the functions $\lambda_1,\lambda_2,\mu_1,\mu_2,\varepsilon:\mathcal S\rightarrow (0,\infty)$ appearing in the definition of nonuniform partially hyperbolicity in \cite[Section~5.4]{bpintro} are all constant. 

    Consider the set
    \begin{equation*}
        C_a\cdot\mathcal S=\{c\cdot x: c\in C_a, x\in\mathcal S\}.
    \end{equation*}

    If we define the corresponding subspaces $E^\sigma(cx)$ for $\sigma\in\{s,u,0\}$ by the formula indicated in the lemma, it follows that these subspaces are $a$-equivariant as $c\in C_a$.

    By using the corresponding functions $C,K:\mathcal S\rightarrow(0,\infty)$ for the nonuniform partially hyperbolic subset $\mathcal S$ as in \cite[Section~5.4]{bpintro}, we can construct corresponding measurable functions $C,K:C_a\cdot\mathcal S\rightarrow(0,\infty)$, which satisfy the properties indicated in the definition given in \cite[Section~5.4]{bpintro}, with respect to the positive real numbers $\lambda_1,\lambda_2,\mu_1,\mu_2,\varepsilon$ indicated above.

    Finally, the last equality of the lemma follows from compactness of $M$ and the definition of unstable manifold.
\end{proof}

\begin{remark}
    If $\mu$ and $s$ satisfy the hypothesis of proposition \ref{topological measure rigidity}, then, by considering the Cartan flow coming from the action $G\acts M$, we obtain that for $\mu$-a.e. $x$ the stable manifold $\mathcal W^s(x,s)$ exists, then for such $x$ the center unstable manifold with respect to $s$ at $x$ exists, in fact $\mathcal W^{cu}(x,s)=Q_s\cdot x$, where $Q_s$ is the subgroup of $G$ generated by $N_s$ and $C_s$, which follows from the assumption that all elements in $\Lambda_\mu^T(s)$ are negative.
\end{remark}

\begin{remark}\label{especific center unstable}
    By using the last equality of lemma \ref{Ca closedness of R} and the previous remark we get that since $Q_s=\langle N_s, C_s\rangle$, then for $\mu$-a.e. $x$ and $\haar_{Q_s}$-a.e. $q\in Q_s$ the central unstable manifold $\mathcal W^{cu}(qx;s)$ exists and equals $Q_s\cdot qx$.
\end{remark}

\subsection{Almost everywhere continuity of Birkhoff averages}

The following fact yields a ``continuity property" for the class of measures appearing in proposition \ref{topological measure rigidity}.

Let $s\in A$, and let $\mu$ be an $s$-ergodic, $\langle s\rangle\ltimes N_s$-invariant probability measure on $M$.

\begin{proposition}\label{local rigidty of Pinvariant measures}
     For $\mu$-a.e. $x$ we have that for any, $f\in C^0(M)$, $\eta>0$ and $\epsilon>0$, there exists  $\delta=\delta(\mu,x,f,\eta,\epsilon)>0$ such that for any  $y$ with $d(x,y)<\delta$, the set
    \begin{equation*}
        \left\{q\in Q_{s}^{<\eta}: \left |\limsup_{N\rightarrow\infty}\frac1N\sum_{j=0}^{N-1}f(s^jqy)-\int fd\mu\right |<\epsilon\right\},
    \end{equation*}
    has positive $\haar_{Q_{s}}$ measure.
\end{proposition}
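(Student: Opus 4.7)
The plan is to combine the $N_s$-invariance of $\mu$ (Lemma \ref{birkhoff for almost all px}), weak-* continuity of pushforwards by elements of $C_s$, and absolute continuity of stable holonomy (Theorem \ref{absolute continuity along stable}) to first establish the conclusion at $y=x$ and then transfer it to nearby $y$.

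First, I would pick $x$ in a full $\mu$-measure set where $x$ lies in a Pesin regular set $Z^l$ for the nonuniform partially hyperbolic structure of $s$, which I take $C_s$-invariant via Lemma \ref{Ca closedness of R}; Birkhoff's theorem holds simultaneously for a countable dense subfamily of $C^0(M)$ at $nx$ for $\haar_{N_s}$-a.e.\ $n$, by Lemma \ref{birkhoff for almost all px}; and $x$ is a density point of $Z^l$. Separability and uniform continuity then upgrade Birkhoff convergence to hold at such $nx$ for every $g\in C^0(M)$ simultaneously. Next, using Lemma \ref{multiplication maps} I factor $q\in Q_s^{<\eta}$ locally as $q=cn$ with $(c,n)\in C_s^{<\eta_1}\times N_s^{<\eta_1}$. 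Since $c$ centralizes $s$,
\[
\frac{1}{N}\sum_{j=0}^{N-1}f(s^j q z)=\frac{1}{N}\sum_{j=0}^{N-1}(f\circ L_c)(s^j n z),
\]
where $L_c:M\to M$ denotes left multiplication by $c$. For $z=x$ and $n$ in the $N_s$-good set, the right side converges to $\int f\circ L_c\, d\mu=\int f\, d((L_c)_\ast \mu)$. Weak-* continuity of $c\mapsto (L_c)_\ast \mu$ then produces a neighborhood $V\subset C_s^{<\eta_1}$ of the identity on which $|\int f \, d((L_c)_\ast \mu)-\int f\, d\mu|<\epsilon$. The subset $\{cn : c\in V,\ n\in N_s^{<\eta_1}\text{ good}\}\subset Q_s^{<\eta}$ therefore has positive $\haar_{Q_s}$-measure and verifies the conclusion at $y=x$.

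For $y$ with $d(x,y)<\delta$, I transfer this good set via stable holonomy. The submanifolds $T_x=N_s^{<\eta_1}\cdot x$ and $T_y=N_s^{<\eta_1}\cdot y$ are uniformly transverse to the stable direction $E^s$, since $\lie(N_s)\cdot(-)\subset E^u(-)$ (the roots generating $N_s$ are positive on $s$). Taking $\delta$ small relative to the Pesin constants at $x$, the stable-saturated neighborhood of $T_x$ meets $T_y$ in a set of positive volume, and Theorem \ref{absolute continuity along stable} yields absolutely continuous holonomy $T_y\to T_x$. Pulling back the Birkhoff-good subset of $T_x$ produces a positive $\haar_{N_s}$-measure set of $n$ for which $ny$ lies on the local stable manifold of some good $n'x$; uniform continuity of $f\circ L_c$ together with exponential stable contraction then yield the same Birkhoff limit $\int f\, d((L_c)_\ast \mu)$ at $ny$ as at $n'x$. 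Integrating over $c\in V$ as before concludes the argument.

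The chief obstacle is executing the absolute-continuity step when $\dim N_s$ is strictly less than the codimension of $E^s$ in $TM$, which can occur when not all transverse Lyapunov exponents of $\mu$ are negative. In that case $T_x$ and $T_y$ fail to be complementary-dimensional transversals to $E^s$, and the stable holonomy map is not a priori defined on a positive-volume portion of $T_y$. Resolving this should involve enriching $T_x, T_y$ by a direction transverse to the $G$-orbit inside $E^{cu}(-)$, or restricting to a sub-chart where the stable foliation is locally trivialized over $T_x$. Either route requires uniformity of the Pesin stable-manifold data across a neighborhood of $x$ inside $Z^l$, which is standard within a Pesin block but must be tracked carefully as $y$ varies.
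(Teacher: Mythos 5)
Your argument at $y=x$ is correct and parallels the paper's: factor $q=cn$ via Lemma \ref{multiplication maps}, use $N_s$-Birkhoff genericity of $nx$, and discard the $C_s$-factor at a cost of $\epsilon$ (the paper does this with uniform continuity of $f$; your route through weak-$\star$ continuity of $c\mapsto (L_c)_\ast\mu$ has the same content). The genuine gap is in the transfer step, and your own \emph{mea culpa} at the end mis-locates it.

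You take $T_x=N_s^{<\eta_1}\cdot x$ and $T_y=N_s^{<\eta_1}\cdot y$ as transversals to the stable lamination. But $\dim T_x=\dim N_s$, whereas the codimension of $E^s$ in $TM$ is $\dim E^u+\dim E^0\ge\dim N_s+\dim C_s$, and $C_s\supseteq A$ is never trivial. So $\dim T_x<\operatorname{codim}E^s$ holds \emph{always}, not only when some transverse exponent is nonnegative. Consequently the $\mathcal W^s_{\operatorname{loc}}$-saturation of $T_x$ has dimension $\dim N_s+\dim E^s<\dim M$, and for generic $y$ it meets $T_y$ in a set of $\nu_{T_y}$-measure zero (indeed of strictly smaller dimension), so Theorem \ref{absolute continuity along stable} never gets off the ground. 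Your proposed repair — enlarge $T_x,T_y$ by a direction \emph{transverse to the $G$-orbit inside $E^{cu}$} — cannot work either in the relevant regime: under the standing hypothesis used for Proposition \ref{topological measure rigidity}, all transverse exponents of $\mu$ are negative, so the one-dimensional transverse direction lives inside $E^s$, not $E^{cu}$. The directions you are missing are precisely the $C_s$-directions, which lie in $E^0$.

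The paper's fix is exactly that: replace $N_s^{<\eta_1}\cdot x$ by $Q_s^{<\eta_1}\cdot x_0$, where $Q_s=\langle C_s,N_s\rangle$. Since, for $\mu$-a.e.\ $x_0$, $Q_s\cdot x_0$ is tangent to $E^u\oplus E^0$ and (when all transverse exponents are negative) this is the full complement of $E^s$, the $Q_s^{<\eta_1}$-orbit is a complementary-dimensional transversal, a positive-measure Pesin block $Q_1=\{q\in Q_s^{<\eta_1}:qx_0\in\mathcal R^{L_0}\}$ exists, and Theorem \ref{absolute continuity along stable} really does give an absolutely continuous holonomy onto $Q_s^{<\eta_1}\cdot y$. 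After holonomy one lands at some $q_1x_0$ with $q_1=c_1n_1$, and \emph{then} the factorization trick you used at $y=x$ kicks in to strip off $c_1$. So the ingredients you list are the right ones and the $cn$-factoring is the right idea; what is missing is realizing that the factoring must happen \emph{after}, not \emph{instead of}, enlarging the transversal to the full $Q_s$-orbit.
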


\begin{proof}
   Choose $x_0$ satisfying the following properties:
    \begin{enumerate}[label=(\roman*)]
        \item For $\haar_{N_s}$-a.e. $u$ the Birkhoff ergodic theorem holds for $ux_0$ with respect to $\mu$ and $s$. This is generic for $\mu$ by $N_s$-invariant.
        \item  For $Q_s$-a.e. $q$, $qx_0$ has both local stable and unstable manifolds $\mathcal W^s_{\operatorname{loc}}(qx_0;s)$ and $\mathcal W^{cu}_{\operatorname{loc}}(qx_0;s)$, respectively, and the latter manifold equals $Q_s\cdot qx_0$. This is generic for $\mu$ as this measure is $N_s$-invariant, $Q_s=\langle C_s,N_s\rangle$, and because of remark \ref{especific center unstable}.  
    \end{enumerate}

    Take $0<\eta_1<\eta$ small enough so that for any $g\in G^{<\eta_1}$ we have that $|f(gx)-f(x)|<\epsilon$ for all $x\in M$, which can be done as $f$ is uniformly continuous on $M$. 

    Let $\mathcal R$ be the nonuniformly partially hyperbolic set given by Lemma \ref{Ca closedness of R}, and let $\mathcal R=\bigcup_{l\in\N}\mathcal R^l$ be an exhaustion into regular sets as in Section \ref{Smooth ergodic theory of nonuniformly partially hyperbolic diffeomorphisms}. Since $\mathcal R$ has full $\mu$ measure, we may assume, without loss of generality, that $x_0, qx_0\in\mathcal R$ for $\haar_{Q_s}$-a.e. $q$. 

    There exists some $L_0\in\N$ such that the set
    \begin{equation*}
       Q_1:= \{q\in Q_s^{<\eta_1}:qx_0\in\mathcal R^{L_0}\}
    \end{equation*}
    has positive $\haar_{Q_s}$ measure. 

    Since for $\haar_{Q_s}$-a.e. $q$ we have that $Q_s\cdot qx_0$ is tangent to $E^s(qx_0)$ and $E^0(qx_0)$(for the distributions of $\mathcal R$) at $qx_0$, and this manifold is transversal to the stable manifold $\locstable(qx,s)$, for $\haar_{Q_s}$-a.e. $q\in Q_s^{<\eta_1}$, and hence since $\locstable(-)$ is uniformly continuous on $\mathcal R^{L_0}$, it follows that $Q_s^{<\eta_1}\cdot x_0$ is a local transversal to the family $\mathcal L^{L_0}(x)$ defined in Section \ref{Absolute continuity along stable manifolds section}. Furthermore, it follows that there exists some $\delta>0$ such that $Q_s^{<\eta_1}\cdot y$ is a local transversal to the family $\mathcal L^{L_0}(x)$, for all $y\in M$ such that $d(x,y)<0$, using again the uniform continuity of $\locstable$ on $\mathcal R^{L_0}$.

    Hence, for such $y$, the holonomy map along the family of local stable manifolds $\mathcal L^{L_0}(x)$,
    \begin{equation*}
        \psi:Q_1\cdot x_0\rightarrow Q_s^{<\eta_1}\cdot y
    \end{equation*}
    is an injection, where defined, and both itself and its inverse are absolutely continuous by theorem \ref{absolute continuity along stable}. We may choose $\delta$ small enough, so that $\psi$ above is defined in a subset of $Q_1$ of positive $\haar_{Q_s}$ measure.

    Thus if $Q_2\subseteq Q_s^{<\eta_1}$ is such that $\psi[Q_1\cdot x_0]=Q_2\cdot y$, then $Q_2$ has positive $\haar_{Q_s}$ measure. Furthermore, for all $q\in Q_2$, we have that $qy\in\mathcal W_{\operatorname{loc}}^s(\psi^{-1}(qy);s)$, and if $q_1$ is the element of $Q_1$ such that $\psi(q_1x_0)=qy$, then
    \begin{equation}\label{hjhmnbmk}
        \lim_{N\rightarrow\infty}\frac1N\sum_{j=0}^{N-1}f(s^jqy)=\lim_{N\rightarrow\infty}\frac1N\sum_{j=0}^{N-1}f(s^jq _1x_0),
    \end{equation}
    by definition of stable manifold.

    We may assume $\eta_1$ was chosen small enough so that $Q^{<\eta_1}$ is in the image of a neighborhood of $(e,e)$ in $C_s\times N_s$ on which the first map of lemma \ref{multiplication maps} restricted to this neighborhood is a diffeomorphism onto its image. Since $Q_1\subseteq Q^{<\eta_1}$ has positive $\haar_{Q_s}$ measure we may assume $q_1=c_1n_1$, where $n_1$ satisfies condition (i) with respect to $x_0$.

    \begin{equation*}
        \limsup_{N\rightarrow\infty}\frac1N\sum_{j=0}^{N-1}\left |f(s^jq_1x_0)-f(s^jn_1x_0)\right |<\epsilon,
    \end{equation*}
    as $s^jq_1=c_1s^jn_1$, $c_1\in C_s^{<\eta_1}$, and because of our choice of $\eta_1$ above with respect to $f$ and $\epsilon$. This  inequality implies 
    \begin{equation*}
        \left |\limsup_{N\rightarrow\infty}\frac1N\sum_{j=0}^{N-1}f(s^jq_1x_0)-\int fd\mu\right |<\epsilon
    \end{equation*}
    for $\haar_{Q_s}$-a.e. $q_1\in Q_1$. Hence, from (\ref{hjhmnbmk}), we obtain the result, since $Q_1\subseteq Q_s^{<\eta_1}\subseteq Q_s^{<\eta}$.
\end{proof}

We also have the following slightly more general version of \ref{local rigidty of Pinvariant measures}. Let $\alpha$ be a root of $(G,A)$ with $\alpha(s)=0$.

\begin{proposition}\label{translated local rigidty of Pinvariant measures}
     Take $\mu$ as in Proposition \ref{local rigidty of Pinvariant measures}. For $\mu$-a.e. $x$ we have that for any $u\in G^{\alpha}$,  $f\in C^0(M)$, $\eta>0$ and $\epsilon>0$, there exists  $\delta=\delta(x,u,f,\eta,\epsilon)>0$ such that for any  $y$ with $d(ux,y)<\delta$, the set
    \begin{equation*}
        \left\{q\in Q_{s}^{<\eta}: \left |\limsup_{N\rightarrow\infty}\frac1N\sum_{j=0}^{N-1}f(s^jquy)-\int fd\mu\right |<\epsilon\right\},
    \end{equation*}
    has positive $\haar_{Q_{s}}$ measure.
\end{proposition}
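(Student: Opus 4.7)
The plan is to reduce Proposition \ref{translated local rigidty of Pinvariant measures} to Proposition \ref{local rigidty of Pinvariant measures} via the change of variables $y' = uy$. The crucial feature making this reduction work is the hypothesis $\alpha(s) = 0$, which forces $u\in G^\alpha$ to commute with $s$ and to lie in $C_s\subseteq Q_s$; combined with the $G^\alpha$-invariance of $\mu$, which is implicit in the context where this result is applied (Proposition \ref{topological measure rigidity}, whose $\mu$ is $C_G(A)\ltimes\langle N_s,G^\alpha\rangle$-invariant).

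First, I would verify that the full $\mu$-measure set $X_0\subseteq M$ on which Proposition \ref{local rigidty of Pinvariant measures} holds can be chosen $G^\alpha$-invariant. The defining conditions of a generic point are membership in the Pesin regular set $\mathcal R$ from Lemma \ref{Ca closedness of R}, together with Birkhoff convergence for $(\mu,s)$ at $nx$ for $\haar_{N_s}$-a.e.\ $n$. The former is $C_s$-invariant by Lemma \ref{Ca closedness of R}, hence $G^\alpha$-invariant because $G^\alpha\subseteq C_s$. For the latter, I observe that $u$ normalizes $N_s$ (as $u\in C_s$), so writing $nux_0=u(u^{-1}nu)x_0=un'x_0$ with $n'\in N_s$ and using $su=us$ yields
\begin{equation*}
\frac{1}{N}\sum_{j=0}^{N-1}f(s^jnux_0)=\frac{1}{N}\sum_{j=0}^{N-1}(f\circ u)(s^jn'x_0),
\end{equation*}
which converges to $\int(f\circ u)\,d\mu=\int f\,d(u_\ast\mu)=\int f\,d\mu$ by the $G^\alpha$-invariance of $\mu$; consequently $ux_0\in X_0$ whenever $x_0\in X_0$.

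Next I would perform the substitution $y'=uy$, which gives the identity $f(s^jquy)=f(s^jqy')$ for every $j$ and $q$. As $y$ ranges over the $\delta$-ball around $ux$, $y'$ ranges within a ball of radius $L\delta$ around $u^2x$, where $L$ is the Lipschitz constant of the diffeomorphism of $M$ induced by $u$. Since $u^2x\in X_0$ by the first step, I would apply Proposition \ref{local rigidty of Pinvariant measures} at the point $u^2x$ with the same $f,\eta,\epsilon$ to obtain some $\delta_0>0$ such that for every $y'$ with $d(u^2x,y')<\delta_0$ the set
\begin{equation*}
\left\{q\in Q_s^{<\eta}:\left|\limsup_{N\rightarrow\infty}\frac{1}{N}\sum_{j=0}^{N-1}f(s^jqy')-\int f\,d\mu\right|<\epsilon\right\}
\end{equation*}
has positive $\haar_{Q_s}$-measure. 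Setting $\delta:=\delta_0/L$ then finishes the argument.

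The main delicate point is the $G^\alpha$-invariance of $\mu$, which is what allows the generic set $X_0$ to be taken $G^\alpha$-invariant and in particular to contain $u^2x$ for every $u\in G^\alpha$. Without this invariance, the generic set might depend on $u$ in a way that intersecting over uncountably many translates destroys full measure, and the Birkhoff limits along the $u$-orbit of $x_0$ would not in general equal $\int f\,d\mu$. Beyond this observation, the argument is a direct change of variables combined with the continuity of the $G$-action, and invokes no new ideas beyond those already deployed in the proof of Proposition \ref{local rigidty of Pinvariant measures}.
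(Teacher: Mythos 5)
Your proof is correct, and the central observation you isolate — that the $G^{\alpha}$-invariance of $\mu$ is what makes the generic set from Proposition \ref{local rigidty of Pinvariant measures} stable under $G^{\alpha}$ — is exactly the ingredient the paper relies on as well. However, you package it differently, and more cleanly, than the paper does. The paper \emph{does not} prove that the full-measure set $X_0$ is literally $G^\alpha$-invariant; it only chooses $x_0$ so that Proposition \ref{local rigidty of Pinvariant measures} holds at $u'x_0$ for $\haar_{G^\alpha}$-\emph{almost every} $u'$, and then, given an arbitrary $u\in G^\alpha$, picks a nearby $u_1$ in that a.e.\ set with $d_G(u,u_1)<\eta/3$; the translation $u_1u^{-1}\in Q_s^{<\eta/3}$ is then absorbed into the $Q_s^{<\eta}$-window using the triangle inequality. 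Your route replaces the approximation step with an exact invariance statement: you verify, via the conjugation $n\mapsto u^{-1}nu$ on $N_s$ (legitimate since $G^\alpha\subseteq C_s$ normalizes $N_s$), the commutation $su=us$, and the identity $\int(f\circ u)\,d\mu=\int f\,d\mu$, that conditions (i)–(ii) from the proof of Proposition \ref{local rigidty of Pinvariant measures}, together with membership in the $C_s$-invariant Pesin set $\mathcal R$, are all preserved under every $u\in G^\alpha$. This buys you a slightly cleaner proof (no $\eta/3$ bookkeeping) and a marginally stronger statement (you never lose a null set of $u$'s), at the cost of the one extra verification that the Birkhoff condition propagates under all of $G^\alpha$. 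Since $\delta$ is allowed to depend on $u$, the Lipschitz rescaling $\delta:=\delta_0/L(u)$ is unproblematic. One small remark: as written, the displayed set involves $f(s^jquy)$ with $y$ within $\delta$ of $ux$ (so $uy$ near $u^2x$), whereas in its sole application inside the proof of Proposition \ref{topological measure rigidity} the paper effectively uses the variant with $f(s^jqy')$ for $y'$ near $ux$; your $G^\alpha$-invariant $X_0$ handles both variants with the same one-line substitution, so this is immaterial here.
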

\begin{proof}
    Suppose $x_0$ is such that Proposition \ref{local rigidty of Pinvariant measures} holds for $ux_0$, for $\haar_{G^\alpha}$-a.e. $u$, which can be done as $\mu$ is $G^\alpha$-invariant. Pick such a $u_1$, with $d_G(u,u_1)<\eta/3$.   

    If $y\in M$ is close enough to $ux$, then $u_1u^{-1}y$ is close to $u_1x$, and thus for such $y$ we have that since $u_1u^{-1}\in Q_s$ and $d_G(u,u_1)<\eta/3$, it follows that the set of $q\in Q_s^{<\eta/3}$ satisfying the inequality of this proposition for the point $u_1y$ has positive $\haar_{Q_s}$ measure.  Therefore, the set of $q\in Q_s^{<\eta}$ satisfying the inequality of this proposition for the point $uy$ has positive $\haar_{Q_s}$ measure, and thus, we obtain the result.
\end{proof}


\subsection{Proof of Topological Measure Rigidity}

We now prove Proposition \ref{topological measure rigidity}.  

\begin{topological measure rigidity}
    Suppose $x$ is $\mu$ generic enough for Proposition \ref{translated local rigidty of Pinvariant measures} to hold with respect to $s$ and $\mu$.

    By hypothesis there exists $u_0\in (G^{-\alpha})\setminus\{e\}$ with $u_0x_0\in\supp(\mu)$. By lemma \ref{deodhar}, there are $u_1\in G^{-\alpha}$, $v_1\in G^\alpha$, and $c_0\in C_G(A)$ such that $w_\alpha=u_1v_1u_0c_0$. Since $\mu$ is invariant under $c_0$, let us assume $x_0$ is such that Birkhoff's ergodic Theorem holds for $x_0$ and $\mu$ with respect to $s$, which implies the same is true for $c_0x_0$, as $\mu^P$ is $C_G(A)$ invariant; hence, we may assume, without loss of generality, that $c_0=e_G$.

    Let $f\in C^0(M)$ and $\epsilon>0$ be arbitrary.  Choose $\delta_1>0$ such that $|f(gx)-f(x)|<\epsilon$, whenever $g\in G^{<\delta_1}$, for all $x\in M$.

    Let us assume $x_0$ is generic enough for Proposition \ref{local rigidty of Pinvariant measures} to hold for this point with respect to $s$, $\mu$ and $\alpha$; so that this proposition holds for $w_\alpha x_0$ with respect to $s$, $(w_\alpha)_\ast\mu$ and $-\alpha$.

    Take $\delta_2=\delta_2(w_\alpha x,u_1^{-1},f,\eta,\epsilon)$, as in Proposition \ref{local rigidty of Pinvariant measures}, we may assume that $\delta_2\leq\delta_1$. Choose $y$ close enough to $u_0x_0$ such that $d(v_1y, u_1^{-1}w_\alpha x_0)<\delta_2$, which can be done as $u_1^{-1}w_\alpha=v_1u_0$; and such that for some $v_2\in G^\alpha$ with $d(v_2y, u_1^{-1}w_\alpha x_0)<\delta_2$ we have that
    \begin{equation}\label{bñlfg}
        \lim_{N\rightarrow\infty}\frac1N\sum_{j=0}^{N-1}f(s^juv_2y)=\int fd\mu,
    \end{equation}
    for $\haar_{N_s}$-a.e. $u$, this holds as $u_0x_0\in\supp(\mu)$, and as $\mu$ is both $G^\alpha$ and $N_s$ invariant. Since $d(v_2y, u_1^{-1}w_\alpha x_0)<\delta_2$, it follows from Proposition \ref{translated local rigidty of Pinvariant measures} that the set of all $q\in Q_s^{<\delta_2}$ such that
    \begin{equation}\label{rmgkglbm}
        \left|\limsup_{N\rightarrow\infty}\frac1N\sum_{j=0}^{N-1}f(s^jqv_2y)-\int fd(w_\alpha)_\ast\mu\right|<\epsilon
    \end{equation}
    has positive $\haar_{Q_s}$ measure. However, if $\delta_2$ is small enough, $\haar_{Q_s}$-a.e. $q\in Q_s^{<\delta_2}$ factor uniquely as $cu$, for some $c\in C_s^{<\delta_2}$ and $u\in N_s$. Hence, there exist $c_2\in C_s^{<\delta_2}$ and $u_2\in N_s$ such that $u_2$ satisfies (\ref{bñlfg}) and $q_2:=c_2u_2$ satisfies (\ref{rmgkglbm}). By our choice of $\delta_2$ we have
    \begin{equation*}
        \limsup_{N\rightarrow\infty}\frac1N\sum_{j=0}^{N-1}\left |f(s^jq_2v_2x_0)-f(s^ju_2v_2x_0)\right |<\epsilon;
    \end{equation*}
    combining this with (\ref{bñlfg}) and (\ref{rmgkglbm}) gives 
    \begin{equation*}
        \left |\int fd\mu-\int fd(w_\alpha)_\ast\mu\right |<2\epsilon.
    \end{equation*}
    Since $f$ and $\epsilon$ were arbitrary, the result follows.
\end{topological measure rigidity}

\section{Lyapunov Exponent Transverse to Orbits and Resonance}\label{Lyapunov Exponent Transverse to Orbits and Resonance}

Fix a higher rank, connected, simple, split Lie group $G$, along with a maximal torus $A\leq G$ and a minimal parabolic subgroup $P\leq G$ such that $A\leq P$. Recall we denote by $\Delta_P$ the set of positive roots with respect to $P$, and its set of simple positive roots by $\Pi_P$.

In this section we begin the proof of theorem \ref{main}. We will prove theorem \ref{main} by making the following assumption from this section on, as the first item of Theorem \ref{main} follows from the Theorem of Stuck and Zimmer, see Theorem \ref{sz}.

\begin{assumption}\label{main assumption}
    Suppose there exists a $C^3$ closed manifold $M$ of dimension $\dim(G)+1$, along with a $C^3$ locally free action $G\acts M$ having a $P$-invariant probability measure $\mu^P$ on $M$, such that the following conditions hold
    \begin{enumerate}
        \item for every $\alpha\in\Pi_P$ there exists some $s\in\ker(\alpha)\setminus\{e\}$ such that $\mu^P$ is $s$-ergodic, i.e., $\ker(\alpha)\acts (M,\mu^P)$ is ergodic for all $\alpha\in\Pi_P$, because of Theorem \ref{pughshub};
        \item $M$ has no invariant probability measure under the action $G\acts M$.
    \end{enumerate}
\end{assumption}

The assumption on ergodicity by singular elements of $A$ appears in the work of Katok-Spatzier \cite[Section~5]{katokspatzier}; and the work of Nevo and Zimmer, see \cite[Theorem~A]{nzinventiones} \cite[Theorem~3]{nzann}.

Let $Q_0$ be the stabilizer of $\mu^P$ in $G$, i.e., the set $\{g\in G:g_\ast\mu^P=\mu^P\}$. Then $Q_0$ is a proper, because of our assumption, Assumption \ref{main assumption}, parabolic subgroup of $G$ as $P\leq Q_0$. Denote by $\Delta_{Q_0}$ the set of all nonzero roots $\beta\in A^\ast$ such that the associated unipotent group $G^\beta$ is contained in $Q_0$ in $G$; and denote its compliment in the set of all roots of $(G,A)$ by $\Delta_{Q_0}^c$.

\begin{remark}\label{A-ergodicity of muP}
Since in particular $\mu^P$ is $s$-ergodic by an element of $A\leq P$ ,  it follows that $\mu^P$ is both $A$ and $P$-ergodic. Since we are assuming $G\acts M$ has no $G$-invariant probability measure, it follows from proposition \ref{uniquePergodicity} that $\mu^P$ has to be supported on a minimal subset $\m$ of $M$, and that it is the unique $P$-invariant measure supported on this minimal set. 
\end{remark}

We have the following immediate consequence of the Theorem of Pugh and Shub, Theorem \ref{pughshub}:

\begin{corollary}\label{corollarypughshub}
    All elements off a countable family of hyperplanes in $A$ act $\mu^P$-ergodically on $M$.
\end{corollary}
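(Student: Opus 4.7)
The plan is to apply Theorem \ref{pughshub} directly, once the hypotheses have been verified. First, I note that the maximal torus $A \leq G$ is, as an abelian Lie group, isomorphic to $\mathbb{R}^k$ where $k$ is the real rank of $G$ (which is at least $2$ since $G$ is higher rank); the action $A \acts M$ thus induces a smooth $\mathbb{R}^k$-action on $M$ via the restriction of the original locally free action $G \acts M$ to $A$.

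Next I would verify the ergodicity hypothesis of Theorem \ref{pughshub}. By Assumption \ref{main assumption}, there exists some $\alpha \in \Pi_P$ and some $s \in \ker(\alpha) \setminus \{e\}$ for which $\mu^P$ is $s$-ergodic; in particular $\mu^P$ is ergodic under the one-parameter subgroup of $A$ generated by $s$, so it is a fortiori ergodic under the full action of $A$. This is recorded in Remark \ref{A-ergodicity of muP}. Moreover $M$ is a closed $C^3$ manifold, hence compact and second countable, so $L^2(M,\mu^P)$ is separable, and $\mu^P(M) = 1 < \infty$.

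With the hypotheses verified, Theorem \ref{pughshub} applies to the ergodic action of $A \cong \mathbb{R}^k$ on $(M,\mu^P)$ and yields exactly the conclusion: every element of $A$ outside a countable union of hyperplanes (in the Lie algebra sense, transported to $A$ via the exponential map) acts $\mu^P$-ergodically on $M$. This is the content of the corollary, so the proof is complete.

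I do not expect any real obstacle here; the corollary is a direct translation of Pugh--Shub once one observes that the $A$-ergodicity of $\mu^P$ follows from the stronger ergodicity assumed for some singular element $s \in \ker(\alpha)$.
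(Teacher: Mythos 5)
Your proposal is correct and matches the paper's approach: the paper presents this as an ``immediate consequence'' of Theorem \ref{pughshub}, relying on the $A$-ergodicity of $\mu^P$ established in Remark \ref{A-ergodicity of muP}, and you verify exactly the same hypotheses ($A\cong\R^k$, $A$-ergodicity of $\mu^P$, separability of $L^2(M,\mu^P)$) before applying Pugh--Shub.
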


\begin{definition}
As $\mu^P$ is $A$-ergodic, there is a corresponding decomposition of $T_xM$ into Osedelets' distributions $\bigoplus_{j=1}^mE^{\lambda_j}(x)$, for $\mu$-a.e. $x$,  where $\lambda_j\in A^*$ for all $j=1,\ldots, m$. Because of our codimension one hypothesis, there exists exactly one Lyapunov exponent in this decomposition whose corresponding Osedelets distribution is not tangent to the $G$-orbits, which we denote by $\lo$.
\end{definition}

\begin{remark}\label{unique ergodicity for other minimal parabolics}
    We have fixed a choice of a minimal parabolic subgroup of $G$; however, for any other minimal parabolic $P'$ in $G$ with respect to $A$, we denote by $\mu^{P'}$ the unique $P'$-invariant probability measure on $\m$, where uniqueness follows from our assumption \ref{main assumption} and proposition \ref{uniquePergodicity}; so that if $\mu^P$ is also $P'$-invariant, then $\mu^P=\mu^{P'}$.
\end{remark}

In this section we first prove that for any $a\in A$ such that its unstable subgroup, $N_a$, is contained in $Q_0$, the Lyapunov exponent $\lambda^T_{\mu^P}(a)$ of the $A$-ergodic probability measure $\mu^P$ satisfies the inequality $\lambda^T_{\mu^P}(a)<\beta(a)$ for all $\beta\in\Delta$. It will then follow that the Lyapunov exponent $\lambda^T_{\mu^P}(a)$ will be the lowest Lyapunov exponent of the $a$-ergodic probability measure $\mu^P$. This will imply that the Osedelec's distributions with respect to the transverse Lyapunov exponent $\lambda^T_{\mu^P}(a)$ will integrate to a one dimensional Pesin manifolds, transverse to the $G$-orbits of the given action, which we shall call \emph{transverse Pesin manifolds with respect to} $a$, and denote by $\mathcal W^T(x;a)$, for $\mu^P$-a.e. $x$ for which such Pesin manifold exists. We note that even though $\mu^P$ is supported on the minimal set $\m$, the transverse Pesin manifolds will not be contained in this set, unless the action $G\acts M$ is minimal.

\subsubsection{Invariance of ergodic components of $\mu^P$}

Let $a\in\mathcal W^P$ be nontrivial, and let $\mu^P=\int\nu_xd\mu^P(x)$ be the $a$-ergodic decomposition of $\mu^P$. Recall we denote by $N_a$ the unstable subgroup of $a$ in $G$, see (\ref{Na and La}). The following fact uses the Hopf argument, and is inspired by \cite{dh}[Proposition~8.4]
\begin{lemma}\label{invariance of ergodic components}
    For $\mu^P$-a.e. $x$ we have that $\nu_x$ is $N_s$-invariant.
\end{lemma}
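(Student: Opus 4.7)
My plan is to combine the Hopf argument with uniqueness of the ergodic decomposition. Since $a\in\mathcal W^P$, every positive root $\beta$ of $(G,A)$ with respect to $P$ satisfies $\beta(a)\ge 0$, so $N_a$ is contained in the unipotent radical of $P$; in particular, $\mu^P$ is $N_a$-invariant, which gives meaning to the claim.

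For the Hopf step, fix $g\in N_a$. By the definition of $N_a$, $a^{-k}ga^k\to e$ in $G$ as $k\to\infty$, and since the action map $G\times M\to M$ is continuous and $M$ is compact, this yields $d_M(a^{-k}x,a^{-k}(gx))\to 0$ uniformly in $x\in M$. Hence for any $f\in C(M)$, uniform continuity gives
\[
\Bigl|\frac{1}{N}\sum_{k=0}^{N-1}f(a^{-k}x)-\frac{1}{N}\sum_{k=0}^{N-1}f(a^{-k}(gx))\Bigr|=o(1),\qquad N\to\infty,
\]
uniformly in $x\in M$. Birkhoff's ergodic theorem applied to $(M,\mu^P,a^{-1})$ shows that the first Ces\`aro average converges $\mu^P$-a.e.\ to $\int f\,d\nu_x$, and, since $\mu^P$ is $N_a$-invariant, the same theorem applied at $gx$ shows the second converges $\mu^P$-a.e.\ to $\int f\,d\nu_{gx}$. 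Combining, $\int f\,d\nu_{gx}=\int f\,d\nu_x$ for $\mu^P$-a.e.\ $x$. Letting $f$ range over a countable dense subset of $C(M)$ then yields $\nu_{gx}=\nu_x$ for $\mu^P$-a.e.\ $x$, for each fixed $g\in N_a$.

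Equivalently, every bounded $\mathcal I_a$-measurable function $F$ (where $\mathcal I_a$ is the $\sigma$-algebra of $a$-invariant Borel subsets of $M$ modulo $\mu^P$) satisfies $F(gx)=F(x)$ for $\mu^P$-a.e.\ $x$, for each fixed $g\in N_a$. Applying Fubini to $\haar_{N_a}\otimes\mu^P$ together with $N_a$-invariance of $\mu^P$ shows that every such $F$ admits an $N_a$-invariant modification. Hence $\mathcal I_a=\mathcal I_{\langle a,N_a\rangle}$ modulo $\mu^P$, where $\mathcal I_{\langle a,N_a\rangle}$ denotes the $\sigma$-algebra of Borel subsets of $M$ invariant under the subgroup generated by $a$ and $N_a$. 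By uniqueness of Rokhlin disintegration, the $a$-ergodic decomposition $\{\nu_x\}$ coincides $\mu^P$-a.e.\ with the $\langle a,N_a\rangle$-ergodic decomposition of $\mu^P$, whose components are automatically $\langle a,N_a\rangle$-invariant. Therefore $\nu_x$ is $N_a$-invariant for $\mu^P$-a.e.\ $x$.

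The main subtlety is the passage from the pointwise statement ``$\nu_x=\nu_{gx}$ for each fixed $g$'' to genuine $N_a$-invariance of $\nu_x$. A naive attempt would identify $g_{\ast}\nu_x$ with $\nu_{gx}$, but this fails because $a$ and $g\in N_a$ do not commute, so $g_{\ast}\nu_x$ need not be $a$-invariant and hence cannot be recognized as an $a$-ergodic component. Routing through the equality of invariant $\sigma$-algebras $\mathcal I_a=\mathcal I_{\langle a,N_a\rangle}$ sidesteps this obstacle cleanly.
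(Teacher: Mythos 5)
Your proof is correct, but it takes a genuinely different route from the paper's. The paper reasons through leafwise/conditional measures: since $\mu^P$ is $N_a$-invariant, its conditional measures along a measurable partition subordinate to the $N_a$-orbits are Haar (Lemma~\ref{invariance for subgroups with orbits in unstable}); the Hopf argument (cited from Brown's notes) shows that the partition into unstable Pesin manifolds for $a$ refines the $a$-ergodic decomposition; since $N_a$-orbits lie inside unstable manifolds, transitivity of disintegration forces the conditional measures of $\nu_x$ along $N_a$-orbits to coincide with those of $\mu^P$, i.e.\ to be Haar, and Lemma~\ref{invariance for subgroups with orbits in unstable} then gives $N_a$-invariance of $\nu_x$. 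You instead run the Hopf argument directly on backward Birkhoff averages to get $\nu_{gx}=\nu_x$ for $\mu^P$-a.e.\ $x$, for each fixed $g\in N_a$, and then upgrade this to invariance of the measure $\nu_x$ itself via equality of invariant $\sigma$-algebras $\mathcal I_a=\mathcal I_{\langle a,N_a\rangle}$ and uniqueness of Rokhlin disintegration. Your route is more elementary and self-contained (no leafwise-measure machinery), and you correctly flag the subtlety the paper's conditional-measure approach sidesteps automatically — that $g_\ast\nu_x$ need not be an $a$-ergodic component since $g$ and $a$ do not commute. The one spot that deserves a sentence more of justification is the passage from ``$F(gx)=F(x)$ for $\haar_{N_a}\otimes\mu^P$-a.e.\ $(g,x)$'' to ``$F$ has an $N_a$-invariant modification'': Fubini alone does not build the modification. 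The clean way is to note that $F(gx)=F(x)$ a.e.\ for each $g$ means $F$ is a fixed vector for the unitary $N_a$-representation on $L^2(\mu^P)$ (here $N_a$-invariance of $\mu^P$ is used), so $F\in L^2(\mathcal I_{N_a},\mu^P)$; intersecting with $\mathcal I_a$-measurability gives $\mathcal I_{\langle a,N_a\rangle}$-measurability.
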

\begin{proof}
    
    For any measurable partition $\xi$ of $M$ subordinate to the $P$-orbits the conditional measure of $\mu^P$ along $\xi(x)$ is equivalent to the Riemannian volume on the $H$-orbit $H\cdot x$, by \ref{invariance for subgroups with orbits in unstable}, for $\mu^P$-a.e. $x$. Because of the Hopf argument, see \cite[Proposition~8.12]{brown2019entropylyapunovexponentsrigidity}, the partition into unstable manifolds with respect to $a$ refines the ergodic decomposition of $\mu^P$. Thus, for $\mu^P$-a.e. $x$, the conditional measure of $\nu_x$ along $\xi(y)$ is equal to the conditional measure of $\mu^P$ for this element, for $\nu_x$-a.e. $y$. Hence, applying \ref{invariance for subgroups with orbits in unstable}, it follows that $\nu_x$ is $N_a$-invariant for $\mu^P$-a.e. $x$.
\end{proof}

\subsection{Negativity of transversal Lyapunov exponent in $\intW$} 

\subsubsection{Minimal resonant codimension}\label{Minimal resonant codimension}
We make use of the notion of minimal resonant codimension of $G$ defined in \cite{brhw}. Unlike in \cite{brhw}, we do not need to consider coarse roots of $G$ since we are assuming $G$ is split. 

\begin{definition}
    The \emph{minimal resonant codimension} of $G$, which we denote by $r(G)$, is the minimal dimension of the manifold $G/Q$, for all proper parabolic subgroups $Q\leq G$.
\end{definition}

Since $G$ is higher rank, we have that $r(G)\geq 2$, see \cite[Example~1.3]{brhw}.

\subsubsection{Kernel of transverse Lyapunov exponent does not intersect $\mathcal W^P$}

The following lemma  is inspired by ideas appearing in \cite{bfh} and \cite{brhw} to construct $G$-invariant measures on suspension spaces of actions of lattices on manifolds. This lemma uses the higher rank hypothesis on $G$.

\begin{lemma}\label{kernel of transverse exponent touches not weyl chamber}
    $\ker(\lo)\cap\intW=\{0\}$.
\end{lemma}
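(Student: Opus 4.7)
The plan is a contradiction argument: if $\lo$ vanishes at some nonzero $a_0 \in \intW$, I will upgrade the $P$-invariance of $\mu^P$ to full $G$-invariance, contradicting Assumption \ref{main assumption}. By Corollary \ref{corollarypughshub}, after a generic perturbation of $a_0$ inside the open subset $\ker(\lo)\cap\intW$ of the hyperplane $\ker(\lo)$, I may further assume $\mu^P$ is $a_0$-ergodic. A key preliminary observation is that $\lo$ cannot be a nonzero scalar multiple of any root of $(G,A)$, since otherwise $\lo(a_0)=0$ would force that root to vanish on $a_0$, contradicting $a_0 \in \intW$. Because $G$ is split (so root spaces are one-dimensional and the root system is reduced), this means that for each root $\beta$ the coarse Lyapunov class of $\beta$ is a singleton, and the associated coarse Lyapunov foliation coincides $\mu^P$-a.e.\ with the $G^\beta$-orbit foliation.

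I first compute $h_{\mu^P}(a_0)$. Since $a_0 \in \intW$ and $\lo(a_0) = 0$, the positive coarse Lyapunov exponents of $a_0$ are indexed exactly by $\beta \in \Delta_P$. The product structure for entropy (Theorem \ref{entropy product structure}), applied to a rank-$d$ $\Z^d$-lattice in $A$ through $a_0$, gives
\begin{equation*}
h_{\mu^P}(a_0) = \sum_{\beta \in \Delta_P} h_{\mu^P}(a_0 \mid G^\beta).
\end{equation*}
Each $G^\beta \leq P$, so $\mu^P$ is $G^\beta$-invariant, and Lemma \ref{invariance from maximal entropy} yields $h_{\mu^P}(a_0 \mid G^\beta) = \beta(a_0)$. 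Hence $h_{\mu^P}(a_0) = \sum_{\beta \in \Delta_P} \beta(a_0)$.

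Running the same machinery for $a_0^{-1}$ and using both $h_{\mu^P}(a_0) = h_{\mu^P}(a_0^{-1})$ and $\lo(a_0^{-1}) = 0$, I obtain
\begin{equation*}
\sum_{\gamma \in \Delta_{-P}} \gamma(a_0^{-1}) = \sum_{\beta \in \Delta_P} \beta(a_0) = h_{\mu^P}(a_0^{-1}) = \sum_{\gamma \in \Delta_{-P}} h_{\mu^P}(a_0^{-1} \mid G^\gamma).
\end{equation*}
Each conditional entropy on the right obeys the Ledrappier--Young bound $h_{\mu^P}(a_0^{-1} \mid G^\gamma) \leq \gamma(a_0^{-1})$ from Theorem \ref{ledrappier young conditional entropy}, so every inequality is an equality. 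Lemma \ref{invariance from maximal entropy} then forces $\mu^P$ to be $G^\gamma$-invariant for every $\gamma \in \Delta_{-P}$, hence $N^{-P}$-invariant. Combined with $N^P$-invariance coming from $P$-invariance, and the fact that $\langle N^P, N^{-P} \rangle = G$ for simple $G$, this gives that $\mu^P$ is $G$-invariant, contradicting Assumption \ref{main assumption}.

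The main delicacy in this plan is identifying each root's coarse Lyapunov foliation with the corresponding $G^\beta$-orbit foliation so that the entropy product structure decomposes cleanly into one term per root and Lemma \ref{invariance from maximal entropy} can be applied root by root. This uses crucially both the split hypothesis on $G$ and the opening observation that under $\lo(a_0) = 0$ the transverse coarse Lyapunov class cannot merge with any of the root classes.
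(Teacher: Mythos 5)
Your proof is correct and reaches the paper's conclusion by the same core mechanism: an entropy computation showing that if $\lo$ vanished at some $a\in\intW$, $\mu^P$ would become $G^{-\beta}$-invariant for every $\beta\in\Delta_P$ and hence $G$-invariant, contradicting Assumption~\ref{main assumption}. The difference is technical: you decompose the entropy root-by-root via the product structure (Theorem~\ref{entropy product structure}) together with the conditional Ledrappier--Young bound (Theorem~\ref{ledrappier young conditional entropy}), which obliges you to first observe that $\lo$ is not resonant with any root, so that each coarse Lyapunov class is the singleton $\{\beta\}$ identifiable with the $G^{\beta}$-orbit foliation --- the ``main delicacy'' you flag yourself. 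The paper sidesteps this bookkeeping entirely: taking $a\in\ker(\lo)\cap\intW$ with $\mu^P$ $a$-ergodic, it sandwiches $h_{\mu^P}(a^{-1})=h_{\mu^P}(a)=\sum_{\beta\in\Delta_P}\beta(a)$ between the global Margulis--Ruelle inequality and the $P$-invariance equality of Lemma~\ref{invariance from maximal entropy}, never needing to examine coarse Lyapunov classes at all, since $\lo(a)=0$ is the only fact used to drop the transverse term from both sides. Both routes succeed; the paper's is marginally shorter because the non-resonance observation, while correct, is unnecessary once you work with the global bound rather than the product decomposition.
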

\begin{proof}
    Suppose, to get a contradiction, that the intersection $\ker(\lo)\cap\intW$ is not trivial. Pick $a\in\intW$ such that $\mu^P$ is $a$-ergodic, which can be done by Corollary \ref{corollarypughshub}. Let us show $\mu^P$ is $G^{-\beta}$ invariant for all $\beta\in\Delta_P$. This will imply that the stabilizer of $\mu^P$, $Q_0$, has resonant codimension at most $1$, which contradicts the fact that $r(G)\geq 2$.

    We have 
    \begin{equation*}
        \sum_{\beta\in\Delta_P}\beta(a)\geq h_{\mu^P}(a^{-1})=h_{\mu^P}(a)=\sum_{\beta\in\Delta_P^+}\beta(a),
    \end{equation*}
    where the first inequality follows from the Margulis-Ruelle inequality, see Theorem \ref{Ruelle-Margulis inequality}, and the last equality follows from lemma \ref{invariance from maximal entropy} and $P$-invariance of $\mu^P$; hence, $h_{\mu^P}(a^{-1})=\sum_{\beta\in\Delta_P}\beta(a)$, and therefore $\mu^P$ is invariant under $G^{-\beta}$ for all $\beta\in\Delta_P$, so $\mu^P$ is $G$-invariant, which contradicts Assumption \ref{main assumption}.

    Using that $r(G)\geq 2$ and the previous argument, we get that $\ker(\lo)$ does not intersect the interiors of the codimension one faces of $\overline{\mathcal W^P}$.

    \begin{lemma}\label{kernel of transversal no faces}
        For each $\alpha\in\Pi_P$ we have $\ker(\lo)\cap\operatorname{Int}(\ker(\alpha)\cap\overline{\mathcal W^P})=\emptyset$.
    \end{lemma}

    \begin{proof}
    Otherwise, if there is an element $s$ in this intersection, let $\nu_x$ be a generic $s$-ergodic component of $\mu^P$, so that $\nu_x$ is $G^\beta$ invariant for all $\beta\in\Delta_P\setminus\{\alpha\}$, by Lemma \ref{invariance of ergodic components}, as $\beta(s)>0$; an entropy argument as the one in the proof of the previous lemma implies $\nu_x$ is $G^{-\beta}$ invariant for all $\beta\in \Delta_P\setminus\{\alpha\}$. As $r(G)\geq 2$, this implies $\nu_x$ is $G$-invariant, and hence so is $\mu^P$, contradicting Assumption \ref{main assumption}.
    \end{proof}

\end{proof}

Recall we denote by $w_\alpha$ the element in the Weyl group $W_{(G,A)}$ corresponding to the reflection of $A$ about the hyperplane $\ker(\alpha)$. For the proof of proposition \ref{entropic measure rigidity} we use our ergodic assumption for singular elements in \ref{main assumption}.

\subsubsection{Non-multiple roots and minimal parabolics for split groups}
We drop our fixed conventions in this subsubsection to prove the following fact about semisimple Lie groups.

Let $G$ be a semisimple Lie group; $A\leq G$ be a maximal torus. Let $Q$ be a parabolic subgroup of $G$ containing $A$. Let us denote by $\Delta_Q$ the set of roots of $(G,A)$ appearing in the root decomposition of $Q$. We have the following observation following from the  Levi decomposition of such parabolics, see \cite[Theorem~B.2]{knappbeyond}. 

\begin{lemma}\label{minimal parabolics containing roots}
    For any $\alpha\in\Delta_Q$, there exists a minimal parabolic $P$ such that $\alpha\in\Delta_P$.
\end{lemma}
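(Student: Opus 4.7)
The plan is to use the Levi decomposition of $Q$. Write $Q = L_Q \ltimes N_Q$ where $L_Q$ is the Levi factor containing $A$ and $N_Q$ is the unipotent radical of $Q$. With respect to $A$, the root set $\Delta_Q$ partitions as $\Delta_Q = \Delta_{L_Q} \sqcup \Delta_{N_Q}$, where $\Delta_{L_Q}$ is the root system of the reductive group $L_Q$ (in particular, it is stable under $\beta \mapsto -\beta$), and $\Delta_{N_Q}$ is the set of roots appearing in $\lie(N_Q)$.

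The structural property of the Levi decomposition that I will exploit is that $\Delta_{N_Q}$ is a closed subset of $\Delta$ (closed under addition whenever the sum is a root) and contains no pair $\{\beta, -\beta\}$. This is exactly what the Levi decomposition of a parabolic subgroup provides, see for instance \cite[Theorem~B.2]{knappbeyond}.

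Given $\alpha \in \Delta_Q$, I split into two cases. If $\alpha \in \Delta_{L_Q}$, then since $\Delta_{L_Q}$ is a genuine root system, I can choose a positive system $\Delta_{L_Q}^+$ of $\Delta_{L_Q}$ containing $\alpha$ (any root lies in some positive system of the root system it belongs to). If instead $\alpha \in \Delta_{N_Q}$, I pick any positive system $\Delta_{L_Q}^+$ of $\Delta_{L_Q}$. In either case, I then set
\begin{equation*}
    \Delta^+ := \Delta_{L_Q}^+ \cup \Delta_{N_Q}.
\end{equation*}
A direct check using the two properties of $\Delta_{N_Q}$ above shows that $\Delta^+$ contains exactly one element of each pair $\{\beta,-\beta\} \subseteq \Delta$ and is closed under addition within $\Delta$; hence $\Delta^+$ is a positive system of $\Delta$. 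The corresponding minimal parabolic $P \leq G$ has $\Delta_P = \Delta^+$, so $\alpha \in \Delta_P$ in both cases (and in fact $P \subseteq Q$).

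The only step with any real content is verifying that $\Delta^+ = \Delta_{L_Q}^+ \cup \Delta_{N_Q}$ is a positive system; everything else is bookkeeping on the Levi decomposition. I do not anticipate a significant obstacle here, since the closedness and $\pm$-asymmetry of $\Delta_{N_Q}$ are standard outputs of parabolic structure theory.
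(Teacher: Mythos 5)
Your proof is correct and follows essentially the same route as the paper's: both use the Levi decomposition of $Q$ and split into the two cases $\alpha \in \Delta_{N_Q}$ versus $\alpha \in \Delta_{L_Q}$, then produce a minimal parabolic contained in $Q$ whose positive system includes $\alpha$. You simply carry out the verification more explicitly at the level of positive systems rather than directly citing a minimal parabolic of a simple factor as the paper does.
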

\begin{proof}
If $G^\alpha$ is contained in the unipotent radical of $Q$, then in this case $\alpha\in\Delta_P$ for any minimal parabolic $P\leq Q$.

If $G^\alpha$ is contained in the semisimple part of $Q$, there is a minimal parabolic of the corresponding simple factor of this semisimple part containing $G^\alpha$, which yields a minimal parabolic of $Q$ with the required property.
\end{proof}

\subsection{Maximal negativity of transversal Lyapunov exponent}

The following proposition is one of the main tools of the whole paper, it is the main ingredient for the construction of the embedded submanifold in section \ref{Proof of theorem when DeltaminusDeltaQgeq3}. The proof of this proposition, under our assumption \ref{main assumption}, uses the higher rank and split hypothesis on $G$. It also uses the assumption that $\mu^P$ is ergodic for $\ker(\alpha)$ for all $\alpha\in\Pi_P$.

\begin{proposition}\label{transverse lyapunov exponent strictly maximally negative}
    For any  $a\in\intW$ we have
    \begin{equation}
        \beta(a)<\lo(a^{-1})
    \end{equation}
    for all $\beta\in\Delta_P$
\end{proposition}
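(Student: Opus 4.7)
My plan is to argue by contradiction at two successive levels, in each case deriving $G$-invariance of $\mu^P$ in contradiction with Assumption \ref{main assumption}. At the first level I would establish $\lo(a)<0$ for all $a\in\intW$. By Lemma \ref{kernel of transverse exponent touches not weyl chamber}, $\lo$ has constant nonzero sign on the connected open cone $\intW$, so assume for contradiction $\lo>0$ there. Fix $a\in\intW$ generic so that $\mu^P$ is $a$-ergodic (Corollary \ref{corollarypughshub}); since $\lo(a^{-1})<0$, the positive Lyapunov exponents of $a^{-1}$ are exactly the $\beta(a)$ for $\beta\in\Delta_P$, each of multiplicity one, whose corresponding Oseledets directions together span the tangent space to the $\bar N_P$-orbit, where $\bar N_P$ is the unipotent radical opposite to $P$. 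The Ruelle-Margulis inequality (Theorem \ref{Ruelle-Margulis inequality}) gives $h_{\mu^P}(a^{-1})\leq\sum_{\beta\in\Delta_P}\beta(a)$, while $N_P$-invariance of $\mu^P$ combined with Lemma \ref{invariance from maximal entropy} yields $h_{\mu^P}(a)\geq\sum_{\beta\in\Delta_P}\beta(a)$. Since $h_{\mu^P}(a)=h_{\mu^P}(a^{-1})$, both inequalities are equalities, and the equality case of Ledrappier-Young (Theorem \ref{ledrappier young}) forces the conditionals of $\mu^P$ on $\bar N_P$-orbits to coincide with Haar. Proposition \ref{haarconditionals} then yields $\bar N_P$-invariance of $\mu^P$, which together with $P$-invariance gives $G$-invariance, contradicting Assumption \ref{main assumption}.

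For the strict inequality $\lo(a)<-\beta(a)$ on $\intW$ for every $\beta\in\Delta_P$, I would first note that it suffices to treat the highest root $\tilde\beta$, since $\beta\leq\tilde\beta$ pointwise on $\intW$ for all $\beta\in\Delta_P$. The approach is to work with least singular elements $s\in\operatorname{Int}(\ker(\alpha)\cap\overline{\intW})$ for each $\alpha\in\Pi_P$, which are generic for $\mu^P$ by Assumption \ref{main assumption}, and then extend by linearity. Suppose for contradiction $(\lo+\beta_0)(a_0)\geq 0$ for some $a_0\in\intW$ and $\beta_0\in\Delta_P$. The degenerate proportionality $\lo\equiv-\alpha$ for a simple root $\alpha$ is immediately ruled out by Lemma \ref{kernel of transversal no faces}, which would be violated on the wall $\ker(\alpha)\cap\overline{\intW}$; the analogous identification $\lo\equiv-\beta_0$ for non-simple $\beta_0$ is excluded by combining the coarse product structure of entropy (Theorem \ref{entropy product structure}) with the Ledrappier-Young conditional inequality (Theorem \ref{ledrappier young conditional entropy}) and the Pesin dimension bounds. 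In the remaining genuinely resonant subcase, where $\ker(\lo+\beta_0)\cap\intW$ is a nontrivial hyperplane in $A$, I would split by the minimal resonant codimension $r(G)$. When $r(G)=2$, so $G$ is locally $\SL(3,\R)$, I would pick $s\in\ker(\alpha)\cap\overline{\intW}$ generic for $\mu^P$, apply Deodhar's Lemma \ref{deodhar} to produce accessibility along the Weyl reflection $w_\alpha$, and invoke Topological Measure Rigidity (Proposition \ref{topological measure rigidity}) to extract $G^{-\alpha}$-invariance of $\mu^P$ for each simple $\alpha$. When $r(G)>2$, the extra room in the root system provides $\alpha\in\Pi_P$ for which the coarse product decomposition of $h_{\mu^P}(s^{-1})$ at $s\in\ker(\alpha)\cap\overline{\intW}$, combined with the resonance hypothesis, forces the leafwise measure of $\mu^P$ along $G^{-\alpha}$ to be non-atomic; Entropic Measure Rigidity (Proposition \ref{entropic measure rigidity}) then gives $G^{-\alpha}$-invariance. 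In either case, iterating over $\Pi_P$ produces $G$-invariance, again contradicting Assumption \ref{main assumption}.

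The principal obstacle lies in this resonant subcase. At a ray where $\lo(a)+\beta_0(a)=0$, the transverse Pesin direction merges with $\mathscr W^{-\beta_0}$ into a single higher-dimensional Oseledets component for the corresponding singular element, so the entropy deficit one would hope to extract along $\mathscr W^{-\beta_0}$ can be absorbed by the transverse $\mathscr W^{\lo}$-direction in the Ledrappier-Young formula, blocking any direct SRB-type equality argument. Extracting the additional invariance therefore requires combining coarse Lyapunov product structure with the topological or entropic rigidity of Section \ref{section topological measure rigidity}, and this is precisely where the ergodicity of $\mu^P$ under $\ker(\alpha)$ for every simple $\alpha\in\Pi_P$ (Assumption \ref{main assumption}) becomes essential: without it, neither Proposition \ref{topological measure rigidity} nor Proposition \ref{entropic measure rigidity} could be applied to a generic singular element on the resonant ray, and the rank-$2$ and higher-rank cases would not admit the case-by-case treatment outlined above.
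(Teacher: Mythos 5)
Your first-level argument (that $\lo<0$ on $\intW$) is correct and, if anything, cleaner than the paper's: you apply Ruelle--Margulis to $a^{-1}$ (whose unstable manifold coincides with the $\bar N_P$-orbit when $\lo(a)>0$), combine with $N_P$-invariance via Lemma \ref{invariance from maximal entropy}, and extract $\bar N_P$-invariance in the equality case. The paper's Lemma \ref{kernel of transverse exponent touches not weyl chamber} and Lemma \ref{negativity of transversal lyapunov exponent on positive weyl chambers} route instead through the coarse Lyapunov product structure (Theorem \ref{entropy product structure}) and Lemma \ref{entropy along unique coarse lyapunov exponent}, partly because they want finer information for later; your shortcut is fine for the negativity statement alone.

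For the strict inequality you have the right skeleton — reduce to the highest root, prove the inequality at least-singular elements on the Weyl walls, extend by linearity, split by $r(G)$ — and for $r(G)\geq 3$ your route through Entropic Measure Rigidity (Proposition \ref{entropic measure rigidity}) is logically equivalent to the paper's Proposition \ref{transverse lyapunov exponent strictly maximally negative for least singular elements} (you argue the contrapositive: an entropy surplus forces non-atomicity, which Proposition \ref{entropic measure rigidity} converts into unwanted invariance; the paper instead feeds atomicity, obtained from the contrapositive of the same proposition via Lemma \ref{entropy ergodic components}, into the entropy count). One caveat you should make explicit: the surplus entropy you extract lives on some $G^{-\beta}$ with $\beta(s)>0$, not on $G^{-\alpha}$ for the wall $\alpha$ you started from, and for non-simple $\beta$ you need the high-entropy method (Proposition \ref{high entropy method}) to pass from non-atomicity to invariance.

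The genuine gap is in the $r(G)=2$ case. You propose to pick $s\in\ker(\alpha)\cap\overline{\mathcal W^P}$ and apply Deodhar's lemma plus Topological Measure Rigidity (Proposition \ref{topological measure rigidity}), but that proposition has a nontrivial hypothesis: it requires a $\mu^P$-generic $x$ and a nontrivial $u\in G^{-\alpha}$ with $ux\in\supp(\mu^P)$; Deodhar's lemma is an ingredient \emph{inside} its proof and does not supply this accessibility hypothesis for free. More importantly, the paper's own proof for $r(G)=2$ in the non-resonant case does not argue along Weyl walls at all: it chooses a singular element $b\in\ker(\lo)\cap\operatorname{Int}(\mathcal W^{P_1})$ in an adjacent Weyl chamber, uses the high-entropy method and Remark \ref{measure can't be invariant under negation of highest root} to force $\hmup(b\mid\mathcal W^{\beta_{2,1}})=0$, and then gets the interior inequality directly. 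This detour is not cosmetic: when $|\Delta\setminus\Delta_{Q_0}|=2$, the wall inequality of Proposition \ref{transverse lyapunov exponent strictly maximally negative for least singular elements} can genuinely be non-strict on one wall (the paper handles exactly this situation in Proposition \ref{absolute continuity along transverse Pesin} and Section \ref{Construction of submanifold when r(G)=2}), so the convex-extension step you rely on breaks for $\SL(3,\R)$. In the resonant $r(G)=2$ case the paper reduces to $Q_0=P$ via Remark \ref{resonance to negative of highest root implies stabilizer is P}, after which $|\Delta\setminus\Delta_{Q_0}|=3$ and the wall argument does apply; but your sketch does not make this case distinction.
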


Recall $r(G)$ denotes the resonant codimension of $G$. When $r(G)\geq 3$ we also have the following property which will be quite instrumental in section \ref{Proof of theorem when DeltaminusDeltaQgeq3} to construct the smooth factor map. Recall $Q_0=\operatorname{Stab}(\mu^P)$. 

\begin{proposition}\label{transverse lyapunov exponent strictly maximally negative for least singular elements}
    Suppose $|\Delta\setminus\Delta_{Q_0}|\geq 3$. Let  $\alpha\in\Pi_P$. Then for any $s\in\operatorname{Int}(\ker(\alpha)\cap\overline{\mathcal W^P})$ we have
    \begin{equation}
        \beta(s)<\lo(s^{-1})
    \end{equation}
    for all $\beta\in\Delta_P$.
\end{proposition}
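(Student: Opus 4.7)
The plan is to adapt the strategy of Proposition \ref{transverse lyapunov exponent strictly maximally negative} to singular elements on the codimension-one face $\ker(\alpha) \cap \overline{\mathcal W^P}$, passing from strict inequality on $\intW$ to strict inequality on the face via a weak-then-strict argument that rules out a resonance using the hypothesis on $|\Delta \setminus \Delta_{Q_0}|$.

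First, since $\beta + \lo$ is a linear functional on $A$ that is strictly negative on $\intW$ by Proposition \ref{transverse lyapunov exponent strictly maximally negative}, continuity yields the non-strict inequality $(\beta + \lo)(s) \leq 0$ for every $s \in \operatorname{Int}(\ker(\alpha) \cap \overline{\mathcal W^P})$ and every $\beta \in \Delta_P$. To upgrade to strict inequality, assume for contradiction $(\beta + \lo)(s_0) = 0$ for some such $s_0$ and some $\beta \in \Delta_P$. Since the face interior is open in $\ker(\alpha)$, the points $s_0 \pm \epsilon v$ lie in the face interior for any $v \in \ker(\alpha)$ and small $\epsilon > 0$; applying the weak inequality at both points forces $(\beta + \lo)(v) = 0$, so $\beta + \lo = c\alpha$ for some $c < 0$ (by comparison on $\intW$ where $\alpha > 0$). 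The case $\beta = \alpha$ would yield $\lo(s_0) = 0$, contradicting Lemma \ref{kernel of transversal no faces}; hence $\beta \in \Delta_P \setminus \{\alpha\}$ and we have the resonance $\lo = -\beta + c\alpha$.

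To rule out this resonance, I would choose $s$ in the face interior that is $\mu^P$-ergodic (available by Corollary \ref{corollarypughshub} applied to the $\ker(\alpha)$-ergodicity assumption), and apply the entropy product structure (Theorem \ref{entropy product structure}) to $s^{-1}$, decomposing $h_{\mu^P}(s^{-1})$ as the sum of conditional entropies along each $G^{-\gamma}$-orbit foliation (for $\gamma \in \Delta_P \setminus \{\alpha\}$) and along the transverse Pesin foliation. By Margulis-Ruelle and $P$-invariance (Lemma \ref{invariance from maximal entropy}), $h_{\mu^P}(s^{-1}) = h_{\mu^P}(s) = \sum_{\gamma \in \Delta_P \setminus \{\alpha\}} \gamma(s)$. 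The conditional Ledrappier-Young bound (Theorem \ref{ledrappier young conditional entropy}) bounds each summand by the corresponding positive Lyapunov exponent. The resulting entropy budget forces either $\mu^P$ to be $G^{-\gamma}$-invariant for every $\gamma \in \Delta_P \setminus \{\alpha\}$ (so that $Q_0$ contains the maximal parabolic opposite $\alpha$), or a nontrivial transverse leafwise structure that, combined with Topological Measure Rigidity (Proposition \ref{topological measure rigidity}, applicable since $\lo(s) < 0$) and the resonance, yields additional $G^{-\gamma}$-invariances. In either case $Q_0$ enlarges until $|\Delta \setminus \Delta_{Q_0}| < 3$, contradicting the hypothesis.

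The main obstacle is this last step: converting the resonance $\lo = -\beta + c\alpha$, which only reflects a pointwise coincidence of Lyapunov values on the face rather than a genuine equality of coarse Lyapunov functionals on $A$, into concrete measure-theoretic invariances that enlarge $Q_0$. Executing this cleanly requires careful bookkeeping of the entropy product decomposition, delicate use of the measure rigidity machinery from Section \ref{section topological measure rigidity}, and leveraging the higher-rank hypothesis $|\Delta \setminus \Delta_{Q_0}| \geq 3$ to close off the configurations in which the entropy balance could otherwise be sustained without additional invariance.
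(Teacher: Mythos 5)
Your first step assumes Proposition \ref{transverse lyapunov exponent strictly maximally negative}---that $(\beta+\lo)(a)<0$ for all $a\in\intW$---and passes to the face by continuity. But the paper's logical order runs the other way: when $r(G)\ge 3$, Proposition \ref{transverse lyapunov exponent strictly maximally negative} is explicitly \emph{deduced} from Proposition \ref{transverse lyapunov exponent strictly maximally negative for least singular elements} (this is stated right after these two propositions, and one of the cases of the $r(G)=2$ proof does the same). So invoking that strict inequality on $\intW$ as your starting point is circular. Moreover, the non-strict inequality on the face cannot be obtained from anything else in the paper that is available at this stage: Lemma \ref{negativity of transversal lyapunov exponent on positive weyl chambers} gives only $\lo<0$ on $\intW$, and Lemma \ref{kernel of transversal no faces} gives only $\lo(s)\neq 0$; neither gives $\beta(s)\le\lo(s^{-1})$.

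Even setting circularity aside, the reduction to a resonance relation $\lo=-\beta+c\alpha$ is not the right case distinction. The coarse-Lyapunov machinery the paper relies on (Definition \ref{definition ith unstable}, Theorem \ref{ledrappier young conditional entropy}, Theorem \ref{entropy product structure}) keys off whether $\lo$ is a positive multiple of a root, not whether some affine combination involving $\alpha$ vanishes. A relation $\lo = -\beta + c\alpha$ with $c<0$ need not be a proportionality resonance and has no bearing on whether the transverse Pesin direction and the root direction $G^{-\beta}$ are tangled in the same coarse Lyapunov foliation. The paper instead splits on whether $\lo$ is proportional to a root (three subcases: non-resonant, resonant to $-\beta_P^H$, resonant to another root), which determines the correct expanding foliation ($\mathscr W^{\lo}$ vs.\ $\mathscr W^\chi$) to feed into the entropy product.

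Finally, the entropy budget step, which you flag as the ``main obstacle,'' is precisely where the key technical input is missing. What makes the paper's count close is Lemma \ref{entropy ergodic components}, which asserts that the leafwise measures $[\mu^P]^{G^\beta}_x$ are atomic for every $\beta\notin\Delta_{Q_0}$; this forces the corresponding conditional entropies $\hmup(s^{-1}\mid\mathcal W^{-\beta})$ to vanish, and it is what lets the entropy product structure (Theorem \ref{entropy product structure}), applied to $s$ and $s^{-1}$, cancel down to $\hmup(s^{-1}\mid\mathcal W^{\lo})$ (or $\mathcal W^\chi$) being a sum of roots over the complement of $\Delta_{Q_0}$. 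The hypothesis $|\Delta\setminus\Delta_{Q_0}|\ge 3$ is then used to guarantee that this sum includes $\beta_P^H(s)$ plus at least one other positive term, giving the strict inequality. Lemma \ref{entropy ergodic components} in turn rests on Proposition \ref{entropic measure rigidity} and the ergodicity of $\ker(\alpha)$-elements from Assumption \ref{main assumption}; citing Topological Measure Rigidity vaguely, as you do, does not substitute for this concrete mechanism, and without atomicity the Ledrappier-Young bounds only give inequalities, not the invariances you hope to extract.
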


We will give a proof of proposition \ref{transverse lyapunov exponent strictly maximally negative} only when $r(G)=2$. In the case $r(G)\geq 3$, proposition \ref{transverse lyapunov exponent strictly maximally negative} follows immediately from proposition \ref{transverse lyapunov exponent strictly maximally negative for least singular elements}. Indeed, in this case $|\Delta\setminus\Delta_{Q}|\geq 3$ holds for any parabolic $Q\leq G$ by definition of $r(G)$; and $\intW$ is contained in the convex hull of the codimension one faces of $\overline{\mathcal W^P}$.

\begin{remark}\label{reduction to inequality on kernel}
    The constructions and all the properties listed in the entirety of the paper will also hold in the case $r(G)=2$ if the given action satisfies proposition \ref{transverse lyapunov exponent strictly maximally negative for least singular elements}, as every application of the assumption $|\Delta\setminus\Delta_{Q_0}|\geq 3$ in the whole paper is done via this proposition.
\end{remark}

\begin{example}
    We need $G$ to be higher rank for the propositions above to hold. Indeed, if $G=\SL(2,\R)$, and $\Gamma$ is a torsion free uniform lattice of $G$, and $P$ is the subgroup of upper triangular matrices, consider the canonical diagonal action $G\acts M:=G/\Gamma\times G/P$, which is smooth, locally free, and with codimension one orbits. Then, $P\acts M$ is uniquely ergodic, with corresponding invariant probability measure, $\mu:=\haar_{G/\Gamma}\times\delta_{eP}$, where $eP$ is the identity coset of $G/P$. This measure is $\ker(\alpha)$-ergodic for all roots $\alpha$, by the Howe-Moore Theorem, but its transverse Lyapunov is the zero functional.
\end{example}

\subsubsection{Negativity of transverse Lyapunov exponent}

Let us make the following observation, under the assumption that $\lo$ is resonant with some positive root with respect to $P$. Given any root $\beta\in\Delta$ we denote by  $\mathcal W^\beta$ the partition of $M$ given by the $G^\beta$ orbits. 

\begin{lemma}\label{entropy along unique coarse lyapunov exponent}
    If $\lo$ is resonant to some $\beta\in\Delta_P$, then the coarse Lyapunov exponent, denoted by $\chi$, of $\lo$ equals $\{\beta,\lo\}$ and 
    $\hmup(a|\mathcal W^\chi)\geq\beta(a)$ for all $a\in\intW$.
    \end{lemma}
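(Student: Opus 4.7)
The plan has two parts, matching the two assertions of the lemma.

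First, I would identify the coarse class $\chi$. By Theorem \ref{Higher rank osedelec}, the Lyapunov exponents of the $A$-action on $(M,\mu^P)$ consist of the roots of $(G,A)$, each of multiplicity one (arising from the root space decomposition of $\lie(G)$ acting tangentially along $G$-orbits, using that $G$ is split so all root spaces are one-dimensional), together with $\lo$ from the unique transverse direction. Since $G$ is split simple, its root system is reduced, so no two distinct roots are positive scalar multiples of one another. Combined with the resonance hypothesis $\lo = c\beta$ for some $c > 0$, this forces $\chi = \{\beta, \lo\}$ as claimed.

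Next, I would apply the entropy product structure, Theorem \ref{entropy product structure}, to $a \in \intW$ (after restricting $A$ to a lattice $\Z^d$ containing $a$). For such $a$ one has $\beta'(a) > 0$ for every $\beta' \in \Delta_P$ and $\lo(a) = c\beta(a) > 0$, so the coarse classes of positive value at $a$ are exactly $\chi$ together with the singletons $\{\beta'\}$ for $\beta' \in \Delta_P \setminus \{\beta\}$. This decomposition yields
\[
h_{\mu^P}(a) = h_{\mu^P}(a \mid \mathscr W^\chi) + \sum_{\beta' \in \Delta_P \setminus \{\beta\}} h_{\mu^P}(a \mid \mathscr W^{\{\beta'\}}).
\]
I would then bound each singleton term above using Theorem \ref{ledrappier young conditional entropy}: the relative multiplicity $m_i(\mathscr W^{\{\beta'\}})$ equals one precisely when $\lambda^i = \beta'$ and vanishes otherwise, giving $h_{\mu^P}(a \mid \mathscr W^{\{\beta'\}}) \leq \beta'(a)$.

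For the lower bound on the total entropy, I would use that $\mu^P$ is $P$-invariant together with Lemma \ref{invariance from maximal entropy} applied to $H = P$: since $P$-orbits are tangent to $\bigoplus_{\beta' \in \Delta_P} E^{\beta'}$, the relative multiplicities $m^{i,P}$ are one exactly at the roots $\beta' \in \Delta_P$, yielding $h_{\mu^P}(a \mid P) = \sum_{\beta' \in \Delta_P} \beta'(a)$. Combining with the standard monotonicity $h_{\mu^P}(a) \geq h_{\mu^P}(a \mid P)$ and the Ledrappier--Young upper bounds above, the product-structure decomposition rearranges to
\[
h_{\mu^P}(a \mid \mathscr W^\chi) \;\geq\; \sum_{\beta' \in \Delta_P} \beta'(a) \;-\; \sum_{\beta' \in \Delta_P \setminus \{\beta\}} \beta'(a) \;=\; \beta(a),
\]
which is the claimed inequality. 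The main obstacle will be the precise identification of the singleton coarse classes $\mathscr W^{\{\beta'\}}$ as tangent exclusively to $E^{\beta'}$ so that the Ledrappier--Young relative multiplicities are exactly as stated; this is where reducedness of the split root system enters, ensuring no other root shares the coarse class $\{\beta'\}$ with $\beta'$.
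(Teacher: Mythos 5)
Your proposal is correct in substance but takes a genuinely different route from the paper. The paper works \emph{internally} inside $\mathcal W^\chi$: it invokes Theorem \ref{ledrappieryoung2} and the transverse pointwise dimensions, splitting into the three cases $c<1$, $c=1$, $c>1$ (where $\lo=c\beta$), and in each case uses that the conditional Hausdorff dimension of $\mu^P$ along $\mathcal W^\chi$ is at least $1$ because $\mu^P$ is $G^\beta$-invariant, hence the conditional measures already carry one full dimension of Haar mass along $G^\beta$-orbits. Your argument instead compares \emph{externally}: you bound $\hmup(a)$ from below via $P$-invariance, apply the product-structure Theorem \ref{entropy product structure}, and subtract the non-resonant positive factors bounded above by Theorem \ref{ledrappier young conditional entropy}. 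This avoids the case analysis on $c$ entirely, which is cleaner. Two small cautions about your version. First, in Lemma \ref{invariance from maximal entropy} the subgroup $H$ must have orbits contained in $\mathcal W^u(x;a)$; since $A\leq P$ has orbits in the neutral direction, you should take $H=U(P)$ (the unipotent radical), not $H=P$. This gives $h_{\mu^P}(a\mid U(P))=\sum_{\beta'\in\Delta_P}\beta'(a)$, which is what you want. Second, the step $h_{\mu^P}(a)\geq h_{\mu^P}(a\mid U(P))$ is not a consequence of the product-structure theorem, because the $U(P)$-orbit foliation is \emph{not} a union of coarse Pesin foliations when $\lo$ is resonant (it contains $G^\beta\cdot x$ but not the transverse direction inside $\mathcal W^\chi$). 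That inequality is the standard Ledrappier--Young monotonicity of conditional entropy under inclusion of expanding foliations; it is true and is exactly the same type of input the paper encodes via the $\gamma\geq 1$ observation, but it is worth flagging explicitly since it is not one of the results the paper lists in its preliminaries. With those two points addressed, your rearrangement $h_{\mu^P}(a\mid\mathscr W^\chi)\geq\sum_{\beta'\in\Delta_P}\beta'(a)-\sum_{\beta'\neq\beta}\beta'(a)=\beta(a)$ is a valid and arguably more streamlined proof of the lemma.
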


\begin{proof}
    The first statement follows from the fact that $G$ acts locally freely with leaves of codimension one combined with the assumption that $G$ is split. 

    For the second statement, it is enough to prove it for all $a\in\intW$ such that $\mu^P$ is $a$-ergodic because of theorem \ref{pughshub}, and since $\hmup(-|\mathcal W^P)$ is linear on $\intW$, as no coarse Lyapunov exponent changes sign in this open cone by the hypothesis, because of theorem \ref{ledrappieryoung2}.

    We have $\lo=c\beta$ for some $c>0$.

    If $c\leq 1$, then from Theorem \ref{ledrappieryoung2} we have that $\hmup(a|\mathcal W^\chi)\geq\gamma(\mu^P|W^\beta)\beta(a)$, where $\gamma(\mu^P|W^\beta)$ is the Hausdorff dimension of $\mu^P$ with respect to the partition by $G^\beta$ orbits. If $0<c<1$, then from Theorem \ref{ledrappieryoung2}, we get the equality immediately, as $\mu^P$ is $G^\beta$ invariant.

    If $c=1$, the inequality follows from Theorem \ref{ledrappieryoung2}, as $\gamma(\mu^P|\mathcal W^\chi)\geq 1$, since $\mathcal W^\chi$ it is refined by the partition into $G^\beta$-orbits. 

    If $c>1$, then applying theorem \ref{ledrappieryoung2} again, 
    \begin{equation*}
    \begin{split}
        \hmup(a|\mathcal W^\chi)&=\lo(a)\gamma_2(\mu^P|\mathcal W^\chi)+\beta(a)\gamma_1(\mu^P|\mathcal W^\chi)\\
        &>\beta(a)[\gamma_2(\mu^P|\mathcal W^\chi)+\gamma_1(\mu^P|\mathcal W^\chi)]
    \end{split}
    \end{equation*}
    and $\gamma_2(\mu^P|\mathcal W^\chi)+\gamma_1(\mu^P|\mathcal W^\chi)$ equals the Hausdorff dimension of $\mu^P$ along $\mathcal W^\chi$, which is at least $1$, hence we obtain the lemma in this case as well.
\end{proof}

In the next lemma we prove that the transversal Lyapunov exponent is negative in the interior of the Weyl chamber corresponding to $P$. 

\begin{lemma}\label{negativity of transversal lyapunov exponent on positive weyl chambers}
    For any $a\in\intW$ we have $\lambda_{\mu^P}^T(a)<0$.
\end{lemma}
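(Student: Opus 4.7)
The plan is to proceed by contradiction, adapting the entropy scheme from the proof of Lemma \ref{kernel of transverse exponent touches not weyl chamber}. By the Higher Rank Oseledec's Theorem, $a \mapsto \lo(a)$ is a linear functional on $A$, and by Lemma \ref{kernel of transverse exponent touches not weyl chamber} it does not vanish on the connected open convex cone $\intW$; hence $\lo$ has constant sign there. It therefore suffices to show $\lo(a) < 0$ for a single $a \in \intW$. First I would fix such an $a$ with $\mu^P$ being $a$-ergodic (possible by Corollary \ref{corollarypughshub}), and assume for contradiction that $\lo(a) > 0$.

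Next I would establish a lower bound on entropy from the $P$-invariance. Since $N_a \leq P$ (because $a \in \intW$), Lemma \ref{invariance from maximal entropy} applied with $H = N_a$ and $s = a$ gives
\begin{equation*}
h_{\mu^P}(a) \;\geq\; h_{\mu^P}(a \mid N_a) \;=\; \sum_{\lambda^i(a)>0} \lambda^i(a)\, m^{i,N_a} \;=\; \sum_{\beta \in \Delta_P} \beta(a),
\end{equation*}
where the last equality uses that $G$ is split, so every root space $\mathfrak g^\beta$ is one-dimensional and transverse to the Oseledets distribution of $\lo$. For the upper bound on $h_{\mu^P}(a^{-1})$, the assumption $\lo(a) > 0$ forces $\lo(a^{-1}) < 0$, so the only positive Lyapunov exponents of $a^{-1}$ come from the root spaces $\mathfrak g^{-\beta}$ with $\beta \in \Delta_P$, each contributing $\beta(a)$. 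The Ruelle--Margulis inequality then yields
\begin{equation*}
h_{\mu^P}(a^{-1}) \;\leq\; \sum_{\beta \in \Delta_P} \beta(a),
\end{equation*}
and combining with $h_{\mu^P}(a) = h_{\mu^P}(a^{-1})$ forces both bounds to be equalities.

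To extract the contradiction I would upgrade this equality to $G^{-\beta}$-invariance for each $\beta \in \Delta_P$ via the product structure of entropy (Theorem \ref{entropy product structure}) applied to $a^{-1}$: the split hypothesis implies that each coarse Lyapunov class positive on $a^{-1}$ is a singleton $\{-\beta\}$ with $\beta \in \Delta_P$, and the associated coarse Pesin partition $\mathcal W^{-\beta}$ coincides with the partition by $G^{-\beta}$-orbits. Thus
\begin{equation*}
\sum_{\beta \in \Delta_P} h_{\mu^P}\bigl(a^{-1} \mid G^{-\beta}\bigr) \;=\; h_{\mu^P}(a^{-1}) \;=\; \sum_{\beta \in \Delta_P} \beta(a),
\end{equation*}
while the conditional Ledrappier--Young bound (Theorem \ref{ledrappier young conditional entropy}) gives $h_{\mu^P}(a^{-1} \mid G^{-\beta}) \leq \beta(a)$ in each term. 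Termwise equality then follows, and the converse direction of Lemma \ref{invariance from maximal entropy} (applied with $s = a^{-1}$ and $H = G^{-\beta}$) yields $G^{-\beta}$-invariance of $\mu^P$ for every $\beta \in \Delta_P$. Together with the $P$-invariance this would make $\mu^P$ into a $G$-invariant probability measure on $M$, contradicting the second clause of Assumption \ref{main assumption}.

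The main obstacle I expect is in the last step, namely the clean identification of each abstract coarse Pesin partition $\mathcal W^{-\beta}$ with the concrete partition by $G^{-\beta}$-orbits, and the reindexing needed to turn the global entropy equality into a per-root equality. This hinges on $G$ being split (so every root space is one-dimensional and no two distinct roots of $A$ are positively proportional, ensuring singleton coarse classes) and on the compatibility between the product structure of Theorem \ref{entropy product structure} and the conditional Ledrappier--Young bound.
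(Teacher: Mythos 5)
Your proof is correct, and it takes a genuinely different and somewhat cleaner route than the paper's. The paper's Lemma \ref{negativity of transversal lyapunov exponent on positive weyl chambers} splits into two cases depending on whether $\lo$ is resonant to a root, because the paper expresses $h_{\mu^P}(a)$ via the entropy product structure (Theorem \ref{entropy product structure}) applied to $a$, and the coarse Lyapunov class of $\lo$ then enters the bookkeeping and requires Lemma \ref{entropy along unique coarse lyapunov exponent} in the resonant case. You sidestep this entirely: instead of decomposing $h_{\mu^P}(a)$ over coarse classes, you get the lower bound $h_{\mu^P}(a)\ge h_{\mu^P}(a\mid N_a)=\sum_{\beta\in\Delta_P}\beta(a)$ directly from $P$-invariance via Lemma \ref{invariance from maximal entropy}, which is resonance-agnostic. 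The only place the hypothesis $\lo(a)>0$ actually enters is in guaranteeing that the coarse classes positive on $a^{-1}$ are exactly the singletons $\{-\beta\}$, $\beta\in\Delta_P$; this is automatic once $\lo(a^{-1})<0$ regardless of whether $\lo$ is resonant to some $\beta_0\in\Delta_P$, since no positive multiple of $-\beta_0$ lies in the coarse class of $\lo$. Your approach also derives the stronger conclusion that $\mu^P$ would be invariant under \emph{all} $G^{-\beta}$, $\beta\in\Delta_P$ (hence $G$-invariant), rather than just the $\beta$ with $-\beta\notin\Delta_{Q_0}$, which makes the contradiction a little more direct. The trade-off is that you implicitly rely on the monotonicity $h_{\mu^P}(a\mid N_a)\le h_{\mu^P}(a)$ for a sub-foliation of $\mathscr W^u$, and on identifying each singleton coarse Pesin partition $\mathcal W^{\{-\beta\}}$ with the $G^{-\beta}$-orbit partition; both are standard (the latter because the $G^{-\beta}$-orbits are smooth invariant curves tangent to $\mathfrak g^{-\beta}$ contracted in backward $a^{-1}$-time, hence are the Pesin leaves), but you should note them explicitly if this were to be written up.
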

\begin{proof}
    It is enough to prove this for all $a\in\intW$ for which $\mu^P$ is $a$-ergodic, since all such points are dense in $\intW$, because of theorem \ref{pughshub}. Fix such an $a$. 
    
    Suppose, to get a contradiction, that $\lo(a)\geq 0$. There are two cases, depending on whether $\lo$ is resonant to a root or not. Denote by $\chi$ the coarse Lyapunov exponent of $\lo$.

    \begin{case negativity of transversal lyapunov exponent on positive weyl chambers} $\lo$ is not resonant to a root. We then have that $\chi=\{\lo\}$, and using theorem \ref{entropy product structure}
    \begin{equation}\label{entropy product no resonance case}
        \hmup(a)=\hmup(a|\mathcal W^\chi)+\sum_{\beta\in\Delta_P}\hmup(a|\mathcal W^\beta)=\hmup(a|\mathcal W^\chi)+\sum_{\beta\in\Delta_P}\beta(a),
    \end{equation}
    where the last equality follows as $\mu^P$ is $P$ invariant. Applying theorem \ref{entropy product structure} again,  we get 
    \begin{equation*}
        \hmup(a^{-1})=\sum_{\beta\in\Delta_P}\hmup(a^{-1}|\mathcal W^{-\beta}).
    \end{equation*}
    As $\hmup(a)=\hmup(a^{-1})$, using  (\ref{entropy product no resonance case}) and the previous equality, canceling accordingly, we obtain that 
    \begin{equation}\label{cancellations entropy product no resonance case}
        \hmup(a|\mathcal W^\chi)+\underset{-\beta\in\Delta_{Q_0}^c}{\sum_{\beta\in\Delta_P}}\beta(a)=\underset{-\beta\in\Delta_{Q_0}^c}{\sum_{\beta\in\Delta_P}}\hmup(a^{-1}|\mathcal W^{-\beta}),
    \end{equation}
    as for each $\beta\in\Delta_P$ such that $-\beta\in\Delta_{Q_0}$ we have that
    \begin{equation*}
        \hmup(a|\mathcal W^\beta)=\beta(a)=(-\beta)(a^{-1})=\hmup(a^{-1}|\mathcal W^{-\beta}),
    \end{equation*}
    as $\mu^P$ is invariant by both $G^{\beta}$ and $G^{-\beta}$, by construction of $Q_0$; and theorem \ref{ledrappier young conditional entropy}.  From (\ref{cancellations entropy product no resonance case}) we have that 
    \begin{equation*}
        \underset{-\beta\in\Delta_{Q_0}^c}{\sum_{\beta\in\Delta_P}}\hmup(a^{-1}|\mathcal W^{-\beta})\geq \underset{-\beta\in\Delta_{Q_0}^c}{\sum_{\beta\in\Delta_P}}\beta(a),
    \end{equation*}
    however, from theorem \ref{ledrappier young conditional entropy}, $\hmup(a^{-1}|\mathcal W^{-\beta})\leq \beta(a)$ for all $\beta\in\Delta_P$, and thus $\hmup(a^{-1}|\mathcal W^{-\beta})=\beta(a)$ for all $\beta\in\Delta_P$ such that $-\beta\notin\Delta_{Q_0}$, which implies by theorem \ref{ledrappier young conditional entropy} that $\mu^P$ is $G^{-\beta}$ invariant for all such $\beta$'s, contradicting the definition of $Q_0$.
    \end{case negativity of transversal lyapunov exponent on positive weyl chambers}

    \begin{case negativity of transversal lyapunov exponent on positive weyl chambers} $\lo$ is resonant to a root $\beta_0$. In this case we have $\chi=\{\lo,\beta_0\}$, because of the codimension one hypothesis on the orbits of $G\acts M$, and as $G$ is a split group. Note that $\beta_0\in\Delta_P$, as we are assuming $\lo$ is nonnegative in a point of the interior of the Weyl chamber $\mathcal W^P$. As $\lo(a)\geq 0$, from theorem \ref{entropy product structure} we get
    \begin{equation*}
        \hmup(a)=\hmup(a|\mathcal W^\chi)+\underset{\beta\neq\beta_0}{\sum_{\beta\in\Delta_P}}\hmup(a|\mathcal W^\beta)=\hmup(a|\mathcal W^\chi)+\underset{\beta\neq\beta_0}{\sum_{\beta\in\Delta_P}}\beta(a)
    \end{equation*}
    and 
    \begin{equation*}
        \hmup(a^{-1})=\sum_{\beta\in\Delta_P}\hmup(a^{-1}|\mathcal W^{-\beta});
    \end{equation*}
    and thus using that $\hmup(a)=\hmup(a^{-1})$ we obtain,  
    arguing similarly as in the previous case, that
    \begin{equation}\label{cancellations entropy product resonance case}
        \hmup(a|\mathcal W^\chi)+\underset{\beta\neq\beta_0,-\beta\in\Delta_{Q_0}^c}{\sum_{\beta\in\Delta_P}}\beta(a)=\underset{\beta\neq\beta_0,-\beta\in\Delta_{Q_0}^c}{\sum_{\beta\in\Delta_P}}\hmup(a^{-1}|\mathcal W^{-\beta})+\hmup(a^{-1}|\mathcal W^{-\beta_0}).
    \end{equation}
    From lemma \ref{entropy along unique coarse lyapunov exponent} we have $\hmup(a|\mathcal W^\chi)\geq\beta_0(a)$; we have $\beta_0(a)\geq \hmup(a^{-1}|\mathcal W^{-\beta_0})$ as well, hence, from the previous equality we get 
    \begin{equation}\label{sum of entropies resonance case}
       \underset{\beta\neq\beta_0,-\beta\in\Delta_{Q_0}^c}{\sum_{\beta\in\Delta_P}}\beta(a)\leq \underset{
       \beta\neq\beta_0,-\beta\in\Delta_{Q_0}^c}{\sum_{\beta\in\Delta_P}}\hmup(a^{-1}|\mathcal W^{-\beta})   
    \end{equation}
    Both sums will be non-vacuous if we prove that $\{\beta\in\Delta_P:\beta\neq\beta_0, -\beta\in\Delta^c_{Q_0}\}$ is not empty. Suppose it is. Then we would have $-\beta\in\Delta_{Q_0}$ for all $\beta\in\Delta_P\setminus\{\beta_0\}$, and thus $\Delta\setminus\{\beta_0\}\subseteq\Delta_{Q_0}$. However, as the resonant codimension of $G$ is at least $2$, see section \ref{Minimal resonant codimension}, we would get that $Q_0=G$, and thus $\mu^P$ would be $G$-invariant, which contradicts assumption \ref{main assumption}. 
    
    However, because of theorem \ref{ledrappier young conditional entropy}, we get from (\ref{sum of entropies resonance case}) that $\hmup(a^{-1}|\mathcal W^{-\beta})=\beta(a)$ for all $\beta$ indexed by the sums in (\ref{sum of entropies resonance case}), and since this index set is not empty, using theorem \ref{ledrappier young conditional entropy}, $\mu^P$ is $G^{-\beta}$ invariant for some $\beta\in\Delta_P$ such that $-\beta\notin\Delta_{Q_0}$, contradicting the construction of $Q_0$.

    Therefore, we must have $\lo(a)<0$ in this case as well.
    \end{case negativity of transversal lyapunov exponent on positive weyl chambers}
\end{proof}

\begin{remark}\label{resonant root can't be in root complement of Q0}
Note that if $\lo$ is resonant with some root, by the previous lemma, it must belong to $\Delta_{Q_0}^c$, for if it were resonant to some $\beta_0\in\Delta_{Q_0}$, there would exist some minimal parabolic $P'$ of $G$ contained in $Q_0$ such that $G^{\beta_0}\leq P'$, by using the Levi decomposition of $Q_0$, so that $\mu^P=\mu^{P'}$ by remark  \ref{unique ergodicity for other minimal parabolics}, and by considering any $a\in\operatorname{Int}(\mathcal W^{P'})$, we would get a contradiction using the previous lemma.     
\end{remark}


\subsubsection{Maximal negativity  of transversal Lyapunov exponent}

For the minimal parabolic $P\leq G$, let $\Pi_P$ be the set of simple roots of the set of positive roots associated to $P$, $\Delta_P$. Denote by $\beta_P^H$ the \emph{highest root} with respect to $P$, given by the sum of all elements of $\Delta_P^+$, then $\beta_P^H$ is greater than or equal to all elements in $\Delta_P^+$, i.e., for any $\beta\in\Delta_P^+$, $\beta_P^H-\beta$ is the sum of elements in $\Delta_P$, with coefficients equal to either 0 or 1, as $G$ is a split simple Lie group.

\begin{remark}\label{measure can't be invariant under negation of highest root}
    We must have that $-\beta^H_P\in\Delta_{Q_0}^c$. For otherwise, $\mu^P$ would be invariant under $G^{-\beta^H_P}$, but this would imply $\mu^P$ would be invariant under $G^{-\beta}$ for all $\beta\in\Delta_P$, as any such $\beta$ is either $\beta^H_P$ itself or can be written as $\beta^H_P-\alpha$ for some $\alpha\in\Delta_P$, which implies $[G^{-\beta^H_P},G^\alpha]=G^{-\beta}$, which would yield $G^{-\beta}$ invariance as $\mu^P$ is $G^\alpha$ invariant as well. Since $\mu^P$ is already $P$-invariant, this would mean $\mu^P$ is $G$-invariant, which would contradict assumption \ref{main assumption}. 
\end{remark}

\begin{remark}\label{resonance to negative of highest root implies stabilizer is P}
Suppose $\lo$ is resonant to $-\beta^H_P$. Then for this case we must have that $\operatorname{Stab}(\mu^{P})=P$, i.e., $Q_0=P$. For otherwise, $\mu^{P}$ would equal $\mu^{P_1}$ for some minimal parabolic $P_1$ with respect to $A$ different to $P$, because of lemma \ref{minimal parabolics containing roots}, and then $\mu^{P}$ would be invariant by the element of the Weyl group sending the Weyl chamber $\mathcal W^{P}$ to $\mathcal W^{P_2}$, but such an element must send $\beta^H_{P}$ to $\beta^H_{P_2}$, which are different roots, a contradiction.    
\end{remark}
 
Recall that since $G$ is a split simple Lie group and $\dim(M)=\dim(G)+1$, if $\lo$ is resonant to a root $\beta_0$, denote by $\chi=\{\lo,\beta_0\}$ its coarse Lyapunov exponent. In this setup we have the following fact.

\begin{lemma}\label{no full entropy along coarse Pesin manifold}
    For all $s\in\overline{\mathcal W^P}$ we have
    \begin{equation*}
        \lo(s^{-1})+\beta_0(s^{-1})>\hmup(s^{-1}|\mathcal W^\chi).
    \end{equation*}
\end{lemma}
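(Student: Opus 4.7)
\emph{Proof proposal.} Since $\lo$ and $\beta_0$ lie in the same coarse class, $\beta_0=c\lo$ for some $c>0$; combined with $\lo(s)<0$ on $\intW$ (Lemma \ref{negativity of transversal lyapunov exponent on positive weyl chambers}), this forces $\beta_0\in-\Delta_P$. By Remark \ref{resonant root can't be in root complement of Q0}, $\beta_0\in\Delta_{Q_0}^c$, so $\mu^P$ is not $G^{\beta_0}$-invariant; this will be the contradiction I target. First I would reduce to $s\in\intW$ with $\mu^P$ being $s$-ergodic: such points are dense by Theorem \ref{pughshub}, and by Lemma \ref{kernel of transverse exponent touches not weyl chamber} the inequality is non-degenerate off $\ker(\lo)$, so the boundary of $\overline{\mathcal W^P}$ in which both sides vanish simultaneously presents no obstruction.

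From the Ledrappier--Young upper bound (Theorem \ref{ledrappier young conditional entropy}) applied to $\mathcal W^\chi$, one has $\hmup(s^{-1}\mid\mathcal W^\chi)\le\lo(s^{-1})+\beta_0(s^{-1})$, and the task is to rule out equality. Suppose for contradiction that equality holds. Then by the equality clause of Theorem \ref{ledrappier young conditional entropy}, the conditional measures of $\mu^P$ along any measurable partition subordinate to $\mathcal W^\chi$ are equivalent to the Riemannian volume on the leaves of $\mathcal W^\chi$ with $C^1$ positive densities. Since the Oseledec distribution $E^{\beta_0}$ is tangent to the $G^{\beta_0}$-orbit, these orbits form a one-dimensional smooth subfoliation $\mathcal F$ of $\mathcal W^\chi$. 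Refining any such partition to be subordinate to $\mathcal F$ and applying Fubini, the conditional measures of $\mu^P$ along $\mathcal F$ also inherit absolute continuity with $C^1$ positive densities.

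Applying the equality clause of Theorem \ref{ledrappier young conditional entropy} now to the tame foliation $\mathcal F$ yields $\hmup(s^{-1}\mid\mathcal F)=\beta_0(s^{-1})$. Since $G^{\beta_0}$-orbits are unstable for $s^{-1}$ (as $\beta_0(s^{-1})>0$), Lemma \ref{invariance from maximal entropy} with $H=G^{\beta_0}$ then forces $\mu^P$ to be $G^{\beta_0}$-invariant, contradicting $\beta_0\in\Delta_{Q_0}^c$. The main technical obstacle I anticipate is the Fubini-style transfer of the Ledrappier--Young equality clause from $\mathcal W^\chi$ down to the subfoliation $\mathcal F$: one has to check that disintegrating the smooth conditionals on $\mathcal W^\chi(x)$ along the one-dimensional leaves of $\mathcal F$ preserves both absolute continuity and $C^1$ positivity of the densities (and that the induced partitions are admissible in Theorem \ref{ledrappier young conditional entropy}). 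Should that step prove subtler than expected, the alternative route is to work directly with the leafwise measures $\mu_x^{G^{\beta_0}}$ and upgrade their absolute continuity to Haar via the $s^{-1}$-dynamics combined with Proposition \ref{haarconditionals}.
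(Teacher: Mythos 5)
Your core argument matches the paper's: assume equality, apply the equality clause of Theorem~\ref{ledrappier young conditional entropy} to get conditional measures on $\mathcal W^\chi$ equivalent to leaf volume, disintegrate via Fubini along the one-dimensional $G^{\beta_0}$-subfoliation to get absolutely continuous conditionals along $G^{\beta_0}$-orbits, and conclude $G^{\beta_0}$-invariance of $\mu^P$, contradicting $\beta_0\in\Delta_{Q_0}^c$ (Remark~\ref{resonant root can't be in root complement of Q0}). The Fubini step you flag as the main technical obstacle is exactly what the paper carries out via the explicit chart $\phi:\R^2\to\mathcal W^\chi(x_0)$ whose vertical fibers are the $G^{\beta_0}$-plaques. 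For the final ``absolute continuity implies invariance'' step the paper cites Proposition~\ref{haarconditionals}, and your route through the L-Y equality clause applied to $\mathcal F$ plus Lemma~\ref{invariance from maximal entropy} fills in the same conclusion; your mentioned alternative through the leafwise measures $\mu_x^{G^{\beta_0}}$ is, if anything, closer to what the paper actually invokes.

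The genuine flaw is your preliminary reduction ``to $s\in\intW$ with $\mu^P$ being $s$-ergodic.'' Both sides of the inequality are linear in $s$ on $\overline{\mathcal W^P}$ (since $\chi$ has a constant sign there), so a strict inequality on a dense subset of $\intW$ only yields $\geq$ on $\overline{\mathcal W^P}$ by continuity; a linear function can be strictly positive on an open cone and vanish identically on a face of its closure (e.g.\ $(x,y)\mapsto x$ on $\{x\ge 0,\,y\ge 0\}$). This matters because the lemma is applied in the paper precisely at singular points $s\in\operatorname{Int}(\ker(\alpha)\cap\overline{\mathcal W^P})$, in Cases 2 and 3 of Proposition~\ref{transverse lyapunov exponent strictly maximally negative for least singular elements}. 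The paper's reduction is different and does work: it keeps $s$ fixed and replaces $\mu^P$ by a generic $s$-ergodic component, using that $\nu\mapsto h_\nu(s^{-1}\mid\mathcal W^\chi)$ is affine across the $s$-ergodic decomposition and bounded above by $\lo(s^{-1})+\beta_0(s^{-1})$ (Ruelle), so equality for $\mu^P$ forces equality for almost every component, where the Ledrappier--Young machinery applies. You should adopt that reduction instead; the rest of your argument then goes through verbatim.
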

\begin{proof}
    Recall $\lo(s)<0$ for all $s\in\mathcal W^P$ by theorem \ref{negativity of transversal lyapunov exponent on positive weyl chambers}, i.e., $\chi(s)<0$.

    By Ruelle's inequality, we have $\lo(s^{-1})+\beta_0(s^{-1})\geq \hmup(s^{-1}|\mathcal W^\chi)$. Suppose, to get a contradiction, that we have equality. 
    
    We may assume, without loss of generality, throughout the proof, that $\mu^P$ is a generic enough $s$-ergodic component of itself, which can be done as $h_{-}(s^{-1}|\mathcal W^\chi)$ is a convex function.    
    
    Let $x_0$ be a generic enough point of $\mu^P$. Since the action $G\acts M$ is at least $C^2$, it follows that the coarse Pesin manifold $\mathcal W^\chi(x_0)$ is an injectively immersed $C^2$ submanifold of $M$ of dimension $2$ such that for any $y\in M$, the injectively immersed submanifold $G^{\beta_0}\cdot y$ is contained in $\mathcal W^\chi(x_0)$. 

    There exists a diffeomorphism $\phi:\R^2\rightarrow\mathcal W^\chi(x_0)$ with the following properties:
    \begin{enumerate}
        \item $\phi(0)=x_0$;
        \item for every $y\in\R$, $\phi(y,-):\R\rightarrow U$ is a diffeomorphism onto the connected component of the intersection $G^{\beta_0}\cdot\phi(y,0)\cap U$ containing $\phi(y,0)$.
    \end{enumerate}
    Since $ \hmup(s^{-1}|\mathcal W^\chi)=\lo(s^{-1})+\beta_0(s^{-1})$, we get by theorem \ref{ledrappier young conditional entropy} that the conditional measure of $\mu^P$ along $\mathcal W^\chi$, $(\mu^P)_{x_0}^{\mathcal W^\chi}$, is absolutely continuous. As $\phi$ is a $C^1$ diffeomorphism, we can apply Fubini's theorem and obtain a disintegration of $(\mu^P)_{x_0}^{\mathcal W^\chi}$ along the partition of $\mathcal W^\chi$ by its $G^{\beta_0}$ orbits such that for $(\mu^P)_{x_0}^{\mathcal W^\chi}$-a.e. $y$ we have that $[(\mu^P)_{x_0}^{\mathcal W^\chi}]^{\mathcal W^{\beta_0}}_y$ is absolutely continuous. Since $x_0$ was chosen to be generic enough for $\mu^P$, the above implies that the decomposition of $\mu^P$ along the $G^{\beta_0}$ orbits is absolutely continuous. This implies that $\mu^P$ is $G^{\beta_0}$-invariant by proposition \ref{haarconditionals}, and therefore, since $x_0$ was arbitrarily $\mu^P$ generic, it follows that $\mu^P$ is $G^{\beta_0}$ invariant, so $\beta_0\in\Delta_{Q_0}$, which contradicts remark \ref{resonant root can't be in root complement of Q0}. 
\end{proof}

As indicated before, we prove proposition \ref{transverse lyapunov exponent strictly maximally negative} only in the case $r(G)=2$, i.e., when $G$ is locally isomorphic to $\SL(3,\R)$. 
\s

\begin{transverse lyapunov exponent strictly maximally negative when r(G)=2}
We take $P$ to be the closed subgroup of $G$ corresponding to the Lie subalgebra of $\lie(G)=\mathfrak{sl}(3,\R)$ corresponding to the upper triangular matrices of $\SL(3,\R)$. Denote by $\beta_{i,j}$, $i,j=1,2,3$, its corresponding roots, with respect to the subgroup of diagonal matrices.

The leafwise measure $(\mu^P)^{G^{\beta_{3,1}}}_x$ is atomic for $\mu^P$-a.e. $x$ by applying theorem \ref{high entropy method}, indeed, since $\mu^P$ is in particular $P$, this theorem would imply $\mu^P$ to be invariant under both $G^{\beta_{2,1}}$ and $G^{\beta_{3,2}}$, so $\operatorname{Stab}(\mu^P)$ would be a parabolic subgroup of $G$ containing both $P$ and the group $\langle G^{2,1},G^{3,2}\rangle$, which is impossible since the minimal codimension dimension of $G$, $r(G)$, is $2$.

There are two cases.

\begin{transverse lyapunov exponent strictly maximally negative when r(G)=2 cases}
    $\lo$ is resonant to a root.
\end{transverse lyapunov exponent strictly maximally negative when r(G)=2 cases}

As in this case $G$ is locally isomorphic to $\SL(3,\R)$, we have that $\beta^{3,1}$ is the only root such that $\lo$ can be resonant to, because of both lemmas \ref{kernel of transverse exponent touches not weyl chamber} and \ref{negativity of transversal lyapunov exponent on positive weyl chambers}. Thus, the coarse Lyapunov exponent of $\lo$ equals $\chi=\{\lo, \beta^{3,1}\}$. In this case $Q_0=\operatorname{Stab}(\mu^P)=P$ by remark \ref{resonance to negative of highest root implies stabilizer is P}, and thus $|\Delta\setminus\Delta_{Q_0}|=3$. Thus, in this case, the proposition follows from proposition \ref{transverse lyapunov exponent strictly maximally negative for least singular elements}

\begin{transverse lyapunov exponent strictly maximally negative when r(G)=2 cases}
$\lo$ is not resonant to a root.
\end{transverse lyapunov exponent strictly maximally negative when r(G)=2 cases}

In this case, because of  lemma \ref{kernel of transverse exponent touches not weyl chamber}, $\ker(\lo)$ intersects the interior of one of the two adjacent Weyl chambers to $\intW$. Let us assume, without loss of generality, that this adjacent Weyl chamber has $\ker(\beta_{1,2})$ as a face, the other case is analogous, and denote by $P_1$ its corresponding minimal parabolic, so that $\Delta_{P_1}=\{\beta_{2,1},\beta_{1,3},\beta_{2,3}\}$.

Choose $b\in\ker\lo$ such that $\beta_{1,3}(b)>0$. 

We claim that $\hmup(b|W^{\beta_{2,1}})=0$. Suppose not. Then $\hmup(b^{-1}|\mathcal W^{\beta_{3,2}})=0$, for otherwise we would get by theorem \ref{high entropy method} that $\mu^P$ would be $G^{\beta_{3,1}}$ invariant and  this is impossible by \ref{measure can't be invariant under negation of highest root}. Then by theorem \ref{entropy product structure},
\begin{equation*}
    \hmup(b)=\hmup(b|\mathcal W^{\beta_{2,1}})+\beta_{1,3}(b)+\beta_{2,3}(b)\text{, }\hmup(b^{-1})=\beta_{1,2}(b^{-1}),
\end{equation*}
since $\hmup(b)=\hmup(b^{-1})$ this implies
\begin{equation*}
    \hmup(b|\mathcal W^{\beta_{2,1}})+\beta_{1,3}(b)+\beta_{2,3}(b)+\beta_{1,2}(b)=0,
\end{equation*}
which is a contradiction since $\beta_{2,3}+\beta_{1,2}=\beta_{1,3}$ and $\beta_{1,3}(b)\neq 0$. Therefore $\hmup(b|\mathcal W^{\beta_{2,1}})=0$.

Thus, for any $a\in\intW$ such that $\mu^P$ is $a$-ergodic, we have by theorem \ref{entropy product structure}, since $\lo$ is not resonant to a root and by lemma \ref{negativity of transversal lyapunov exponent on positive weyl chambers}, that
\begin{equation*}
    \hmup(a^{-1})=\hmup(a^{-1}|\mathcal W^{\lo})+\hmup(a^{-1}|\mathcal W^{G^{\beta_{3,2}}}),
\end{equation*}
and as $\hmup(a^{-1})=\hmup(a)=(\beta_{1,2}+\beta_{1,3}+\beta_{2,3})(a)$ we get 
\begin{equation*}
    \hmup(a^{-1}|\mathcal W^{\lo})\geq (\beta_{1,2}+\beta_{1,3})(a)
\end{equation*}
as $\hmup(a^{-1}|\mathcal W^{\beta_{3,2}})\leq\beta_{2,3}(a)$.

Thus, as $\beta_{1,2}(a)\neq 0$, we obtain that $\lo(a^{-1})>\beta_{1,3}(a)$, and as $\beta_{1,3}$ is the highest root with respect to $P$, we are done.

\end{transverse lyapunov exponent strictly maximally negative when r(G)=2}

For the proof of Proposition \ref{transverse lyapunov exponent strictly maximally negative for least singular elements} we need the following observation. The following proposition is the first part of the proof of our main result, Theorem \ref{main}, where we use the ergodic hypothesis appearing in it. 

\begin{lemma}\label{entropy ergodic components}
     For all $\beta\in\Delta$ such that $\beta\notin\Delta_{Q_0}$, the leafwise measure $[\mu^P]^{G^\beta}_x$ is atomic for $\mu^P$-a.e. $x$.
\end{lemma}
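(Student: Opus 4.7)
The plan is to prove Lemma \ref{entropy ergodic components} by contradiction, using Proposition \ref{entropic measure rigidity} in the simple-root case and iterating the high entropy method (Proposition \ref{high entropy method}) to reduce the general case to the simple one. Since $Q_0$ is a standard parabolic subgroup of $G$ containing $P$, it corresponds to a subset $I\subseteq\Pi_P$ for which $\Delta_{Q_0}=\Delta_P\cup\{-\gamma:\gamma\in\Delta_P,\ \supp(\gamma)\subseteq I\}$, so every $\beta\in\Delta\setminus\Delta_{Q_0}$ has the form $\beta=-\alpha$ with $\alpha\in\Delta_P$ and $\supp(\alpha)\not\subseteq I$. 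Fix such an $\alpha$, pick $\alpha_i\in\supp(\alpha)\setminus I$, and suppose for contradiction that $[\mu^P]^{G^{-\alpha}}_x$ is non-atomic for $\mu^P$-a.e.\ $x$.

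I first treat the base case $\alpha=\alpha_i\in\Pi_P\setminus I$. By the ergodic hypothesis in Assumption \ref{main assumption}(1) the action of $\ker(\alpha_i)$ on $(M,\mu^P)$ is ergodic, so by Theorem \ref{pughshub} a generic element of $\ker(\alpha_i)$ acts $\mu^P$-ergodically, and in particular I can find $\mu^P$-ergodic $s\in\operatorname{Int}(\ker(\alpha_i)\cap\overline{\mathcal W^P})$. For such $s$ the unstable subgroup $N_s=\bigoplus_{\gamma\in\Delta_P\setminus\{\alpha_i\}}G^\gamma$ is contained in $P$, as are $G^{\alpha_i}$ and $C_G(A)=A$ (the latter equality because $G$ is split). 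Hence $\mu^P$ is $C_G(A)\ltimes\langle N_s,G^{\alpha_i}\rangle$-invariant and Proposition \ref{entropic measure rigidity} forces it to be $G^{-\alpha_i}$-invariant, giving $-\alpha_i\in\Delta_{Q_0}$ and hence $\alpha_i\in I$, contradicting the choice of $\alpha_i$.

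For the general case I will iteratively peel off simple roots using the high entropy method. Starting from $\gamma_0:=\alpha$, given $\gamma_k\in\Delta_P$ with $\alpha_i\in\supp(\gamma_k)$ and $[\mu^P]^{G^{-\gamma_k}}_x$ non-atomic, I pick $\alpha_{j_k}\in\Pi_P$ with $\gamma_{k+1}:=\gamma_k-\alpha_{j_k}\in\Delta_P$ and $\alpha_i\in\supp(\gamma_{k+1})$. Since $\mu^P$ is $P$-invariant the leafwise measure $[\mu^P]^{G^{\alpha_{j_k}}}_x$ is Haar and hence non-atomic, and $\alpha_{j_k}+(-\gamma_k)=-\gamma_{k+1}$ lies in $\Delta$, so Proposition \ref{high entropy method} gives $\mu^P$ being $G^{-\gamma_{k+1}}$-invariant; then $[\mu^P]^{G^{-\gamma_{k+1}}}_x$ is Haar and non-atomic, so the iteration continues. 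Since the height of $\gamma_k$ drops by one at each step the sequence terminates at $\gamma_k=\alpha_i$, yielding $G^{-\alpha_i}$-invariance of $\mu^P$ and the same contradiction as in the base case.

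The main obstacle is the combinatorial claim underlying the inductive step: for every $\gamma\in\Delta_P$ of height at least two and every $\alpha_i\in\supp(\gamma)$ there exists $\alpha_j\in\Pi_P$ with $\gamma-\alpha_j\in\Delta_P$ and $\alpha_i\in\supp(\gamma-\alpha_j)$, equivalently the existence of a saturated chain from $\gamma$ down to $\alpha_i$ in the positive root poset. This is a standard property of irreducible root systems, but it is the one step where the split, simple structure of $G$ is used in a nontrivial combinatorial way; otherwise the argument is a direct application of Propositions \ref{entropic measure rigidity} and \ref{high entropy method}, and the ergodic hypothesis on $\ker(\alpha_i)$-actions from Assumption \ref{main assumption} enters precisely in the base case.
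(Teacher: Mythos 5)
Your proposal is correct, and it is actually more complete than what the paper records: the paper's own proof of Lemma \ref{entropy ergodic components} breaks off mid-sentence (``\ldots it follows from Proposition \ref{entropic measure rigidity} that''), and even the intended completion—picking $s\in\ker(\alpha)$ with $\mu^P$ $s$-ergodic and invoking Proposition \ref{entropic measure rigidity}—is only licensed by Assumption \ref{main assumption} when $\alpha$ is a \emph{simple} root, whereas the lemma is stated (and subsequently used in the proof of Proposition \ref{transverse lyapunov exponent strictly maximally negative for least singular elements}) for arbitrary $\beta=-\alpha$ with $\alpha\in\Delta_P$. You correctly identify this and close the gap: the base case $\alpha\in\Pi_P$ goes exactly as the paper intends, since on the interior of the wall $\ker(\alpha_i)\cap\overline{\mathcal W^P}$ only $\alpha_i$ vanishes, so $\langle N_s,G^{\alpha_i}\rangle=U(P)$ and $C_G(A)\ltimes U(P)=P$, making $\mu^P$ satisfy the invariance hypothesis of Proposition \ref{entropic measure rigidity}; and the general case is reduced to it with Proposition \ref{high entropy method}. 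Two minor remarks. First, $C_G(A)$ for split $G$ need not equal $A$ on the nose (it may have several components), but it is contained in $P$, which is all you use. Second, your iteration is longer than needed: already after the first step, Proposition \ref{high entropy method} gives that $\mu^P$ is $G^{-\gamma_1}$-invariant with $\gamma_1=\alpha-\alpha_{j_0}\in\Delta_P$ and $\alpha_i\in\supp(\gamma_1)\not\subseteq I$, so $-\gamma_1\notin\Delta_{Q_0}$ and this already contradicts $Q_0=\operatorname{Stab}(\mu^P)$; there is no need to descend all the way to $\alpha_i$. The saturated-chain fact you invoke is indeed a standard property of irreducible (hence, for split simple $G$, reduced and irreducible) root systems, and is exactly where simplicity of $G$ enters.
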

\begin{proof}
    Take $s\in\ker(\alpha)\cap\overline{\mathcal W^P}$ for which $\mu^P$ is $s$-ergodic, which can be done by Assumption \ref{main assumption}. Since $\mu^P$ is not $G^{-\beta}$-invariant, it follows from Proposition \ref{entropic measure rigidity} that 
\end{proof}

We need the following observation about higher rank, simple Lie algebras, which follows immediately as in this case, $\beta_P^H$ cannot be a simple root.

\begin{lemma}\label{sum of positive roots except biggest}
The sum of all elements of $\Delta_P\setminus\{\beta_P^H\}$  is bigger than or equal to $\beta_P^H$ on $\intW$.
\end{lemma}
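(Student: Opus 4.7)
The plan is to exploit the higher-rank hypothesis through the standard fact that in a simple Lie algebra of rank at least two the highest root $\beta_P^H$ is never itself a simple root. This lets me write $\beta_P^H$ as a sum of two distinct positive roots, each strictly below $\beta_P^H$ in height, which both then appear in the sum I need to bound.

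First I would recall the classical root-system lemma that every non-simple positive root $\beta$ admits a decomposition $\beta = \alpha + \gamma$ with $\alpha \in \Pi_P$ and $\gamma \in \Delta_P$. The proof is short: since $(\beta,\beta) > 0$ and $\beta$ has non-negative coefficients when expanded in $\Pi_P$, there is some $\alpha \in \Pi_P$ with $\langle \beta, \alpha^\vee \rangle > 0$, and then the $\alpha$-string through $\beta$ contains $\beta - \alpha$; since $\beta - \alpha$ differs from $\beta$ only in the coefficient of $\alpha$, it is forced to be a positive root. Applying this with $\beta = \beta_P^H$ produces $\beta_P^H = \alpha + \gamma$ with $\alpha \in \Pi_P$ and $\gamma \in \Delta_P$.

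I then need to verify that $\alpha, \gamma \in \Delta_P \setminus \{\beta_P^H\}$ and that $\alpha \neq \gamma$. The membership fails for $\alpha$ only if $\beta_P^H$ itself were simple, which is ruled out by the higher-rank hypothesis; for $\gamma$ it is automatic from $\gamma = \beta_P^H - \alpha \neq \beta_P^H$. Distinctness fails only if $\beta_P^H = 2\alpha$, which is impossible because the root system of $(G,A)$ is reduced ($G$ being split semisimple). Thus $\alpha$ and $\gamma$ are two distinct summands of $\sum_{\beta \in \Delta_P \setminus \{\beta_P^H\}} \beta$ with $\alpha + \gamma = \beta_P^H$, and all other summands are non-negative on $\intW$, giving
\[
\sum_{\beta \in \Delta_P \setminus \{\beta_P^H\}} \beta \;\geq\; \alpha + \gamma \;=\; \beta_P^H
\]
on $\intW$, as required. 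There is no real obstacle: the whole content of the argument is the higher-rank fact that $\beta_P^H$ is not simple, exactly as the author's hint suggests; the rest is bookkeeping inside the root system.
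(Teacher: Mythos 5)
Your proposal is correct and is precisely the argument the paper has in mind: the paper's justification is the single observation that in higher rank $\beta_P^H$ cannot be simple, and your decomposition $\beta_P^H=\alpha+\gamma$ with $\alpha\in\Pi_P$, $\gamma\in\Delta_P$, $\alpha\neq\gamma$, and both in $\Delta_P\setminus\{\beta_P^H\}$, is the standard way to unpack that hint (the reducedness of the root system for a split group handling the $\beta_P^H=2\alpha$ degeneracy, and positivity of the remaining roots on $\intW$ finishing the bound). No divergence from the paper's approach; you simply supplied the bookkeeping the paper left implicit.
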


\begin{transverse lyapunov exponent strictly maximally negative for least singular elements}
    Let $\alpha\in\Pi_P$ and $s$ be as in the hypothesis of the proposition. We have $\lo(s)<0$ by lemma \ref{kernel of transversal no faces}.

    \begin{transverse lyapunov exponent strictly maximally negative for least singular elements cases}
        $\lo$ is not resonant to a root.
    \end{transverse lyapunov exponent strictly maximally negative for least singular elements cases}    
    We have by theorem \ref{entropy product structure} that
\begin{equation*}
  \hmup(s)=\sum_{\beta\in\Delta_P\setminus\{\alpha\}}\beta(s)\text{, }\hmup(s^{-1})=\underset{-\beta\in\Delta_{Q_0}}{\sum_{\beta\in\Delta_P\setminus\{\alpha\}}}\beta(s)+\hmup(s^{-1}|\mathcal W^{\lo})
\end{equation*}
as 
 \begin{equation*}
    h_{\mu^P}(s^{-1}|\mathcal W^{-\beta}) =
    \begin{cases*}
      \beta(s) & if $-\beta\in\Delta_{Q_0}$ \\
      0        & otherwise
    \end{cases*}
  \end{equation*}
for all $\beta\in\Delta_P\setminus\{\alpha\}$, by  lemma \ref{entropy ergodic components}.
Since $\hmup(s)=\hmup(s^{-1})$ it follows that
\begin{equation*}
  \hmup(s^{-1}|\mathcal W^{\lo})=\underset{-\beta\notin\Delta_{Q_0}}{\sum_{\beta\in\Delta_P\setminus\{\alpha\}}}\beta(s);
\end{equation*}
we then have
\begin{equation*}
  \lo(s^{-1})\geq\hmup(s^{-1}|\mathcal W^{\lo})=\underset{-\beta\notin\Delta_{Q_0}}{\sum_{\beta\in\Delta_P\setminus\{\alpha\}}}\beta(s);
\end{equation*}
but as $|\Delta\setminus\Delta_{Q_0}|\geq 3$,  there exists some positive root with respect to $P$ different to both $\beta_P^H$ and $\alpha$, and since $s\in \operatorname{Int}(\ker(\alpha)\cap\overline{\mathcal W^P})$, we get that $\lo(s^{-1})>\beta_P^H(s)$, which implies the inequality of the proposition. 

\begin{remark}
    The last argument is the only place in the whole proof of proposition \ref{transverse lyapunov exponent strictly maximally negative for least singular elements} where we use that $|\Delta\setminus\Delta_{Q_0}|\geq 3$.
\end{remark}

\begin{transverse lyapunov exponent strictly maximally negative for least singular elements cases}
    $\lo$ is resonant to $-\beta_P^H$.
\end{transverse lyapunov exponent strictly maximally negative for least singular elements cases}

We proved this case of this proposition in the proof of proposition \ref{transverse lyapunov exponent strictly maximally negative} when $r(G)=2$, so we assume for this case that $r(G)\geq 3$. 

In this case we have $Q_0=\operatorname{Stab}(\mu^P)=P$ by remark \ref{resonance to negative of highest root implies stabilizer is P}. We also have that the coarse Lyapunov exponent of $\lo$ equals $\chi=\{\lo,-\beta_P^H\}$.

We have by lemma \ref{entropy ergodic components} that
\begin{equation*}
\hmup(s)=\sum_{\beta\in\Delta_P}\beta(s)\text{, } \hmup(s^{-1})=\hmup(s^{-1}|\mathcal W^\chi);
\end{equation*}
from lemma \ref{no full entropy along coarse Pesin manifold} we obtain
\begin{equation*}
        \lo(s^{-1})+\beta_P^H(s)>\hmup(s^{-1}|\mathcal W^\chi)
        =\sum_{\beta\in\Delta_P}\beta(s);
\end{equation*}
thus by canceling accordingly in the previous inequality, we obtain
\begin{equation*}
\lo(s^{-1})>\sum_{\beta\in\Delta_P\setminus\{\beta_P^H\}}\beta(s),
\end{equation*}
however by lemma \ref{sum of positive roots except biggest} the right hand side of this inequality is greater than or equal to $\beta_P^H(s)$, hence $\lo(s^{-1})>\beta_P^H(s)$, and as $\beta_P^H$ is the highest root with respect to $P$, we obtain the result in this case.

\begin{transverse lyapunov exponent strictly maximally negative for least singular elements cases}
$\lo$ is resonant to a root different than $-\beta_P^H$.
\end{transverse lyapunov exponent strictly maximally negative for least singular elements cases}

Denote by $\beta_0$ the root $\lo$ is resonant to, then $-\beta_0\in\Delta_P$ because of lemma \ref{kernel of transverse exponent touches not weyl chamber}, and $\beta_0\notin\Delta_{Q_0}$ by Remark \ref{resonant root can't be in root complement of Q0}. We have by theorem \ref{entropy product structure} and lemma \ref{entropy ergodic components} that
\begin{equation*}
    \hmup(s)=\sum_{\beta\in\Delta_P}\beta(s)\text{, }\hmup(s^{-1})=\underset{-\beta\in\Delta_{Q_0}}{\sum_{\beta\in\Delta_P}}\beta(s)+\hmup(s^{-1}|\mathcal W^{\chi}),
\end{equation*}
hence, since $h_{\mu^P}(s)=h_{\mu^P}(s^{-1})$ we obtain that
\begin{equation*}
    \hmup(s^{-1}|\mathcal W^{\chi})=\underset{-\beta\notin\Delta_{Q_0}}{\sum_{\beta\in\Delta_P}}\beta(s);
\end{equation*}
by lemma \ref{no full entropy along coarse Pesin manifold} we obtain
\begin{equation*}
        \lo(s^{-1})+\beta_0(s^{-1})>\hmup(s^{-1}|\mathcal W^\chi)
        =\underset{-\beta\notin\Delta_{Q_0}}{\sum_{\beta\in\Delta_P}}\beta(s);
\end{equation*}
thus
\begin{equation*}
    \lo(s^{-1})>\underset{-\beta\notin\Delta_{Q_0}}{\sum_{\beta\in\Delta_P\setminus\{-\beta_0\}}}\beta(s),
\end{equation*}
however, $\beta_H^P\neq-\beta_0$ by hypothesis; $\beta_H^P\neq\alpha$ by lemma \ref{kernel of transverse exponent touches not weyl chamber}; $-\beta_P^H\notin\Delta_{Q_0}$ because of remark \ref{measure can't be invariant under negation of highest root}, and thus the right hand side of the last inequality has $\beta_P^H(s)$ as a summand, which implies $\lo(s^{-1})>\beta_P^H(s)$, and since $\beta_P^H$ is the highest root with respect to $P$, we obtain the proposition in this case.

\end{transverse lyapunov exponent strictly maximally negative for least singular elements}

\begin{definition}
 By propositions \ref{transverse lyapunov exponent strictly maximally negative} and \ref{transverse lyapunov exponent strictly maximally negative for least singular elements} we have that $\lo(a^{-1})>\beta(a)$ for all $a\in\intW$ and all $\beta\in\Delta_P^+$. From we get that for $\mu^P$-a.e. $x$ there exists a one-dimensional global Pesin manifold associated to the Lyapunov exponent $\lo$, which equals the first unstable Pesin manifold  with respect to $a^{-1}$, $\mathcal W^{u,1}(x;a)$ for all $a\in\intW$, see definition \ref{definition ith unstable}; which we denote by $\mathcal W^T(x)$ and call \emph{transverse Pesin manifold at $x$}.
\end{definition}

The name in the previous definition comes from the observation that $\mathcal W^T(x)$ is transversal to the $G$-orbits of the action $G\acts M$ at $x$, because of the description of the Lyapunov exponent $\lo$ of $\mu^P$. Observe that $\mathcal W^T(x)$ might not be contained in $\m$, unless the original given locally free action $G\acts M$ is minimal, even though $\mu^P$ is supported on this minimal set.

\begin{remark}\label{first unstable for regular and not so singular elements}
    If $|\Delta\setminus\Delta_{Q_0}|\geq 3$, then for any $\alpha$, any $s\in\operatorname{Int}(\ker(\alpha)\cap \overline{\mathcal W^P})$ and any $a\in\intW$,  the first unstable manifold $\mathcal W^{u,1}(x;s)$ exists and equals $\mathcal W^{u,1}(x;a)$, i.e. $\mathcal W^T(x)$, for $\mu^P$-a.e. $x$,  because of lemma \ref{uniqueness of ith unstable higher rank} and proposition \ref{transverse lyapunov exponent strictly maximally negative for least singular elements}.
\end{remark}

\section{Construction of immersed submanifold}\label{Proof of theorem when DeltaminusDeltaQgeq3}

Recall $Q_0$ is the parabolic subgroup of $G$ equal to the stabilizer of $\mu^P$. In this section we show that, when $|\Delta\setminus\Delta_{Q_0}|\geq 3$,  there exists an injectively immersed submanifold $N$ of $M$, of the same regularity as $M$, with the following properties:
\begin{enumerate}[label=(\roman*)]
    \item For $\mu^P$-a.e. $x$ the transverse Pesin manifold $\mathcal W^T(x)$ is contained in $N$;
    \item $N$ is $Q_0$ invariant;
    \item $\dim(N)=\dim(Q_0)+1$;
    \item $N$ is an injectively immersed submanifold of $M$;
    \item  $N$ intersects injectively immersed $G$-orbits transversely in $M$;
    \item If $x\in M$ and $G\cdot x\cap N\neq\emptyset$, this intersection equals  $gQ\cdot x$ for some $g\in G$;
    \item for any $g'\in G$ we have that $g'Q\cdot x\cap gQ\cdot x\neq \emptyset$ implies $gQ=g'Q$.
\end{enumerate}

We will make a different construction in the case $r(G)=2$ in Section \ref{Construction of submanifold when r(G)=2}.

For each $\mu^P$ generic enough $x$, we construct an injectively immersed closed submanifold of $M$ containing $x$, which we denote by $N_x$, obtained by taking  the $Q_0$ orbit of the transverse Pesin manifold $\mathcal W^T(x)$; we then prove that, in a set of full $\mu^P$ measure, this construction yields the same manifold, which we denote $N$; and we will show $N$ satisfies the conditions listed above.

In Section \ref{N is a closed embedded submanifold} we prove $N$ is in fact a closed embedded submanifold, which is equivalent to the existence of a smooth $G$-equivariant map $M\rightarrow G/Q$.

\subsection{Essential $P$-equivariance of transverse Pesin manifold}

In this section we prove the following assertion, under the assumption that $|\Delta\setminus\Delta_{Q_0}|\geq 3$. Recall we are working under assumption \ref{main assumption}. 

\begin{proposition}\label{P-equivariance of transverse Pesin manifolds}
    Suppose $|\Delta\setminus\Delta_{Q_0}|\geq 3$. For $\mu^P$-a.e. $x$ and $\haar_P$-a.e. $p$ we have 
    \begin{equation*}
        p\cdot \mathcal W^T(x)=\mathcal W^T(px).
    \end{equation*}
\end{proposition}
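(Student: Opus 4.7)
The plan is to first establish the equivariance on a generating set of $P$ by a standard commutation argument for Pesin manifolds, and then upgrade to all of $P$ via a subgroup argument.

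For each simple root $\alpha \in \Pi_P$, I will pick an element $s_\alpha \in \operatorname{Int}(\ker(\alpha) \cap \overline{\mathcal W^P})$ that is $\mu^P$-ergodic, as guaranteed by Assumption \ref{main assumption} combined with Theorem \ref{pughshub}. Under the standing hypothesis $|\Delta \setminus \Delta_{Q_0}| \geq 3$, Remark \ref{first unstable for regular and not so singular elements} identifies $\mathcal W^T(x)$ with the first unstable manifold $\mathcal W^{u,1}(x; s_\alpha^{-1})$ for $\mu^P$-a.e. $x$. The key observation is that $\alpha(s_\alpha) = 0$ forces $\operatorname{Ad}(s_\alpha)$ to fix $\mathfrak{g}_\alpha$ pointwise, so $s_\alpha$ commutes elementwise with $G^\alpha$; and, being in $A$, it commutes with all of $A$.

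Next I will invoke the standard fact, which is the technical core of the proof, that if $g$ is a $C^2$ diffeomorphism of $M$ that commutes with $s_\alpha$ and preserves $\mu^P$, then $g \cdot \mathcal W^{u,1}(x; s_\alpha^{-1}) = \mathcal W^{u,1}(gx; s_\alpha^{-1})$ for $\mu^P$-a.e. $x$. This is immediate from the topological characterization in Definition \ref{definition ith unstable}: the commutation $s_\alpha^n g = g s_\alpha^n$ gives $d(s_\alpha^n gx, s_\alpha^n gy) = d(g s_\alpha^n x, g s_\alpha^n y)$, which differs from $d(s_\alpha^n x, s_\alpha^n y)$ by at most the bi-Lipschitz constant of $g$ on the compact manifold $M$, a constant independent of $n$; the defining limsup is therefore unchanged. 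Applied to $g \in G^\alpha$ for each $\alpha \in \Pi_P$ and to $g \in A$, this yields the required equivariance of $\mathcal W^T$ under $A$ and under each $G^\alpha$, in each case for every individual group element and $\mu^P$-a.e. point.

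To conclude, set
\[
H := \{p \in P : p \cdot \mathcal W^T(x) = \mathcal W^T(px) \text{ for } \mu^P\text{-a.e. } x\}.
\]
The $P$-invariance of $\mu^P$ makes $H$ a subgroup of $P$: closure under products holds because the preimage under $p_2$ of the exceptional set for $p_1$ remains $\mu^P$-null, and closure under inverses is an immediate substitution. By the previous paragraph, $H$ contains $A$ and $G^\alpha$ for every $\alpha \in \Pi_P$. Since $G$ is split, the Lie algebra $\mathfrak{u}_P$ of the unipotent radical of $P$ is generated via iterated brackets by the simple root spaces $\{\mathfrak{g}_\alpha\}_{\alpha \in \Pi_P}$; consequently, via the Baker--Campbell--Hausdorff formula, $U_P$ is generated as an abstract group by $\bigcup_{\alpha \in \Pi_P} G^\alpha$, and combined with $P = A \cdot U_P$ we conclude that $A$ together with the $G^\alpha$'s for $\alpha \in \Pi_P$ generate $P$ as an abstract group. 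Thus $H = P$, and the proposition follows by Fubini. The sole technicality to be carefully verified is the commutation lemma of the second paragraph; the subgroup and group-generation arguments are routine.
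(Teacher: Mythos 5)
Your proof is correct up to one small omission, and it organizes the argument differently from the paper. The shared core idea is the commutation trick: $s_\alpha\in\ker(\alpha)$ commutes with both $A$ and $G^\alpha$, and since $\mathcal W^T$ agrees $\mu^P$-a.e.\ with the first unstable manifold of $s_\alpha^{-1}$ by Remark~\ref{first unstable for regular and not so singular elements}, any $\mu^P$-preserving diffeomorphism commuting with $s_\alpha$ intertwines $\mathcal W^T$. Where the two proofs diverge is in upgrading from simple-root data to all of $P$: the paper proves $G^\alpha$-equivariance only for $\haar_{G^\alpha}$-a.e.\ $u$, then extends to every $G^\beta$ with $\beta\in\Delta_P$ via surjectivity of the commutator map $G^\alpha\times G^\beta\to G^{\alpha+\beta}$ on full-Haar-measure subsets (using that $G$ is split), and finishes with an ordered product decomposition of $U(P)$ and Fubini. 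You instead observe that the commutation lemma gives equivariance for \emph{every} $g\in G^\alpha$ and every $g\in A$ (not merely a.e.\ $g$, since each such $g$ preserves $\mu^P$), and then close the argument algebraically: the set $H$ of $p\in P$ for which a.e.\ equivariance holds is a subgroup of $P$, and it contains $A$ and all $G^\alpha$ for $\alpha\in\Pi_P$. Your subgroup formulation is arguably tidier and avoids the commutator-surjectivity step entirely.

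The one point to patch is the claim $P=A\cdot U(P)$. For split $G$ the minimal parabolic is $P=C_G(A)\,U(P)$, and $C_G(A)$ properly contains $A$ by the finite group $M=C_K(A)$ (for instance, the diagonal $\pm1$ matrices of determinant one in $\SL(n,\R)$). Thus $A$ together with the simple root groups generates only the identity component of $P$, and since each $M$-coset of this component carries positive $\haar_P$ mass, $H$ containing that component does not by itself give the $\haar_P$-a.e.\ conclusion over all of $P$. The fix is immediate in your own framework: every $m\in M$ centralizes $A$, hence commutes with $s_\alpha$, and preserves $\mu^P$ because $M\leq P$; your commutation lemma therefore applies to $m$, putting $M$ inside $H$ and forcing $H=P$. (The paper's own proof, which also concludes from $A$ and the $G^\alpha$'s alone, is equally silent on this finite factor.)
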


\begin{proof}
Fix $a\in\intW$ such that $\mu^P$ is $a$-ergodic. For each $\alpha\in\Pi_P$ fix some $s_\alpha\in\operatorname{Int}(\ker(\alpha)\cap\mathcal W^P)$ such that $\mu^P$ is $s_\alpha$-ergodic, which exists by Assumption \ref{main assumption}.

As $\mu^P$ is $G^\alpha$-invariant, because of proposition \ref{haarconditionals}, we obtain that for $\mu^P$-a.e. $x$ and $\haar_{G^\alpha}$-a.e. $u$, the transverse Pesin manifolds $\mathcal W^T(x;a)$ and $\mathcal W^T(ux;a)$ exists; because of remark \ref{first unstable for regular and not so singular elements} they equal the manifolds $\mathcal W^T(x;s_{\alpha})$ and $\mathcal W^T(ux;s_\alpha)$, respectively; however, $u$ and $s_\alpha$ commute, this implies $$u\mathcal W^T(x;s_{\alpha})=u\mathcal W^{u,1}(x;s_{\alpha})=\mathcal W^{u,1}(ux;s_{\alpha})=\mathcal W^T(ux;s_\alpha),$$ by definition of the first unstable manifold and of transverse Pesin manifold, and hence $$u\mathcal W^T(x;a)=u\mathcal W^T(x;s_{\alpha})=\mathcal W^T(ux;s_\alpha)=\mathcal W^T(ux;a).$$

Since every root in $\Delta_P$ is the sum of elements of $\Pi_P$, we can use the fact that $[G^\alpha, G^\beta]= G^{\alpha+\beta}$, for all roots $\alpha$ and $\beta$ in $\Delta_P$ such that $\alpha+\beta$ is a root, and thus belonging to $\Delta_P$. Since $G$ is split, and $G^\gamma$ is a one dimensional non-compact group, it follows that if $A\subseteq G^\alpha$ and $B\subseteq G^\beta$ have full Haar measure, in the respective groups, \emph{any} $u\in G^{\alpha+\beta}$ can be expressed as $u=[g,h]$ for some $g\in A$ and $h\in B$. Hence, since $\mathcal W^T(ux;a)$ exists for $\mu^P$-a.e. $x$ and $\haar_{G^\alpha}$-a.e. $u$, we get that $u\cdot\mathcal W^T(x;a)=\mathcal W^T(ux;a)$ for $\mu^P$-a.e. $x$ and $\haar_{G^\alpha}$-a.e. $u$, for all $\alpha\in\Delta_P$, as $\mu$ is $G^\alpha$-invariant for such $\alpha$.  

Therefore, since $\mathcal W^T(-;a)$ is $A$-invariant, combining everything we have done with an ordering of the positive roots of $P$, along with Fubini's theorem, we obtain the proposition.
\end{proof}

\subsection{Construction of immersed submanifold $N_x$}
We want to prove first that for any transverse Pesin manifold of a $\mu^P$ generic enough point, the $Q_0$ saturation of this manifold, is an immersed submanifold. This is easily achieved once the following property is shown.

\begin{lemma}\label{transverse Pesin manifold is uniformly locally nice}
   For $\mu^P$-a.e. $x$,  $\mathcal W^T(x)$ intersects each $G$-orbit transversely.
\end{lemma}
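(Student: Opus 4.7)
The plan is to prove the stronger pointwise statement: for $\mu^P$-a.e.\ $x$ and \emph{every} $y\in\mathcal W^T(x)$, one has $T_y\mathcal W^T(x)\cap T_y(G\cdot y)=\{0\}$. Since $\dim M=\dim G+1$ and $\dim\mathcal W^T(x)=1$, this is equivalent to the transversality required by the lemma.

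I fix $a\in\intW$ with $\mu^P$ being $a$-ergodic. Then for $\mu^P$-a.e.\ $x$ the curve $\mathcal W^T(x)$ is a $C^2$ injectively immersed one-dimensional Pesin stable manifold for the diffeomorphism $a$, associated to the single Lyapunov exponent $\lo(a)<0$. Combining Theorem \ref{stable manifold theorem} with the standard exhaustion into regular sets $\mathcal R^l$, one obtains the following upper bound: for every $\epsilon>0$, every $y\in\mathcal W^T(x)$ and every $v\in T_y\mathcal W^T(x)$, there exists a finite $C(y,\epsilon)>0$ such that
\[
\|Da^k v\|_{T_{a^ky}M}\le C(y,\epsilon)\,e^{k(\lo(a)+\epsilon)}\|v\|,\qquad k\ge 0.
\]
The finitely many initial iterates needed to pull $y$ into a local stable disk $V^s(a^{n_0}x)$ of uniform Pesin size are absorbed into $C(y,\epsilon)$.

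Next I argue by contradiction: suppose $0\neq v\in T_y\mathcal W^T(x)\cap T_y(G\cdot y)$. Local freeness together with compactness of $M$ yield a uniform $c_1>0$ with $\|X\cdot z\|_{T_zM}\ge c_1\|X\|_{\lie(G)}$ for every $z\in M$ and $X\in\lie(G)$. Writing $v=X\cdot y$ with $X\neq 0$ and expanding $X=X_0+\sum_{\beta\in\Delta}X_\beta$ via the root-space decomposition $\lie(G)=\lie(A)\oplus\bigoplus_\beta\lie(G^\beta)$ of the split group $G$, pick a weight $\beta^*\in\Delta\cup\{0\}$ with $X_{\beta^*}\neq 0$. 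A direct computation gives $Da^k v=(\operatorname{Ad}(a^k)X)\cdot(a^ky)$, so isolating the $\beta^*$-component produces
\[
\|Da^k v\|_{T_{a^ky}M}\ge c_1\,e^{k\beta^*(\log a)}\|X_{\beta^*}\|_{\lie(G)}.
\]

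The decisive ingredient is that Proposition \ref{transverse lyapunov exponent strictly maximally negative} together with Lemma \ref{negativity of transversal lyapunov exponent on positive weyl chambers} give $\lo(a)<\beta(\log a)$ for \emph{every} $\beta\in\Delta\cup\{0\}$: for $\beta\in\Delta_P$ positive one has $\lo(a)<0<\beta(\log a)$; for a negative root $\gamma=-\beta$ with $\beta\in\Delta_P$, the inequality $\beta(a)<\lo(a^{-1})=-\lo(a)$ rearranges to $\lo(a)<\gamma(\log a)$; and the case $\beta^*=0$ is precisely $\lo(a)<0$. Choosing $\epsilon$ so small that $\lo(a)+\epsilon<\beta(\log a)$ for every possible $\beta\in\Delta\cup\{0\}$ and combining the two bounds yields
\[
e^{k(\beta^*(\log a)-\lo(a)-\epsilon)}\le \frac{C(y,\epsilon)\|v\|}{c_1\|X_{\beta^*}\|_{\lie(G)}},
\]
whose left side diverges as $k\to\infty$, a contradiction. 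The main subtlety, and the one place care is needed, is turning the Pesin contraction estimate on local stable disks into the displayed upper bound at every point of the global transverse Pesin manifold; once that is in place, everything else reduces to a direct comparison of exponential rates made possible by the root-space decomposition of $T_y(G\cdot y)$.
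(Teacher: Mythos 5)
Your proposal is correct, but it follows a different path than the paper. The paper's proof is a recurrence argument: one fixes a Pesin block $\Lambda$ of positive $\mu^P$-measure and radius $r>0$ on which the transverse Pesin disks of radius $r$ are \emph{uniformly} embedded and transverse to the $G$-orbits, then for a hypothetical non-transversal point $z\in\mathcal W^T(x)$ one uses the Birkhoff ergodic theorem to find a return time $N$ with $a^N x\in\Lambda$ and $d(a^Nx,a^Nz)<r$ (the distance shrinks because $\mathcal W^T(x)$ is a fast-stable leaf for $a$), and then contradicts the uniform transversality on $\Lambda$ since the action of $a^N$ preserves both $\mathcal W^T$ and the $G$-orbit foliation. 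Your argument instead works directly at the level of tangent vectors: you expand a putative non-transversal $v\in T_y\mathcal W^T(x)\cap T_y(G\cdot y)$ as $v=X\cdot y$ via local freeness, decompose $X$ along the $\mathrm{Ad}(A)$-weight spaces of $\mathfrak g$, and compare the growth rate $e^{k\beta^*(\log a)}$ of the slowest nonzero weight component against the Pesin upper bound $e^{k(\lo(a)+\epsilon)}$, with the contradiction coming from the strict gap $\lo(a)<\beta(a)$ for every $\beta\in\Delta\cup\{0\}$ supplied by Proposition \ref{transverse lyapunov exponent strictly maximally negative} and Lemma \ref{negativity of transversal lyapunov exponent on positive weyl chambers}. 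Both proofs ultimately rest on the same source (the strict separation of $\lo$ from the roots, which is what makes $\mathcal W^T$ a well-defined one-dimensional fast-stable leaf), but the paper invokes that separation only through the existence of $\mathcal W^T$ as the first unstable of $a^{-1}$, while you use it again explicitly in the rate comparison; in return your argument gives the stronger pointwise transversality at every $y\in\mathcal W^T(x)$ directly, rather than deducing it via recurrence into a uniform Pesin block. The one step you correctly flag as requiring care — upgrading the distance contraction in Theorem \ref{stable manifold theorem} to a derivative bound $\|Da^kv\|\le C(y,\epsilon)e^{k(\lo(a)+\epsilon)}\|v\|$ valid at \emph{every} point of the global leaf — is standard in Pesin theory (one absorbs the finitely many iterates needed to enter a Lyapunov chart into the constant, and then uses the graph-transform cone condition inside the chart), so there is no genuine gap.
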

\begin{proof}
    
    Pick $a$ such that $\mu^P$ is $a$-ergodic. Suppose Birkhoff ergodic theorem holds for $x$ with respect to $a$ and $\mu^P$.

    There exists $\Lambda\subseteq M$ of positive $\mu^P$ measure such that for some $r>0$ we have that for every $y\in\Lambda$ the following holds: 
    \begin{enumerate}
        \item $\mathcal W^T(y)$ exists;
        \item the closed ball of radius $r$ in $\mathcal W^T(y)$ centered at $y$ is an embedded submanifold that is transversal to the $G$-orbits.
    \end{enumerate}
    The latter property holds in a set of positive $\mu^P$ measure as for $\mu^P$-a.e. $y$, $\mathcal W^T(y)$ is transversal to $G\cdot y$ at $y$,

    Suppose, to get a contradiction, that $\mathcal W^T(x)$ intersects a $G$-orbit non-transversely. Let $z$ be a point of $\mathcal W^T(x)$ for which this submanifold is not trasnversal to $G\cdot z$ at $z$. As the Birkhoff ergodic theorem holds for $x$ with respect to $a$ and $\mu^P$, we get that there exists a large enough $N>0$ such that $a^Nx\in\Lambda$ and $d(a^Nx,a^Nz)<r$. Then the ball of radius $r$ of $\mathcal W^T(a^Nx;a)$ centered at $a^Nx$ intersects $G\cdot a^Nz$ non-transversely, which contradicts (2) above.    
\end{proof}
\begin{definition}\label{construction submanifold}
For any $x$ as in proposition \ref{P-equivariance of transverse Pesin manifolds}, consider the set
\begin{equation*}
    N_x:=Q_0\cdot\mathcal W^T(x):=\{qy:q\in Q_0, y\in\mathcal W^T(x)\}.
\end{equation*} 
    
\end{definition}

The transversality property yields the following property.

\begin{lemma}\label{Nx is immersed}
    For $\mu^P$-a.e. $x$, $N_x$ can be given the structure of an immersed submanifold of $M$ of dimension $\dim(Q_0)+1$.
\end{lemma}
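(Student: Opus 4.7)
The plan is to realize $N_x$ as the image of a natural immersion and transport the smooth structure along that map. Consider
$$\Phi: Q_0 \times \mathcal{W}^T(x) \to M, \qquad \Phi(q,y) = qy,$$
where $\mathcal{W}^T(x)$ carries its intrinsic $C^2$ structure as an injectively immersed submanifold of $M$. The source is a $C^2$ manifold of dimension $\dim(Q_0)+1$, and its image is precisely $N_x$ by Definition \ref{construction submanifold}.

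First I would verify that $\Phi$ is an immersion at every $(q,y)$. The differential $d\Phi_{(q,y)}$ sends $T_qQ_0\oplus 0$ onto $T_{qy}(Q_0\cdot qy)$, which has full dimension $\dim(Q_0)$ by local freeness of $G\acts M$. It sends $0\oplus T_y\mathcal{W}^T(x)$ to $dL_q(T_y\mathcal{W}^T(x))$, where $L_q$ denotes the diffeomorphism $z\mapsto qz$ of $M$. By Lemma \ref{transverse Pesin manifold is uniformly locally nice}, $T_y\mathcal{W}^T(x)$ is transverse to $T_y(G\cdot y)$; since $L_q$ carries $G$-orbits to $G$-orbits, the pushforward $dL_q(T_y\mathcal{W}^T(x))$ remains transverse to $T_{qy}(G\cdot qy)$, hence a fortiori to $T_{qy}(Q_0\cdot qy)$. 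So $d\Phi_{(q,y)}$ is injective.

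Next I would show that $\Phi$ has locally discrete fibers. If $\Phi(q_1,y_1)=\Phi(q_2,y_2)$, then $q_2^{-1}q_1y_1=y_2$, so $q_2^{-1}q_1$ belongs to $\{q\in Q_0:qy_1\in\mathcal{W}^T(x)\}$. The transversality between $Q_0$-orbits and $\mathcal{W}^T(x)$ established in the previous step, combined with local freeness of the action, forces this subset of $Q_0$ to be discrete near the identity (and thus everywhere by left-translation). Consequently, around every $(q_0,y_0)\in Q_0\times\mathcal{W}^T(x)$ one can find a neighborhood on which $\Phi$ is an embedding.

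Finally I would put the immersed submanifold structure on $N_x$ by passing to the quotient $\tilde N_x:=(Q_0\times\mathcal{W}^T(x))/\!\sim$, where $(q_1,y_1)\sim (q_2,y_2)$ iff $\Phi(q_1,y_1)=\Phi(q_2,y_2)$. Local discreteness of fibers yields a locally Euclidean atlas of dimension $\dim(Q_0)+1$; second countability is inherited from the product; and Hausdorffness of $\tilde N_x$ is pulled back from $M$ via the induced injective continuous map $\tilde N_x\hookrightarrow M$. Transporting this manifold structure to $N_x$ makes the inclusion $N_x\hookrightarrow M$ an immersion of the required dimension. The main obstacle is verifying that the naive identification of local charts along the quotient glues to a genuine Hausdorff manifold; this rests entirely on the transversality from Lemma \ref{transverse Pesin manifold is uniformly locally nice} together with local freeness, which together preclude the pathological accumulations that would otherwise obstruct descending to a quotient manifold.
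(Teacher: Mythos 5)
Your core argument—showing that $\Phi:(q,y)\mapsto qy$ is an immersion by combining the transversality from Lemma \ref{transverse Pesin manifold is uniformly locally nice} with the fact that each $q$ preserves the $G$-orbit foliation—is exactly what the paper does; the paper phrases it as producing, near each $y\in\mathcal W^T(x)$, a local embedding $Q_0^{<\delta}\times J\to M$, which is the same immersion restricted to a chart.

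The gap is in your final step. You pass to the quotient $\tilde N_x=(Q_0\times\mathcal W^T(x))/\!\sim$ by the fiber relation of $\Phi$ and claim that local discreteness of fibers ``yields a locally Euclidean atlas.'' That is false in general: a figure-eight immersion of $S^1$ in $\R^2$ has discrete (in fact finite) fibers, yet the quotient of $S^1$ by the fiber relation is locally an ``X'' at the crossing point, not a $1$-manifold. Discreteness of fibers tells you $\Phi$ is a local embedding near each source point, but says nothing about whether the several local sheets of $\Phi$ passing through a common image point $p$ agree as germs at $p$. It is precisely that agreement that the quotient needs in order to be locally Euclidean, and it is precisely what the paper proves \emph{separately} in Lemma \ref{injectivey immersed N}: if $q_0z_0=z_1$ with $z_0,z_1\in\mathcal W^T(x)$, then $q_0\mathcal W^T(x)=\mathcal W^T(x)$, so the two sheets coincide. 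That argument rests on the essential $P$-equivariance of $\mathcal W^T$ (Proposition \ref{P-equivariance of transverse Pesin manifolds}), not just on the transversality statement in Lemma \ref{transverse Pesin manifold is uniformly locally nice}. The paper's Lemma \ref{Nx is immersed} is deliberately the weaker claim that $N_x$ is the image of an immersion out of a $(\dim(Q_0)+1)$-dimensional domain; building a manifold structure on the underlying set $N_x$ is deferred to Lemma \ref{injectivey immersed N}. Either drop the quotient step and state the weaker conclusion, or postpone the quotient until after you have a Lemma \ref{injectivey immersed N}-type coherence statement in hand.

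(One minor cosmetic point: the claim that $\{q\in Q_0:qy_1\in\mathcal W^T(x)\}$ is discrete ``everywhere by left-translation'' is off—this set is not a group—but you don't actually need global discreteness, since local discreteness near any $(q_0,y_0)$ follows from transversality applied at $q_0 y_0$ directly, or simply from $\Phi$ being an immersion via the inverse function theorem.)
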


\begin{proof}
    Let $x$ be $\mu^P$ generic for lemma \ref{transverse Pesin manifold is uniformly locally nice} to hold. Take any $y\in \mathcal W^T(x)$ and $q\in Q_0$.

    Since $q$ is a diffeomorphism $M\rightarrow M$ preserving the $G$-orbits foliation, and $\mathcal W^T(x)$ is transversal to $G\cdot y$ at $y$, we obtain that $D_yq[T_y\mathcal W^T(x)]+T_{qy}G\cdot qy$ is a vector subspace of $T_{qy}M$ of dimension $\dim(Q_0)+1$. Hence, there exists some $J$, precompact neighborhood of $y$ in $\mathcal W^T(x)$ diffeomorphic to $[-1,1]$, along with some $\delta>0$ such that the map $Q_0^{<\delta}\times J\rightarrow M$ given by $(q,z)\mapsto qz$ is an embedding onto its image. 

    All such embeddings endow $N_x$ with an immersed submanifold structure as a subset of $M.$
\end{proof}

\subsection{$N_x$ is injectively immersed}

We use the equivariance property given by proposition \ref{P-equivariance of transverse Pesin manifolds} to show $N_x$ is injectively immersed in $M$.

\begin{lemma}\label{injectivey immersed N}
    For $\mu^P$-a.e. $x$, $N_x$ is an injectively immersed submanifold of $M$, of dimension $\dim(Q_0)+1$.
\end{lemma}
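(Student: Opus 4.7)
The problem is to promote the local chart structure built in Lemma \ref{Nx is immersed} into a globally defined injective immersion $N_x\hookrightarrow M$. Unpacking the construction, what must be ruled out is the existence of $q\in Q_0$ and $y_1,y_2\in\mathcal W^T(x)$ with $qy_1=y_2$ but $y_1\neq y_2$, which would glue two different local sheets of the candidate $N_x$ at the same point of $M$. My plan is to derive injectivity from a full $Q_0$-equivariance statement for the family $\{\mathcal W^T(x)\}_x$, together with the partition property of the tame measurable foliation $\mathscr W^T$.

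First I would upgrade Proposition \ref{P-equivariance of transverse Pesin manifolds} from $P$-equivariance to $Q_0$-equivariance, proving that for $\mu^P$-a.e.\ $x$ and $\operatorname{Haar}_{Q_0}$-a.e.\ $q\in Q_0$ one has $q\cdot\mathcal W^T(x)=\mathcal W^T(qx)$. Let $\Theta\subseteq\Pi_P$ be the subset of simple roots determining the Levi of $Q_0$; by definition of $Q_0$ the measure $\mu^P$ is $G^{-\alpha}$-invariant for every $\alpha\in\Theta$. For each such $\alpha$ pick a singular element $s_\alpha\in\operatorname{Int}(\ker(\alpha)\cap\overline{\mathcal W^P})$ on which $\mu^P$ is ergodic; by Remark \ref{first unstable for regular and not so singular elements} combined with Proposition \ref{transverse lyapunov exponent strictly maximally negative for least singular elements} one has $\mathcal W^T(x)=\mathcal W^{u,1}(x;s_\alpha)$. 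Since $G^{-\alpha}$ commutes with $s_\alpha$, the same argument used in Proposition \ref{P-equivariance of transverse Pesin manifolds} gives $u\cdot\mathcal W^T(x)=\mathcal W^T(ux)$ for $\operatorname{Haar}_{G^{-\alpha}}$-a.e.\ $u$. The commutator identity $[\mathfrak g^{-\alpha},\mathfrak g^{-\beta}]=\mathfrak g^{-\alpha-\beta}$, together with Fubini and the fact that in a split simple group every element of a one-dimensional root subgroup is a single commutator, then propagates equivariance from the negative simple roots $-\alpha$, $\alpha\in\Theta$, to every root subgroup $G^{-\gamma}\subseteq Q_0$. Combining this with the already known $P$-equivariance yields the full $Q_0$-equivariance.

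With $Q_0$-equivariance in hand, suppose that $x$ is generic and that $qy_1=y_2$ for some $q\in Q_0$ and $y_1,y_2\in\mathcal W^T(x)$. Then $y_2\in q\mathcal W^T(x)\cap\mathcal W^T(x)=\mathcal W^T(qx)\cap\mathcal W^T(x)$; since $\mathscr W^T$ is a measurable partition, this non-empty intersection forces $\mathcal W^T(x)=\mathcal W^T(qx)$, hence $qx\in\mathcal W^T(x)$. But $qx$ lies on $G\cdot x$ while $\mathcal W^T(x)$ is transverse to every $G$-orbit by Lemma \ref{transverse Pesin manifold is uniformly locally nice}, so $\mathcal W^T(x)\cap G\cdot x$ is discrete and $q$ is confined to a discrete subset. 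Feeding this back into the local embeddings $Q_0^{<\delta}\times J\hookrightarrow M$ of Lemma \ref{Nx is immersed} shows that the charts centered at different $y\in\mathcal W^T(x)$ patch consistently, endowing $N_x$ with the required injectively immersed $C^2$ submanifold structure of dimension $\dim(Q_0)+1$. The main obstacle I expect is the first step: extending from $P$- to $Q_0$-equivariance rests on identifying $\mathcal W^T$ with the first unstable manifold of suitable singular Cartan elements, and this identification consumes the full strength of Proposition \ref{transverse lyapunov exponent strictly maximally negative for least singular elements}, and hence the standing hypothesis $|\Delta\setminus\Delta_{Q_0}|\geq 3$.
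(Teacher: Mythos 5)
Your overall strategy — reduce the problem to showing that two translated copies $q\,\mathcal W^T(x)$ and $\mathcal W^T(x)$ either coincide or are disjoint, via an equivariance property of $\mathcal W^T$ — is exactly the paper's. Your first step, upgrading Proposition \ref{P-equivariance of transverse Pesin manifolds} from $P$- to $Q_0$-equivariance by recycling the commutator argument with the singular elements $s_\alpha$, is a genuinely useful clarification: the paper's proof of Lemma \ref{injectivey immersed N} picks ``$q_1\in Q$'' while citing a proposition stated only for $\haar_P$-a.e.\ $p$, and the $Q_0$-equivariance you derive is precisely what is needed for that choice of $q_1$ to make sense.

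However, there is a genuine gap in your step 2. You write ``$q\,\mathcal W^T(x)\cap\mathcal W^T(x)=\mathcal W^T(qx)\cap\mathcal W^T(x)$'' for the \emph{specific} $q=q_0$ realizing the putative self-intersection $q_0y_1=y_2$. The $Q_0$-equivariance you established only holds for $\haar_{Q_0}$-a.e.\ $q$, and the given $q_0$ has no reason to lie in that full-measure set; indeed one does not even know that $q_0x$ lies in the full-measure set on which the measurable partition $\mathscr W^T$ is defined. The paper gets around exactly this by a perturbation trick: it does not apply equivariance at $q_0$ but picks a generic $q_1\in Q_0$ so that $\mathcal W^T(q_1x)=q_1\,\mathcal W^T(x)$ and $\mathcal W^T(q_1q_0x)=q_1q_0\,\mathcal W^T(x)$ both hold (Fubini plus translation invariance of $\haar_{Q_0}$ make the intersection of the two a.e.\ conditions still conull). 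Then $q_1y_2=q_1q_0y_1$ lies in both $\mathcal W^T(q_1x)$ and $\mathcal W^T(q_1q_0x)$, so the partition property gives $q_1\,\mathcal W^T(x)=q_1q_0\,\mathcal W^T(x)$, hence $\mathcal W^T(x)=q_0\,\mathcal W^T(x)$. Without this perturbation your conclusion $\mathcal W^T(x)=\mathcal W^T(q_0x)$ is unsupported. Your closing transversality observation about $\mathcal W^T(x)\cap G\cdot x$ being discrete is correct but is not the mechanism the paper uses to get injectivity; the operative fact is that the translates $\{q\,\mathcal W^T(x)\}_{q\in Q_0}$ form a partition into sheets once one knows $\mathcal W^T(x)=q_0\,\mathcal W^T(x)$ whenever they meet.
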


\begin{proof} 
    Let $x$ be $\mu^P$-generic enough for Definition \ref{construction submanifold} to hold. Suppose that for some $q_0\in Q_0$ and some $z_0,z_1\in\mathcal W^T(x)$ we have $q_0z_0=z_1$. The injectively immersed submanifolds $\mathcal W^T(x)$ and $q_0\cdot \mathcal W^T(x)$ intersect at $z_1=q_0z_0$. By proposition \ref{P-equivariance of transverse Pesin manifolds} there exists some $q_1\in Q$ such that both $\mathcal W^T(q_1x)$ and $\mathcal W^T(q_1q_0x)$ exist and
    \begin{equation}\label{wiedhgqwafg84}
        \mathcal W^T(q_1x)=q_1\mathcal W^T(x), \mathcal W^T(q_1q_0x)=q_1q_0\mathcal W^T(x).
    \end{equation}
    Then $q_1z_1=q_1q_0z_0$ belongs to both $\mathcal W^T(q_1x)$ and $\mathcal W^T(q_1q_0x)$, hence these two submanifolds are equal, and in particular by (\ref{wiedhgqwafg84}) this implies $\mathcal W^T(x)=q_0\mathcal W^T(x)$. 
    
    As $N_x=Q_0\cdot \mathcal W^T(x)$, this argument implies the immersed submanifold $N_x$ of $M$ has no self-intersections, i.e., this submanifold is injectively immersed.

    By the construction of $N_x$, and as it is an injectively immersed submanifold, it follows that the sets $U\cdot V$, where $U$ is an open set of $G$ and $V$ is an open set of $\mathcal W^T(x)$ are a basis for the immersed topology of $N_x$, and hence, the dimensional assertion follows.
\end{proof}

\subsection{Intersections of $N_x$ with $G$-orbits}

The following proposition will be the most important tool to prove the properties indicated at the beginning of this section, and will also yield independence of $N_x$ with respect to $x$.

\begin{proposition}\label{coherence proposition}
     For $\mu^P$-a.e. $x_1$ and $x_2$ we have that if there is $z\in\mathcal W^T(x_1)$ such that $gz\in\mathcal W^T(x_2)$ for some $g\in G$, then $g\in Q$.
\end{proposition}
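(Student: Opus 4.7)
The plan is to argue by contradiction. Suppose $g\notin Q$. I aim to produce a non-trivial $u\in G^{-\alpha}\setminus\{e\}$ for some simple root $\alpha\in\Pi_P$ with $G^{-\alpha}\not\subseteq Q$, together with a $\mu^P$-generic $x_1$ such that $ux_1\in\supp(\mu^P)$; then Topological Measure Rigidity (Proposition \ref{topological measure rigidity}) forces $\mu^P$ to be $G^{-\alpha}$-invariant, contradicting $G^{-\alpha}\not\subseteq Q=\operatorname{Stab}(\mu^P)$.

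First, I reduce to $g\in U_Q^-\setminus\{e\}$, where $U_Q^-$ denotes the unipotent radical of the parabolic opposite to $Q$. Since $\mu^P$ is $Q$-invariant, the proof of Proposition \ref{P-equivariance of transverse Pesin manifolds} extends to $Q$: for each simple $\beta\in\Pi_P$ with $G^{-\beta}\subseteq Q$, use a singular element $s_\beta\in\operatorname{Int}(\ker\beta\cap\overline{\mathcal W^P})$ centralizing $G^{\pm\beta}$ to establish $G^{-\beta}$-equivariance of $\mathcal W^T$, then build up to all of $Q$ by the commutator argument of Proposition \ref{P-equivariance of transverse Pesin manifolds}. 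This yields $q\cdot\mathcal W^T(x)=\mathcal W^T(qx)$ for $\mu^P$-a.e.\ $x$ and $\haar_Q$-a.e.\ $q\in Q$. Via the open Bruhat cell $U_Q^-\cdot Q$, decompose $g=vq$ with $v\in U_Q^-\setminus\{e\}$ and $q\in Q$, and replace $(x_1,z)$ by $(qx_1,qz)$ to reduce to $g=v\in U_Q^-\setminus\{e\}$; the finitely many non-open Bruhat cells are treated analogously using Weyl representatives.

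Next, fix a simple $\alpha\in\Pi_P$ with $G^{-\alpha}\not\subseteq Q$ (such $\alpha$ exists since $Q\neq G$) and pick $s\in\operatorname{Int}(\ker\alpha\cap\overline{\mathcal W^P})$ for which $\mu^P$ is $s$-ergodic (Assumption \ref{main assumption}). Then $\lo(s)<0$ by Propositions \ref{transverse lyapunov exponent strictly maximally negative} and \ref{transverse lyapunov exponent strictly maximally negative for least singular elements}, so $\mathcal W^T(y)$ is the stable manifold of $s$ at $y$. I also note that $\mu^P$ is non-atomic: an atom would give a finite $s$-orbit, contradicting local freeness together with the torsion-freeness of $A$; hence the diagonal of $M\times M$ is $\mu^P\otimes\mu^P$-null. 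Applying Poincar\'e recurrence to $(M\times M,\mu^P\otimes\mu^P,s\times s)$, for $\mu^P\otimes\mu^P$-a.e.\ $(x_1,x_2)$, with in particular $x_1\neq x_2$, there is $n_k\to\infty$ with $s^{n_k}x_i\to x_i$; combining with $z\in\mathcal W^s(x_1;s)$ and $vz\in\mathcal W^s(x_2;s)$ gives $s^{n_k}z\to x_1$ and $s^{n_k}vz\to x_2$.

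Finally, write $s^{n_k}vz=(s^{n_k}vs^{-n_k})\cdot s^{n_k}z$. Since $\operatorname{Ad}(s^n)$ scales $\mathfrak g^{-\beta}$ by $e^{-\beta(s)n}$, with $\alpha(s)=0$ and $\beta(s)>0$ for $\beta\in\Delta_P\setminus\{\alpha\}$, we obtain $s^{n_k}vs^{-n_k}\to\exp((\log v)_{-\alpha})=:u\in G^{-\alpha}$, and passing to the limit yields $ux_1=x_2$. If $u=e$ then $x_1=x_2$, contradicting $x_1\neq x_2$. If $u\neq e$, the hypotheses of Proposition \ref{topological measure rigidity} hold for $(\mu^P,s,\alpha)$: $\mu^P$ is $s$-ergodic, $P$-invariance yields $C_G(A)\ltimes\langle N_s,G^\alpha\rangle$-invariance, and $\lo(s)<0$. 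Thus $\mu^P$ is $G^{-\alpha}$-invariant, contradicting $G^{-\alpha}\not\subseteq Q$. The main technical obstacle is the first step: extending $P$-equivariance of $\mathcal W^T$ to $Q$-equivariance valid at the particular $q$ produced by the Bruhat decomposition of $g$, and uniformly handling the non-open Bruhat cells; the dynamical extraction itself is clean, because the alternative $u=e$ is already excluded by non-atomicity of $\mu^P$, so no height-reduction within $\mathfrak u_Q^-$ is required.
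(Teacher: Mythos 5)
Your overall strategy is recognizably the same as the paper's: shrink the dynamics along the singular element $s$, extract a limit $u=\lim s^{n_k}v s^{-n_k}\in G^{-\alpha}$, show $ux_1\in\supp(\mu^P)$, and invoke Topological Measure Rigidity. The Poincar\'e recurrence trick on $(M\times M,\mu^P\otimes\mu^P,s\times s)$ together with the non-atomicity observation to rule out $u=e$ is a cleaner way to avoid Lemma~\ref{almost surely nice perturbations}, and it is correct as far as it goes. But there is a real gap in the reduction step, and a more subtle issue with the off-diagonal hypothesis.

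The reduction to $g=v\in U_Q^-\setminus\{e\}$ is not justified. You write ``decompose $g=vq$ and replace $(x_1,z)$ by $(qx_1,qz)$''; that replacement requires $q\cdot\mathcal W^T(x_1)=\mathcal W^T(qx_1)$, i.e.\ $Q$-equivariance of $\mathcal W^T$ \emph{at the specific element $q$ produced by the Bruhat decomposition of $g$}. Even granting that the commutator argument of Proposition~\ref{P-equivariance of transverse Pesin manifolds} extends from $P$ to $Q_0$ (which is plausible but not stated anywhere in the paper, and still requires verification for $G^{-\beta}\leq Q_0$), the equivariance holds only for $\haar_{Q_0}$-a.e.\ $q$, whereas the Bruhat factor $q$ is a single element determined by $g$ and lies in a set of measure zero with respect to any preliminary genericity choices you made. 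Likewise, $P$ does not normalize $U_Q^-$, so you cannot repair this by conjugating with generic $p_0\in P$. And the proposed treatment of the non-open Bruhat cells ``analogously using Weyl representatives'' is not an argument. This is precisely the problem the paper solves with Lemma~\ref{Pertubations by generic elements of $P$ have $LU$ factorization} and Lemma~\ref{almost surely nice perturbations}: it never reduces to $U_Q^-$ at all, but perturbs $p_0g=u_1p_1$ with $u_1\in U(P^{\operatorname{op}})$ and \emph{chooses $\alpha$ depending on $g$} so that $u_1$ is guaranteed to have nontrivial $G^{-\alpha}$ component for a.e.\ $p_0$; the perturbing element $p_1$ is then generic by construction. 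Without that perturbation lemma (or some substitute), your argument has no way to land on a generic element and the reduction collapses.

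Secondly, notice that ruling out $u=e$ by insisting $x_1\ne x_2$ means your argument only proves the proposition on the complement of the diagonal. Proposition~\ref{coherence proposition} is later applied in the proof of Theorem~\ref{main} with $x_1$ and $x_2$ ranging over $N=Q_0\cdot\mathcal W^T(x)$ for a single fixed generic $x$, and in the proof of Proposition~\ref{nondependende on x} the relevant $x_1,x_2$ are not constrained to be distinct. The paper's proof does not require $x_1\ne x_2$ because it guarantees nontriviality of the $G^{-\alpha}$ component via Lemma~\ref{almost surely nice perturbations} rather than via the diagonal-avoidance argument. So even if the reduction were fixed, you would need to revisit the non-atomicity step or supply a separate argument when $x_1=x_2$.

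Finally, a minor remark: your use of Proposition~\ref{transverse lyapunov exponent strictly maximally negative for least singular elements} to get $\lo(s)<0$ at a singular $s$ silently assumes $|\Delta\setminus\Delta_{Q_0}|\geq 3$; the paper handles $r(G)=2$ with a different argument in Section~\ref{Construction of submanifold when r(G)=2}, which your proposal does not address.
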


Before proving this proposition, we need some lemmas and definitions. 

\subsubsection{Pertubations by generic elements of $P$ and big cell of $G/P$}

\begin{lemma}\label{Pertubations by generic elements of $P$ have $LU$ factorization}
For any $g\in G$ we have that for $\haar_P$-a.e. $p$, $pg=up'$ for unique $u\in U(P^{\operatorname{op}})$ and $p'\in P$. Furthermore, the $\haar_P$-a.e. defined function $p\mapsto p'$ is absolutely continuous. 
\end{lemma}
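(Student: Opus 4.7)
The key structural input is the Bruhat-type factorization of $G$: the multiplication map
\begin{equation*}
\mu : U(P^{\operatorname{op}}) \times P \longrightarrow G, \quad (u, p') \mapsto up',
\end{equation*}
is a smooth embedding onto an open dense subset $\mathcal{B} \subseteq G$ (the big Bruhat cell), with inverse smooth on $\mathcal{B}$. Hence whenever $pg \in \mathcal{B}$ the factorization $pg = u p'$ exists and is unique by injectivity of $\mu$, with $(u, p')$ depending smoothly on $pg$. The first assertion of the lemma thus reduces to showing that for every $g \in G$ the open set $S_g := \{p \in P : pg \in \mathcal{B}\}$ has full $\haar_P$-measure in $P$. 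Its complement $P \setminus S_g = P \cap (G \setminus \mathcal{B}) g^{-1}$ is a closed real-analytic subset of $P$, and a proper closed real-analytic subset of the real-analytic manifold $P$ has $\haar_P$-measure zero; so it suffices to show $S_g \neq \emptyset$ for every $g \in G$, i.e., $g \in P \cdot U(P^{\operatorname{op}}) \cdot P$.

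The core algebraic step is therefore the equality $G = P \, U(P^{\operatorname{op}}) \, P$, which I expect to be the only non-routine ingredient. My plan is the following. First, the differential of $(p_1, u, p_2) \mapsto p_1 u p_2$ at $(e,e,e)$ is surjective onto $T_e G$, because $\mathfrak{p} + \mathfrak{u}^{-} = \mathfrak{g}$; hence $P U(P^{\operatorname{op}}) P$ contains an open neighborhood of $e$, and by its manifest left- and right-$P$-invariance it is open in $G$, in particular a union of $(P,P)$-double cosets. Second, its complement in $G$ is then a closed union of $(P,P)$-double cosets. Third, the identity double coset $PeP = P$ lies in the closure $\overline{PwP}$ of every $(P,P)$-double coset (by the standard Bruhat-order fact that $e \leq w$ for every $w$); since $P$ is itself contained in $P U(P^{\operatorname{op}}) P$ (take $u = e$), a nonempty complement would force $P$ to lie simultaneously in $P U(P^{\operatorname{op}}) P$ and in its complement, a contradiction. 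Hence $G = P U(P^{\operatorname{op}}) P$, $S_g$ is nonempty, and $S_g$ has full $\haar_P$-measure.

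For the absolute continuity of $p \mapsto p'$: on the open dense set $S_g$, this map is the composition $p \mapsto pg \mapsto \mathrm{pr}_P \circ \mu^{-1}(pg)$, where $\mathrm{pr}_P : U(P^{\operatorname{op}}) \times P \to P$ is the projection onto the second factor. Each factor is smooth on its domain, so $p \mapsto p'$ is smooth on $S_g$. Smooth maps between manifolds of equal dimension are locally Lipschitz and therefore satisfy the Luzin N-property, sending $\haar_P$-null sets to $\haar_P$-null sets, which is the required absolute continuity. I expect the Bruhat-order manipulation in the central step to be the main delicate point; every other ingredient is a routine consequence of the smooth structure of the big cell and the Luzin property for smooth maps.
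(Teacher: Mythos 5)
Your proof takes a genuinely different route from the paper's sketch. The paper reduces via the Bruhat decomposition $G=\bigsqcup_w PwP$ to showing the claim for Weyl representatives $w$, then (for $\SL(n,\R)$) checks the leading-principal-minor criterion for an $LU$ factorization of $pw$ for generic upper-triangular $p$, noting the argument is ``similar'' for other split types. You instead establish the single clean algebraic fact $G=P\,U(P^{\operatorname{op}})\,P$ uniformly in $G$ and deduce everything from the big cell being open dense. Your route is cleaner, type-free, and makes the structural content explicit; the paper's is computational but self-contained for its worked case. A small improvement to your central step: $P\,U(P^{\operatorname{op}})\,P=\bigcup_{p\in P}p\,\mathcal B$ is a union of Zariski-opens and hence Zariski-open, so its complement is Zariski-closed; you can then invoke the Zariski closure relation $\overline{PwP}\supseteq P$ (which is the genuinely standard Bruhat fact) rather than the Hausdorff closure over $\R$, which is true but requires a further argument that a Zariski-open dense subset of an irreducible smooth $\R$-variety with $\R$-points of full dimension has Hausdorff-dense $\R$-points.

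Two points do need attention. First, ``a proper closed real-analytic subset of $P$ has $\haar_P$-measure zero'' is false when $P$ is disconnected, and for split $G$ the real points $P(\R)$ typically are disconnected (e.g.\ the $2^{n-1}$ sign components of the upper-triangular group in $\SL(n,\R)$). The correct statement is that $P\setminus S_g$ is the zero locus of a \emph{regular} function on the Zariski-irreducible variety $P$, in which nonvanishing at a single $\R$-point forces the zero locus to be a proper algebraic subvariety, whose real points have measure zero on every component. Second, and more substantively, your appeal to the Luzin N-property addresses the wrong direction of absolute continuity for how this lemma is used: downstream (e.g.\ in Lemma~\ref{almost surely nice perturbations} and the proof of Proposition~\ref{coherence proposition}) one needs that the \emph{preimage} under $p\mapsto p'$ of a $\haar_P$-null set is null, i.e.\ the pushforward of $\haar_P$ is absolutely continuous, not that forward images of null sets are null. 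Fortunately the stronger correct statement is easy: translating tangent spaces at $x=pg=up'\in\mathcal B$ to $T_eG$ by right multiplication, $T_x(Pg)x^{-1}=\mathfrak p$ and $T_x(U(P^{\operatorname{op}})p')x^{-1}=\mathfrak u^{-}$ are transverse, so the composite $p\mapsto pg\mapsto\operatorname{pr}_P\circ\mu^{-1}(pg)$ is a submersion between equidimensional manifolds, hence a local diffeomorphism on $S_g$, which gives absolute continuity in both directions at once.
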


\begin{sketch of proof}
    We do the proof only for $G=\SL(n,\R)$, the proof being similar for the other simple split Lie groups. The elements of the Weyl group $W$ of $(G,A)$ can be represented as column permutation matrices. Fix any $w\in W$. It can easily be verified that for a generic $p\in P$, where in this case $P$ is the subgroup of upper triangular matrices of $\SL(n,\R)$, for each $1\leq k\leq n$ we have $\det (pw)_k\neq 0$, where $(pw)_k$ is the upper left-hand corner $k\times k$ submatrix of $pw$, as $pw$ is obtained from $p$ by permuting its columns. Therefore, the lemma is true for all the elements of the Weyl group, by the existence theorem for $LU$ factorization.

    From the above and using the Bruhat decomposition of $G$ with respect to $P$, we obtain the result.
\end{sketch of proof}

\begin{definition}
    Suppose $\alpha$ is a root of $(G,A)$ such that $\alpha\in\Delta_{P}$. We say $g\in G$ \emph{has \emph{nontrivial} $G^{-\alpha}$ component with respect to $P$} if $g$ can be factored as $up$, where $p\in P$ and $u\in P^{\operatorname{op}}$ is such that if $v$ is the element of $\operatorname{Lie}(P^{\operatorname{op}})$ such that $u=\exp(v)$, and $v$ has non trivial $\mathfrak g_{-\alpha}$ component in the decomposition $\operatorname{Lie}(P^{\operatorname{op}})=\bigoplus_{\beta\in\Delta_{P}}\mathfrak g_{-\beta}$.
\end{definition}

\begin{lemma}\label{almost surely nice perturbations}
Let $P\leq Q\leq G$ be a proper parabolic. Suppose $g\in G\setminus Q$. Then there exists a simple positive root $\alpha\in\Pi_P\setminus\Delta_Q$ such that if $pg=up'$, as in lemma \ref{Pertubations by generic elements of $P$ have $LU$ factorization}, then $u$ has nontrivial $G^{-\alpha}$ component for $\haar_P$-a.e. $p$.
\end{lemma}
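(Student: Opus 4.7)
The plan is to combine real-analyticity of the $LU$-projection with a root-system perturbation argument to exhibit a single witness. On the open dense set $P_\ast\subseteq P$ from the previous lemma, the map $p\mapsto u(p)\in U(P^{\operatorname{op}})$ is real-analytic, hence so is each coordinate $F_\alpha(p):=\pi_{-\alpha}(\log u(p))$ for a simple root $\alpha\in\Pi_P$. Since $P_\ast$ is connected, either $F_\alpha\equiv 0$ on $P_\ast$ or its zero locus has $\haar_P$-measure zero. It therefore suffices to exhibit one simple $\alpha$ with $-\alpha\notin\Delta_Q$ (equivalently $\alpha\in\Pi':=\Pi_P\setminus\Pi_Q$, which is how I read the statement's $\Pi_P\setminus\Delta_Q$) together with a single $p_0\in P_\ast$ for which $F_\alpha(p_0)\neq 0$.

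I argue by contradiction: suppose $F_\alpha\equiv 0$ for every $\alpha\in\Pi'$. Define $\mathfrak{s}:=\bigoplus_{\beta\in\Delta_P\setminus\Pi'}\mathfrak{g}_{-\beta}$; this is a nilpotent Lie subalgebra of $\mathfrak{u}(P^{\operatorname{op}})$, since brackets of two negative root vectors land in a root space whose root is a non-simple sum, hence in $\Delta_P\setminus\Pi_P\subseteq\Delta_P\setminus\Pi'$. Let $S:=\exp\mathfrak{s}$. The assumption says $u(p)\in S$ for $p\in P_\ast$, i.e.\ $pg\in SP$. Specializing $p=e$ gives $g\in SP$; writing $g=s_0p_0$ and replacing $g$ by $gp_0^{-1}$ (which changes neither the hypothesis $g\notin Q$, as $p_0\in P\subseteq Q$, nor the $U(P^{\operatorname{op}})$-component of $pg$), I may reduce to the case $g=s_0\in S\setminus Q$.

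Because $s_0\notin Q$, the expansion $\log s_0=\sum_{\beta}c_\beta X_{-\beta}$ has a nonzero summand $c_{\beta_0}X_{-\beta_0}$ with $\beta_0\in\Delta_P\setminus\Delta_{L_Q}^+$; such a $\beta_0$ must be non-simple (a simple root outside $\Pi_Q$ would lie in $\Pi'$, which is excluded from $\mathfrak{s}$) and has some $\alpha_0\in\Pi'$ in its support. A standard fact about split root systems provides a chain $\alpha_0,\ \alpha_0+\gamma_k,\ \alpha_0+\gamma_k+\gamma_{k-1},\ \ldots,\ \alpha_0+\gamma_k+\cdots+\gamma_1=\beta_0$ of positive roots with $\gamma_i\in\Pi_P$. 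Now consider the multi-parameter perturbation $p(t):=\exp(t_1E_{\gamma_1})\cdots\exp(t_kE_{\gamma_k})\in P$, and expand $F_{\alpha_0}(p(t))$ around $t=0$ by a Baker--Campbell--Hausdorff and $LU$-derivative computation. The coefficient of $t_1t_2\cdots t_k$ in $F_{\alpha_0}(p(t))$ equals, up to a nonzero universal scalar,
\[c_{\beta_0}\,\bigl[E_{\gamma_k},\bigl[E_{\gamma_{k-1}},\ldots\bigl[E_{\gamma_1},X_{-\beta_0}\bigr]\cdots\bigr]\bigr]\in\mathfrak{g}_{-\alpha_0},\]
and this iterated commutator is nonzero: at each step the bracket pairs a simple root vector with a negative root vector whose root sum is again a positive root (by construction of the chain), so since $G$ is split and root spaces are one-dimensional, each intermediate structure constant is nonzero. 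Hence $F_{\alpha_0}\not\equiv 0$, contradicting the initial assumption and proving the lemma.

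The main obstacle is the higher-height case (for instance $\beta_0=\alpha_1+2\alpha_2$ in a $B_2$-subsystem with $\Pi_Q=\{\alpha_2\}$), where no first-order perturbation can produce a $\mathfrak{g}_{-\alpha_0}$ contribution through a single bracket; this forces the multi-step perturbation of length equal to the chain length. A subtle point is to check that the multilinear $t_1\cdots t_k$-coefficient is not killed by contributions from other summands of $\log s_0$ or from higher BCH terms: any other $c_\beta X_{-\beta}$ would produce a contribution at order $t_1\cdots t_k$ only into a root space $\mathfrak{g}_{-\beta+\gamma_1+\cdots+\gamma_k}$, which equals $\mathfrak{g}_{-\alpha_0}$ only for $\beta=\beta_0$, and quadratic-or-higher corrections in a single $t_i$ appear at strictly higher polynomial degree and so do not enter the $t_1\cdots t_k$-coefficient. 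This root-support accounting is the technical heart of the argument.
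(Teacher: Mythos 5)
Your proposal is in the same spirit as the paper's proof --- both rest on (i) the algebraic/analytic nature of the $LU$-projection $p\mapsto u(p)$, so that a nonvanishing component somewhere forces nonvanishing on a conull set, and (ii) spreading a component of $\log u$ from a higher root space down to a simple one by multiplying on the left by elements of $P$. The difference is in how carefully step (ii) is carried out. The paper's version is a \emph{single} perturbation step: it picks an $\alpha\in\Delta_P$ where $u_1$ has a nontrivial component, then a $\gamma\in\Delta_P\cup\{0\}$ with (as written) $\alpha+\gamma\in\Pi_P$ and argues that $u_2$ acquires a nontrivial $G^{-\alpha-\gamma}$ component for a.e.\ $p$. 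As written this has the sign reversed (left multiplication by $\exp(tE_\gamma)$ pushes $\mathfrak g_{-\beta}$ toward $\mathfrak g_{-\beta+\gamma}$, not $\mathfrak g_{-\beta-\gamma}$, so the condition should be $\alpha-\gamma\in\Pi_P$), it does not check that the simple root reached satisfies $-\alpha\notin\Delta_Q$, and it does not address the genuinely multi-step situations you flag (e.g.\ a $B_2$-type configuration where $\beta_0-\alpha_0$ is not a root), which require a chain of length $\geq 2$. Your proof fixes all three: you choose $\alpha_0\in\Pi'=\Pi_P\setminus\Pi_Q$ from the outset, reduce to $g=s_0\in S\setminus Q$ with $\mathfrak s=\bigoplus_{\beta\in\Delta_P\setminus\Pi'}\mathfrak g_{-\beta}$, and use a multi-parameter perturbation along a root chain. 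The one caveat I would note is that the ``root-support accounting'' at the end is stated but not fully justified: BCH corrections of the form $[Y_1,Y_2]$ with both factors drawn from $\Ad(p(t))\log s_0$ can a priori also land in $\mathfrak g_{-\alpha_0}$ at multilinear order $t_1\cdots t_k$, and ruling out a cancellation with the leading term needs the observation that $\alpha_0$, being simple, cannot be written as a sum of two positive roots --- which eliminates the dangerous pairings once one tracks which pieces lie in $\mathfrak u(P^{\operatorname{op}})$ versus $\operatorname{Lie}(P)$. Also be careful with the ordering: with $p(t)=\exp(t_1E_{\gamma_1})\cdots\exp(t_kE_{\gamma_k})$ the innermost bracket in the expansion of $\Ad(p(t))$ is with $E_{\gamma_k}$, so the chain should be built to match that nesting (a harmless index reversal from what you wrote). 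With those two points nailed down, your argument is a correct and more complete version of the paper's.
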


\begin{proof}
    By lemma \ref{Pertubations by generic elements of $P$ have $LU$ factorization}, choose $p_0\in P$ such that $p_0g=u_1p_1$ for some $u_1\in U(P^{\operatorname{op}})$ and $p_1\in P$. Since $g\notin Q$, it follows that $u_1\in U(P^{\operatorname{op}})\setminus\{e\}$, as $g\notin Q$.

    Then $u_1$ has non-trivial $G^{-\alpha}$ component for some $\alpha\in\Delta_P$, choose such an $\alpha$ minimal with respect to the ordering on the roots of $(G,A)$ with respect to $P$. Choose $\gamma\in \Delta_P\cup\{0\}$, with $\alpha+\gamma\in\Pi_P$. By using commutators, it follows that for all $p\in P$, $pu_1=u_2p_2$, for some $u_2\in U(P^{\operatorname{op}})$; furthermore, $u_2$ will have non-trivial $G^{-\alpha-\gamma}$ component for $\haar_P$-a.e. $p$, in fact, the set set of all $p\in P$ not satisfying this is a proper algebraic subset of $P$ of lower dimension, and hence of $\haar_P$ measure zero. 

    Therefore, as $pp_0g=pu_1p_1=u_2p_2p_1$, we obtain the result.
\end{proof}

\begin{proof of coherence proposition}
    Throughout the proof we indicate how generic, with respect to $\mu^P$, $x_1$ and $x_2$ have to be. For each $\alpha\in\Pi_P\setminus\Delta_{Q_0}$, pick some $s_\alpha\in\operatorname{Int}(\ker(\alpha)\cap\overline{\mathcal W^P})$ such that $\mu^P$ is $s_\alpha$-ergodic, recall we are working under assumption \ref{main assumption}. 
    
    Suppose $x_1$ is such that for all $\alpha\in\Pi_P$ and for $\haar_P$-a.e. $p$ we have the following properties:
    \begin{enumerate}[label=(\roman*)]
        \item proposition \ref{topological measure rigidity} holds for $\mu^P$ and $px_1$ with respect to $s_\alpha$ and $\alpha$, for a.e. $p\in P$, which can be done as $\mu^P$ is $C_G(A)\ltimes\langle N_{s_\alpha},G^{\alpha}\rangle$-invariant, as $P=C_G(A)U(P)$ and $\langle N_{s_\alpha},G^{\alpha}\rangle=U(P)$, because of our choice of $s_\alpha$ with respect to $\ker(\alpha)$ and $\mathcal W^P$, and because of proposition \ref{haarconditionals}.
        \item Birkhoff's ergodic theorem holds for $px_1$ with respect to $s_\alpha$ and $\mu^P$.
    \end{enumerate}
    
    Suppose to get a contradiction that $g\notin Q$. Choose $p_0\in P$  such that the following holds:
    \begin{enumerate}
        \item $p_0g=u_1p_1$ for some $u_1\in U(P^{\operatorname{op}})$ and $p_1\in P$, such that for some $\alpha\in\Pi_P$ with $-\alpha\notin\Delta_{Q_0}$, we have that $u_1$ has nontrivial $G^{-\alpha}$ component, which can be done by lemma \ref{almost surely nice perturbations};
        \item $p_1$ satisfies properties (i) and (ii) above with respect to $x_1$, this can be done by the last part of lemma \ref{almost surely nice perturbations};
        \item  $p_0x_2\in\supp(\mu^P)$;
        \item  $p_0\mathcal W^T(x_2)=\mathcal W^T(p_0x_2)$, which can be done because of remark \ref{first unstable for regular and not so singular elements}, and proposition \ref{P-equivariance of transverse Pesin manifolds}. 
    \end{enumerate}

    For $\alpha$ as in (1), denote $s_\alpha$ by $s$ to ease notation. 
    \s

    As $x_1\in\supp(\mu^P)$, so that $p_1x_1\in\supp(\mu^P)$; and because of (2) and (ii), it follows that there exists an increasing sequence of natural numbers $\{n_k\}_k$ such that $s^{n_k}p_1x_1$ converges to $p_1x_1$ as $k\rightarrow\infty$. 

    We have that $u_1$ has nontrivial $G^{-\alpha}$ component, which implies $s^n u_1s^{-n}$ converges to some $v_1\in G^{-\alpha}\setminus\{e\}$. 
    
    However, by both lemmas \ref{kernel of transverse exponent touches not weyl chamber} and \ref{negativity of transversal lyapunov exponent on positive weyl chambers} we have that $\lambda_{\mu^P}(s)<0$, since $s\in\operatorname{Int}(\ker(\alpha)\cap\overline{\mathcal W^P})$, by remark \ref{first unstable for regular and not so singular elements} we get that since $z\in\mathcal W^T(x_1)=\mathcal W^T(x_1;s)$($z$ as in our hypothesis), 
    \begin{equation*}
        d(s^np_1z,s^np_1x_1)\rightarrow 0  
    \end{equation*}
     which implies $s^{n_k}p_1z\rightarrow  p_1x_1$, as $s^{n_k}p_1x_1\rightarrow p_1x_1$, which in turn implies 
     \begin{equation}\label{kanv8i4k}
     s^{n_k}u_1p_1z=(s^{n_k}u_1s^{-n_k})s^{n_k}p_1z\rightarrow v_1p_1x_1.    
     \end{equation}

     However, $u_1p_1z=p_0gz\in p_0\mathcal W^T(x_2)$, hence, by (4), we obtain $u_1p_1z\in\mathcal W^T(p_0x_2)=\mathcal W^T(p_0x_2,s)$; so that by definition of transverse Pesin manifold it follows that 
     \begin{equation*}
         d(s^nu_1p_1z, s^np_0x_2)\rightarrow 0;
     \end{equation*}
     this, along with (\ref{kanv8i4k}), implies $s^{n_k}p_0x_2\rightarrow v_1p_1x_1$; this in turn proves that $v_1p_1x_1\in\overline{\supp(\mu^P)}=\supp(\mu^P)$, as $p_0x_2\in\supp(\mu^P)$ by (3), and hence $s^np_0x_2\in\supp(\mu^P)$ for all $n$, as $\mu^P$ is $P$-invariant. 

     Therefore, by (2) and (i), and recalling Proposition \ref{topological measure rigidity}, we get that $\mu^P$ is $G^{-\alpha}$-invariant, which is a contradiction, as $-\alpha\notin\Delta_{Q_0}$, because of (1), and as $Q_0=\operatorname{Stab}(\mu^P)$.

     This contradiction proves $g\in Q$, which finishes the proof.
\end{proof of coherence proposition}

\subsection{Construction of submanifold when $r(G)=2$}\label{Construction of submanifold when r(G)=2}

This is the case when $G$ has Lie algebra $\mathfrak{sl}(3,\R)$. We take $P$ to be the closed subgroup of $G$ corresponding to the Lie subalgebra of $\lie(G)=\mathfrak{sl}(3,\R)$ corresponding to the upper triangular matrices of $\SL(3,\R)$. If proposition \ref{transverse lyapunov exponent strictly maximally negative for least singular elements} fails, then by remark \ref{reduction to inequality on kernel}, we must have that $Q_0$ is a maximal parabolic subgroup of $G$ containing $P$. We may assume without loss of generality that $Q_0$ is the Lie subgroup of $G$ generated by $G^{\beta_{2,1}}$ and $P$.

If proposition \ref{transverse lyapunov exponent strictly maximally negative for least singular elements} holds for $\mu^P$ in this case, we are done by remark \ref{reduction to inequality on kernel}. Let us assume this is not the case for the rest of the section. 

By assumption \ref{main assumption} and theorem \ref{pughshub} there exists $s\in\ker(\beta_{2,3})\cap\overline{\mathcal W^P}\setminus\{e\}$ such that $\mu^P$ is $s$-ergodic. 

As $Q_0=\langle G^{\beta_{2,1}}, P\rangle$, and since proposition \ref{transverse lyapunov exponent strictly maximally negative for least singular elements} fails for $\mu^P$, it follows from our choice of $s$ that 

\begin{equation}\label{kasjfg0vb}
    h_{\mu^P}(s)=(\beta_{1,3}+\beta_{1,2})(s)\text{ and }h_{\mu^P}(s^{-1})=\beta_{2,1}(s^{-1})+h_{\mu^P}(s^{-1}|\mathcal W^{\lo}),
\end{equation}
where both equalities follow from $Q_0$-invariance of $\mu^P$; and the last equality follows from theorem \ref{entropy product structure}, along with the fact that, in this case, because of lemma \ref{kernel of transverse exponent touches not weyl chamber}, $\lo$ can only be resonant to $-\beta_P^H=\beta_{3,1}$, for otherwise, $Q_0$ would be $P$ because of remark \ref{resonance to negative of highest root implies stabilizer is P}, which would contradict that $\lo$ is not resonant to a root. As $h_{\mu^P}(s)=h_{\mu^P}(s^{-1})$ and $\beta_{1,2}(s)=\beta_{2,1}(s^{-1})$, we obtain
\begin{equation*}
    h_{\mu^P}(s^{-1}|\mathcal W^{\lo})=\beta_{1,3}(s)
\end{equation*}

Since we are assuming proposition \ref{transverse lyapunov exponent strictly maximally negative for least singular elements} is false on $\ker(\beta_{2,3})$, we get that $\lo(s^{-1})\leq \beta_{1,3}(s)$, however
\begin{equation*}
    \beta_{1,3}(s)=h_{\mu^P}(s^{-1}|\mathcal W^{\lo})\leq\lo(s^{-1})\leq\beta_{1,3}(s),
\end{equation*}
which implies
\begin{equation}\label{maximal entropy for rG=2}
    h_{\mu^P}(s^{-1}|\mathcal W^{\lo})=\lo(s^{-1})
\end{equation}
We have thus proved the following:

\begin{proposition}\label{absolute continuity along transverse Pesin}
    If $\mathfrak g=\mathfrak{sl}(3,\R)$ and the inequality in proposition \ref{transverse lyapunov exponent strictly maximally negative for least singular elements} fails, then $\mu^P$ is equivalent to the Riemannian volume along the Pesin partition $\mathscr W^{\lo}$, with nowhere vanishing smooth Radon-Nikodym derivative.
\end{proposition}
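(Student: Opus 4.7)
The plan is to recognize equation \eqref{maximal entropy for rG=2} as the equality case of the Ledrappier--Young conditional entropy theorem (Theorem \ref{ledrappier young conditional entropy}), applied to the diffeomorphism $s^{-1}$ and the tame measurable foliation $\mathscr W^{\lo}$.

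First I would unpack the foliation $\mathscr W^{\lo}$ in this setting. Under our standing assumptions the coarse equivalence class of $\lo$ reduces to $\{\lo\}$: the exponent $\lo$ cannot be resonant to a positive root of $P$ by Lemmas \ref{kernel of transverse exponent touches not weyl chamber} and \ref{negativity of transversal lyapunov exponent on positive weyl chambers}, and resonance to the negation of the highest root $-\beta_P^H=\beta_{3,1}$ would force $Q_0=P$ by Remark \ref{resonance to negative of highest root implies stabilizer is P}, contradicting our assumption that $Q_0=\langle G^{\beta_{2,1}},P\rangle$. Hence $\mathscr W^{\lo}$ is the one-dimensional Pesin partition tangent to the Oseledets distribution $E^{\lo}$, which is transverse to the $G$-orbits. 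Since $\lo(s^{-1})=\beta_{1,3}(s)>0$ (as extracted from the computation immediately preceding \eqref{maximal entropy for rG=2}), each leaf of $\mathscr W^{\lo}$ is contained in the $s^{-1}$-unstable manifold, so the expanding refinement $(\mathscr W^{\lo})^u$ coincides with $\mathscr W^{\lo}$ itself.

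Next, I would compute the multiplicities relative to $\mathscr W^{\lo}$. Because $T_x\mathcal W^{\lo}(x)=E^{\lo}(x)$ for $\mu^P$-a.e.\ $x$, the only Lyapunov exponent of $s^{-1}$ contributing to the right-hand side of the inequality in Theorem \ref{ledrappier young conditional entropy} is $\lo(s^{-1})$, with multiplicity one. The inequality thus reads $h_{\mu^P}(s^{-1}\mid\mathscr W^{\lo})\leq \lo(s^{-1})$, and equation \eqref{maximal entropy for rG=2} shows that this inequality is saturated. Invoking the equality case of Theorem \ref{ledrappier young conditional entropy}, I would conclude that for any measurable partition $\xi$ subordinate to $\mathscr W^{\lo}$ the conditional measure of $\mu^P$ along $\xi(x)$ is equivalent to the Riemannian volume on $\mathcal W^{\lo}(x)$ with strictly positive $C^1$ density, for $\mu^P$-a.e.\ $x$. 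This is exactly the content of the proposition.

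I do not expect a substantive obstacle: the entropy identity \eqref{maximal entropy for rG=2} already carries the hard dynamical information, and the remaining work is the bookkeeping to verify that $\mathscr W^{\lo}$ is a single-exponent foliation lying inside the $s^{-1}$-unstable manifold, which ensures Theorem \ref{ledrappier young conditional entropy} applies verbatim and produces the $C^1$ Radon--Nikodym density (with the word ``smooth'' in the statement interpreted as $C^1$ in the sense of that theorem).
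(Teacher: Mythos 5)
Your proof is correct and takes essentially the same route as the paper: the paper's proof is the terse statement that the proposition follows from equation \eqref{maximal entropy for rG=2} together with the Ledrappier--Young theorem, and your argument supplies the bookkeeping (that $\lo$ is non-resonant under the standing hypothesis, that $\mathscr W^{\lo}$ is therefore a one-dimensional expanding Pesin foliation for $s^{-1}$, and that \eqref{maximal entropy for rG=2} is precisely the equality case of Theorem \ref{ledrappier young conditional entropy}). Your explicit use of the conditional version, Theorem \ref{ledrappier young conditional entropy}, rather than the unconditional $\cite[\text{Theorem~A}]{ledrappieryoung1}$ the paper cites, is in fact the more precise reference, since $\mathscr W^{\lo}$ is a proper subfoliation of the full $s^{-1}$-unstable foliation.
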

\begin{proof}
    This is a consequence of (\ref{maximal entropy for rG=2}) along with the Ledrappier-Young Theorem, see \cite[Theorem~A]{ledrappieryoung1} and the remark after it.
\end{proof}

Now we prove proposition \ref{coherence proposition} in the case $r(G)=2$. 

\begin{proof of coherence proposition when rG=2}
Let $x_1$ and $x_2$ be $\mu^P$ generic enough. Suppose there are $z\in \mathcal W^T(x_1)$ and $g\in G$ such that $gz\in \mathcal W^T(x_2)$. Suppose, to get a contradiction, that $g\notin Q_0$. 

By proposition \ref{absolute continuity along transverse Pesin} we may take $z_1$ as close to $z$ as necessary in $\mathcal W^T(x_1)$ so that there exists $g_1\in G\setminus Q_0$ with $g_1z_1\in\mathcal W^T(x_2)$, and $z_1$ is generic for $\mu^P$.

Take $p_0\in P$ such that $p_0g_1=u_2p_2$ for some $u_2\in U(P^{\operatorname{op}}$ and $p_2\in P$ such that the following holds:
\begin{enumerate}
    \item $p_2z_1$ is $\mu^P$ generic enough, in particular, such that $p_2z_1\in\supp(\mu^P)$ by Proposition \ref{absolute continuity along transverse Pesin} and Lemma \ref{Pertubations by generic elements of $P$ have $LU$ factorization}.
    \item $u_2$ has nontrivial $G^{-\alpha}$-component for some $\alpha\in \Pi_P$ with $-\alpha\notin\Delta_{Q_0}$, because of lemma \ref{almost surely nice perturbations}.
\end{enumerate}
As $g_1z_1\in\mathcal W^T(x_2)$, by applying proposition \ref{absolute continuity along transverse Pesin}, we obtain $g_1z_1\in \supp(\mu^P)$, and hence $p_0g_1z_1\in\supp(\mu^P)$ by $P$-invariance of $\mu^P$. 

Take $s\in\ker(\alpha)\cap\overline{\mathcal W^P}$ such that $\mu^P$ is $s$-ergodic, which can be done by assumption \ref{main assumption}. 

By (1) above and Birkhoff's ergodic theorem, there exists an increasing sequence $\{n_k\}_k$ such that 
\begin{equation*}
    d(s^{n_k}p_2z_1,p_2z_1)\rightarrow 0,
\end{equation*}
which implies $v_2p_2z_1\in\supp(\mu^P)$, where 
\begin{equation*}
    v_2:=\lim_{n\rightarrow\infty} s^nu_2s^{-n}
\end{equation*}
is such that $v_2\in G^{-\alpha}\setminus\{e\}$, by (2), and since $u_2p_2z_1=p_0g_1z_1\in\supp(\mu^P)$.

By proposition \ref{topological measure rigidity}, we obtain $\mu^P$ is $G^{-\alpha}$-invariant, which contradicts $-\alpha\notin\Delta_{Q_0}$ because of (2). Therefore we obtain the result. 
\end{proof of coherence proposition when rG=2}
\subsection{Independence of $N_x$ on $x$}
Proposition \ref{coherence proposition} yields that $N_x$ does not depend on any $\mu^P$ generic enough $x$.

\begin{proposition}\label{nondependende on x}
    For $\mu^P$-a.e. $x_1$ and $x_2$ we have $N_{x_1}=N_{x_2}$.
\end{proposition}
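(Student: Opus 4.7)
The plan is to prove that the assignment $x\mapsto N_x$ is $P$-invariant modulo $\mu^P$-null sets and then invoke the $P$-ergodicity of $\mu^P$ recorded in Remark \ref{A-ergodicity of muP} to conclude that this assignment is essentially constant.

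First I would carry out the following $P$-equivariance computation at the level of the saturated submanifolds. By Proposition \ref{P-equivariance of transverse Pesin manifolds}, for $\mu^P$-a.e.\ $x$ and $\haar_P$-a.e.\ $p\in P$ we have $p\cdot \mathcal W^T(x)=\mathcal W^T(px)$. Since $p\in P\subseteq Q_0$ and $Q_0$ is a group, $Q_0p=pQ_0=Q_0$, so
\[
N_{px}=Q_0\cdot \mathcal W^T(px)=(Q_0p)\cdot\mathcal W^T(x)=Q_0\cdot\mathcal W^T(x)=N_x.
\]
Thus the map $x\mapsto N_x$ is $P$-invariant in the a.e.\ sense.

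Next, I would argue that the collection $\{N_x\}$ (for $x$ in an appropriate full $\mu^P$-measure set) defines a $\mu^P$-measurable partition $\mathscr N$ of $M$: it is the $Q_0$-saturation of the tame measurable foliation $\mathscr W^T$ by transverse Pesin manifolds, and the smooth $Q_0$-action together with the local product structure near generic points furnished by Lemma \ref{Nx is immersed} provides the measurable structure needed to apply Rokhlin disintegration. The previous step then says $\mathscr N$ is $P$-invariant modulo $\mu^P$-null sets. Considering the factor map $\pi\colon M\to M/\mathscr N$, the preimage $\pi^{-1}$ of the quotient Borel $\sigma$-algebra is a sub-$\sigma$-algebra of $P$-invariant Borel subsets of $M$, which by $P$-ergodicity is $\mu^P$-trivial. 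Hence $\pi_\ast\mu^P$ is a Dirac mass, i.e.\ a single atom of $\mathscr N$ carries full $\mu^P$-measure; this atom is $N_{x_0}$ for any generic $x_0$, giving the claim.

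In the case $r(G)=2$ treated in Section \ref{Construction of submanifold when r(G)=2}, the same scheme applies once the analog of Proposition \ref{P-equivariance of transverse Pesin manifolds} has been set up from the absolute continuity of $\mu^P$ along $\mathscr W^{\lo}$ established in Proposition \ref{absolute continuity along transverse Pesin}. The main technical obstacle I anticipate is justifying that the $Q_0$-saturated family $\{N_x\}$ is a bona fide $\mu^P$-measurable partition to which Rokhlin disintegration applies: this reduces to the measurable dependence of $\mathcal W^T(x)$ on $x$ coming from Pesin theory plus the smoothness of the $Q_0$-saturation, but needs to be stated carefully so that the ergodicity argument goes through verbatim.
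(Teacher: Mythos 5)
Your route is genuinely different from the paper's. The paper argues directly: it fixes a regular $a\in\intW$ with $\mu^P$ $a$-ergodic, uses Birkhoff recurrence together with the fact that $\mathcal W^T(x_j)$ is contracted by $a$ and transverse to the $G$-orbits to show that $N_{x_1}\subseteq G\cdot\mathcal W^T(x_2)$ and $N_{x_2}\subseteq G\cdot\mathcal W^T(x_1)$, and then invokes Proposition \ref{coherence proposition} to promote these $G$-inclusions to $Q_0$-inclusions, giving $N_{x_1}=N_{x_2}$. You instead propose an abstract ergodicity argument: show $x\mapsto N_x$ is essentially $P$-invariant and quotient by a measurable partition. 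There are two real gaps in your proposal.

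First, the fact that $\{N_x\}$ is a \emph{partition} is not automatic and is underestimated as a mere measurability issue. The $Q_0$-saturation of a measurable foliation is not, in general, a partition: one needs the geometric fact that $N_x\cap N_y\neq\emptyset$ forces $N_x=N_y$ on a full-measure set. This is a coincidence statement about overlapping $Q_0$-translates of transverse Pesin manifolds; the natural way to prove it is the argument used for Lemma \ref{injectivey immersed N}, which relies precisely on the essential $P$-equivariance of Proposition \ref{P-equivariance of transverse Pesin manifolds}. You gesture at ``measurable dependence of $\mathcal W^T(x)$ on $x$ coming from Pesin theory plus the smoothness of the $Q_0$-saturation,'' but that only addresses \emph{measurability} of the assignment $x\mapsto N_x$, not disjointness of distinct $N_x$'s, which is where the real content lies.

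Second, and more seriously, your Step 1 depends entirely on Proposition \ref{P-equivariance of transverse Pesin manifolds}, which is only established under the hypothesis $|\Delta\setminus\Delta_{Q_0}|\geq 3$. In the case $r(G)=2$ with $|\Delta\setminus\Delta_{Q_0}|=2$ (the exceptional $\mathfrak{sl}(3,\mathbb R)$ situation), the paper deliberately does \emph{not} prove any analogue of that proposition and instead works with the absolute continuity statement, Proposition \ref{absolute continuity along transverse Pesin}. Absolute continuity of $\mu^P$ along $\mathscr W^{\lo}$ does not hand you $p\cdot\mathcal W^T(x)=\mathcal W^T(px)$ for a.e.\ $p\in P$; that is a statement about Pesin manifolds varying equivariantly under elements of $P$ that do not commute with the Anosov element, and there is no obvious way to deduce it from the density statement. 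The paper's proof of Proposition \ref{nondependende on x} sidesteps this entirely: it never invokes Proposition \ref{P-equivariance of transverse Pesin manifolds}, using only recurrence, transversality (Lemma \ref{transverse Pesin manifold is uniformly locally nice}), and Proposition \ref{coherence proposition}, all of which hold in both regimes. Your scheme, as written, does not survive in the $r(G)=2$ case.
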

\begin{proof}
    Fix $a\in\operatorname{Int}(\mathcal W^P)$ such that $\mu^P$ is $a$-ergodic.

    Take any open precompact subset $I_j$ of $x_j$ in the immersed topology of $\mathcal W^T(x_j)=\mathcal W^{u,1}(x_j;a)$ diffeomorphic to $[-1,1]$, for $j=1,2$. 

    We may assume $x_j$ is $\mu^P$ generic enough so that $x_j\in\supp(\mu^P)$ and Birkhoff ergodic theorem holds for $x_j$ with respect to $a$ and $\mu^P$, for $j=1,2$, so that $\{a^nx_j\}_n$ returns arbitrarily close to $x_j$ infinitely many times. Since $\mathcal W^{u,1}(x_j;a)$ is contracted by $a$, it follows that for \emph{every} $y\in\mathcal W^{u,1}(x_j,a)$ there exists $g\in G$ and $z\in I_j$ such that $gz=y$, and hence by definition \ref{construction submanifold}, we get
    \begin{equation}
        N_{x_1}\subseteq G\cdot\mathcal W^T(x_2)\text{ and }N_{x_2}\subseteq G\cdot\mathcal W^T(x_1),
    \end{equation}
    which along with proposition \ref{coherence proposition} yields the proposition.
    
\end{proof}

We now can set the following notation.

\begin{definition}
    Set $N$ to be the injectively immersed submanifold $N_x=Q_0\cdot \mathcal W^T(x)$, for any $x$ generic enough with respect to $\mu^P$ to satisfy proposition \ref{nondependende on x}.
\end{definition}




\section{Construction of smooth equivariant map $M\rightarrow G/Q$}\label{N is a closed embedded submanifold}

In this section we construct the smooth equivariant map $\pi:M\rightarrow G/Q$ of our main result, Theorem \ref{main}, under assumption \ref{main assumption}. The main argument to doing this, appearing in this section, is showing that $N$ intersects all orbits of $G\acts M$.

\subsection{$N$ intersects all $G$-orbits}\label{N intersects all G-orbits section}

The final tool required to construct the equivariant map $M\rightarrow G/Q$ is the following fact.

\begin{lemma}\label{N intersects all orbits}
    $N$ intersects all $G$-orbits
\end{lemma}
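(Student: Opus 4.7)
The plan is to show $G \cdot N = M$ by establishing that $P \acts M$ is uniquely ergodic, which combined with the openness of $G \cdot N$ will force the equality.

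First I would verify that $G \cdot N$ is open and $G$-invariant in $M$. For a $\mu^P$-generic $x_0 \in N$, the transverse Pesin manifold $\mathcal W^T(x_0) \subseteq N$ is a one-dimensional $C^2$ submanifold transverse to the orbit $G \cdot x_0$ at $x_0$; local freeness then implies that the map $\Phi : G \times \mathcal W^T(x_0) \to M$, $(g, y) \mapsto g y$, is a submersion at $(e, x_0)$, and its image, contained in $G \cdot N$, is an open neighborhood of $x_0$. Since $x_0 \in \mathcal M \cap N$, the set $(G \cdot N) \cap \mathcal M$ is a nonempty open $G$-invariant subset of the $G$-minimal set $\mathcal M$; by minimality it equals $\mathcal M$, so $\mathcal M \subseteq G \cdot N$.

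Suppose for contradiction that $G \cdot N \neq M$. Its closed $G$-invariant complement contains a $G$-minimal set $\mathcal M'$ by Zorn's lemma and compactness of $M$; by amenability of $P$ and Proposition \ref{uniquePergodicity}, $\mathcal M'$ supports an extremal $P$-invariant probability measure $\mu_j \neq \mu^P$. To rule this out I would establish unique ergodicity of $P \acts M$. Using Assumption \ref{main assumption}, I would fix a simple root $\alpha \in \Pi_P$ with $-\alpha \notin \Delta_{Q_0}$ and choose $s \in \operatorname{Int}(\ker(\alpha) \cap \overline{\mathcal W^P})$ for which $\mu^P$ is $s$-ergodic. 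Propositions \ref{transverse lyapunov exponent strictly maximally negative} and \ref{transverse lyapunov exponent strictly maximally negative for least singular elements} yield negativity of the transverse Lyapunov exponents at $s$, so the hypotheses of Proposition \ref{topological measure rigidity} are met for $\mu^P$. Its contrapositive says that if $\mu^P$ is not $G^{-\alpha}$-invariant, then for $\mu^P$-a.e. $x$ no $u \in G^{-\alpha} \setminus \{e\}$ satisfies $u x \in \supp(\mu^P)$. I would then combine this with the finite convex structure of $P$-invariant probabilities from Proposition \ref{uniquePergodicity} and an analysis of $P$-Birkhoff averages of orbits approaching the boundary of $G \cdot N$ to produce a generic $x \in \supp(\mu^P)$ and a $u \in G^{-\alpha} \setminus \{e\}$ with $u x \in \supp(\mu^P)$, forcing $\mu^P$ to be $G^{-\alpha}$-invariant and contradicting $-\alpha \notin \Delta_{Q_0}$.

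Once unique ergodicity is established, $\mathcal M$ is the unique $G$-minimal set of $M$, and every nonempty closed $G$-invariant subset of $M$ must contain it. Combined with $\mathcal M \subseteq G \cdot N$, the complement $M \setminus (G \cdot N)$ cannot be nonempty, giving $G \cdot N = M$. The hard part will be establishing $P$-unique ergodicity via Topological Measure Rigidity: the geometric difficulty is that $\mathcal M$ and $\mathcal M'$ are disjoint by construction, so the witness $u \in G^{-\alpha} \setminus \{e\}$ with $u x \in \supp(\mu^P)$ is not produced by a direct geometric intersection but must instead be extracted from the interplay between the finite convex decomposition of $P$-invariant measures, the recurrence of $P$-Birkhoff averages, and the boundary structure of $G \cdot N$.
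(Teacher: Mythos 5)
Your overall scaffolding matches the paper's: show $X := G\cdot N$ is open, run a connectedness/minimality argument to show the complement is empty, and rule out competing extremal $P$-invariant measures $\mu_j \neq \mu^P$ drawn from Proposition \ref{uniquePergodicity}. The openness step and the observation that $\mathcal M \subseteq G\cdot N$ are both correct and match the paper.

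The genuine gap is exactly the one you flag at the end. You propose to first establish unique $P$-ergodicity via Proposition \ref{topological measure rigidity}, but that proposition requires, as input, a witness $u\in G^{-\alpha}\setminus\{e\}$ with $ux\in\supp(\mu^P)$ for a $\mu^P$-generic $x$. When $\mathcal M_j=\supp(\mu_j)$ is a minimal set \emph{disjoint} from $\mathcal M=\supp(\mu^P)$, nothing in the given setup produces such a $u$: the two supports are separated, so no perturbation of a generic point of $\mathcal M$ by a small element of $G^{-\alpha}$ lands back in $\supp(\mu^P)$ merely because a second minimal set exists elsewhere. You gesture at extracting the witness ``from the interplay between the finite convex decomposition\ldots the recurrence of $P$-Birkhoff averages, and the boundary structure of $G\cdot N$,'' but do not supply a mechanism; and in fact the obstruction you then describe is real, not just a technical difficulty to be overcome by more care.

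The paper avoids this entirely by not attempting to prove unique $P$-ergodicity first. Instead it takes $z\in\overline{X}$, uses a F\o lner-type lemma to find $\mu_j$ whose support is approached by $Pz$, and then applies the \emph{Birkhoff average continuity lemma}, Proposition \ref{local rigidty of Pinvariant measures}, centered at a generic $y\in\supp(\mu_j)$, together with Lemma \ref{Pattraction} (which says Birkhoff averages along $G\cdot\mathcal W^T(x)$ converge to $\mu^P$). A point $w$ close to $z$ in $X$ provides, at once, Birkhoff averages converging to $\mu^P$ (from the $\mathcal W^T$ side) and Birkhoff averages $\epsilon$-close to $\int f\,d\mu_j$ (from the continuity lemma applied near $y$). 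Equating these forces $\mu_j=\mu^P$, which is what closes $X$; unique $P$-ergodicity drops out afterwards as a corollary. So while Proposition \ref{local rigidty of Pinvariant measures} is also the technical engine behind Proposition \ref{topological measure rigidity}, here it must be used directly as a two-sided approximation at the boundary of $X$, not filtered through the ``if $ux\in\supp(\mu^P)$ then invariance'' implication, which never fires in the disjoint-supports scenario you rightly identify as problematic.
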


To prove this lemma we need some technical observations.

Let us denote by $m^T_x$ the Riemannian volume on the injectively immersed submanifold $\mathcal W^T(x)$, for $\mu^P$-a.e. $x$. Pick any $a\in\intW$ such that $\mu^P$ is $a$-ergodic.

\begin{lemma}\label{Pattraction}
    For $\mu^P$-a.e. $x$ we have that 
    \begin{equation*}
        \lim_{N\rightarrow\infty}\frac{1}{N}\sum_{j=0}^{N-1}\delta_{a^jgy}=\mu^P
    \end{equation*}
    in the weak-$\star$ topology, for $m^T_x$-a.e. $y$ and a.e. $g\in G$. 
\end{lemma}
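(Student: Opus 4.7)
The plan is to combine Birkhoff's ergodic theorem on $(M,\mu^P,a)$ with a Hopf argument along the transverse Pesin manifolds, and then propagate the conclusion to $G$-translates using the local Bruhat-type factorization $G=L_aC_aN_a$ of Lemma \ref{multiplication maps}.

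First, I would use $a$-ergodicity of $\mu^P$ to obtain a set $B\subseteq M$ of full $\mu^P$ measure on which $\frac{1}{N}\sum_{j=0}^{N-1}\delta_{a^jz}\to\mu^P$ in the weak-$\star$ topology. For $\mu^P$-a.e. $x$ we may assume $x\in B$ and that $\mathcal W^T(x)$ exists. Since $\mathcal W^T(x)=\mathcal W^{u,1}(x;a^{-1})$ coincides with a component of the stable manifold of $a$ at $x$, we have $d(a^jy,a^jx)\to 0$ as $j\to\infty$ for every $y\in\mathcal W^T(x)$. Uniform continuity of any continuous observable on the compact manifold $M$ then converts Birkhoff convergence at $x$ into Birkhoff convergence at every $y\in\mathcal W^T(x)$, via the standard Hopf-type Ces\`aro argument.

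Next, to propagate from $y$ to $gy$ I would factor $g=lcu$ with $l\in L_a$, $c\in C_a$ and $u\in N_a$; by Lemma \ref{multiplication maps} such a factorization is valid on an open subset of $G$ of full $\haar_G$ measure (and we may take $a$ regular so that $C_a=C_G(A)\subseteq P$). For the $N_a$ component, use that $N_a$ is the unipotent radical of $P$, so $\mu^P$ is $N_a$-invariant and, by Proposition \ref{haarconditionals}, its $N_a$-leafwise measures are Haar. Combined with Proposition \ref{P-equivariance of transverse Pesin manifolds}, which gives $u\mathcal W^T(x)=\mathcal W^T(ux)$ for $\haar_{N_a}$-a.e. $u$, a Fubini argument yields that for $\mu^P$-a.e. $x$ and $\haar_{N_a}$-a.e. $u$, the point $ux$ lies in $B$; applying the first step to $ux$ gives Birkhoff convergence at $uy$ for every $y\in\mathcal W^T(x)$.

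Finally, writing $a^j(gy)=(a^jla^{-j})\, c\, a^j(uy)$ and using that $l\in L_a$ forces $a^jla^{-j}\to e_G$ uniformly in $G$, while $c\in C_G(A)\subseteq P$ preserves $\mu^P$, I would conclude $d(a^j(gy),c\cdot a^j(uy))\to 0$ uniformly and therefore $\tfrac{1}{N}\sum_j\delta_{a^j(gy)}\to c_*\mu^P=\mu^P$. One more Fubini on $(y,g)\in\mathcal W^T(x)\times G$ produces the lemma. In the $r(G)=2$ case, Proposition \ref{absolute continuity along transverse Pesin} replaces Proposition \ref{P-equivariance of transverse Pesin manifolds}: the conditionals of $\mu^P$ along the partition by transverse Pesin manifolds are equivalent to the volumes $m^T_x$, so $m^T_x$-a.e. $y$ is already Birkhoff-generic for $\mu^P$, and the subsequent $G$-propagation step goes through unchanged. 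The main obstacle is precisely the $N_a$ step: without the $P$-equivariance of $\mathcal W^T$ (or its $r(G)=2$ substitute), one cannot transfer the Hopf argument from $\mathcal W^T(x)$ to $\mathcal W^T(ux)$, while the growing conjugation $a^jua^{-j}$ for $u\in N_a$ obstructs any direct analogue of the $L_aC_a$ computation.
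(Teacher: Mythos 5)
Your approach coincides with the paper's in all essentials: Birkhoff for $a$ combined with a Hopf contraction along $\mathcal W^T(x)\subseteq\mathcal W^s(x;a)$, an almost-sure decomposition of $G$ into contracted, centralizing, and $P$-factors (your $L_a C_a N_a$ is precisely the paper's big Bruhat cell $U(P^{\operatorname{op}})\cdot P$ once you note $P=C_aN_a$ for regular $a$), $P$-equivariance of $\mathcal W^T$ from Proposition~\ref{P-equivariance of transverse Pesin manifolds}, and its $r(G)=2$ replacement Proposition~\ref{absolute continuity along transverse Pesin}. The paper handles the $P=C_aN_a$ factor in one step rather than peeling off the $N_a$ piece and then pushing forward by $C_a$, but that is a cosmetic difference.

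The one place your write-up is imprecise is the final paragraph on the $r(G)=2$ subcase. You assert that ``the subsequent $G$-propagation step goes through unchanged,'' yet your $N_a$ step was built on the equivariance $n\,\mathcal W^T(x)=\mathcal W^T(nx)$, which is exactly the property that may fail in this subcase; it therefore cannot run unchanged. The correct (and actually simpler) repair, which is what the paper does, is to abandon any attempt to transfer Hopf from $\mathcal W^T(x)$ to $\mathcal W^T(nx)$: by $P$-invariance of $\mu^P$ and Fubini, the set $\{z\in M:\ pz\in B\text{ for }\haar_P\text{-a.e. }p\}$ has full $\mu^P$-measure (with $B$ the Birkhoff-generic set); disintegrating this set along $\mathscr W^T$ and using $(\mu^P)^T_x\sim m^T_x$ from Proposition~\ref{absolute continuity along transverse Pesin} gives directly that for $m^T_x$-a.e.\ $y$ and $\haar_P$-a.e.\ $p$, $py\in B$, after which one contracts the $L_a$-factor exactly as in your case~1. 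With that emendation your proof is equivalent to the paper's.
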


\begin{proof}
    There are two cases:

    \begin{case empirical measures sequence}
    $|\Delta\setminus\Delta_{Q_0}|\geq 3$.
    \end{case empirical measures sequence}

    In this case, we prove that in fact the lemma holds for \emph{all} $y\in \mathcal W^T(x)$ for $\mu^P$-a.e. $x$. Indeed, if $x$ is $\mu^P$ generic enough for proposition \ref{P-equivariance of transverse Pesin manifolds} and Birkhoff's ergodic theorem to hold for $px$, with respect to $\mu^P$ and $a$, for $\haar_P$-a.e. $p$, then for any $y\in\mathcal W^{u,1}(x;a)$ we have $py\in p\cdot \mathcal W^T(x)=\mathcal W^T(px)\subseteq\mathcal W^s(px;a)$ for $\haar_P$-a.e. $p$, and thus 
    \begin{equation*}
        \lim_{N\rightarrow\infty}\frac{1}{N}\sum_{j=0}^{N-1}f(a^jpy)=\lim_{N\rightarrow\infty}\frac{1}{N}\sum_{j=0}^{N-1}f(a^jpx)=\int fd\mu^P,
    \end{equation*}
    for all $f\in C^0(M)$, as such functions are uniformly continuous; which implies that for all $u\in U(P^{\operatorname{op}})$ we have
    \begin{equation*}
        \lim_{N\rightarrow\infty}\frac{1}{N}\sum_{j=0}^{N-1}f(a^jupy)=\lim_{N\rightarrow\infty}\frac{1}{N}\sum_{j=0}^{N-1}f(a^jpy)=\int fd\mu^P
    \end{equation*}
    for $\haar_P$-a.e. $p$, as $U(P^{\operatorname{op}})\leq N_a$, since $a\in\intW$.

    Since $\haar_G$-a.e. $g$ can be written uniquely in the form $up$, for some $u\in U(P^{\operatorname{op}})$ and $p\in P$ by taking the biggest cell in Bruhat's decomposition, see for instance \cite[Lemma~5.1.4]{zimmerbook}, we get the lemma in this case by applying Fubini's theorem, as the map $g\mapsto p$, where $g=up$, is analytic where defined, and therefore absolutely continuous.

    \begin{case empirical measures sequence}
    $|\Delta\setminus\Delta_{Q_0}|=2$.
    \end{case empirical measures sequence}

    In this case, if Proposition \ref{P-equivariance of transverse Pesin manifolds} holds in this case, then the lemma follows from the argument for the previous case, using Remark \ref{reduction to inequality on kernel}.

    Suppose proposition \ref{P-equivariance of transverse Pesin manifolds} does not hold. Let us denote by $(\mu^P)^T_x$ the corresponding decomposition of $\mu^P$ along the partition $\mathscr W^T$; by proposition \ref{absolute continuity along transverse Pesin}, $(\mu^P)^T_x$ is equivalent to Riemannian volume on $\mathcal W^T(x)$ for $\mu^P$-a.e. $x$ with nowhere vanishing smooth Radon-Nikodym derivative. We may assume $x$ is $\mu^P$ generic enough, so that Birkhoff's ergodic theorem holds for $py$ with respect to $\mu^P$ and $a$, for $(\mu^P)^T_x$-a.e. $y$, and $\haar_P$-a.e. $p$, as $\mu^P$ is $P$-invariant. By making an analogous argument to the one for the previous case, we obtain that for all $f\in C^0(M)$,
    \begin{equation*}
        \lim_{N\rightarrow\infty}\frac{1}{N}\sum_{j=0}^{N-1}f(a^jgy)=\int fd\mu^P,
    \end{equation*}
     for $\haar_G$-a.e. $g$, $(\mu^P)^T_x$-a.e. $y$, and $\mu^P$-a.e. $x$, which proves the lemma for this case.
\end{proof}

Let $\mathcal M_1,\ldots,\mathcal M_r$ be the minimal subsets of $G\acts M$ appearing in proposition \ref{uniquePergodicity}, along with corresponding unique $P$-invariant probability measures $\mu_1,\ldots,\mu_r$ on $\mathcal M_1,\ldots,\mathcal M_r$ respectively, and let us take $\mu^P=\mu^1$, because of assumption \ref{main assumption}.

\begin{lemma}
For any $x\in M$ there exists $j=1,\ldots,r$ such that for any $y\in\supp(\mu_j)$ and any $\epsilon>0$ there exists $p\in P$ such that $d_M(px,y)<\epsilon$.\end{lemma}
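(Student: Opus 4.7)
The plan is to use amenability of $P$ combined with the structure of the set of $P$-invariant probability measures given by Proposition \ref{uniquePergodicity}. Fix any $x \in M$ and consider the closed $P$-invariant subset $X := \overline{P \cdot x} \subseteq M$. Since $P$ is amenable and $X$ is compact, there exists a $P$-invariant Borel probability measure $\nu$ on $M$ supported on $X$.

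By Proposition \ref{uniquePergodicity}, the extremal points of the convex set of $P$-invariant probability measures on $M$ are precisely $\mu_1, \ldots, \mu_r$. Therefore $\nu$ admits a decomposition
\begin{equation*}
\nu = \sum_{j=1}^{r} c_j \mu_j, \qquad c_j \geq 0, \quad \sum_j c_j = 1.
\end{equation*}
Since $\nu$ is a probability measure, at least one coefficient, say $c_{j_0}$, is strictly positive. Then for any open set $U \subseteq M$ with $\mu_{j_0}(U) > 0$ we have $\nu(U) \geq c_{j_0}\mu_{j_0}(U) > 0$, which gives the inclusion $\supp(\mu_{j_0}) \subseteq \supp(\nu) \subseteq X = \overline{P \cdot x}$.

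Unwinding this set-theoretic inclusion yields exactly the statement of the lemma: for any $y \in \supp(\mu_{j_0})$ and any $\epsilon > 0$, since $y$ lies in the closure of $P \cdot x$, there exists $p \in P$ with $d_M(px, y) < \epsilon$. The argument is essentially formal once the two ingredients (amenability of $P$, finiteness of the extremal points of $\mathcal{M}^1_P(M)$) are in hand; no further dynamical input is needed, so there is no real obstacle beyond invoking these two facts.
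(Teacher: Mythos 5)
Your proof is correct and essentially matches the paper's: both construct a $P$-invariant probability measure concentrated on $\overline{P\cdot x}$ (the paper via subsequential weak-$\star$ limits of F\o{}lner averages of orbit measures, you by invoking amenability of $P$ acting on the compact $P$-invariant set $\overline{P\cdot x}$), decompose it using the finitely many extremal points from Proposition \ref{uniquePergodicity}, and conclude by positivity of some coefficient. Your formulation via the support inclusion $\supp(\mu_{j_0})\subseteq\supp(\nu)\subseteq\overline{P\cdot x}$ is a slightly cleaner packaging of the same argument.
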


\begin{proof}
    Denote by $\lambda_P$ a left invariant Haar measure on $P$. Let $\{F_n\}_n$ be a F\o lner sequence of $P$. By proposition \ref{uniquePergodicity}, it follows that for any $x\in M$ there exists an increasing sequence $\{n_k\}_k$ such that 
    \begin{equation*}
        \frac{1}{\lambda_P(F_{n_k})}\int_{F_{n_k}}\delta_{px}d\lambda_P(p)\rightarrow \mu
    \end{equation*}
    in the weak-$\star$ topology, for some $P$-invariant probability measure $\mu$ on $M$. There are $\rho_1,\ldots,\rho_r$ nonnegative real numbers such that $\rho_1+\ldots+\rho_r=1$ and $\mu=\rho_1\mu_1+\cdots+\rho_r\mu_r$. If $j\in\{1,\ldots, r\}$ is such that $\rho_j\neq 0$, then for any $y\in\supp(\mu_j)$ and any $U$ neighborhood of $y$ in $M$, we get that there exists $p\in P$ such that $px\in U$, because of the weak-$\star$ limit above, and as $\mu_j(U)>0$. Therefore, our assertion follows.
\end{proof}

\begin{proof of N intersects all orbits}
    Pick any $a\in\intW$ such that $\mu_j$ is $a$-ergodic for all $j=1,\ldots, r$. Choose any $x$, that is $\mu^P$ generic enough, so that $N=Q_0\cdot \mathcal W^T(x;a)$, because of Proposition \ref{nondependende on x}, and $x\in\supp(\mu^P)$. Set
    \begin{equation*}
        X=\{x\in M: gx\in N, \text{ for some }g\in G\}
    \end{equation*}
    $X$ is open by construction of $N$ and lemma \ref{transverse Pesin manifold is uniformly locally nice}. We want to show $X$ is closed as well, as $M$ is connected, this will yield the result. 

    Take any $z\in\overline{X}$. Fix $j\in\{1,\ldots, r\}$ such that lemma \ref{Pattraction} holds for $z$; so in particular, by our choice of $x$, we have that for any $\delta>0$ there is $p\in P$ such that $d(pz,x)<\delta$.

    Fix any $y\in\supp(\mu_j)$ generic enough for $\mu_j$.

    Take any $f\in C^0(M)$ and any $\epsilon>0$. Choose $\delta=\delta(y,f,1,\epsilon)>0$ for proposition \ref{local rigidty of Pinvariant measures} to hold for  the measure $\mu_j$.

    There exists $p\in P$ such that $d(pz,y)<\delta$. As $z\in\overline{X}$, there is $w\in\mathcal W^T(x)$ such that $d(G\cdot w,z)<\delta$, and $w$ such that 
    \begin{equation*}
        \lim_{N\rightarrow\infty}\sum_{k=0}^{N-1}f(a^kgw)=\int fd\mu^P
    \end{equation*}
    for $\haar_G$-a.e. $g\in G$ by Lemma \ref{Pattraction}.

    By our choice of $\delta$ above we have that since $d(G\cdot w,y)<\delta$, it follows, using the argument in Case 1 in the proof of Lemma \ref{Pattraction}, that there exists an subset $V$ of $G\cdot w$, of positive $\haar_G$ measure, such that for all $v\in V$ we have that 
    \begin{equation*}
        \left |\limsup_{N\rightarrow\infty}\frac1N\sum_{k=0}^{N-1}f(a^kvw)-\int fd\mu_j\right |<\epsilon.
    \end{equation*}
    Combining this inequality with the previous equality we obtain
    \begin{equation*}
        \left |\int fd\mu^P-\int fd\mu_j\right|<\epsilon
    \end{equation*}
    Since $f\in C^0(M)$ and $\epsilon>0$ were arbitrary, it follows that $\mu_j=\mu^P$. Hence, by our choice of $\mu_j$,  there exists $p\in P$ such that $pz$ is close enough to $x$ so that $G\cdot z$ intersects $N$, i.e., $z\in X$.

    This shows $X$ is a closed subset, so $X=M$ by connectedness of $M$, which finishes the proof.    
\end{proof of N intersects all orbits}






\subsection{Construction of smooth equivariant map}\label{Construction of smooth equivariant map}

We now finish the proof of the main result of this paper, Theorem \ref{main}, that is, we show item (\ref{suspension statement}) of this theorem. This will follow immediately once we show $M$ is $G$-equivariantly diffeomorphic to $G\times_{Q_0} N$. Let us first define the latter object.  

\begin{definition}\label{suspension space defintion}
    Suppose $L$ is a Lie group, with $H\leq L$ a closed subgroup. Suppose $\alpha:H\acts M$ is a left $C^r(r\geq 0)$ action on a $C^r$ manifold. We have two actions
    \begin{equation*}
        L\acts L\times M\actsright H
    \end{equation*}
    given by $l\cdot (l',x)=(ll',x)$ and $h\cdot (l',x)=(l'h^{-1},hx)$, for all $l\in L, h\in H$ and all $(l',x)\in L\times M$. These actions commute, and hence, when we take the quotient of the action by $H$, $M^\alpha:=L\times M/H$, we obtain a $C^r$ manifold, along with an $C^r$ action $L\acts M^\alpha$. Furthermore, the canonical projection map $M^\alpha\rightarrow L/H$ is an equivariant smooth bundle, with fibers diffeomorphic to $M$.
\end{definition}

\begin{proof of main}
    It is enough to show $M$ is equivariantly diffeomorphic to $G\times_{Q_0} N$.

    The map $j:G\times N\rightarrow M$ given by $(g,x)\mapsto gx$ is smooth, and onto $M$ because of Lemma \ref{N intersects all orbits}.

    $j$ is a submersion as  $D_{(g,x)}j$ sends $T_gG\times \{0\}\subseteq  T_{(g,x)}G\times N$ to $T_{gx}(G\cdot x)$, and sends $\{0\}\times T_xN$ to $D_xg[T_xN]$, which is transversal to $T_{gx}(G\cdot x)$, as $g$ is a diffeomorphism preserving the $G$-orbits.

    $j$ descends to a map $\tilde j:G\times_{Q_0} N\rightarrow M$, as $j(gq^{-1},qx)=j(g,x)$, still onto $M$ as $j$ and $\tilde j$ have the same image. The same argument as above shows $\tilde j$ is a submersion. 
    
    However, $\dim(G\times_{Q_0} N)=\dim(G/Q_0)+\dim(N)=\dim(G)-\dim(Q_0)+\dim(Q_0)+1=\dim(G)+1=\dim(M)$, and the tangent space of the orbit of $x$ under the action $Q_0\acts G\times N$ defining the suspension space $G\times_{Q_0} N$ is contained in $\ker D_{(g,x)}j$, this shows $\tilde j$ is a local diffeomorphism. 

    If $(g_1,x_1),(g_2,x_2)\in G\times N$ are such that $g_1x_1=j(g_1,x_1)=j(g_2x_2)=g_2x_2$, then $g_2^{-1}g_1x_1=x_2$, which implies $g_2^{-1}g_1\in Q_0$ by proposition \ref{coherence proposition}, which implies $[(g_1,x_1)]=[(g_2,x_2)]$, and thus, $\tilde j$ is injective, and therefore, $\tilde j$ is a diffeomorphism.
\end{proof of main}

\printbibliography

@article {dk,
    AUTHOR = {Deroin, Bertrand and Kleptsyn, Victor},
     TITLE = {Random conformal dynamical systems},
   JOURNAL = {Geom. Funct. Anal.},
  FJOURNAL = {Geometric and Functional Analysis},
    VOLUME = {17},
      YEAR = {2007},
    NUMBER = {4},
     PAGES = {1043--1105},
   MRCLASS = {37A50 (37C85 37E10 37H15 58J51 60H30)},
  MRNUMBER = {2373011},
       DOI = {10.1007/s00039-007-0606-y},
}

@article {candel,
    AUTHOR = {Candel, Alberto},
     TITLE = {The harmonic measures of {L}ucy {G}arnett},
   JOURNAL = {Adv. Math.},
  FJOURNAL = {Advances in Mathematics},
    VOLUME = {176},
      YEAR = {2003},
    NUMBER = {2},
     PAGES = {187--247},
      ISSN = {0001-8708},
   MRCLASS = {58J65 (57R30 60J65)},
  MRNUMBER = {1982882},
MRREVIEWER = {Anton Thalmaier},
       DOI = {10.1016/S0001-8708(02)00036-1}
}

@misc{dh,
      title={Non left-orderability of lattices in higher rank semi-simple Lie groups}, 
      author={Bertrand Deroin and Sebastian Hurtado},
      year={2020},
      eprint={2008.10687},
      archivePrefix={arXiv},
      primaryClass={math.GR}
}

@book {bp,
    AUTHOR = {Barreira, Luis and Pesin, Yakov B.},
     TITLE = {Lyapunov exponents and smooth ergodic theory},
    SERIES = {University Lecture Series},
    VOLUME = {23},
 PUBLISHER = {American Mathematical Society, Providence, RI},
      YEAR = {2002},
     PAGES = {xii+151},
   MRCLASS = {37D25 (34D08 37C40 37D10)},
  MRNUMBER = {1862379},
MRREVIEWER = {Viviane Baladi},
       DOI = {10.1090/ulect/023},
}

@article {ghys,
    AUTHOR = {Ghys, \'{E}tienne},
     TITLE = {Actions de r\'{e}seaux sur le cercle},
   JOURNAL = {Invent. Math.},
  FJOURNAL = {Inventiones Mathematicae},
    VOLUME = {137},
      YEAR = {1999},
    NUMBER = {1},
     PAGES = {199--231},
       DOI = {10.1007/s002220050329},
}

@article {bfh,
    AUTHOR = {Brown, Aaron and Fisher, David and Hurtado, Sebastian},
     TITLE = {Zimmer's conjecture: subexponential growth, measure rigidity,
              and strong property ({T})},
   JOURNAL = {Ann. of Math. (2)},
  FJOURNAL = {Annals of Mathematics. Second Series},
    VOLUME = {196},
      YEAR = {2022},
    NUMBER = {3},
     PAGES = {891--940},
       DOI = {10.4007/annals.2022.196.3.1},
}

@article {brhw,
    AUTHOR = {Brown, Aaron and Rodriguez Hertz, Federico and Wang, Zhiren},
     TITLE = {Invariant measures and measurable projective factors for
              actions of higher-rank lattices on manifolds},
   JOURNAL = {Ann. of Math. (2)},
  FJOURNAL = {Annals of Mathematics. Second Series},
    VOLUME = {196},
      YEAR = {2022},
    NUMBER = {3},
     PAGES = {941--981},
       DOI = {10.4007/annals.2022.196.3.2},      
}

@article {sz,
    AUTHOR = {Stuck, Garrett and Zimmer, Robert J.},
     TITLE = {Stabilizers for ergodic actions of higher rank semisimple
              groups},
   JOURNAL = {Ann. of Math. (2)},
  FJOURNAL = {Annals of Mathematics. Second Series},
    VOLUME = {139},
      YEAR = {1994},
    NUMBER = {3},
     PAGES = {723--747},
       DOI = {10.2307/2118577},
}

@article {nzann,
    AUTHOR = {Nevo, Amos and Zimmer, Robert J.},
     TITLE = {A structure theorem for actions of semisimple {L}ie groups},
   JOURNAL = {Ann. of Math. (2)},
  FJOURNAL = {Annals of Mathematics. Second Series},
    VOLUME = {156},
      YEAR = {2002},
    NUMBER = {2},
     PAGES = {565--594},
       DOI = {10.2307/3597198},
}

@article {garnett,
    AUTHOR = {Garnett, Lucy},
     TITLE = {Foliations, the ergodic theorem and {B}rownian motion},
   JOURNAL = {J. Functional Analysis},
  FJOURNAL = {Journal of Functional Analysis},
    VOLUME = {51},
      YEAR = {1983},
    NUMBER = {3},
     PAGES = {285--311},
       DOI = {10.1016/0022-1236(83)90015-0},
}

@article {brhwzd,
    AUTHOR = {Brown, Aaron and Rodriguez Hertz, Federico and Wang, Zhiren},
     TITLE = {Smooth ergodic theory of {$\mathbb Z^d$}-actions},
   JOURNAL = {J. Mod. Dyn.},
  FJOURNAL = {Journal of Modern Dynamics},
    VOLUME = {19},
      YEAR = {2023},
     PAGES = {455--540},
       DOI = {10.3934/jmd.2023014},
}

@article {furstenberg,
    AUTHOR = {Furstenberg, Harry},
     TITLE = {A {P}oisson formula for semi-simple {L}ie groups},
   JOURNAL = {Ann. of Math. (2)},
  FJOURNAL = {Annals of Mathematics. Second Series},
    VOLUME = {77},
      YEAR = {1963},
     PAGES = {335--386},
      ISSN = {0003-486X},
       DOI = {10.2307/1970220},
}

@Misc{candelpres,
author = {Alberto Candel},
title = {On harmonic measures and group actions},
url = {https://www.csun.edu/~ac53971/research/talk_zaragoza_out.pdf},
date = {2007},
urldate = {2025}
}

@incollection {el,
    AUTHOR = {Einsiedler, M. and Lindenstrauss, E.},
     TITLE = {Diagonal actions on locally homogeneous spaces},
 BOOKTITLE = {Homogeneous flows, moduli spaces and arithmetic},
    SERIES = {Clay Math. Proc.},
    VOLUME = {10},
     PAGES = {155--241},
 PUBLISHER = {Amer. Math. Soc., Providence, RI},
      YEAR = {2010},
   MRCLASS = {22F30 (22D40 22F10 28D10 28D15 37A15 58J51)},
  MRNUMBER = {2648695},
MRREVIEWER = {Thomas Ward},
}

@article {furstenberg2,
    AUTHOR = {Furstenberg, Harry},
     TITLE = {Noncommuting random products},
   JOURNAL = {Trans. Amer. Math. Soc.},
  FJOURNAL = {Transactions of the American Mathematical Society},
    VOLUME = {108},
      YEAR = {1963},
     PAGES = {377--428},
      ISSN = {0002-9947},
   MRCLASS = {60.08 (60.66)},
  MRNUMBER = {163345},
MRREVIEWER = {G.-C. Rota},
       DOI = {10.2307/1993589},
       URL = {https://doi-org.ezproxy.rice.edu/10.2307/1993589},
}

@article {nzinventiones,
      AUTHOR = {Nevo, Amos and Zimmer, Robert J.},
     TITLE = {Homogenous projective factors for actions of semi-simple {L}ie
              groups},
   JOURNAL = {Invent. Math.},
  FJOURNAL = {Inventiones Mathematicae},
    VOLUME = {138},
      YEAR = {1999},
    NUMBER = {2},
     PAGES = {229--252},
       DOI = {10.1007/s002220050377},
}

@article {godement,
    AUTHOR = {Godement, Roger},
     TITLE = {Une g\'{e}n\'{e}ralisation du th\'{e}or\`eme de la moyenne pour les
              fonctions harmoniques},
   JOURNAL = {C. R. Acad. Sci. Paris},
  FJOURNAL = {Comptes Rendus Hebdomadaires des S\'{e}ances de l'Acad\'{e}mie des
              Sciences},
    VOLUME = {234},
      YEAR = {1952},
     PAGES = {2137--2139},
      ISSN = {0001-4036},
   MRCLASS = {20.0X},
  MRNUMBER = {47056},
MRREVIEWER = {G. W. Mackey},
}

@article {plante,
    AUTHOR = {Plante, J. F.},
     TITLE = {Foliations with measure preserving holonomy},
   JOURNAL = {Ann. of Math. (2)},
  FJOURNAL = {Annals of Mathematics. Second Series},
    VOLUME = {102},
      YEAR = {1975},
    NUMBER = {2},
     PAGES = {327--361},
      ISSN = {0003-486X},
   MRCLASS = {57D30},
  MRNUMBER = {391125},
MRREVIEWER = {Robert Roussarie},
}

@article {deodhar,
    AUTHOR = {Deodhar, Vinay V.},
     TITLE = {On central extensions of rational points of algebraic groups},
   JOURNAL = {Amer. J. Math.},
  FJOURNAL = {American Journal of Mathematics},
    VOLUME = {100},
      YEAR = {1978},
    NUMBER = {2},
     PAGES = {303--386},
       DOI = {10.2307/2373853},
       URL = {https://doi-org.ezproxy.rice.edu/10.2307/2373853},
}

@book {bpintro,
    AUTHOR = {Barreira, Lu\'{\i}s and Pesin, Yakov},
     TITLE = {Introduction to smooth ergodic theory},
    SERIES = {Graduate Studies in Mathematics},
    VOLUME = {231},
      NOTE = {Second edition [of  3076414]},
 PUBLISHER = {American Mathematical Society, Providence, RI},
      YEAR = {2023},
     PAGES = {xv+336},
      ISBN = {9781470470654},
   MRCLASS = {37-01 (37Axx)},
  MRNUMBER = {4574839},
}

@incollection {ek,
    AUTHOR = {Einsiedler, Manfred and Katok, Anatole},
     TITLE = {Rigidity of measures---the high entropy case and non-commuting
              foliations},
      NOTE = {Probability in mathematics},
   JOURNAL = {Israel J. Math.},
  FJOURNAL = {Israel Journal of Mathematics},
    VOLUME = {148},
      YEAR = {2005},
     PAGES = {169--238},
      ISSN = {0021-2172},
   MRCLASS = {37C85 (37A15 37C40 37D30)},
  MRNUMBER = {2191228},
MRREVIEWER = {S. G. Dani},
       DOI = {10.1007/BF02775436},
       URL = {https://doi-org.ezproxy.rice.edu/10.1007/BF02775436},
}

@article {osedelec,
    AUTHOR = {Oseledec, V. I.},
     TITLE = {A multiplicative ergodic theorem. {C}haracteristic {L}japunov,
              exponents of dynamical systems},
   JOURNAL = {Trudy Moskov. Mat. Ob\v{s}\v{c}.},
  FJOURNAL = {Trudy Moskovskogo Matemati\v{c}eskogo Ob\v{s}\v{c}estva},
    VOLUME = {19},
      YEAR = {1968},
     PAGES = {179--210},
      ISSN = {0134-8663},
   MRCLASS = {28.70 (34.00)},
  MRNUMBER = {240280},
MRREVIEWER = {J\'{o}zsef Sz\H{u}cs},
}

@article {ledrappierstrelcyn,
    AUTHOR = {Ledrappier, Fran\c{c}ois and Strelcyn, Jean-Marie},
     TITLE = {A proof of the estimation from below in {P}esin's entropy
              formula},
   JOURNAL = {Ergodic Theory Dynam. Systems},
  FJOURNAL = {Ergodic Theory and Dynamical Systems},
    VOLUME = {2},
      YEAR = {1982},
    NUMBER = {2},
     PAGES = {203--219 (1983)},
      ISSN = {0143-3857},
   MRCLASS = {58F11 (28D20 34C35)},
  MRNUMBER = {693976},
MRREVIEWER = {M. I. Brin},
       DOI = {10.1017/S0143385700001528}
}

@article {ledrappieryoung2,
    AUTHOR = {Ledrappier, F. and Young, L.-S.},
     TITLE = {The metric entropy of diffeomorphisms. {II}. {R}elations
              between entropy, exponents and dimension},
   JOURNAL = {Ann. of Math. (2)},
  FJOURNAL = {Annals of Mathematics. Second Series},
    VOLUME = {122},
      YEAR = {1985},
    NUMBER = {3},
     PAGES = {540--574},
      ISSN = {0003-486X},
   MRCLASS = {58F11 (58F15)},
  MRNUMBER = {819557},
MRREVIEWER = {D. Newton},
       DOI = {10.2307/1971329}
}

@article {ledrappieryoung1,
    AUTHOR = {Ledrappier, F. and Young, L.-S.},
     TITLE = {The metric entropy of diffeomorphisms. {I}. {C}haracterization
              of measures satisfying {P}esin's entropy formula},
   JOURNAL = {Ann. of Math. (2)},
  FJOURNAL = {Annals of Mathematics. Second Series},
    VOLUME = {122},
      YEAR = {1985},
    NUMBER = {3},
     PAGES = {509--539},
      ISSN = {0003-486X},
   MRCLASS = {58F11 (58F15)},
  MRNUMBER = {819556},
MRREVIEWER = {D. Newton},
       DOI = {10.2307/1971328}
}

@misc{brown2019entropylyapunovexponentsrigidity,
      title={Entropy, Lyapunov exponents, and rigidity of group actions}, 
      author={Aaron W. Brown and Sébastien Alvarez and Dominique Malicet and Davi Obata and Mario Roldán and Bruno Santiago and Michele Triestino},
      year={2019},
      eprint={1809.09192},
      archivePrefix={arXiv},
      primaryClass={math.DS},
      url={https://arxiv.org/abs/1809.09192}, 
}

@article {pughshub,
    AUTHOR = {Pugh, Charles and Shub, Michael},
     TITLE = {Ergodic elements of ergodic actions},
   JOURNAL = {Compositio Math.},
  FJOURNAL = {Compositio Mathematica},
    VOLUME = {23},
      YEAR = {1971},
     PAGES = {115--122},
      ISSN = {0010-437X},
   MRCLASS = {28.70 (22.00)},
  MRNUMBER = {283174},
MRREVIEWER = {R. L. Adler},
}

@book {knappbeyond,
    AUTHOR = {Knapp, Anthony W.},
     TITLE = {Lie groups beyond an introduction},
    SERIES = {Progress in Mathematics},
    VOLUME = {140},
   EDITION = {Second},
 PUBLISHER = {Birkh\"{a}user Boston, Inc., Boston, MA},
      YEAR = {2002},
     PAGES = {xviii+812},
      ISBN = {0-8176-4259-5},
}

@article {katokspatzier,
    AUTHOR = {Katok, A. and Spatzier, R. J.},
     TITLE = {Invariant measures for higher-rank hyperbolic abelian actions},
   JOURNAL = {Ergodic Theory Dynam. Systems},
  FJOURNAL = {Ergodic Theory and Dynamical Systems},
    VOLUME = {16},
      YEAR = {1996},
    NUMBER = {4},
     PAGES = {751--778},
      ISSN = {0143-3857},
   MRCLASS = {58F11},
  MRNUMBER = {1406432},
MRREVIEWER = {Scot Adams},
       DOI = {10.1017/S0143385700009081},
}

@incollection {ekcomm,
    AUTHOR = {Einsiedler, Manfred and Katok, Anatole},
     TITLE = {Invariant measures on {$G/\Gamma$} for split simple {L}ie
              groups {$G$}},
      NOTE = {Dedicated to the memory of J\"urgen K. Moser},
   JOURNAL = {Comm. Pure Appl. Math.},
  FJOURNAL = {Communications on Pure and Applied Mathematics},
    VOLUME = {56},
      YEAR = {2003},
    NUMBER = {8},
     PAGES = {1184--1221},
      ISSN = {0010-3640,1097-0312},
   MRCLASS = {37C85 (22E46 37A15 37A35)},
  MRNUMBER = {1989231},
MRREVIEWER = {Viorel\ Ni\c tic\u a},
       DOI = {10.1002/cpa.10092},
       URL = {https://doi-org.ezproxy.rice.edu/10.1002/cpa.10092},
}

@misc{damjanovic2025zimmerprogrampartiallyhyperbolic,
      title={The Zimmer Program for partially hyperbolic actions}, 
      author={Danijela Damjanovic and Ralf Spatzier and Kurt Vinhage and Disheng Xu},
      year={2025},
      eprint={2211.08195},
      archivePrefix={arXiv},
      primaryClass={math.DS},
      url={https://arxiv.org/abs/2211.08195}, 
}

@book {zimmerbook,
    AUTHOR = {Zimmer, Robert J.},
     TITLE = {Ergodic theory and semisimple groups},
    SERIES = {Monographs in Mathematics},
    VOLUME = {81},
 PUBLISHER = {Birkh\"{a}user Verlag, Basel},
      YEAR = {1984},
     PAGES = {x+209},
       DOI = {10.1007/978-1-4684-9488-4},
}

@inproceedings {zimmericm,
    AUTHOR = {Zimmer, Robert J.},
     TITLE = {Actions of semisimple groups and discrete subgroups},
 BOOKTITLE = {Proceedings of the {I}nternational {C}ongress of
              {M}athematicians, {V}ol. 1, 2 ({B}erkeley, {C}alif., 1986)},
     PAGES = {1247--1258},
 PUBLISHER = {Amer. Math. Soc., Providence, RI},
      YEAR = {1987},
}

@article {chateletrosenberg,
    AUTHOR = {Chatelet, G. and Rosenberg, H.},
     TITLE = {Manifolds which admit {${\bf R}\sp{n}$} actions},
   JOURNAL = {Inst. Hautes \'Etudes Sci. Publ. Math.},
  FJOURNAL = {Institut des Hautes \'Etudes Scientifiques. Publications
              Math\'ematiques},
    NUMBER = {43},
      YEAR = {1974},
     PAGES = {245--260},
}

@article {rrw,
    AUTHOR = {Rosenberg, H. and Roussarie, R. and Weil, D.},
     TITLE = {A classification of closed orientable {$3$}-manifolds of rank
              two},
   JOURNAL = {Ann. of Math. (2)},
  FJOURNAL = {Annals of Mathematics. Second Series},
    VOLUME = {91},
      YEAR = {1970},
     PAGES = {449--464},
       DOI = {10.2307/1970633},
}

@article {ghys85,
    AUTHOR = {Ghys, \'{E}tienne},
     TITLE = {Actions localement libres du groupe affine},
   JOURNAL = {Invent. Math.},
  FJOURNAL = {Inventiones Mathematicae},
    VOLUME = {82},
      YEAR = {1985},
    NUMBER = {3},
     PAGES = {479--526},
       DOI = {10.1007/BF01388867},
}

@article {asaokagrenoble,
    AUTHOR = {Asaoka, Masayuki},
     TITLE = {Regular projectively {A}nosov flows on three-dimensional
              manifolds},
   JOURNAL = {Ann. Inst. Fourier (Grenoble)},
  FJOURNAL = {Universit\'e{} de Grenoble. Annales de l'Institut Fourier},
    VOLUME = {60},
      YEAR = {2010},
    NUMBER = {5},
     PAGES = {1649--1684},
      ISSN = {0373-0956,1777-5310},
   MRCLASS = {37D30 (37C10 37E99 57R30)},
  MRNUMBER = {2766227},
MRREVIEWER = {Shigenori\ Matsumoto},
       DOI = {10.5802/aif.2569},
       URL = {https://doi-org.ezproxy.rice.edu/10.5802/aif.2569},
}

@article {asaokaannals,
    AUTHOR = {Asaoka, Masayuki},
     TITLE = {Nonhomogeneous locally free actions of the affine group},
   JOURNAL = {Ann. of Math. (2)},
  FJOURNAL = {Annals of Mathematics. Second Series},
    VOLUME = {175},
      YEAR = {2012},
    NUMBER = {1},
     PAGES = {1--21},
       DOI = {10.4007/annals.2012.175.1.1},
}

@misc{brown2025measurerigiditygeneralizedugibbs,
      title={Measure rigidity for generalized u-Gibbs states and stationary measures via the factorization method}, 
      author={Aaron Brown and Alex Eskin and Simion Filip and Federico Rodriguez Hertz},
      year={2025},
      archivePrefix={arXiv},
      primaryClass={math.DS},
      url={https://arxiv.org/abs/2502.14042}, 
}

@article {katz,
    AUTHOR = {Katz, Asaf},
     TITLE = {Measure rigidity of {A}nosov flows via the factorization
              method},
   JOURNAL = {Geom. Funct. Anal.},
  FJOURNAL = {Geometric and Functional Analysis},
    VOLUME = {33},
      YEAR = {2023},
    NUMBER = {2},
     PAGES = {468--540},
      ISSN = {1016-443X},
   MRCLASS = {37D20},
  MRNUMBER = {4578464},
       DOI = {10.1007/s00039-023-00629-8},
}
\end{document}